
\documentclass[moor,nonblindrev]{template}%
\usepackage{multirow}
\usepackage{algorithm,algorithmicx,algpseudocode}
\usepackage{amsmath}
\usepackage{amssymb}
\usepackage{natbib,xspace}
\usepackage[]{subcaption}
\captionsetup[subfigure]{font=scriptsize}
\usepackage[colorlinks=true,breaklinks=true,bookmarks=true,urlcolor=blue,
citecolor=blue,linkcolor=blue,bookmarksopen=false,draft=false]{hyperref}%
\setcounter{MaxMatrixCols}{30}%
\usepackage{amsfonts}%
\usepackage{graphicx}
\providecommand{\U}[1]{\protect\rule{.1in}{.1in}}
\NatBibNumeric

\bibpunct[, ]{[}{]}{,}{n}{}{,}
\def\EMAIL#1{\href{mailto:#1}{#1}}

\TheoremsNumberedThrough
\EquationsNumberedThrough
\MANUSCRIPTNO{MOR-2018-281}
\begin{document}


\RUNAUTHOR{Blanchet, Murthy, and Zhang}

\RUNTITLE{Optimal Transport Based Distributionally
Robust Optimization}

\TITLE{Optimal Transport Based Distributionally Robust Optimization: Structural Properties and Iterative Schemes}

\ARTICLEAUTHORS{\AUTHOR{Jose Blanchet}
\AFF{Management Science and Engineering, Stanford University, Stanford, California 94305;  \EMAIL{jose.blanchet@stanford.edu}}
\AUTHOR{Karthyek Murthy}
\AFF{Engineering Systems and Design, Singapore University of
	Technology \& Design, Singapore 487372;  \EMAIL{karthyek\_murthy@sutd.edu.sg}}
\AUTHOR{Fan Zhang}
\AFF{Management Science and Engineering, Stanford University, Stanford, California 94305;  \EMAIL{fzh@stanford.edu}}
}

\ABSTRACT{We consider optimal transport based distributionally robust
optimization (DRO) problems with locally strongly convex transport
cost functions and affine decision rules. Under conventional
convexity assumptions on the underlying loss function, we obtain
structural results about the value function, the optimal policy, and
the worst-case optimal transport adversarial model. These results
expose a rich structure embedded in the DRO problem (e.g. strong
convexity even if the non-DRO problem was not strongly convex, a
suitable scaling of the Lagrangian for the DRO constraint,
etc. which are crucial for the design of efficient algorithms). As a
consequence of these results, one can develop efficient optimization
procedures which have the same sample and iteration complexity as a
natural non-DRO benchmark algorithm such as stochastic gradient
descent.

}

\KEYWORDS{Distributionally Robust Optimization, Stochastic Gradient
Descent, Optimal Transport, Wasserstein Distances, Adversarial, Strong Convexity,
Comparative Statics, Rate of Convergence.}
\MSCCLASS{Primary: 90C15; 	Secondary: 65K05, 90C47.}
\ORMSCLASS{Programming: Stochastic; Programming: Nonlinear Algorithms; Mathematics: Convexity.}




\maketitle
\section{Introduction.}

In this paper we study the distributionally robust optimization (DRO) version
of stochastic optimization models with linear decision rules of the form
\begin{equation}
\inf_{\beta\in B}E_{P^{\ast}}[\ell(\beta^{T}X)],\label{Stoch_Opt_Model}%
\end{equation}
where $E_{P^{\ast}}[ \cdot]$ represents the expectation operator associated to
the probability model $P^{\ast}$, which describes the random element
$X\in\mathbb{R}^{d}$. The decision (or optimization) variable $\beta$ is
assumed to take values on a convex set $B\subseteq\mathbb{R}^{d}$, and the
loss function $\ell:\mathbb{R\rightarrow R}$ is assumed to satisfy certain
convexity and regularity assumptions discussed in the sequel. The formulation
also includes affine decision rules by simply redefining $X$ by $(X,\mathbf{1}%
) $.

Stochastic optimization problems such as (\ref{Stoch_Opt_Model}) include
standard formulations in important Operations Research (OR) and Machine
Learning (ML) applications, including newsvendor models, portfolio
optimization via utility maximization, and a large portion of the most
conventional generalized linear models in the setting of statistical learning problems.

The corresponding DRO\ version of (\ref{Stoch_Opt_Model}) takes the form
\begin{equation}
\inf_{\beta\in B}\sup_{P\in\mathcal{U}_{\delta}\left(  P_{0}\right)  }%
E_{P}[\ell(\beta^{T}X)],\label{DRO_B0}%
\end{equation}
where $\mathcal{U}_{\delta}\left(  P_{0}\right)  $ is a so-called
distributional uncertainty region \textquotedblleft centered\textquotedblright%
\ around some benchmark model, $P_{0}$, which may be data-driven (for example,
an empirical distribution) and $\delta>0$ parameterizes the size of the
distributional uncertainty. Precisely, we assume that $P_{0}$ is an arbitrary
distribution with suitably bounded moments.

The DRO counterpart of (\ref{Stoch_Opt_Model}) is motivated by the fact that
the underlying model $P^{\ast}$ generally is unknown, while the benchmark
model, $P_{0}$, is typically chosen to be a tractable model which in principle
should retain as much model fidelity as possible (i.e. $P_{0}$ should at least
capture the most relevant features present in $P^{\ast}$). However, simply
replacing $P^{\ast}$ by $P_{0}$ in the formulation (\ref{Stoch_Opt_Model}) may
result in the selection of a decision, $\beta_{0}$, which significantly
under-performs in actual practice, relative to the optimal decision for
the actual problem (based on $P^{\ast}$).

The DRO formulation (\ref{DRO_B0}) introduces an adversary (represented by the
inner $\sup$) which explores the implications of any decision $\beta$ as the
benchmark model $P_{0}$ varies within $\mathcal{U}_{\delta}\left(
P_{0}\right)  $. The adversary should be seen as a powerful modeling tool
whose goal is to explore the impact of potential decisions in the phase of
distributional uncertainty. The DRO\ formulation then prescribes a choice
which minimizes the worst case expected cost induced by the models in the
distributional uncertainty region.

An important ingredient in the DRO\ formulation is the description of
the distributional uncertainty region $\mathcal{U}_{\delta}(P_{0})$. In recent years, there has been significant
interest in distributional uncertainty regions satisfying
\[
\mathcal{U}_{\delta}(P_{0})=\{P:\mathcal{W}(P_{0},P)\leq\delta\},
\]
where $\mathcal{W}(P_{0},P)$ is a Wasserstein distance (see, for example,
\cite{blanchet2016robust,blanchet_quantifying_2016,wolfram2018,gao2016distributionally,gao2018robust,MohajerinEsfahani2017,shafieezadeh-abadeh_distributionally_2015,sinha2018certifiable,volpi2018generalizing,7938642,ZHAO2018262}
and references therein). 

The Wasserstein distance is a particular case of optimal transport
discrepancies, which we will review momentarily. A general optimal transport
discrepancy computes the cheapest cost of transporting the mass of $P_{0}$ to
the mass of $P$ so that a unit of mass transported from position $x$ to
position $y$ is measured according to a transportation cost function,
$c\left(  \cdot\right)  $. The definition of $\mathcal{W}(P_{0},P)$ requires
that $c\left(  \cdot\right)  $ be a norm or a distance, but this is not
necessary and endowing modelers with increased flexibility in choosing
$c\left(  \cdot\right)  $ is an important part of our motivation.

The use of the Wasserstein distance is closely related to norm-regularization
and DRO formulations have been shown to recover approximately and exactly a
wide range of machine learning estimators; see, for example,
\cite{blanchet2016robust,gao2017wasserstein,shafieezadeh-abadeh_distributionally_2015,ShafieezadehAbadeh:231966}%
. These and some other applications of the DRO formulation (\ref{DRO_B0})
based on Wasserstein distance lead to a reduction from (\ref{DRO_B0}) back to
a problem of the form (\ref{Stoch_Opt_Model}), in which the objective loss
function is modified by adding a regularization penalty expressed in terms of
the norm of $\beta$ and a regularization penalty parameter as an explicit
function of $\delta$. 

We stress that in many of these settings, particularly the cases in which
$\ell\left(  \cdot\right)  $ is Lipschitz and convex, the worst-case
distribution is degenerate (i.e. it is realized by moving infinitesimally
small mass towards infinity or moving no mass at all). 

We will enable efficient algorithms which can be applied to more flexible cost
functions $c\left(  \cdot\right)  $ and losses $\ell\left(  \cdot\right)  $ in
order to induce adversarial distributions which can be both informed by side
information and endowed with meaningful interpretations. 

For other special cases which are amenable to either analytical solutions or
software implementations, $P_{0}$ is either assumed to have a special
structure (e.g. Gaussian distribution, as in \cite{nguyen2018distributionally}) or ultimately requires robust
optimization formulations which require $P_{0}$ to have finite support; see
\cite{wolfram2018,chen2018adaptive,luo2017decomposition,MohajerinEsfahani2017,shafieezadeh-abadeh_distributionally_2015,ShafieezadehAbadeh:231966,weijun,ZHAO2018262}

As we shall see, our analysis will enable the application of stochastic
gradient descent algorithms to approximate the solution to (\ref{DRO_B0}) and
which are applicable to cases in which $P_{0}$ has unbounded support (under
suitable moment constraints). Moreover, by enabling the use of stochastic
gradient descent algorithm we open the door to further research on accelerated
stochastic gradient methods. In this paper, we shall focus on providing
stochastic gradient descent implementation to demonstrate the direct
application of our structural results.

We mention \cite{sinha2018certifiable}, in which relaxed Wasserstein DRO
formulations are explored in the context of certifying robustness in deep
neural networks. The stochastic gradient descent-type employed in
\cite{sinha2018certifiable} is similar to the ones that we discuss in Section
\ref{Sec-SA-Algo}. Nevertheless, these algorithms are designed for a fixed value of the
dual parameter (which we call $\lambda$), chosen to be large. Our analysis
suggests that rescaling $\lambda$, so that ($\lambda=O\left(\delta
^{-1/2}\right)  $) may enhance performance, even in the case of the more
general type of losses considered in \cite{sinha2018certifiable}. The impact
of this type of rescaling in terms of performance guarantees for computational
algorithms has not been studied in the literature and we believe that our
analysis could prove useful in future studies. Additional discussion on the
rescaling is given at the end of Section \ref{subsect_rates_of_convergence}.

The challenge in our study lies in the inner maximization (\ref{DRO_B0}),
which is not easy to perform and its properties, parametrically as a function
of both $\beta$ and $\delta$, are non-trivial to analyze. So, much of our
effort will go into understanding these properties. But before we describe our
results, we first describe a flexible class of models for distributional
uncertainty sets, $\mathcal{U}_{\delta}\left(  P_{0}\right)  $.

\textbf{A description of the distributional uncertainty region
$\mathcal{U}_{\delta}(P_{0})$.} We focus on DRO formulations
based on extensions of the Wasserstein distance, called optimal transport
discrepancies. Formally, an optimal transport discrepancy between
distributions $P$ and $P_{0}$ with respect to the (lower semicontinuous) cost
function $c:\mathbb{R}^{d}\times\mathbb{R}^{d}\rightarrow\lbrack0,\infty]$ is
defined as follows.

First, let $\mathcal{P}(\mathbb{R}^{d}\times\mathbb{R}^{d}) $ be the set of
Borel probability measures on $\mathbb{R}^{d}\times\mathbb{R}^{d}$. So, for
any $X\in\mathbb{R}^{d}$ and $X^{\prime}\in\mathbb{R}^{d}$ random elements
living on the same probability space there exists $\pi\in\mathcal{P}(
\mathbb{R}^{d}\times\mathbb{R}^{d}) $ which governs the joint distribution of
$( X,X^{\prime}) $.

If we use $\pi_{_{X}}$ to denote the marginal distribution of $X$ under $\pi$
and $\pi_{_{X^{\prime}}}$ to denote the marginal distribution of $X^{\prime}$
under $\pi$, then the optimal transport cost between $P$ and $P_{0}$ can be
written as,
\begin{align}
D_{c}( P_{0},P) =\inf\big\{E_{\pi}\left[ c\left(  X,X^{\prime}\right)
\right]  :\ \pi\in\mathcal{P}( \mathbb{R}^{d}\times\mathbb{R}^{d})
,\ \pi_{_{X}}=P_{0},\,\pi_{_{X^{\prime}}}=P\big\}.\label{OT-Defn}%
\end{align}
The Wasserstein distance is recovered if $c\left(  x,x^{\prime}\right)
=\left\Vert x-x^{\prime}\right\Vert $ under any given norm. If $c\left(
x,x^{\prime}\right)  $ is not a distance, then $D_{c}\left(  P_{0},P\right)  $
is not necessarily a distance.

Ultimately, we are interested in the computational tractability of the DRO
problem (\ref{DRO_B0}) assuming
\begin{equation}
\mathcal{U}_{\delta}(P_{0})=\left\{  P:D_{c}(P_{0},P)\leq\delta\right\}
,\label{Choice_U}%
\end{equation}
for a flexible class of functions $c$. We concentrate on what we call local
Mahalanobis or state-dependent Mahalanobis cost functions of the form,
\begin{equation}
c(x,x^{\prime})=(x-x^{\prime})^{T}A(x)(x-x^{\prime}),\label{Choice_C}%
\end{equation}
where $A(x)$ is a positive definite matrix for each $x$. For this choice of $c(\cdot),$ the distributions in $\mathcal{U}_\delta(P_0)$ are unrestricted in support.
We explain in the
Conclusions section how our results can be applied to other cost functions.

The family of cost functions that we consider is motivated by the perspective
that the adversary introduced in the DRO formulation (\ref{DRO_B0})
(represented by the inner sup) is a modeling tool which explores the impact of
potential decisions. 

Let us consider, for example a situation in which we are interested in choosing an optimal portfolio strategy. In this setting, historical returns can naturally be used to fit a statistical
model. However, there is also current market information which is not of
statistical nature but of economic nature in the form of, for instance,
implied volatilities (i.e., the volatility that is implied by the current
supply and demand reflected by the prices of derivative securities). The
implied volatility differs from the historical volatility and it is more
sensible to capturing current market perceptions. An enhanced DRO\ formulation
which uses a cost function such as (\ref{Choice_C}) could incorporate market
information as follows. Returns with higher implied volatility, maybe even
depending on the current stock values, could be assigned lower cost of
transportation; while returns with lower implied volatility may be given
higher transportation costs. The intuition is that high implied volatilities correspond to potentially higher future fluctuations (as perceived by the
market), so the adversary should be given higher ability relative (and thus
lower costs) to explore the potential implications of such future
out-of-sample fluctuations on portfolio choices.

In general, just as we discussed in the previous paragraph, it is not
difficult to imagine more situations in which the optimizer may be more
concerned about the impact of distributional uncertainty on certain regions of
the outcome space relative to other regions. Such situations may arise as a
consequence of different amounts of information available in different regions
of the outcome space, or perhaps due to data contamination or measurement
errors, which may be more prone to occur for certain values of $x$.

In this paper we do not focus on the problem of fitting the cost function, but
we do consider the portfolio optimization discussed earlier and the use of
implied volatilities in an empirical study in Section \ref{Sec-Eg}. We point
out, however, that related questions have been explored, at least empirically,
in classification settings, using manifold learning procedures
(\cite{blanchet2017data,noh2010generative,wang2012parametric}). Our motivation
is that flexible formulations based on cost functions such as (\ref{Choice_C})
are useful if one wishes to fully exploit the role of the artificial adversary
in (\ref{DRO_B0}) as a modeling tool.

Now, leaving aside the modeling advantages of choosing a cost function such as
(\ref{Choice_C}) and coming back to the computational challenges, even if one
selects $A(x)$ to be the identity (thus recovering a more traditional
Wasserstein DRO formulation) solving (\ref{DRO_B0}) is not entirely easy
because the inner optimization problem in \eqref{DRO_B0} is non-trivial to study. An exact convex optimization reformulation has been demonstrated only for losses taking a specific form.
For example,  \cite{hanasusanto2018conic} provides a conic reformulation for the data-driven DRO problem with piecewise linear convex losses. With the number of conic constraints being proportional to the data set size, it is however computationally less suited for handling large datasets.  

To exploit these DRO formulations one must develop scalable algorithms with guaranteed good performance for solving
(\ref{DRO_B0}). By good performance, we mean that we can easily develop
algorithms for solving (\ref{DRO_B0}) with complexity which is comparable to
that of natural benchmark algorithms for solving (\ref{Stoch_Opt_Model}).
Enabling these good-performing algorithms is precisely one of the goals of
this paper. To this end, several properties such as duality
representations, convexity and the structure of worst-case adversaries are
studied. These results have far ranging implications, as we discuss next.

\textbf{A more in-depth discussion of our technical contributions.}

First, using a standard duality result, we write the inner maximization in
(\ref{DRO_B0}) as,
\begin{equation}
\sup_{P:D_{c}(P_{0},P)\leq\delta}E_{P}\left[  \ell(\beta^{T}X)\right]
=\inf_{\lambda\geq0}E_{P_{0}}[\ell_{rob}\xspace(\beta,\lambda
;X)],\label{Rob-Obj-Fn}%
\end{equation}
for a dual objective function $\ell_{rob}\xspace(\cdot)$ and a dual variable
$\lambda\geq0$. 

Then, we show that after a rescaling in $\lambda$ that the objective function,
$E_{P_{0}}[\ell_{rob}\xspace(\beta,\lambda;X)]$, is locally strongly convex in
$\left(  \beta,\lambda\right)  $ uniformly over a compact set containing the
optimizer; the strong convexity parameter of at least $\kappa_{1}\delta^{1/2}$
(for some $\kappa_{1}>0$ which we identify), under suitable convexity and
growth assumptions on $\ell\left(  \cdot\right)  $; see Theorems
\ref{Thm-strong-duality} - \ref{Thm-Str-Convexity}.

It turns out that the function $\ell_{rob}\xspace(\cdot)$ can be computed by
solving a one dimensional search problem on a compact interval. This can be
solved quite efficiently under (exponentially fast rate of convergence) under
the setting of Theorems \ref{Thm-strong-duality} - \ref{Thm-Str-Convexity}.

We then study a natural stochastic gradient descent algorithm for solving
(\ref{DRO_B0}) which, due to the strong convexity properties derived for
$\ell_{rob}\left(  \cdot\right)  $ achieves an iteration complexity of order
$O_{p}(\varepsilon^{-1}L)$ to reach $O\left(  \varepsilon\right)  $ error,
where $L$ is the cost of solving the one dimensional search problem. We also
discuss in the Appendix how to execute this line search procedure efficiently,
provided that suitable smoothness assumptions are imposed on $\ell\left(
\cdot\right)  $ (leading to an extra factor of order $L=O\left(  \log\left(
1/\varepsilon\right)  \right)  $ in total cost. In this sense, we obtain a
provably efficient iterative procedure to solve (\ref{DRO_B0}).

It is important to note that the non-DRO version of the problem, namely
(\ref{Stoch_Opt_Model}), corresponding to the case $\delta=0$ may not be
strongly convex even if $\ell\left(  \cdot\right)  $ is strongly convex, see Remark \ref{remark-strongly-convex} following Theorem \ref{Thm-Str-Convexity}. So,
in principle, (\ref{Stoch_Opt_Model}) may require $O\left(  1/\varepsilon
^{2}\right)  $ stochastic gradient descent iterations to reach $O\left(
\varepsilon\right)  $ error of the optimal value. Indeed, if $\ell\left(
\cdot\right)  $ is convex, the problem is always convex in $\beta$ (for
$\delta\geq0$), because the supremum of convex functions is convex. 

Of course, $\delta>0$ may be seen as a form of \textquotedblleft
regularization\textquotedblright\ in some cases, as discussed earlier, and
this is a feature that could explain, at least intuitively, the convexity
properties of the objective function. But the goal of formulation
(\ref{DRO_B0}) is not to regularize for the sake of making the problem better
possed from an optimization standpoint. Rather, the point of formulation
(\ref{DRO_B0}) is enabling the flexibility in choosing effective
DRO\ formulations (via (\ref{Choice_C})) in order to improve out-of-sample
properties. This flexibility could come at a price in terms of computational
tractability. The point of keeping the case $\delta=0$ in mind as a benchmark
is that such a price is not incurred and, therefore, our results enable
modelers to use formulations such as (\ref{DRO_B0}) to improve out-of-sample
performance based on side information, as in the portfolio optimization
example mentioned earlier.

Another useful consequence of our results involves the application of standard
Sample Average Approximation statistical analysis results to Optimal Transport
based DRO. This enables the direct application of results in conjunction with,
for example, \cite{shapiro2009lectures}, to produce confidence regions for the
solution of the DRO formulation.

Another interesting contribution of our analysis consists in studying the local
structure of the worst-case optimal transport plan, including uniqueness and
comparative statics results, see Proposition \ref{Prop-Comp-Stat} and Theorem
\ref{Thm-WC-Dist}. 

The structure of the optimal transport plan, we believe, could prove helpful
in the development of statistical results to certify robustness and in
providing insights for robustification in non-convex objective functions. Some
of the statistical implications are studied in (see \cite{blanchet2019confidence}).

\textbf{Organization of the paper.} We now describe how to navigate the
results in the paper. Throughout the rest of the paper we introduce
assumptions as we need them. Often these assumptions and the corresponding
results that are obtained involved constants, which are surveyed in a table
presented in Appendix \ref{Sec-table}.

Section \ref{Sec-Reform-Convexity}, sets the stage for our analysis by first
obtaining the duality result (\ref{Rob-Obj-Fn}). The duality result in
(\ref{Rob-Obj-Fn}) is given only under the assumption that $\ell\left(
\cdot\right)  $ is upper semi-continuous and $c\left(  \cdot\right)  $ is as
in (\ref{Choice_C}), assuming $A\left(  x\right)  $ is uniformly well
conditioned in $x$.

In Section \ref{subsec_basic_convexity}, under the assumption that
$\ell\left(  \cdot\right)  $ is convex, with at most quadratic growth and
fourth order moments of $P_{0}$, we establish convexity and finiteness in the
right-hand of (\ref{Rob-Obj-Fn}). 

In Section \ref{subsec:str-convexity-compactB}, we add the assumptions that
$\ell\left(  \cdot\right)  $ is twice differentiable, with a natural
non-degeneracy condition on $P_{0}$, and that the feasible set, $B$, is convex
and compact. We characterize a useful region (compact and with convenient
analytical properties), called $\mathbb{V}$, which contains the dual optimizer
$\lambda_{\ast}\left(  \beta\right)  $, parametrically as a function of each
decision $\beta$. Then, we show smoothness and strong convexity in $\beta$ of
the right-hand side of (\ref{Rob-Obj-Fn}) on $\mathbb{V}$.

Also in Section \ref{subsec:str-convexity-compactB}, now under a local
strong convexity condition on $\ell\left(  \cdot\right)  $, and a
strengthening of the non-degeneracy condition on $P_{0}$ mentioned earlier, we
extend the smoothness and strong convexity of the right hand side of
(\ref{Rob-Obj-Fn}) both in $\beta$ and the dual variable $\lambda>0$, provided
that $\delta$ is chosen suitably small, throughout $\mathbb{V}$.

The assumption that $B$ is compact is imposed to simplify the strong convexity
analysis and comparative statics (i.e. the structure of the worst case
distribution and comparative statics). We show in Section
\ref{subsec:noncompact-B} that the compactness of $B$ can be relaxed at the
expense of additional technical burden.

The structure of the worst case is studied in Section \ref{Sec-WorstCase-Dist}%
, in Theorem \ref{Thm-WC-Dist}. The result includes the amount of displacement
(parametrically in $\delta$) of the optimal transport plan and the existence
of a Monge map (i.e. a direct `matching' between outcomes of $P_{0}$ and those
of the worst case distribution). We also discuss situations in which the
optimal transport plan may not exist (even if an optimal solution to
(\ref{Rob-Obj-Fn}) exists), among other results.

Comparative statics results, including the uniqueness of the worst case distribution as a Monge map, as well as monotonicity in the amount of the
displacement as a function of $\delta$ for every single outcome of $P_{0}$ are
also discussed in Section \ref{Sec-WorstCase-Dist}. Also, the geometry of the
worst case transportation parametrically in $\delta$ is shown to follow
straight lines.

In Section \ref{Sec-SA-Algo},  we examine the wide range of algorithmic implications
which follow from the results in earlier sections. Section
\ref{Sec-Oracle-Info-Extr} studies how to evaluate subgradients of the
function $\ell_{rob}\xspace(\cdot)$ inside the expectation in
(\ref{Rob-Obj-Fn}). This is discussed under mild assumptions which do not require the loss $\ell(\cdot)$ to be differentiable. So, the result
can be applied to developing stochastic subgradient descent algorithms for
non-differentiable losses if derivatives and expectations can be swapped.

This swapping is explored in Section \ref{Sec-First-SGD}. We evaluate gradients
for the expectation in the right hand side of (\ref{Rob-Obj-Fn}) under the
assumptions imposed in Section \ref{subsec_basic_convexity}, and a formal
stochastic gradient descent scheme is given in Section
\ref{sec-iterative-scheme}, together with corresponding iteration complexity
analysis discussed in Section \ref{subsect_rates_of_convergence}.

In Section \ref{Sec-Mod-SGD} we discussed potential enhancements of the basic
stochastic gradient descent strategy introduced in Section
\ref{sec-iterative-scheme}. These include a two-scale stochastic approximation
scheme for dealing with the evaluation of the gradients of $\ell
_{rob}\xspace(\cdot)$ and the case in which $\delta$ may not be small enough
to apply the smoothness results from Section
\ref{subsec:str-convexity-compactB} and we need to deal with
non-differentiable losses as well.

We provide several specific examples in Section \ref{Sec-Eg}. These are
designed to derive the expressions of the structural results that we present,
explore the structure of the worst case probability model and its behavior
parametrically in $\delta$. The various constants summarized required in the
assumptions for application of our structural results are summarized in
Appendix \ref{Sec-table}. With the complexity of the SGD approach not scaling with the data size, the numerical study in Section \ref{sec-sgd-socp} demonstrates the distinct computational advantage enjoyed by the proposed SGD scheme over second order cone formulations derived from piecewise linear approximation to the loss $\ell(\cdot)$.

In Section \ref{subsec_portfolio}, we provide a discussion related to the
portfolio optimization discussed earlier in the Introduction. The set of
matrices, $A\left(  x\right)  $, is calibrated based on an implied volatility
index and $P_{0}$ is constructed based on several years of historical data for
the S\&P500 index.

The proofs of our main structural results are given in Section
\ref{Sec-Proofs}. Additional discussion involving technical lemmas and
propositions, which are auxiliary to our main structural results are given in
the appendix, in Section \ref{Sec-App-Proofs}. The discussion on the
complexity of the line search, which underlies the gradient evaluation of
$\ell_{rob}\xspace(\cdot)$ and is given in Section \ref{Sec-Line-Search}.

\textbf{Notations.} In the sequel, the symbol $\mathcal{P}(S)$ is used to
denote the set of all probability measures defined on a complete separable
metric space $S.$ A collection of random variables $\{X_{n}:n\geq1\}$ is said
to satisfy the relationship $X_{n}=O_{p}(1)$ if it is tight; in other words,
for any $\varepsilon>0,$ there exists a constant $C_{\varepsilon}$ such that
$\sup_{n}P(|X_{n}|>C_{\varepsilon})<\varepsilon.$ Following this notation, we
write $X_{n}=O_{p}(g(n))$ to denote that the family $\{X_{n}/g(n):n\geq1\}$ is
tight. The notation $X\sim P$ is to write that the law of $X$ is $P$. For any measurable function $f:S \rightarrow \mathbb{R}$, we denote the essential supremum of $f$ under measure $P\in\mathcal{P}(S)$ as $P-\textrm{ess-sup}_{x} f(x) 
:= \inf\{a\in\mathbb{R}: P(f^{-1}(a,\infty))
= 0\}$.
For any real-symmetric matrix $A$, we write $A\succeq0$ to denote that $A$ is a
positive semidefinite matrix. The set of $d$-dimensional positive definite
matrices with real entries is denoted by $\mathbb{S}_{d}^{++}$. The
$d$-dimensional identity matrix is denoted by $\mathbb{I}_{d}.$ The norm
$\Vert\cdot\Vert$ is written to denote the $\ell_{2}-$euclidean norm unless
specified otherwise. For any real vector $x$ and $r>0,$ $\mathcal{N}_{r}(x)$
denotes the neighborhood $\mathcal{N}_{r}(x):=\{y:\Vert y-x\Vert<r\}.$ We say
that a collection of random variables $\{X_{c}:c\in\mathcal{C}\}$ is $L_{2}%
-$bounded (or bounded in $L_{2}-$norm) if $\sup_{c\in\mathcal{C}}E\Vert
X_{c}\Vert^{2}<\infty$. For any function $f:\mathbb{R}^{d}\rightarrow
\mathbb{R}$, the notation $\nabla f$ and $\nabla^{2}f$ are written to donate,
respectively, the gradient and Hessian of $f.$ In instances where it is
helpful to clarify the variable with which partial derivatives are taken, we
resort to writing, for example, $\nabla_{x}f(x,y)$, $\nabla_{x}^{2}f(x,y)$, or
equivalently, $\partial f/\partial x$, $\partial^{2}f/\partial x^{2}$ to
denote that the partial derivative is taken with respect to the variable $x$.
We write $\partial_{+}f$, $\partial_{-}f$ to denote the right and left derivatives.

\section{Dual reformulation and convexity properties.}

\label{Sec-Reform-Convexity} In this section we first re-express the robust
(worst-case) objective as in (\ref{Rob-Obj-Fn}). Such reformulation, entirely
in terms of the baseline probability distribution $P_{0}$, is useful in
deriving the convexity and other structural properties to be examined in
Sections \ref{Sec-Convexity} - \ref{Sec-Comp-Statics}. In turn, the
reformulation (\ref{Rob-Obj-Fn}) is helpful in developing stochastic gradient
based iterative descent schemes described in Section \ref{Sec-SA-Algo}.

\subsection{Dual reformulation.}

\label{Sec-Dual-Reform} It follows from the definition of the optimal
transport costs $D_{c}(P_{0},P)$ (see \eqref{OT-Defn}) that the worst-case
objective in \eqref{Rob-Obj-Fn} equals
\[
\sup\left\{  \int\ell(\beta^{T}x^{\prime})d\pi(x,x^{\prime}):\ \pi
\in\mathcal{P}(\mathbb{R}^{d}\times\mathbb{R}^{d}),\ \pi(\ \cdot
\times\mathbb{R}^{d})=P_{0}(\cdot),\ \int c(x,x^{\prime})d\pi(x,x^{\prime
})\leq\delta\right\}  ,
\]
which is an infinite-dimensional linear program that maximizes $E_{\pi}%
[\ell(\beta^{T}X^{\prime})]$ over all joint distributions $\pi$ of pair
$(X,X^{\prime})\in\mathbb{R}^{d}\times\mathbb{R}^{d}$ satisfying the linear
marginal constraints that the law of $X$ is $P_{0}$
and the cost constraint that $E_{\pi}[c(X,X^{\prime})]\leq\delta$ (see
\cite[Section 2.2]{blanchet_quantifying_2016} for details). A precise description of the
state-dependent Mahalanobis transport costs $c(\cdot,\cdot)$ we consider in this paper
is given in Assumption \ref{Assump-c} below.

\begin{assumption}
\textnormal{\ \sloppy The transport cost function
$c: \mathbb{R}^d \times \mathbb{R}^d \rightarrow \mathbb{R}_+$ is
of the form \[c(x,x^\prime) = (x-x^\prime)^TA(x)(x-x^\prime),\]
where $A:\mathbb{R}^d \rightarrow \mathbb{S}%
_d^{++}$ is such that a) $c(\cdot)$ is lower-semicontinuous, and
b) there exist positive constants
$\rho_{\min}, \,\rho_{\max}$ satisfying
$\sup_{\Vert v \Vert = 1} v^TA(x)v \, \leq \, \rho_{\max}$ and
$\inf_{\Vert v \Vert = 1} v^TA(x)v \, \geq \, \rho_{\min}\,,$
	for $P_0-$almost every $x \in \mathbb{R}^d.$
\label{Assump-c}}
\end{assumption}

As mentioned in the Introduction, a transport cost function satisfying
Assumption \ref{Assump-c} is not necessarily symmetric (hence need not be a
metric). The special case of $A(x)$ being the identity matrix (for all $x$)
corresponds to the $D_{c}^{1/2}(\cdot)$ being the well-known Wasserstein distance (in this case, the constants $\rho_{\max} = \rho_{\min} = 1$).
Theorem \ref{Thm-strong-duality} below builds on a general strong duality
result applicable for this linear program when the chosen transport cost
function $c(x,x^{\prime})$ is not necessarily a metric.

\begin{theorem}
\label{Thm-strong-duality} Suppose that $\ell:\mathbb{R} \rightarrow \mathbb{%
R}$ is upper semicontinuous. Then, under Assumption \ref{Assump-c}, the
worst-case objective,
\begin{align*}
\sup_{P:D_c(P_0,P) \leq \delta} E_P \left[\ell(\beta^TX) \right] =
\inf_{\lambda\geq 0} f_\delta(\beta,\lambda),
\end{align*}
where
$f_\delta(\beta,\lambda) := E_{P_0}[\ell_{rob}
\xspace(\beta,\lambda;X)],$ $%
\ell_{rob} \xspace(\beta, \lambda;x) := \sup_{\gamma\in \mathbb{R}}
F(\gamma,\beta,\lambda;x),$ and 
\begin{align}
F(\gamma,\beta,\lambda;x):= \ell \left(\beta^Tx + \gamma\sqrt{\delta}
\beta^T A(x)^{-1}\beta \right) - \lambda \sqrt{\delta} \big(\gamma^2 \beta^T
A(x)^{-1}\beta - 1\big).  \label{Univar-Opt}
\end{align}
For any $\beta\in B$, there exists a dual optimizer
$\lambda_{\ast}(\beta)%
\geq 0$ such that
$f_\delta(\beta,\lambda_\ast(\beta)) = \inf_{\lambda\geq
0}f_\delta(\beta,\lambda)$. 
\end{theorem}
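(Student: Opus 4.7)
The proof plan splits naturally into three parts: an infinite-dimensional strong duality step, a closed-form reduction of the resulting pointwise supremum to a one-dimensional problem, and a rescaling of the dual variable that brings the duality into the stated form.

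For the duality step, I would invoke the infinite-dimensional Kantorovich-type strong duality for optimal-transport DRO alluded to just before the theorem statement, which applies here because $\ell$ is upper semicontinuous and $c$ is lower semicontinuous under Assumption~\ref{Assump-c}. Attaching a multiplier $\lambda\geq 0$ to the cost constraint $E_\pi[c(X,X')]\leq\delta$ and disintegrating $\pi(dx,dx')=P_0(dx)\pi_x(dx')$ with prescribed first marginal, the inner sup over $\pi_x$ collapses to a Dirac mass at the maximizer of $\ell(\beta^Tx')-\lambda c(x,x')$, giving
\begin{equation*}
\sup_{P:\,D_c(P_0,P)\leq\delta} E_P[\ell(\beta^TX)] \;=\; \inf_{\lambda\geq 0}\Big\{\lambda\delta \,+\, E_{P_0}\Big[\sup_{x'\in\mathbb{R}^d}\{\ell(\beta^Tx')-\lambda c(x,x')\}\Big]\Big\}.
\end{equation*}

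Next I would reduce the inner pointwise supremum to one dimension. Fix $x$, write $u=x'-x$, and slice the supremum by the scalar $s:=\beta^Tu$. Since $\ell(\beta^Tx')$ depends on $u$ only through $s$, the residual task is to minimize the elliptic quadratic form $u^TA(x)u$ subject to $\beta^Tu=s$; by Lagrange multipliers (equivalently, Cauchy--Schwarz in the $A(x)^{-1}$-inner product), the minimum is $s^2/(\beta^TA(x)^{-1}\beta)$ for $\beta\neq 0$, attained at $u=sA(x)^{-1}\beta/(\beta^TA(x)^{-1}\beta)$. The inner sup therefore reduces to $\sup_s\{\ell(\beta^Tx+s)-\lambda s^2/(\beta^TA(x)^{-1}\beta)\}$. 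I would then reparametrize $s=\gamma\sqrt\delta\,\beta^TA(x)^{-1}\beta$, turning the penalty into $\lambda\gamma^2\delta\,\beta^TA(x)^{-1}\beta$, and apply the bijection $\tilde\lambda:=\lambda\sqrt\delta$ on $[0,\infty)$: the offset $\lambda\delta$ becomes $\tilde\lambda\sqrt\delta$, which I pull inside $E_{P_0}$ and combine with the inner supremum to produce exactly $\sup_\gamma F(\gamma,\beta,\tilde\lambda;x)$. Renaming $\tilde\lambda$ back to $\lambda$ recovers the stated dual. This $\sqrt\delta$-rescaling is precisely the one flagged in the introduction as critical for the later strong-convexity results. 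The trivial case $\beta=0$ is treated separately.

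For existence of $\lambda_\ast(\beta)$, the representation of $\ell_{rob}(\beta,\lambda;x)$ as a supremum of affine functions of $\lambda$ makes $f_\delta(\beta,\cdot)$ convex and lower semicontinuous on $[0,\infty)$. Taking $\gamma=0$ inside $F$ yields the pointwise bound $f_\delta(\beta,\lambda)\geq E_{P_0}[\ell(\beta^TX)]+\lambda\sqrt\delta$, forcing $f_\delta(\beta,\lambda)\to+\infty$ as $\lambda\to\infty$, which gives coercivity. A proper, convex, lower semicontinuous, coercive function on $[0,\infty)$ attains its infimum, producing the desired $\lambda_\ast(\beta)$. The main obstacle is the strong-duality step itself: the non-symmetry of $c$, the possibly unbounded support of $P_0$, and the possibly unbounded $\ell$ all preclude a direct application of classical Kantorovich duality, which is why the argument leans on the cited general duality result. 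The remaining work is largely algebraic, but requires care in verifying measurability of the pointwise suprema before exchanging sup and expectation, and in bookkeeping the scalings so that the reparametrization lands on precisely the form of $F$ in the statement.
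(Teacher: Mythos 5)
Your proposal follows essentially the same route as the paper's proof: invoke the strong duality theorem of Blanchet--Murthy (Theorem 1 of the cited reference, which applies since $\ell$ is upper semicontinuous and $c$ is lower semicontinuous), reduce the inner $d$-dimensional supremum to one dimension via $\inf\{\Delta^T A(x)\Delta : \beta^T\Delta = s\} = s^2/(\beta^T A(x)^{-1}\beta)$, and then apply the change of variables $s = \sqrt{\delta}\gamma\,\beta^T A(x)^{-1}\beta$ together with the rescaling $\lambda\sqrt{\delta}\mapsto\lambda$. The only place you diverge is the existence of the dual optimizer $\lambda_\ast(\beta)$: the paper simply inherits attainment of the infimum as part of the cited duality theorem, whereas you re-derive it from convexity, lower semicontinuity, and coercivity of $f_\delta(\beta,\cdot)$. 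That self-contained argument is fine in spirit, but note that passing lower semicontinuity of $\lambda\mapsto\ell_{rob}(\beta,\lambda;x)$ through the expectation requires a Fatou-type step with an integrable lower bound, and at the level of Theorem~\ref{Thm-strong-duality} no moment conditions on $P_0$ or growth conditions on $\ell$ are yet assumed; the cleanest fix is to take attainment directly from the cited duality result, as the paper does.
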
The proof of Theorem \ref{Thm-strong-duality} is provided in
Section \ref{sec:proofs-dualreform-convex}.

\vspace{-0.1in}

\subsection{Convexity and smoothness properties of the dual DRO objective.}
\label{Sec-Convexity} Here we study the convexity and smoothness properties of the dual objective function $f_{\delta}(\beta,\lambda).$ 

\vspace{-0.1in}

\subsubsection{Convexity. \label{subsec_basic_convexity}}

We first identify conditions under which the function $f_{\delta}(\cdot)$ is
proper and convex. \begin{assumption}
\textnormal{\ The loss function $\ell: \mathbb{R} \rightarrow \mathbb{R}$ is
convex and it satisfies the growth condition that $\kappa := \inf\{ s \geq
0: \sup_{u \in \mathbb{R}}(\ell(u) - s u^2) < \infty\}$ is finite. In
addition, the baseline distribution $P_0$ is such that $E_{P_0}\Vert X
\Vert^4 < \infty.$ } \label{Assump-Obj-Conv}
\end{assumption}

\begin{theorem}
The function
$f_\delta:B \times \mathbb{R}_+ \rightarrow \mathbb{R} \cup \{\infty\} $ is
proper and convex when Assumptions \ref{Assump-c} and 2 hold.
\label{thm:convex}
\end{theorem}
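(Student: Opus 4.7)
My strategy is to recognize the scalar supremum $\sup_{\gamma} F$ defining $\ell_{rob}$ as the one-dimensional cross-section of a $d$-dimensional supremum whose integrand is manifestly jointly convex in $(\beta,\lambda)$, then transfer convexity through the expectation and check properness at a convenient point. The key identity I would establish first is
\[
\ell_{rob}(\beta,\lambda;x) \;=\; \lambda\sqrt{\delta}\;+\;\sup_{y\in\mathbb{R}^d}\Bigl\{\ell(\beta^T y)\;-\;\tfrac{\lambda}{\sqrt{\delta}}\,(y-x)^T A(x)(y-x)\Bigr\}.
\]
To verify this, write $y = x + u$ on the right-hand side: the integrand depends on $u$ only through $\beta^T u$ and the quadratic form $u^T A(x) u$. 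For any fixed value $\beta^T u = t$, the quadratic form is minimized over $u$ at $u = t\,A(x)^{-1}\beta/(\beta^T A(x)^{-1}\beta)$; restricting to this one-dimensional family and substituting $t = \gamma\sqrt{\delta}\,\beta^T A(x)^{-1}\beta$ reproduces exactly $F(\gamma,\beta,\lambda;x)$ from \eqref{Univar-Opt}, with the additive $\lambda\sqrt{\delta}$ outside matching the constant term inside $F$.

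\textbf{Convexity and properness.} Joint convexity is then immediate: for each fixed $x$ and $y$, the map $\ell(\beta^T y)$ is convex in $\beta$ (convex composed with linear) and the $\lambda$-term is linear in $\lambda$, so the integrand is jointly convex in $(\beta,\lambda)$. Pointwise supremum over $y$ preserves joint convexity, and adding the linear term $\lambda\sqrt{\delta}$ preserves it once more, so $(\beta,\lambda)\mapsto\ell_{rob}(\beta,\lambda;x)$ is jointly convex for $P_0$-a.e.~$x$. Convexity then passes to $f_\delta$ by integration. For properness, I would evaluate $f_\delta(0,\lambda)=\ell(0)+\lambda\sqrt{\delta}<\infty$ (at $\beta=0$ the sup in $y$ is attained at $y=x$ for any $\lambda>0$) to show that $f_\delta$ is not identically $+\infty$, and use the lower bound $\ell_{rob}(\beta,\lambda;x)\geq F(0,\beta,\lambda;x)=\ell(\beta^T x)+\lambda\sqrt{\delta}$ together with the subgradient inequality $\ell(\beta^T x)\geq \ell(0)+g\,\beta^T x$ for any $g\in\partial\ell(0)$ and the moment bound $E_{P_0}\|X\|<\infty$ (implied by the fourth-moment hypothesis) to exclude $f_\delta=-\infty$.

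\textbf{Main obstacle.} The only step with any real content is the reparametrization. A direct attempt to invoke ``supremum of convex functions is convex'' on $F$ itself fails because $F(\gamma,\beta,\lambda;x)$ is \emph{not} convex in $\beta$ for each $\gamma$: the argument $\beta^T x + \gamma\sqrt{\delta}\,\beta^T A(x)^{-1}\beta$ of $\ell$ is quadratic in $\beta$, so convexity of $\ell$ alone gives no convex composition. Passing to the $d$-dimensional variable $y$ linearizes this dependence, which is what makes the joint convexity visible. Neither the growth constant $\kappa$ nor the fourth-moment hypothesis is actually needed for joint convexity; both play a substantive role only in subsequent results where they are used to obtain finiteness on larger sets and to derive strong convexity estimates.
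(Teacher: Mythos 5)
Your convexity argument is correct and is, in substance, the paper's own: the identity you establish is exactly the representation
$\ell_{rob}(\beta,\lambda;x)=\sup_{\Delta\in\mathbb{R}^d}\{\ell(\beta^T(x+\Delta))-\lambda(\Delta^TA(x)\Delta-\delta)\}$
(up to the $\sqrt{\delta}$-rescaling of $\lambda$) that appears as \eqref{Inter-SD} in the proof of Theorem \ref{Thm-strong-duality}, and the paper's Lemma \ref{Lem-conv-lrob} derives joint convexity of $\ell_{rob}(\cdot\,;x)$ from precisely this representation. Your observation that one cannot argue directly from $F$ because $\beta\mapsto\beta^Tx+\gamma\sqrt{\delta}\beta^TA(x)^{-1}\beta$ is quadratic is exactly why the paper also routes the argument through the $d$-dimensional variable. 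So far, same proof.

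The gap is in properness. You certify that $f_\delta$ is not identically $+\infty$ by evaluating at $\beta=\mathbf{0}$, but the theorem only assumes $B\subseteq\mathbb{R}^d$ is convex; $\mathbf{0}\in B$ is nowhere assumed, and the statement must hold, e.g., for $B$ a singleton $\{\beta_0\}$ with $\beta_0\neq\mathbf{0}$. For such $\beta$, finiteness of $\ell_{rob}(\beta,\lambda;x)$ for some $\lambda$ genuinely requires the growth condition: if $\ell$ grows faster than quadratically (say $\ell(u)=e^u$), the supremum over $\gamma$ is $+\infty$ for every $\lambda$, and $f_\delta\equiv+\infty$ is not proper. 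The paper closes this by Lemma \ref{Lem-Finite-Exp}b, which uses $\kappa<\infty$ to show that for $\lambda\geq(\kappa+\varepsilon)\sqrt{\delta}\,\beta^TA(x)^{-1}\beta$ one has $\ell_{rob}(\beta,\lambda;x)\leq\lambda\sqrt{\delta}+C(1+|\beta^Tx|)^2$, and then the moment bound on $P_0$ to integrate. So your closing claim that the growth constant $\kappa$ plays no substantive role in this theorem is not right: it is exactly what makes $f_\delta$ finite somewhere when $\mathbf{0}\notin B$. Your lower-bound argument excluding $-\infty$ (via a subgradient of $\ell$ at $0$ and $E_{P_0}\Vert X\Vert<\infty$) is fine.
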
The proof of Theorem \ref{thm:convex} can be found in Section
\ref{sec:proofs-dualreform-convex}.

\vspace{-0.1in}

\subsubsection{Smoothness and strong convexity.}

\label{subsec:str-convexity-compactB} Next, we establish smoothness, strong convexity of $f_{\delta}(\cdot, \lambda)$ for fixed
$\lambda,$ and joint strong convexity of $f_{\delta}(\cdot),$ when restricted to the domain $\mathbb{V},$ under
increasingly stronger sets of assumptions. While these assumptions are helpful in  understanding smoothness and strong convexity properties, the development of iterative schemes in Appendix \ref{sec-algorithm} does not require these stronger assumptions.

\begin{assumption}
\textnormal{The loss function
$\ell: \mathbb{R} \rightarrow \mathbb{R}$ is twice differentiable
with bounded second derivatives. Specifically, we have a positive
constant $M$ such that $\ell^{\prime\prime}(\cdot) \leq M.$
Moreover, the baseline distribution $P_0$ is such that
$\ell^\prime(\beta^TX)$ is not identically 0, for any
$\beta \in B.$ }
\label{Assump-Obj-2diff}
\end{assumption}

\begin{assumption}
\textnormal{The set $B \subseteq \mathbb{R}^d$ is convex and compact.
Specifically, $\sup_{\beta \in B} \Vert \beta \Vert =: R_\beta < \infty.$} %
\label{Assump-Compact}
\end{assumption}

Recall from Theorem \ref{Thm-strong-duality} that $\text{arg} \min
_{\lambda\geq0} f_{\delta}(\beta,\lambda)$ is not empty for every $\beta\in
B.$

\begin{proposition}
Suppose that Assumptions \ref{Assump-c} - \ref{Assump-Compact}
hold. Then for any $\beta \in B$ and dual optimizer
$\lambda_\ast(\beta) \in \arg\min_{\lambda \geq 0}f_\delta(\beta,\lambda),$
we have $(\beta,\lambda_\ast(\beta)) \in \mathbb{V},$ where
\begin{align}
\mathbb{V} := \{(\beta,\lambda) \in B \times \mathbb{R}_+: \ K_1\Vert \beta \Vert
\leq \lambda \leq K_2\Vert \beta \Vert\},
\label{Defn-V}
\end{align}
for some positive constants $K_1,K_2$ which can be explicitly
determined in terms of parameters
$\delta,M,R_\beta,\rho_{\max},\rho_{\min}.$
\label{Prop-V-cont-OPT}
\end{proposition}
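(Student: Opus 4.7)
The plan is to exploit the first-order optimality conditions both for the inner sup over $\gamma$ and for the outer inf over $\lambda$, so as to obtain an exact formula for $\lambda_\ast(\beta)^2$ that can be bounded above and below by multiples of $\|\beta\|^2$. The case $\beta = 0$ (when $0 \in B$) is immediate, since $F(\gamma, 0, \lambda; x) = \ell(0) + \lambda\sqrt\delta$ has no $\gamma$-dependence and is linear increasing in $\lambda$, giving $\lambda_\ast(0) = 0 \in [K_1\cdot 0, K_2\cdot 0]$. For $\beta \neq 0$ I would first argue $\lambda_\ast(\beta) > 0$: the quadratic growth condition in Assumption~\ref{Assump-Obj-Conv} forces $\ell_{rob}(\beta, \lambda; x) = +\infty$ whenever $\lambda < \kappa\sqrt\delta\,\beta^T A(x)^{-1}\beta$ on a set of positive $P_0$-measure, so $f_\delta(\beta, \lambda) = +\infty$ for all sufficiently small $\lambda > 0$. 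At the resulting interior minimizer the envelope theorem applied to $\ell_{rob}$ (well-defined since $\ell$ is $C^1$ under Assumption~\ref{Assump-Obj-2diff}) produces the outer first-order condition $E_{P_0}[(\gamma^\ast)^2 \beta^T A(X)^{-1}\beta] = 1$, while the inner first-order condition in $\gamma$ yields $\ell'(u^\ast) = 2\lambda_\ast\gamma^\ast$ with $u^\ast := \beta^T X + \gamma^\ast\sqrt\delta\,\beta^T A(X)^{-1}\beta$. Substituting one into the other produces the identity
\[
4\lambda_\ast^2 \,=\, E_{P_0}\big[(\ell'(u^\ast))^2\,\beta^T A(X)^{-1}\beta\big],
\]
on which both bounds rest.

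For the upper bound $\lambda_\ast \leq K_2\|\beta\|$, I would use $\beta^T A(X)^{-1}\beta \leq \|\beta\|^2/\rho_{\min}$, the global Lipschitz estimate $|\ell'(u)| \leq |\ell'(0)| + M|u|$ (from $\ell'' \leq M$), and the bound $E[(u^\ast)^2] \leq 2\|\beta\|^2 E_{P_0}\|X\|^2 + 2\delta\|\beta\|^2/\rho_{\min}$, whose second term uses the outer constraint $E[(\gamma^\ast)^2 \beta^T A(X)^{-1}\beta] = 1$ together with $\beta^T A(X)^{-1}\beta \leq \|\beta\|^2/\rho_{\min}$. Absorbing an extra factor of $\|\beta\|$ via $\|\beta\| \leq R_\beta$ (Assumption~\ref{Assump-Compact}) then produces an explicit $K_2$ in $\delta, M, R_\beta, \rho_{\min}$ and the finite second moment of $P_0$ (Assumption~\ref{Assump-Obj-Conv}).

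The lower bound $\lambda_\ast \geq K_1\|\beta\|$ is where the main obstacle lies, because I must bound $E[(\ell'(u^\ast))^2 \beta^T A(X)^{-1}\beta]$ from below even though $u^\ast$ depends implicitly on the optimizer. My plan is to transfer the non-degeneracy of $\ell'(\beta^T X)$ from Assumption~\ref{Assump-Obj-2diff} to $\ell'(u^\ast)$ via the inequality $(\ell'(u^\ast))^2 \geq \tfrac{1}{2}(\ell'(\beta^T X))^2 - M^2(u^\ast - \beta^T X)^2$, an immediate consequence of $a^2 \leq 2(a+b)^2 + 2b^2$ (with $a = \ell'(\beta^T X)$) and the Lipschitz property of $\ell'$. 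After multiplying by $\beta^T A(X)^{-1}\beta$ and taking expectations, the error term $M^2\delta E[(\gamma^\ast)^2 (\beta^T A(X)^{-1}\beta)^3]$ is bounded by $M^2\delta\|\beta\|^4/\rho_{\min}^2$ using the outer constraint. It then remains to show $E[(\ell'(\beta^T X))^2 \beta^T A(X)^{-1}\beta] \geq c_0\|\beta\|^2$ uniformly in $\beta \in B$: for $\|\beta\|$ small, Assumption~\ref{Assump-Obj-2diff} applied at $\beta = 0$ forces $\ell'(0) \neq 0$ and a Taylor/continuity argument produces the bound, while for $\|\beta\|$ bounded away from $0$, continuity of the integrand in $\beta$, $L^2$-integrability guaranteed by Assumption~\ref{Assump-Obj-Conv}, and compactness of $B$ yield a positive infimum that divides cleanly by $\|\beta\|^2 \leq R_\beta^2$. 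Combining the principal term with the $\|\beta\|^4$ correction delivers $K_1 > 0$, and the dependence on $\delta$ enters precisely to ensure that the correction is dominated by the principal term on all of $B$.
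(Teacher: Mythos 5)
Your overall architecture (inner first-order condition $\ell'(u^\ast)=2\lambda_\ast\gamma^\ast$ combined with the outer stationarity condition in $\lambda$) matches the paper's, but your lower bound contains a genuine gap. To transfer non-degeneracy from $\ell'(\beta^TX)$ to $\ell'(u^\ast)$ you use the Lipschitz estimate $(\ell'(u^\ast))^2 \geq \tfrac12(\ell'(\beta^TX))^2 - M^2(u^\ast-\beta^TX)^2$, which after taking expectations leaves the error term $M^2\delta\Vert\beta\Vert^4/\rho_{\min}^2$. Your resulting bound is of the form $4\lambda_\ast^2 \geq \big(\tfrac12\rho_{\max}^{-1}\underline{L} - M^2\delta R_\beta^2\rho_{\min}^{-2}\big)\Vert\beta\Vert^2$, whose leading constant is positive only for $\delta$ small; you concede as much at the end. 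But Proposition \ref{Prop-V-cont-OPT} is stated for every $\delta>0$ (the small-$\delta$ restrictions only enter later, in Theorems \ref{result-smoothness}--\ref{Thm-Str-Convexity}), and the paper's $K_1 = \tfrac12\sqrt{\underline{L}\rho_{\max}^{-1}}$ is $\delta$-free. The paper avoids the error term entirely via Lemma \ref{Lemma-Gamma}: convexity of $\ell$ (monotonicity of $\ell'$) gives the pointwise, $\delta$-uniform bound $\vert\gamma\vert \geq \vert\ell'(\beta^Tx)\vert/(2\lambda)$ for every maximizer $\gamma$, which plugged into the stationarity condition yields $4\lambda_\ast^2 \geq E_{P_0}[\ell'(\beta^TX)^2\beta^TA(X)^{-1}\beta]$ with no correction. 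Your Lipschitz route discards the convexity information that makes the clean bound possible.

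A secondary issue: the identity $4\lambda_\ast^2 = E_{P_0}[(\ell'(u^\ast))^2\beta^TA(X)^{-1}\beta]$ presumes that the outer condition holds with equality, i.e.\ that $f_\delta(\beta,\cdot)$ is differentiable at $\lambda_\ast(\beta)$ and the minimizer is interior to the effective domain. Neither is guaranteed under Assumptions \ref{Assump-c}--\ref{Assump-Compact}: $\Gamma^\ast(\beta,\lambda_\ast(\beta);x)$ need not be a singleton, and $\lambda_\ast(\beta)=\lambda_{thr}(\beta)$ is possible (cf.\ Theorem \ref{Thm-WC-Dist}d). The paper (Lemma \ref{Lem-threshold}) therefore works only with the one-sided condition $\partial_+f_\delta/\partial\lambda \geq 0$, which yields $E_{P_0}[\beta^TA(X)^{-1}\beta\min_{\gamma\in\Gamma^\ast}\gamma^2]\leq 1$ --- enough for the lower bound. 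Your upper bound, however, needs the reverse inequality with the $\sup$-norm selection, and mixing the two selections inside one identity is not justified here. The paper's upper bound instead uses a value comparison: $\lambda_\ast(\beta)\sqrt{\delta}+E_{P_0}[\ell(\beta^TX)] \leq f_\delta(\beta,\lambda)$ for all $\lambda$, together with the Taylor bound $\ell_{rob}-\ell \leq \sqrt{\delta}\beta^TA^{-1}\beta\,[\ell'(\beta^TX)]^2/(4\lambda-2M\sqrt{\delta}\beta^TA^{-1}\beta)^+$ and an explicit one-dimensional minimization, which requires no optimality conditions at $\lambda_\ast$ at all. I would adopt Lemma \ref{Lemma-Gamma} for the lower bound and the value-comparison argument for the upper bound.
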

To avoid clutter, we provide explicit characterizations for the constants $K_{1},K_{2}$ in the proof of Proposition \ref{Prop-V-cont-OPT} (see Section \ref{Sec-Prep-results}) and as well in Table \ref{tab:constants-derived} (see Appendix \ref{Sec-table}).

\begin{theorem}
Suppose that Assumptions \ref{Assump-c} - \ref{Assump-Compact} are
satisfied. Then there exist positive constants
$\delta_0,\kappa_{0}$ such that the following hold: Whenever
$\delta < \delta_0,$ the function
$f_\delta:B \times \mathbb{R}_+ \rightarrow \mathbb{R} \cup
\{\infty\}$ satisfies the following properties:
\begin{itemize}
\item[a)] $f_\delta(\cdot)$ is twice differentiable throughout the domain
$\mathbb{V}$ with a uniformly bounded Hessian;
\item[b)] the second derivative of $f_\delta(\cdot)$ satisfies
\[\frac{\partial^2 f_\delta}{\partial \beta^2}(\beta,\lambda) \succeq \sqrt{\delta} \kappa_{0}
\lambda^{-1} \mathbb{I}_d, \qquad \text{ for } (\beta,\lambda) \in
\mathbb{V}.\]
\end{itemize}
\label{result-smoothness}
\end{theorem}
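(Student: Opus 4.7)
The plan is to fix $(\beta,\lambda)\in\mathbb{V}$ with $\beta\neq 0$ and use a second-derivative envelope bound at the inner optimizer $\gamma_\ast$ of the sup defining $\ell_{rob}(\beta,\lambda;x)$. Since the sup is attained at an interior maximum of a smooth, strictly concave (in $\gamma$) function, the standard identity for $\partial_\beta^2$ of a maximum gives $\partial_\beta^2\ell_{rob}(\beta,\lambda;x)\succeq \partial_\beta^2 F(\gamma_\ast,\beta,\lambda;x)$, the extra envelope correction being PSD. Computing $\partial_\beta^2 F|_{\gamma_\ast}$ explicitly will reveal a coercive term of order $\sqrt{\delta}/\lambda$ multiplying $A(x)^{-1}$, which after integration against $P_0$ and use of $A(x)^{-1}\succeq\rho_{\max}^{-1}\mathbb{I}_d$ will yield the stated lower bound.

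With $\alpha(x):=\beta^T A(x)^{-1}\beta$ and $v_\ast:=\beta^T x+\gamma_\ast\sqrt{\delta}\alpha(x)$, the first-order condition in $\gamma$ is $\ell'(v_\ast)=2\lambda\gamma_\ast$, and the implicit-function denominator $2\lambda-\sqrt{\delta}\alpha(x)\ell''(v_\ast)$ is bounded below by $2K_1\|\beta\|-M\sqrt{\delta}\|\beta\|^2/\rho_{\min}$ by Proposition \ref{Prop-V-cont-OPT} and Assumptions \ref{Assump-c}, \ref{Assump-Obj-2diff}; this is strictly positive for $\delta$ small (fixing one instance of $\delta_0$), and gives smoothness of $\gamma_\ast,v_\ast$ in $(\beta,\lambda)$ together with the uniform estimate $v_\ast-\beta^T x=O(\sqrt{\delta}\|\beta\|(1+\|x\|))$ on $\mathbb{V}$. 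Differentiating $F$ in $\beta$ with $\gamma$ held fixed at $\gamma_\ast$ and substituting the FOC yields
\[\partial_\beta^2 F(\gamma_\ast,\beta,\lambda;x) = \ell''(v_\ast)\,w_\ast w_\ast^T + \frac{\sqrt{\delta}\,\ell'(v_\ast)^2}{2\lambda}\,A(x)^{-1},\qquad w_\ast:=x+2\gamma_\ast\sqrt{\delta}A(x)^{-1}\beta.\]
Since $\ell''\geq 0$ by convexity, the first summand is PSD and may be dropped, whence $\partial_\beta^2\ell_{rob}(\beta,\lambda;x)\succeq \bigl(\sqrt{\delta}\ell'(v_\ast)^2/(2\lambda)\bigr)A(x)^{-1}$.

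Taking expectation under $P_0$ (the swap of $\partial^2$ and the integral is justified by dominated convergence using $\ell''\leq M$ and $E\|X\|^4<\infty$), and using $A(x)^{-1}\succeq \rho_{\max}^{-1}\mathbb{I}_d$,
\[\partial_\beta^2 f_\delta(\beta,\lambda) \succeq \frac{\sqrt{\delta}}{2\lambda\,\rho_{\max}}\,E_{P_0}\bigl[\ell'(v_\ast)^2\bigr]\,\mathbb{I}_d.\]
By Assumption \ref{Assump-Obj-2diff}, $E_{P_0}[\ell'(\beta^T X)^2]>0$ for every $\beta\in B$; dominated convergence (with $\ell''\leq M$ and $E\|X\|^2<\infty$) makes $\beta\mapsto E_{P_0}[\ell'(\beta^T X)^2]$ continuous, so compactness of $B$ yields $c_0:=\inf_{\beta\in B}E_{P_0}[\ell'(\beta^T X)^2]>0$. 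Because $\ell'$ is $M$-Lipschitz and $v_\ast=\beta^T X+O(\sqrt{\delta}\|\beta\|(1+\|X\|))$, an $L^2(P_0)$ continuity estimate (using $E\|X\|^4<\infty$) gives $|E[\ell'(v_\ast)^2]-E[\ell'(\beta^T X)^2]|=O(\sqrt{\delta})$ uniformly in $(\beta,\lambda)\in\mathbb{V}$. Shrinking $\delta_0$ further thus ensures $E[\ell'(v_\ast)^2]\geq c_0/2$, and the stated lower bound follows with, for instance, $\kappa_0=c_0/(4\rho_{\max})$. The uniformly bounded Hessian in part (a) is read off from the same explicit formulas (now retaining the PSD envelope correction), using $\ell''\leq M$, $\lambda\geq K_1\|\beta\|$, and $E\|X\|^4<\infty$ to control each term.

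The hard part will be the uniform-on-$\mathbb{V}$ control of both the approximation $v_\ast\approx\beta^T X$ in $L^2(P_0)$ and of the envelope correction term needed for part (a); both depend on $\sqrt{\delta}\alpha(X)/\lambda$ being uniformly small on $\mathbb{V}$, which is possible only because Proposition \ref{Prop-V-cont-OPT} pins $\lambda$ to the scale of $\|\beta\|$ (keeping $\|\beta\|^2/\lambda$ bounded), and the fourth-moment assumption on $P_0$ is what supplies the integrable envelopes for the Hessian integrands.
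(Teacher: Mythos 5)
Your proposal is correct and follows the paper's proof in all essentials: both compute $\nabla^2_\beta \ell_{rob}$ at the inner optimizer, observe that the envelope (implicit-function) correction and the $\ell''\cdot(\text{rank-one})$ term are positive semidefinite and can be discarded, and retain the coercive term $2\sqrt{\delta}\lambda\gamma_\ast^2 A(x)^{-1}=\tfrac{\sqrt{\delta}\,\ell'(v_\ast)^2}{2\lambda}A(x)^{-1}$, which is then integrated and bounded below via $A(x)^{-1}\succeq\rho_{\max}^{-1}\mathbb{I}_d$ together with a uniform positive lower bound on a second moment of $\ell'$ (your $c_0$ is exactly $\underline{L}$ from Lemma \ref{Lem-Lbars}); part (a) is handled identically through $L_2$-bounds on the Hessian entries.

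The one step where you genuinely diverge is the lower bound on $E_{P_0}[\ell'(v_\ast)^2]$. The paper invokes Lemma \ref{Lemma-Gamma}, whose pointwise inequality $|\gamma_\ast|\geq|\ell'(\beta^Tx)|/(2\lambda)$ --- proved by a monotonicity and comparison argument exploiting convexity of $\ell$ --- gives $\ell'(v_\ast)^2=(2\lambda\gamma_\ast)^2\geq\ell'(\beta^Tx)^2$ pointwise with no loss, hence $E[\ell'(v_\ast)^2]\geq\underline{L}$ for all $\delta<\delta_0$ and $\kappa_0=\underline{L}\rho_{\max}^{-1}/2$. You instead run an $L^2(P_0)$ perturbation argument, $|E[\ell'(v_\ast)^2]-E[\ell'(\beta^TX)^2]|=O(\sqrt{\delta})$ uniformly on $\mathbb{V}$, and shrink $\delta_0$ to keep $E[\ell'(v_\ast)^2]\geq c_0/2$. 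This is valid: your estimates $|v_\ast-\beta^TX|=O(\sqrt{\delta}(1+\|X\|))$ (from the implicit-function denominator bound, which is uniform once $\delta$ is small enough that $\varphi_{\min}>0$) and $|\ell'|=O(1+\|X\|)$ make the error term integrable under $E\|X\|^2<\infty$ and uniform over $\mathbb{V}$. It buys you a way around the sign/case analysis in Lemma \ref{Lemma-Gamma}, at the price of a further restriction on $\delta_0$ and a factor of $2$ in $\kappa_0$, neither of which matters for the statement. For part (a) your sketch matches the paper's route (Proposition \ref{prop:secondorder-info-smoothcase}c); to make it airtight you would still need to write out and bound the $\partial^2/\partial\lambda^2$ and mixed entries and justify the derivative--expectation exchange, which the paper does via $L_2$-boundedness of $\{g^2,\bar T_g(X)^2,\ell'(\beta^TT_g(X))^2\}$ over $\mathbb{V}$.
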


Theorem \ref{result-smoothness} identifies conditions under which the dual DRO objective $f_{\delta}(\cdot)$ has Lipschitz continuous gradients (smoothness) and also points towards strong
convexity in terms of the parameter $\beta$ (for any fixed $\lambda$), Similar
to Proposition \ref{Prop-V-cont-OPT}, we provide explicit characterizations
for the constants $\delta_{0},\kappa_{0}$ in the proof of Theorem
\ref{result-smoothness} in Section \ref{subsec:proofs-thms-smooth-str-convex} (see also Appendix \ref{Sec-table} for tables summarizing useful constants). 
We next focus on characterizing strong convexity jointly in the parameters
$(\beta,\lambda).$

\begin{assumption}
The loss function $\ell: \mathbb{R} \rightarrow \mathbb{R}$ is locally strongly convex.  In addition, for every
$\beta \in B,$ the baseline distribution $P_0$ is such that there
exist $c_1,c_2 \in (0,\infty),$ $p \in (0,1)$ satisfying
$ P_0 \left( \vert \ell^\prime(\beta^TX) \vert > c_1, \ \vert \beta^T
X \vert > c_2\Vert \beta \Vert \right) \geq p.$
\label{Assump-nondegeneracy}
\end{assumption}

\begin{theorem}
Suppose that Assumptions \ref{Assump-c} - \ref{Assump-nondegeneracy}
hold.  Then there exist constants $\delta_1 \in (0,\delta_0)$ and
$\kappa_{1} \in (0,\infty)$ such that whenever $\delta < \delta_1,$
the Hessian of the function
$f_\delta:B \times \mathbb{R}_+ \rightarrow \mathbb{R} \cup
\{\infty\}$ satisfies,
\[\nabla^2 f_\delta(\theta) \succeq \sqrt{\delta}\kappa_{1} \mathbb{I}_{d+1},\]
for $\theta \in \mathbb{V}.$
\label{Thm-Str-Convexity}
\end{theorem}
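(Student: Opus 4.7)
The plan is to establish the joint Hessian bound by combining the strong convexity in the $\beta$-block already supplied by Theorem~\ref{result-smoothness} with a matching lower bound on the $\lambda$-block obtained from Assumption~\ref{Assump-nondegeneracy}, closing the gap via a Young/Schur-complement argument on the cross term. All computations rest on the envelope maximizer $\gamma^\ast = \gamma^\ast(\beta,\lambda,x)$, characterized by the first-order condition $\ell'(u^\ast) = 2\lambda\gamma^\ast$ with $u^\ast := \beta^T x + \gamma^\ast \sqrt{\delta}\,\beta^T A(x)^{-1}\beta$.

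The envelope theorem gives $\partial_\lambda \ell_{rob}(\beta,\lambda;x) = \sqrt{\delta}\bigl(1 - (\gamma^\ast)^2 \beta^T A(x)^{-1}\beta\bigr)$, and implicit differentiation of the first-order condition yields $\partial \gamma^\ast/\partial \lambda = 2\gamma^\ast / \bigl(\ell''(u^\ast)\sqrt{\delta}\,\beta^T A^{-1}\beta - 2\lambda\bigr)$. Combining,
\[
\frac{\partial^2 \ell_{rob}}{\partial \lambda^2} \;=\; \frac{4\sqrt{\delta}\,(\gamma^\ast)^2\,\beta^T A(x)^{-1}\beta}{2\lambda - \ell''(u^\ast)\sqrt{\delta}\,\beta^T A(x)^{-1}\beta}.
\]
On $\mathbb{V}$ one has $\lambda \geq K_1\|\beta\| > 0$ and $0 \leq \ell'' \leq M$, so for small enough $\delta$ the denominator lies in $[\lambda, 2\lambda]$, giving $\partial^2_\lambda \ell_{rob} \geq 2\sqrt{\delta}(\gamma^\ast)^2 \beta^T A^{-1}\beta / \lambda$. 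Substituting $\gamma^\ast = \ell'(u^\ast)/(2\lambda)$ and using the Lipschitz estimate $|\ell'(u^\ast) - \ell'(\beta^T X)| \leq M|\gamma^\ast|\sqrt{\delta}\,\beta^T A^{-1}\beta = O(\sqrt{\delta})$ (with $\gamma^\ast$ uniformly bounded on $\mathbb{V}$ via Proposition~\ref{Prop-V-cont-OPT}), Assumption~\ref{Assump-nondegeneracy} delivers $(\ell'(u^\ast))^2 \geq c_1^2/4$ on an event of probability at least $p$ for all sufficiently small $\delta$. Together with $\beta^T A^{-1}\beta \geq \|\beta\|^2/\rho_{\max}$ and $\lambda \leq K_2\|\beta\| \leq K_2 R_\beta$,
\[
\frac{\partial^2 f_\delta}{\partial \lambda^2}(\beta,\lambda) \;\geq\; \sqrt{\delta}\,\frac{c_1^2\, p\, \|\beta\|^2}{8\,\lambda^3\,\rho_{\max}} \;\geq\; \sqrt{\delta}\,\frac{c_1^2\, p}{8\,K_2^3\,R_\beta\,\rho_{\max}} \;=:\; \sqrt{\delta}\,\tilde\kappa.
\]
A parallel calculation for the cross term, using the uniform boundedness of $\gamma^\ast$, $\ell'(u^\ast)$, $\|A^{-1}\|$ and $E_{P_0}\|X\|^4 < \infty$, produces $\|\partial_{\beta\lambda} f_\delta\| \leq C_1\sqrt{\delta}$ for an explicit constant $C_1$.

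To assemble, write $v = (u,t) \in \mathbb{R}^{d+1}$ and decompose $v^T \nabla^2 f_\delta\, v = u^T \partial^2_\beta f_\delta\, u + 2t\langle u, \partial_{\beta\lambda} f_\delta\rangle + t^2 \partial^2_\lambda f_\delta$. Theorem~\ref{result-smoothness} together with $\lambda \leq K_2 R_\beta$ bounds the first term below by $\sqrt{\delta}\kappa_0 (K_2 R_\beta)^{-1}\|u\|^2$. Applying Young's inequality $2|t|\|b\|\|u\| \leq \varepsilon\|u\|^2 + \|b\|^2 t^2/\varepsilon$ with $\varepsilon = \sqrt{\delta}\kappa_0/(2K_2 R_\beta)$ then gives
\[
v^T \nabla^2 f_\delta\, v \;\geq\; \sqrt{\delta}\,\min\!\Bigl\{\tfrac{\kappa_0}{2K_2 R_\beta},\ \tilde\kappa - \tfrac{2K_2 R_\beta\, C_1^2}{\kappa_0}\Bigr\}\|v\|^2.
\]
Shrinking $\delta_1 \leq \delta_0$ so that the envelope approximations are valid (and, if necessary, so the second entry of the minimum is strictly positive) produces the desired $\kappa_1 > 0$.

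The principal difficulty is the second step: converting Assumption~\ref{Assump-nondegeneracy}, phrased in terms of $\ell'(\beta^T X)$, into a $\sqrt{\delta}$-order lower bound on $\partial^2_\lambda f_\delta$, which involves the perturbed quantity $\ell'(u^\ast(X))$. Making this comparison uniform over $\mathbb{V}$—not degrading as $\|\beta\| \to 0$—is possible thanks to the sandwich $K_1\|\beta\| \leq \lambda \leq K_2\|\beta\|$ built into $\mathbb{V}$: this scaling simultaneously keeps $\gamma^\ast$ uniformly bounded (so that the displacement $u^\ast - \beta^T X = O(\sqrt{\delta})$ is controlled) and arranges the dimensional factor $\|\beta\|^2/\lambda^3 \geq 1/(K_2^3\|\beta\|)$ to combine with $\|\beta\|\leq R_\beta$ into a clean constant, independent of where one sits in $\mathbb{V}$.
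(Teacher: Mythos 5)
Your computations of the two diagonal blocks are correct (your formula $\partial^2_\lambda \ell_{rob} = 4\sqrt{\delta}(\gamma^\ast)^2\beta^TA^{-1}\beta/\varphi$ agrees with Proposition \ref{prop:hessian-lrob}), but the assembly step is a genuine gap that cannot be repaired by shrinking $\delta$. After Young's inequality your coefficient on $t^2$ is $\sqrt{\delta}\bigl(\tilde\kappa - 2K_2R_\beta C_1^2/\kappa_0\bigr)$, and since the diagonal lower bounds and the cross-term upper bound \emph{all} scale as $\sqrt{\delta}$, the deficit $\tilde\kappa - 2K_2R_\beta C_1^2/\kappa_0$ is a fixed, $\delta$-independent constant: taking $\delta_1$ smaller does nothing to make it positive. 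And it genuinely can be negative. Note that your argument uses only the $|\ell^\prime(\beta^TX)|>c_1$ half of Assumption \ref{Assump-nondegeneracy} and never invokes $|\beta^TX|>c_2\Vert\beta\Vert$; if the Young-inequality absorption could be closed, the theorem would hold with $c_2=0$, which Remark \ref{Rem-nonzero-bX-need} shows is false --- when $g\beta^TX=0$ almost surely the smallest eigenvalue of the Hessian degrades to order $\delta^{3/2}$ even though both diagonal blocks remain bounded below by $\sqrt{\delta}$ times a constant and the off-diagonal block remains $O(\sqrt{\delta})$. This is the standard failure mode of bounding a matrix by its entries: a PSD matrix with diagonal entries $\geq \sqrt{\delta}m$ and off-diagonal entries $\leq\sqrt{\delta}M$ in magnitude can still have smallest eigenvalue $o(\sqrt{\delta})$.

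The information your decomposition discards is precisely the correlation structure between the $\beta$- and $\lambda$-directions of the Hessian of $\ell_{rob}$, which is close to rank-deficient along the direction coupling $A(x)^{-1}\beta$ with the scalar block. The paper's proof keeps this structure by establishing the exact matrix inequality $\nabla^2_\theta\ell_{rob}(\theta;x) \succeq \Lambda(\theta;x)B(x)$ via a Schur-complement/Sherman--Morrison computation (Proposition \ref{prop:hessian-lrob}), where the scalar $\Lambda$ carries the factor $(\beta^TT_g(x))^2$ in its numerator; the condition $|\beta^TX|>c_2\Vert\beta\Vert$ is exactly what keeps $(\beta^TT_g(X))^2\gtrsim\Vert\beta\Vert^2$ on an event of probability $p/2$, so that $E[\Lambda(\cdot;X)B(X)]\succeq\sqrt{\delta}\kappa_1\mathbb{I}_{d+1}$. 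To fix your proof you would need to replace the blockwise Young-inequality step by a bound on the full quadratic form that exploits this cancellation, which essentially forces you back to the Schur-complement route.
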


The proof of Theorem \ref{Thm-Str-Convexity}, along with an explicit
characterization of the constant $\delta_1$, is presented in Section
\ref{subsec:proofs-thms-smooth-str-convex}. Theorem \ref{Thm-Str-Convexity}
above identifies conditions under which $f_{\delta}(\cdot)$ is strongly convex
(jointly over $(\beta,\lambda)$) when restricted to the set $\mathbb{V}$.
Indeed, because of Proposition \ref{Prop-V-cont-OPT}, it is sufficient to
restrict attention to $\mathbb{V}$ to arrive at local strong convexity around
$\arg\min_{\beta,\lambda}f_{\delta}(\beta,\lambda).$ To the best of our
knowledge, Theorem \ref{Thm-Str-Convexity} is the first result that
presents strong convexity of the objective in Wasserstein distance based
DRO in a suitable sense. As is well-known, strong convexity is a property that
determines the iteration complexity of gradient based descent methods. We
utilize this in Section \ref{Sec-SA-Algo} to derive convergence properties of
the proposed iterative schemes.

\begin{remark}
\label{remark-strongly-convex}
It is instructive to recall that $\ell(\cdot)$ being strongly convex does not
mean $E_{P_{0}}[\ell(\beta^{T}X)]$ is necessarily strongly convex. For
example, consider the underdetermined case of least-squares linear regression
where $\ell(u) = (y-u)^{2}$ and the number of samples $n < d.$ If we take
$P_{0}$ to be the empirical distribution corresponding to the $n$ data samples
$(X_{i},Y_{i}),$ the stochastic optimization objective to be minimized,
$E_{P_{0}}[(Y - \beta^{T}X)^{2}] = n^{-1}\sum_{i=1}^{n}(Y_{i} - \beta^{T}%
X_{i})^{2}$ is not strongly convex. Theorem \ref{Thm-Str-Convexity} asserts
that the respective dual DRO objective $f_{\delta}(\beta,\lambda)$ is,
nevertheless, strongly convex in a region containing the minimizer (refer an
example in Section \ref{sec-linear-regression} for a discussion on how a DRO
formulation of the least squares linear regression problem results in the dual
objective of the form $f_{\delta}(\beta,\lambda)$). Thus, due to Theorem
\ref{Thm-Str-Convexity}, for a considerable class of useful loss functions
$\ell(\cdot),$ the DRO dual objective to be minimized, $f_{\delta}%
(\beta,\lambda),$ is strongly convex in a suitable sense, even if the
non-robust counterpart $E_{P_{0}}[\ell(\beta^{T}X)]$ is not.
\end{remark}


\vspace{-0.1in}

\subsubsection*{Comments on Assumptions \ref{Assump-c}
-~\ref{Assump-nondegeneracy}.}

Assumptions \ref{Assump-c} - \ref{Assump-Obj-Conv} above ensure that the DRO
objective \eqref{Rob-Obj-Fn} is convex, proper and that the strong duality
utilized in Theorem \ref{Thm-strong-duality} is indeed applicable. These
non-restrictive assumptions serve the purpose of clearly stating the framework
considered. Indeed, Assumptions \ref{Assump-c} - \ref{Assump-Obj-Conv} are
satisfied by a wide variety of loss functions $\ell(\cdot)$ and a flexible
class of state-dependent Mahalanobis cost functions $c(\cdot)$ which include commonly
used Euclidean metric, Mahalanobis distances as special cases.
As we shall see in the proof of Theorem \ref{Thm-Str-Convexity}, the twice
differentiability imposed in Assumption \ref{Assump-Obj-2diff} is necessary to
characterize the local strong convexity of $f_{\delta}$ by means of the
positive definiteness of Hessian of $f_{\delta}$. The assumption of
boundedness of the set $B,$ though not necessary for strong convexity (see
following Section \ref{subsec:noncompact-B}), is essential for guaranteeing
differentiability of $f_{\delta}(\cdot).$ Moving to Assumption
\ref{Assump-nondegeneracy},
the positive probability requirement in Assumption \ref{Assump-nondegeneracy}
rules out the degeneracy that $P_{0}$ is not concentrated entirely in the
regions where either $\vert\ell^{\prime}(\beta^{T}x) \vert$ or $\vert\beta
^{T}x \vert$ is small.
See Remark \ref{Rem-nonzero-bX-need} (following the proof of Theorem
\ref{Thm-Str-Convexity} in Section \ref{subsec:proofs-thms-smooth-str-convex})
for an explanation of why the positivity of $c_{1}, c_{2}$ is necessary to
identify the coefficient $\kappa_{1}$ which is independent of the ambiguity
radius $\delta.$ We would like to reiterate that the development of iterative schemes in Section \ref{sec-algorithm} does not require Assumptions \ref{Assump-Obj-2diff} - \ref{Assump-nondegeneracy}.


\vspace{-0.1in}

\subsubsection{Strong convexity property for non-compact $B.$}

\label{subsec:noncompact-B}
As we shall see in Theorem \ref{result-noncompact} below, compactness of the
set $B$ (as in Assumption \ref{Assump-Compact}) is not crucial for strong
convexity of the DRO objective around the minimizer. Assumption
\ref{Assump-Compact} is merely a simplifying assumption which allows to
study additional structural properties such as differentiability, smoothness
(see Theorem \ref{result-smoothness}) and comparative statics (see Section
\ref{Sec-Comp-Statics}). A proof of Theorem \ref{result-noncompact} is
presented in Appendix \ref{Sec-App-Proofs}.


\begin{theorem}
Suppose that Assumptions \ref{Assump-c} - \ref{Assump-Obj-2diff} and
Assumption \ref{Assump-nondegeneracy} are satisfied.  In addition,
suppose we have positive constants $k_1,k_2$ such that
$\vert u \vert\ell^{\prime\prime}(u) \leq k_1 + k_2 \vert
\ell^\prime(u) \vert,$ for $u \in \mathbb{R}.$ Then there exists
$\delta_2 > 0$ such that for every $\delta < \delta_2,$ the
following property holds: For any $\beta \in B,$ we have
positive constants $\kappa, r$ such that
\begin{align*}
f_\delta(\alpha  \theta_1 + (1-\alpha)\theta_2)  \leq
\alpha f_\delta(\theta_1) + (1-\alpha) f_\delta(\theta_2) - \frac{1}{2}
\kappa \alpha(1-\alpha) \Vert\theta_1  - \theta_2 \Vert^2,
\end{align*}
for every
$\theta_1,\theta_2 \in  \mathcal{N}_{r}((\beta,\lambda_{\ast}(\beta))).$
\label{result-noncompact}
\end{theorem}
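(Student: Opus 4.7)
The plan is to localize the argument so that, within a sufficiently small ball around $(\beta,\lambda_\ast(\beta))$, the situation essentially reduces to the compact-$B$ case already treated in Theorem \ref{Thm-Str-Convexity}. Fix $\beta_0 \in B$ and set $\lambda_0 := \lambda_\ast(\beta_0)$, which exists by Theorem \ref{Thm-strong-duality}. Choose a radius $r>0$ and define the bounded slice $\tilde B := B \cap \mathcal{N}_r(\beta_0)$. Since $\sup_{\beta\in\tilde B}\|\beta\|\leq \|\beta_0\|+r=:\tilde R_\beta$, the full Assumption \ref{Assump-Compact} holds on $\tilde B$, and we can attempt to rerun the strong-convexity analysis of Section \ref{subsec:str-convexity-compactB} with $B$ replaced by $\tilde B$.

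First, I would verify that the containment provided by Proposition \ref{Prop-V-cont-OPT} still holds on $\tilde B$: inspecting the derivation of the constants $K_1,K_2$ in terms of $\delta, M, R_\beta,\rho_{\max},\rho_{\min}$, one can produce analogous constants $\tilde K_1,\tilde K_2$ depending only on $\tilde R_\beta$ and the other data. In particular $\lambda_\ast(\beta)\in[\tilde K_1\|\beta\|,\tilde K_2\|\beta\|]$ for $\beta\in\tilde B$, so by continuity of $\lambda_\ast$ we may shrink $r$ further (without changing $\tilde R_\beta$ more than negligibly) so that the whole neighborhood $\mathcal{N}_r((\beta_0,\lambda_0))$ is contained in a local version $\tilde{\mathbb V}$ of the set $\mathbb V$ in \eqref{Defn-V}.

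Next, I would reproduce the Hessian computation used in the proof of Theorem \ref{Thm-Str-Convexity} to obtain $\nabla^2 f_\delta(\theta)\succeq\sqrt{\delta}\,\tilde\kappa_1 \mathbb{I}_{d+1}$ pointwise on $\mathcal{N}_r((\beta_0,\lambda_0))$. The new ingredient, compared with the compact case, is that on $\tilde B$ we no longer have the crude bound $M$ of Assumption \ref{Assump-Obj-2diff} doing all the work of dominating the random quantities; one cannot simply invoke $E_{P_0}\|X\|^4<\infty$ to dominate $\ell''(\beta^TX)$ uniformly in $\beta$. This is precisely where the added hypothesis $|u|\ell''(u)\leq k_1+k_2|\ell'(u)|$ enters: integrand estimates of the form $E_{P_0}[|\beta^TX|^2\ell''(\beta^TX)]$ or $E_{P_0}[\ell''(\beta^TX)\|X\|^2]$, which appear when differentiating under the supremum in $F$ and in the expectation defining $f_\delta$, are reduced via this inequality to expressions involving $E_{P_0}[|\ell'(\beta^TX)|\,\|X\|]$ and $E_{P_0}\|X\|^2$, both of which are finite and locally bounded in $\beta$ courtesy of Assumption \ref{Assump-Obj-Conv}. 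The non-degeneracy provided by Assumption \ref{Assump-nondegeneracy} then supplies, as in the proof of Theorem \ref{Thm-Str-Convexity}, the positive contribution needed to push the Hessian above $\sqrt{\delta}\,\tilde\kappa_1\mathbb{I}_{d+1}$, with $\tilde\kappa_1$ independent of $\delta$.

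The main obstacle is controlling the trade-off between the localization radius $r$ and the smallness threshold $\delta_2$. One needs $r$ small enough that the local constants $\tilde K_1,\tilde K_2,\tilde\kappa_1$ remain uniformly positive on $\mathcal{N}_r((\beta_0,\lambda_0))$ and so that $(\beta_0,\lambda_0)$ stays an interior point of $\tilde{\mathbb V}$, yet $\delta_2$ small enough that higher-order $o(\sqrt\delta)$ corrections in the Hessian expansion (arising from the maximizer $\gamma_\ast$ of $F(\cdot,\beta,\lambda;x)$ having $O(\sqrt\delta)$ displacement) cannot dominate the leading strong-convexity term $\sqrt\delta\,\tilde\kappa_1$. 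Once this calibration is carried out, integrating the uniform Hessian lower bound along the segment joining $\theta_1$ and $\theta_2$ immediately yields the stated strong-convexity inequality with $\kappa=\sqrt{\delta}\,\tilde\kappa_1$.
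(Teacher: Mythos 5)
There is a genuine gap. The theorem quantifies $\delta_2$ \emph{before} $\beta$: a single threshold must work for every $\beta$ in the (possibly unbounded) set $B$, with only $\kappa$ and $r$ allowed to depend on $\beta$. Your strategy of restricting to $\tilde B = B\cap\mathcal{N}_r(\beta_0)$ and rerunning the compact-case machinery inherits the thresholds of that machinery, and these scale badly in $\|\beta_0\|$: the compact-case analysis needs $\delta<\delta_0 = \rho_{\min}^{2}\underline{L}R_\beta^{-2}M^{-2}\rho_{\max}^{-1}$ precisely so that $\varphi_{\min}=\sqrt{\underline{L}}\rho_{\max}^{-1/2}-\sqrt{\delta}R_\beta M\rho_{\min}^{-1}>0$, i.e.\ so that $\varphi_g = 2\lambda-\sqrt{\delta}\,\beta^TA(x)^{-1}\beta\,\ell''(\beta^TT_g(x))$ stays positive when $\ell''$ is only bounded by $M$ and $\sqrt{\delta}\beta^TA(x)^{-1}\beta\leq\sqrt{\delta}\rho_{\min}^{-1}\|\beta\|^{2}$ is controlled by $\sqrt{\delta}\rho_{\min}^{-1}R_\beta\|\beta\|$. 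On your slice this becomes $\tilde\delta_0\propto(\|\beta_0\|+r)^{-2}$, which tends to $0$ along any unbounded sequence in $B$; no uniform $\delta_2>0$ survives. Since handling unbounded $B$ is the entire content of the theorem, the localization-to-compact reduction cannot close the argument.

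Relatedly, you have misidentified the role of the extra hypothesis $|u|\ell''(u)\leq k_1+k_2|\ell'(u)|$. It is not needed for integrability — Assumption \ref{Assump-Obj-2diff} still gives $\ell''\leq M$, so moments like $E_{P_0}[\ell''(\beta^TX)\|X\|^2]$ are finite for free. Its actual job in the paper's proof is to replace the $R_\beta$-dependent bound on $\varphi_g$ and on the numerator term $I_1$: combining it with the optimality condition $\ell'(\beta^TT_g(x))=2\lambda g$ gives $|\beta^TT_g(x)|\,\ell''(\beta^TT_g(x))\leq k_1+2\lambda k_2|g|$, which converts the potentially $\|\beta\|^2$-sized term $\sqrt{\delta}\beta^TA(x)^{-1}\beta\,\ell''(\cdot)$ into something comparable to $\lambda$ itself, with constants independent of $\|\beta\|$. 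The paper also does not rely on continuity of $\lambda_\ast(\cdot)$ (which you invoke but which need not hold for an argmin selection); instead it propagates the first-order condition $E_{P_0}[g^2\beta^TA(X)^{-1}\beta]\approx 1$ from $(\beta,\lambda_\ast(\beta))$ to a neighborhood via continuity of $\ell'$, and uses that normalization — rather than the $\mathbb{V}$-type bounds — to control $I_2=\sqrt{\beta^TA(x)^{-1}\beta}/(2\lambda)$. You would need to rebuild your argument around these two devices to obtain a $\beta$-uniform $\delta_2$.
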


\vspace{-0.1in}

\subsection{Structure of the worst-case distribution.}

\label{Sec-WorstCase-Dist} Fixing $\beta\in B, $ we explain the structure of
worst case distribution(s) that attains the supremum in \eqref{Rob-Obj-Fn} by
utilizing the solution of the respective dual problem $\inf_{\lambda\geq0}
f_{\delta}(\beta,\lambda)$ (see Theorem \ref{Thm-strong-duality}). Recall the
notation that $\lambda_{\ast}(\beta)$ attains the infimum in $\inf
_{\lambda\geq0} f_{\delta}(\beta,\lambda)$ for fixed $\beta\in B.$ For each
$\beta\in B, \lambda\geq0$ and $x\in\mathbb{R}^{d}$, define the set of optimal
solutions to \eqref{Univar-Opt} as
\begin{align}
\label{Eqn-Optimal-Set}\Gamma^{\ast}(\beta,\lambda;x) = \left\{ \gamma
:\ F(\gamma,\beta,\lambda;x) = \sup_{c \in\mathbb{R}}F(c,\beta,\lambda
;x)\right\} .
\end{align}
Finally, for a fixed $\beta\in B,$ define 
$$\lambda_{thr} \xspace(\beta) =
\kappa \sqrt{\delta}  (P_{0}-\textrm{ess-sup}_{x} \beta^{T}
A(x)^{-1}\beta).$$
Similarly, when Assumption \ref{Assump-Obj-2diff}
holds, define $$\lambda_{thr}^{\prime} \xspace(\beta) =
\frac{1}{2}M\sqrt{\delta} (P_{0}-\textrm{ess-sup}_{x} \beta^{T}
A(x)^{-1}\beta).$$ Since
$\kappa\leq M/2,$ we have $\lambda_{thr}^{\prime}\xspace(\beta) \geq
\lambda_{thr} \xspace(\beta)$ for every $\beta\in B.$

\begin{theorem}
Suppose that Assumptions \ref{Assump-c},\ref{Assump-Obj-Conv} hold and $%
\beta \neq \mathbf{0}.$ Take any dual optimizer $\lambda_\ast(\beta) \in
\arg\min_{\lambda \geq 0} f_{\delta}(\beta,\lambda).$ Then
\begin{itemize}
\item[a)] the dual optimizer $\lambda_\ast(\beta)$ is strictly positive unless $%
\ell(\cdot)$ is a constant function. If $\ell(\cdot)$ is indeed a constant
function, then any distribution in $\mathcal{U}_{\delta}(P_0) = \{P:\ D_c(P_0,P) \leq \delta\}$ attains
the supremum in \eqref{Rob-Obj-Fn};
\item[b)] the dual optimizer $\lambda_\ast(\beta) \geq \lambda_{thr} \xspace%
(\beta)$ whenever $\ell(\cdot)$ is not a constant;
\item[c)] if $\lambda_\ast(\beta) > \lambda_{thr} \xspace(\beta),$ the law
of
\begin{align}
X^\ast := X + \sqrt{\delta} G A(X)^{-1}\beta  \label{Transported-Xstar}
\end{align}
attains the supremum in \eqref{Rob-Obj-Fn} and satisfies $E[c(X,X^\ast)] =
\delta;$ here the random variable $G$ can be written as $G := ZG_{-} +
(1-Z)G_{+},$ with $G_{-} = \inf\Gamma(\beta,\lambda_\ast(\beta);X), \, G_+ =
\sup \Gamma(\beta,\lambda_\ast(\beta);X),$ $P_0-$almost surely, and $Z$ is
an independent Bernoulli random variable satisfying $P(Z = 1) = (\overline{c}
- 1)/(\overline{c} -\underline{c}),$ where $\overline{c} :=
E_{P_0}[G_{+}^2\beta^TA(X)^{-1}\beta]$ and $\underline{c} :=
E_{P_0}[G_{-}^2\beta^TA(X)^{-1}\beta];$
\item[d)] if $\lambda_\ast(\beta) = \lambda_{thr} \xspace(\beta),$ then a
worst-case distribution attaining the supremum in \eqref{Rob-Obj-Fn} may not
exist;
\item[e)] under additional Assumption \ref{Assump-Obj-2diff}, if
$\lambda_\ast(\beta) > \lambda_{thr}^\prime \xspace(\beta),$ the
set $%
\Gamma^\ast(\beta,\lambda_\ast(\beta);x)$ is a singleton for every
$x \in \mathbb{R}^d.$ Then for the random variable $G$ being the
unique element in $%
\Gamma^\ast(\beta,\lambda_\ast(\beta);X),$ $P_0-$almost surely, we
have that the law of $X^\ast := X + \sqrt{\delta}GA(X)^{-1}\beta$ is
the only distribution that attains the supremum in
\eqref{Rob-Obj-Fn}. In addition, $%
E[c(X,X^\ast)] = \delta.$
\end{itemize}
\label{Thm-WC-Dist}
\end{theorem}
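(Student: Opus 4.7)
The plan is to derive all five claims from the strong duality in Theorem~\ref{Thm-strong-duality} combined with a pointwise analysis of the univariate function $F(\gamma,\beta,\lambda;x)$ defined in~\eqref{Univar-Opt}. The underlying structural observation is that for each fixed $x$, Cauchy--Schwarz in the $A(x)$-metric reduces $\sup_{x'}\ell(\beta^T x') - \lambda c(x,x')$ to the one-parameter family $x' = x + \sqrt{\delta}\gamma A(x)^{-1}\beta$, which is precisely why every worst-case transport must take the form~\eqref{Transported-Xstar}.

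Parts~(a) and~(b) both follow from showing that $f_\delta(\beta,\lambda) = +\infty$ whenever $\lambda$ is strictly below the relevant threshold. For~(a), if $\ell$ is non-constant and convex then it is unbounded above, and with $\lambda = 0$ and $\beta \neq \mathbf{0}$ the affine map $\gamma \mapsto \beta^T x + \gamma\sqrt{\delta}\beta^T A(x)^{-1}\beta$ is surjective, forcing $\ell_{rob}(\beta,0;x) = +\infty$ pointwise; the constant-$\ell$ case is trivial since every feasible $P$ gives the same value. For~(b), if $\lambda < \lambda_{thr}(\beta)$ then the definition of the growth constant $\kappa$ allows one to pick $s < \kappa$ with $\lambda < s\sqrt{\delta}\beta^T A(x)^{-1}\beta$ on a set of positive $P_0$-measure; the leading $\gamma^2$-coefficient in $F(\cdot,\beta,\lambda;x)$ is then positive on that set, so $F \to +\infty$ as $|\gamma|\to\infty$ there.

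Part~(c) is the main step. When $\lambda_\ast > \lambda_{thr}(\beta)$, the $\gamma^2$-coefficient in $F$ is strictly negative $P_0$-a.s., so $\Gamma^\ast(\beta,\lambda_\ast;x)$ is a nonempty compact subset of $\mathbb{R}$ for $P_0$-a.e.\ $x$, and $G_\pm(x)$ are well-defined measurable selections. I would then use the dual first-order condition $0 \in \partial_\lambda f_\delta(\beta,\lambda_\ast)$ together with a Danskin/envelope computation of $\partial_\lambda F(\gamma,\beta,\lambda;x) = -\sqrt{\delta}(\gamma^2 \beta^T A(x)^{-1}\beta - 1)$ and an interchange of subdifferential with expectation to conclude that $1$ lies in the interval bracketed by $\underline{c}$ and $\overline{c}$. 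The prescribed Bernoulli $Z$ therefore has a probability in $[0,1]$, and the induced $G = ZG_- + (1-Z)G_+$ satisfies $E[G^2\beta^T A(X)^{-1}\beta] = 1$, which yields $E[c(X,X^\ast)] = \delta$. Since $G(x) \in \Gamma^\ast(\beta,\lambda_\ast;x)$ pointwise and the penalty term averages to zero, one obtains $E[\ell(\beta^T X^\ast)] = E[F(G,\beta,\lambda_\ast;X)] = f_\delta(\beta,\lambda_\ast)$, which matches the primal supremum by Theorem~\ref{Thm-strong-duality}.

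For~(d), the approach is to exhibit an explicit instance (for example $\ell(u) = \kappa u^2$ with a suitably chosen $P_0$) for which $\lambda_\ast(\beta) = \lambda_{thr}(\beta)$ and the first-order equation $\ell'(\beta^T x + \gamma\sqrt{\delta}\beta^T A(x)^{-1}\beta) = 2\lambda_\ast \gamma$ admits no finite solution on a set of positive $P_0$-measure, so that approximate maximizers escape to infinity. For~(e), under Assumption~\ref{Assump-Obj-2diff} the bound
\begin{align*}
\partial^2_\gamma F(\gamma,\beta,\lambda_\ast;x) \;\leq\; \sqrt{\delta}\,\beta^T A(x)^{-1}\beta\bigl(M\sqrt{\delta}\beta^T A(x)^{-1}\beta - 2\lambda_\ast\bigr)
\end{align*}
is strictly negative $P_0$-a.s.\ whenever $\lambda_\ast > \lambda_{thr}^\prime(\beta)$; strict concavity makes $\Gamma^\ast(\beta,\lambda_\ast;x)$ a singleton, so $G_- = G_+$, the transport becomes a deterministic Monge map, and uniqueness follows because any competing optimal plan must also be supported on $\Gamma^\ast(\beta,\lambda_\ast;x)$ for $P_0$-a.e.\ $x$ (otherwise the pointwise inequality becomes strict, contradicting strong duality). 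I expect the main obstacle to be the envelope/selection step in part~(c): ensuring the measurable selection of $G_\pm$, justifying the interchange of $\lambda$-subdifferentiation with the $P_0$-expectation, and verifying that the Bernoulli mixture of just the two constant selections $G \equiv G_\pm$ (rather than a more general $\gamma(\cdot) \in \Gamma^\ast(\cdot)$) already realizes $E[G^2\beta^T A(X)^{-1}\beta] = 1$.
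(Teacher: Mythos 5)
Your treatment of parts (a), (b), (c) and (e) follows essentially the same route as the paper: (a) and (b) via blow-up of $f_\delta(\beta,\lambda)$ below the threshold (this is Lemma~\ref{Lem-eff-dom-f} in the paper), (c) via one-sided $\lambda$-derivatives of $f_\delta$ computed by an envelope/Danskin argument interchanged with the expectation (the paper invokes \cite[Proposition 2.1]{Bertsekas1973}), measurable selections $G_\pm$ (Jankov--von Neumann, \cite[Proposition 7.50b]{bertsekas1978stochastic}), the first-order condition $\underline{c}\le 1\le \overline{c}$, the Bernoulli mixture, and complementary slackness to close the duality gap; and (e) via the second-derivative bound giving strict concavity of $\gamma\mapsto F$ and hence a unique Monge map (Lemma~\ref{Lem-Str-conv-Thr}). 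One point of care in (c): compactness and nonemptiness of $\Gamma^\ast(\beta,\lambda_\ast;x)$ do not follow from negativity of the explicit $-\lambda\sqrt{\delta}\beta^TA(x)^{-1}\beta\,\gamma^2$ term alone, since $\ell(\beta^Tx+\gamma\sqrt{\delta}\beta^TA(x)^{-1}\beta)$ itself grows like $\kappa\delta(\beta^TA(x)^{-1}\beta)^2\gamma^2$; you need the \emph{net} quadratic coefficient $\sqrt{\delta}\beta^TA(x)^{-1}\beta(\kappa\sqrt{\delta}\beta^TA(x)^{-1}\beta-\lambda_\ast)$ to be negative, which is exactly where $\lambda_\ast>\lambda_{thr}(\beta)$ enters. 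This is what Lemma~\ref{Lem-Finite-Exp}a supplies; make it explicit.

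The genuine gap is part (d): the counterexample you propose, $\ell(u)=\kappa u^2$, does not work. Take $A(x)=\mathbb{I}_d$ for concreteness. If $P_0(\beta^TX\neq 0)>0$, then at $\lambda=\lambda_{thr}(\beta)$ the function $F(\gamma,\beta,\lambda;x)$ reduces on that event to an affine-plus-vanishing-quadratic in $\gamma$ with nonzero linear coefficient, so $\ell_{rob}(\beta,\lambda_{thr}(\beta);x)=+\infty$ with positive probability and hence $\lambda_\ast(\beta)>\lambda_{thr}(\beta)$ --- you land in case (c), not (d). If instead $P_0(\beta^TX=0)=1$ (e.g.\ $P_0=\delta_{\{\mathbf{0}\}}$), then $f_\delta(\beta,\lambda)=\lambda\sqrt{\delta}$ for $\lambda\ge\lambda_{thr}(\beta)$, the dual value is $\delta\,\beta^TA(\mathbf{0})^{-1}\beta\cdot\kappa$, and this value \emph{is} attained by transporting all the mass to $\pm\sqrt{\delta}\gamma A(\mathbf{0})^{-1}\beta$ with $\gamma^2\beta^TA(\mathbf{0})^{-1}\beta=1$. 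Either way a worst-case distribution exists, so the pure quadratic cannot exhibit non-attainment. What is needed is a loss with growth constant exactly $\kappa$ that lies \emph{strictly below} $\kappa u^2$, so that the dual value is approached only as mass escapes to infinity but $E_P[\ell(\beta^TX)]<\kappa\, E_P[(\beta^TX)^2]\le \kappa\rho_{\min}^{-1}\delta$-type bounds are strict for every feasible $P$. The paper uses $\ell(u)=u^2-|u|(1-e^{-|u|})$ with $P_0=\delta_{\{\mathbf{0}\}}$ for precisely this reason. Your stated mechanism (``approximate maximizers escape to infinity'') is the right intuition, but it is incompatible with the exactly quadratic choice, so the example must be repaired.
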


The proof of Theorem \ref{Thm-WC-Dist} is presented in Section
\ref{Sec-Pf-Thm-WC}.


\begin{remark}
\textnormal{\ Consider the case $\beta = \mathbf{0}.$ Then $\lambda = 0$
attains the minimum in $\min_{\lambda \geq 0}f_{\delta}(\mathbf{0},\lambda),$ $%
\sup_{D_c(P_0,P) \leq \delta} E_{P_0}[\ell(\beta^TX)] = \ell(0),$ and any
distribution in $\{P:\ D_c(P_0,P) \leq \delta\}$ attains the supremum.} %
\label{Rem-WC-Dist-Zero-Beta}
\end{remark}

\vspace{-0.1in}

\subsection{Comparative statics analysis.}

\label{Sec-Comp-Statics} In this section we explain how the worst-case
distribution structure explained in Section \ref{Sec-WorstCase-Dist} changes
for every realization of $X$ when the radius of ambiguity $\delta$ is changed.
Such a sample-wise description is facilitated by examining the derivative of
the random variable $G$ described in Part e) of Theorem \ref{Thm-WC-Dist},
$P_{0}-$almost surely.



\begin{theorem}
Suppose that the assumptions in Theorem \ref{result-smoothness} are
satisfied. For any $\delta \in (0, \delta_1)$ and fixed
$\beta \in B \setminus \{%
\mathbf{0}\},$ there exists a unique worst-case distribution
$P_{\delta}^\ast$ which attains the supremum in
$\sup_{P:D_c(P_0,P) \leq \delta}E_P[\ell(\beta^TX)].$ In particular,
there exist random variables $\{G_\delta: \delta \in (0,\delta_1)\}$
such that
\begin{itemize}
\item[a)] the law of $X_\delta^\ast := X + \sqrt{\delta} G_\delta
A(X)^{-1}\beta$ is $P^\ast_\delta;$
\item[b)] $0 < \sqrt{\delta}G_{\delta} < \sqrt{\delta^\prime}%
G_{\delta^\prime}$ whenever $0 < \delta < \delta^\prime < \delta_1$ and $%
\ell^{\prime}(\beta^TX) > 0;$
\item[c)] $\sqrt{\delta^\prime}G_{\delta^\prime} < \sqrt{\delta}G_{\delta} <
0$ whenever $0 < \delta < \delta^\prime < \delta_1$ and $%
\ell^{\prime}(\beta^TX) < 0;$ and
\item[d)] $G_\delta = 0$ whenever $\delta \in (0,\delta_1)$ and $%
\ell^{\prime}(\beta^TX) = 0.$
\end{itemize}
Therefore, $\Vert X_{\delta}^\ast - X \Vert \leq \Vert
X_{\delta^\prime}^\ast - X \Vert,$ $P_0-$almost surely, whenever $0 < \delta
< \delta^\prime < \delta_1.$
\label{Prop-Comp-Stat}
\end{theorem}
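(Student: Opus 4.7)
The plan is to reduce the sample-wise comparative statics to implicit-function-theorem statements about the first-order optimality condition for the inner maximizer $G_\delta$, combined with strict monotonicity in $\delta$ of the rescaled dual variable $\mu_\delta := \lambda_\ast(\beta,\delta)/\sqrt{\delta}$. First I would establish uniqueness of $P^\ast_\delta$: Proposition \ref{Prop-V-cont-OPT} gives $\lambda_\ast(\beta,\delta) \geq K_1 \Vert \beta \Vert$, while $\lamtp(\beta) \leq \tfrac{1}{2} M\sqrt{\delta} \Vert\beta\Vert^2/\rho_{\min}$ (using $A(x) \succeq \rho_{\min} \mathbb{I}_d$), so after possibly shrinking $\delta_1$ one has $\lambda_\ast(\beta,\delta) > \lamtp(\beta)$ uniformly on $\delta \in (0,\delta_1)$. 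Theorem \ref{Thm-WC-Dist}(e) then delivers the unique worst-case law $P^\ast_\delta$ as that of $X^\ast_\delta = X + \sqrt{\delta}\, G_\delta\, A(X)^{-1}\beta$, with $G_\delta$ measurable and satisfying the complementary slackness $E_{P_0}[c(X,X^\ast_\delta)] = \delta$.

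Next, since $\Gamma^\ast(\beta,\lambda_\ast(\beta);X)$ is a singleton, $G_\delta$ is the unique root of $\partial F/\partial\gamma = 0$, which, after writing $a(X) := \beta^T A(X)^{-1}\beta$ and $t_\delta := \sqrt{\delta}G_\delta$, becomes
\[
\ell'\bigl(\beta^T X + t_\delta\, a(X)\bigr) \, = \, 2\mu_\delta t_\delta.
\]
The map $s \mapsto 2\mu_\delta s - \ell'(\beta^T X + s a(X))$ is strictly increasing in the concavity regime $\lambda_\ast > \lamtp$ (since its derivative $2\mu_\delta - \ell''(\cdot)a(X)$ is positive by $\ell'' \leq M$), so it admits a unique zero $t_\delta = s_{\mu_\delta}(X)$, and evaluating at $s=0$ yields $-\ell'(\beta^T X)$; hence the sign of $t_\delta$ coincides with that of $\ell'(\beta^T X)$. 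This already delivers part (d) and the sign components of (b), (c). Implicit differentiation of the FOC in $\mu$ gives $\partial s_\mu/\partial \mu = 2 s_\mu/(\ell''(\cdot)a(X) - 2\mu)$ with a negative denominator in the concavity regime, so $\mu \mapsto |s_\mu(X)|$ is strictly decreasing, $P_0$-almost surely on $\{\ell'(\beta^T X) \neq 0\}$.

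To convert this pointwise sensitivity into the $\delta$-monotonicity in (b), (c), I would show $\mu_\delta$ is strictly decreasing in $\delta$. Using complementary slackness and the parameterization $c(X,X^\ast_\delta) = t_\delta^2 a(X)$ yields $E_{P_0}[s_{\mu_\delta}(X)^2 a(X)] = \delta$. If $\mu_{\delta} = \mu_{\delta'}$ for $\delta < \delta'$ then $s_{\mu_\delta}(X) = s_{\mu_{\delta'}}(X)$ $P_0$-a.s., contradicting $\delta \neq \delta'$; hence $\mu_\delta > \mu_{\delta'}$. Combined with the pointwise strict monotonicity of $\mu \mapsto |s_\mu(X)|$, this yields $|t_\delta| < |t_{\delta'}|$ almost surely on $\{\ell'(\beta^T X) \neq 0\}$, with the sign fixed by the previous step, finishing (b) and (c). The final displacement inequality follows from $\Vert X^\ast_\delta - X \Vert = |t_\delta| \, \Vert A(X)^{-1}\beta \Vert$.

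The main obstacle is guaranteeing a uniform concavity regime $\mu_\delta > \tfrac{1}{2} M \cdot (P_0\textrm{-ess-sup}_x a(x))$ across all $\delta \in (0,\delta_1)$, so that the implicit function $s_\mu$ is unambiguously defined and the above calculus is truly uniform; this uses the $\beta$-dependent lower bound on $\lambda_\ast$ from Proposition \ref{Prop-V-cont-OPT} together with the compactness of $B$ to refine $\delta_1$ if necessary. Once this uniformity is in place, the remaining monotone comparative statics reduce to standard implicit-function calculus combined with the binding budget constraint.
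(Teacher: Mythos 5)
Your proposal is correct, and it takes a genuinely different route from the paper. The paper treats $\lambda$ and $\delta$ as separate variables and applies the implicit function theorem three times --- to $\partial g/\partial\delta$ at fixed $\lambda$, to $\partial\lambda_\ast(\delta)/\partial\delta$ via the budget identity $E_{P_0}[g^2\beta^TA(X)^{-1}\beta]=1$, and to $\partial g/\partial\lambda$ --- then combines them by the chain rule to compute $\partial(\sqrt{\delta}g)/\partial\delta$ and closes with the quantitative sign estimate $\delta^{-1/2}-(\sqrt{\delta_0}-\sqrt{\delta})^{-1}>0$, which is where the restriction $\delta_1\leq\delta_0/4$ enters. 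Your change of variables $t=\sqrt{\delta}\gamma$, $\mu=\lambda/\sqrt{\delta}$ decouples the problem: $t_\delta=s_{\mu_\delta}(X)$ depends on $\delta$ only through $\mu_\delta$, the concavity threshold $\mu>\tfrac{1}{2}M\,(P_0\text{-ess-sup}_x\,\beta^TA(x)^{-1}\beta)$ becomes $\delta$-free, and the whole comparative statics reduces to two monotonicity facts (pointwise monotonicity of $|s_\mu|$ in $\mu$, and monotonicity of $\mu_\delta$ in $\delta$ from the binding budget constraint), with no chain-rule computation and no quantitative bound needed; this is arguably cleaner and makes transparent why $\delta_1<\delta_0$ suffices. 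One step is stated incompletely: from the contradiction you only conclude $\mu_\delta\neq\mu_{\delta'}$, not the direction $\mu_\delta>\mu_{\delta'}$. The repair is immediate and already contained in your toolkit: $s_\mu(X)^2$ is non-increasing in $\mu$ pointwise and strictly decreasing on the set $\{\ell'(\beta^TX)\neq 0\}$, which has positive $P_0$-probability by Assumption \ref{Assump-Obj-2diff}, so $\mu\mapsto E_{P_0}[s_\mu(X)^2\beta^TA(X)^{-1}\beta]$ is strictly decreasing and the identity $E_{P_0}[s_{\mu_\delta}(X)^2\beta^TA(X)^{-1}\beta]=\delta$ forces $\mu_\delta$ to be strictly decreasing in $\delta$. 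With that sentence added, your argument delivers all of a)--d) and the final displacement inequality.
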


The proof of Theorem \ref{Prop-Comp-Stat} is presented in Section
\ref{Sec-Pf-Thm-WC}. Interestingly, Theorem \ref{Prop-Comp-Stat} asserts that
the trajectory $\{X_{\delta}^{\ast}:\delta\in\lbrack0,\delta_{1})\}$ is a
straight-line, $P_{0}-$almost surely, with probability mass being transported
to farther distances as $\delta$ increases in $[0,\delta_{1}).$ A pictorial
description of this phenomenon can be inferred from Figure
\ref{fig-logistic-fig1} in Section \ref{Sec-Eg} devoted to numerical demonstrations.

\section{Algorithmic implications of the strong convexity properties.}

\label{Sec-SA-Algo}
A key component of this section is a stochastic gradient based iterative
scheme that exhibits the following desirable convergence properties:

\begin{itemize}
\item[a)] The proposed scheme enjoys optimal rates of convergence among the class of iterative algorithms that utilize first-order oracle information and possesses per-iteration effort not dependent on the size of the support of
$P_{0}.$

\item[b)] Compared with the `non-robust' counterpart $\inf_{\beta\in B}
E_{P_{0}}[\ell(\beta^{T}X)],$ the proposed first-order method yields similar
(or) superior rates of convergence for the DRO formulation \eqref{DRO_B0}.
\end{itemize}

In the case of data-driven problems where $P_{0}$ is taken to be the empirical
distribution, the size of the support of $P_{0}$ is simply the size of the
data set. In such cases, Property a) above is a particularly pleasant property
as it allows Wasserstein distance based DRO formulations to be amenable for
big data problems that have become common in machine learning and operations
research. Alternative approaches that directly solve the resulting convex
program reformulations without resorting to stochastic gradients suffer from a
large problem size when employed for large data sets (see, for example,
\cite{shafieezadeh-abadeh_distributionally_2015, MohajerinEsfahani2017}).
Further, the proposed stochastic gradients based approaches are also
immediately applicable to problems where $P_{0}$ has uncountably infinite support.

Property b) above makes sure that computational intractability is not a reason
that should deter the use of DRO approach towards optimization under
uncertainty. In fact Property b) describes that it may be computationally more
advantageous, in addition to the desired robustness, to work with the DRO
formulation \eqref{DRO_B0} compared to its stochastic optimization counterpart
$\inf_{\beta\in B} E_{P_{0}}[\ell(\beta^{T}X)].$ As we shall see in Section
\ref{Sec-First-SGD}, this computational benefit for the proposed stochastic
gradient descent scheme is endowed by the strong convexity properties of the
dual objective $f_{\delta}(\beta,\lambda)$ derived in Theorem
\ref{Thm-Str-Convexity}. Guided by the strong convexity structure of
$f_{\delta}(\beta,\lambda),$ we also discuss enhancements to the vanilla SGD
scheme in Sections \ref{Sec-two-timescale-SGD} and \ref{Sec-GSS-SGD}.

\vspace{-0.1in}

\subsection{Extracting first-order information.}

\label{Sec-Oracle-Info-Extr} Recall the univariate maximization
\eqref{Univar-Opt} that defines $\ell_{rob} \xspace(\beta,\lambda;x)$ for
$\beta\in B, \lambda\geq0, x \in\mathbb{R}^{d}$ and the set of maximizers
$\Gamma^{\ast}(\beta,\lambda;x)$ in \eqref{Eqn-Optimal-Set}. With the DRO
objective \eqref{Rob-Obj-Fn} being related to the dual objective $f_{\delta
}(\beta,\lambda) := E_{P_{0}}[\ell_{rob} \xspace(\beta,\lambda;X)]$ as in
Theorem \ref{Thm-strong-duality}, the minimization can be restricted to the
effective domain,
\begin{align}
\label{Defn-U}\mathbb{U} := \left\{  (\beta,\lambda) \in B \times
\mathbb{R}_{+} : \ E_{P_{0}}\left[ \ell_{rob}(\beta,\lambda;X)\right]  <
\infty\right\} .
\end{align}
Lemma \ref{Lem-eff-dom-f} below, whose proof is presented in Appendix
\ref{Sec-App-Proofs}, provides a characterization of the effective domain
$\mathbb{U}.$ Here recall the earlier definition that $\lambda_{thr}
\xspace(\beta)$ is the $P_{0}-$essential supremum of $\sqrt{\delta}\kappa
\beta^{T} A(x)^{-1}\beta.$ Define,
\begin{align*}
\mathbb{U}_{1} := \left\{  (\beta,\lambda) \in B \times\mathbb{R}_{+}:
\lambda> \lambda_{thr} \xspace(\beta)\right\}  \quad\text{ and }
\quad\mathbb{U}_{2} := \left\{ (\beta,\lambda) \in B \times\mathbb{R}_{+}:
\lambda\geq\lambda_{thr} (\beta)\right\} .
\end{align*}

\begin{lemma}
Suppose that Assumptions \ref{Assump-c} - \ref{Assump-Obj-Conv} hold. Then
for any $\beta \in B, \lambda \geq 0$ and $x \in \mathbb{R}^d,$
\begin{itemize}
\item[a)] $\Gamma^\ast(\beta,\lambda;x)$ is nonempty and $\ell_{rob} \xspace%
(\beta,\lambda;x)$ is finite if $\lambda>\kappa\sqrt{\delta}\beta
A(x)^{-1}\beta;$ and
\item[b)] $\Gamma^\ast(\beta,\lambda;x)$ is empty and $\ell_{rob} \xspace%
(\beta,\lambda;x) = \infty$ if $\lambda < \kappa\sqrt{\delta}\beta
A(x)^{-1}\beta.$
\end{itemize}
Consequently, $\mathbb{U}_1 \subseteq \mathbb{U} \subseteq \mathbb{U}_2.$ %
\label{Lem-eff-dom-f}
\end{lemma}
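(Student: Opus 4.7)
The plan is to reduce the univariate problem defining $\ell_{rob}(\beta,\lambda;x)$ to the form $\sup_u[\ell(u) - s(u-\beta^T x)^2]$ through a change of variables, and then to use the growth exponent $\kappa$ of Assumption~\ref{Assump-Obj-Conv} to separate the finite from the infinite regime at each individual $x$. The only nontrivial integrability step happens at the end, when passing from pointwise statements to $E_{P_0}[\ell_{rob}] < \infty$. Assume $\beta \neq 0$ and set $a(x) := \beta^T A(x)^{-1}\beta > 0$ (the case $\beta=0$ is immediate since $F(\gamma,0,\lambda;x) = \ell(0) + \lambda\sqrt{\delta}$ is constant in $\gamma$). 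Substituting $u = \beta^T x + \gamma\sqrt{\delta}\,a(x)$ rewrites
\[ F(\gamma,\beta,\lambda;x) \,=\, \ell(u) \,-\, s(x)\bigl(u - \beta^T x\bigr)^2 \,+\, \lambda\sqrt{\delta}, \qquad s(x) := \frac{\lambda}{\sqrt{\delta}\,a(x)}, \]
so that the hypothesis $\lambda > \kappa\sqrt{\delta}\,a(x)$ of part (a) is equivalent to $s(x) > \kappa$, and the hypothesis of part (b) to $s(x) < \kappa$.

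For part (a) I will fix any auxiliary $s' \in (\kappa, s(x))$. By the very definition of $\kappa$, there is a finite constant $M_{s'}$ with $\ell(u) \leq s' u^2 + M_{s'}$ for all $u \in \mathbb{R}$. Plugging in, $\ell(u) - s(x)(u-\beta^T x)^2$ is majorized by a downward-opening quadratic in $u$ with leading coefficient $s' - s(x) < 0$, which tends to $-\infty$ as $|u|\to\infty$. Continuity of the convex function $\ell$ then forces $\sup_u$ to be finite and attained, giving $\Gamma^{\ast}(\beta,\lambda;x) \neq \emptyset$ and $\ell_{rob}(\beta,\lambda;x) < \infty$.

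For part (b) I pick instead $s' \in (s(x),\kappa)$; the defining property of $\kappa$ now yields $\sup_u(\ell(u) - s' u^2) = +\infty$. The identity
\[ \ell(u) - s(x)(u - \beta^T x)^2 \,=\, \bigl(\ell(u) - s' u^2\bigr) \,+\, \bigl[(s' - s(x))u^2 + 2 s(x)(\beta^T x)\,u - s(x)(\beta^T x)^2\bigr] \]
exhibits the second bracket as an upward-opening parabola (since $s' > s(x)$), which is bounded below by a finite constant depending only on $s(x), s'$ and $\beta^T x$. Consequently $\sup_u$ of the left-hand side is also $+\infty$, so $\Gamma^{\ast}(\beta,\lambda;x) = \emptyset$ and $\ell_{rob}(\beta,\lambda;x) = \infty$.

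For the final inclusions I combine the pointwise statements with the fourth-moment hypothesis in Assumption~\ref{Assump-Obj-Conv}. Let $\bar a(\beta)$ denote the $P_0$-essential supremum of $a(\cdot)$. When $(\beta,\lambda)\in\mathbb{U}_1$, i.e.\ $\lambda > \kappa\sqrt{\delta}\,\bar a(\beta)$, I have the \emph{uniform} lower bound $s(x) \geq s_0 := \lambda/(\sqrt{\delta}\,\bar a(\beta)) > \kappa$ for $P_0$-a.e.\ $x$, so one choice of $s' \in (\kappa, s_0)$ works simultaneously for all such $x$; completing the square in the majorizing parabola of part (a) then yields a $P_0$-a.s.\ bound of the form $\ell_{rob}(\beta,\lambda;x) \leq C_1 + C_2 (\beta^T x)^2$, which is integrable because $E_{P_0}\Vert X\Vert^4 < \infty$; hence $\mathbb{U}_1 \subseteq \mathbb{U}$. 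Conversely, if $(\beta,\lambda)\notin\mathbb{U}_2$, then $\lambda < \kappa\sqrt{\delta}\,\bar a(\beta)$, and the definition of essential supremum furnishes a set of positive $P_0$-measure on which $\lambda < \kappa\sqrt{\delta}\,a(x)$; part (b) sends $\ell_{rob}(\beta,\lambda;X)$ to $+\infty$ on that set, forcing $E_{P_0}[\ell_{rob}] = \infty$. The main technical care, and the only real obstacle beyond bookkeeping, lies in choosing $s'$ uniformly over the essential support in the $\mathbb{U}_1$ argument so that the pointwise bound from part (a) is simultaneously valid and $P_0$-integrable.
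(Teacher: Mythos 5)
Your proof is correct and follows essentially the same route as the paper: parts (a) and (b) come from comparing the quadratic decay rate $s(x)=\lambda/(\sqrt{\delta}\,\beta^TA(x)^{-1}\beta)$ against the growth exponent $\kappa$ (the paper phrases this as the limits of $F(\gamma,\cdot)$ as $\gamma\to\pm\infty$ plus continuity), and the inclusions follow by uniformizing the slack over the $P_0$-essential support and integrating. The only cosmetic difference is that you derive the integrable bound $\ell_{rob}\leq C_1+C_2(\beta^Tx)^2$ directly by completing the square, whereas the paper delegates this step to Lemma~\ref{Lem-Finite-Exp}b; both rest on the same moment condition.
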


\begin{lemma}
Suppose that Assumptions \ref{Assump-c}a and \ref{Assump-Obj-Conv} hold.
Then the function $\ell_{rob} \xspace(\beta,\lambda;x)$ is convex in $%
(\beta,\lambda) \in B \times \mathbb{R}_+$ for any $x \in \mathbb{R}^d.$ %
\label{Lem-conv-lrob}
\end{lemma}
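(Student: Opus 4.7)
My plan is to express $\ell_{rob}(\beta,\lambda;x)$ as a pointwise supremum of jointly convex functions of $(\beta,\lambda)$, from which joint convexity is immediate. Applying this idea directly to the integrand $F(\gamma,\beta,\lambda;x)$ in \eqref{Univar-Opt} does not work: for fixed $\gamma$, the argument of $\ell$ is quadratic rather than affine in $\beta$ (so convexity does not propagate through $\ell$), and the penalty term $-\lambda\sqrt{\delta}\gamma^2\beta^TA(x)^{-1}\beta$ is in fact concave in $\beta$ whenever $\lambda\gamma^2 > 0$. So I will first reparametrize the inner sup so that each summand becomes affine in $\beta$ inside $\ell$ and affine in $\lambda$.

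The reparametrization is to set $u := \gamma\sqrt{\delta}\,A(x)^{-1}\beta$, so that $\beta^Tx + \gamma\sqrt{\delta}\,\beta^TA(x)^{-1}\beta = \beta^T(x+u)$ and $\gamma^2\sqrt{\delta}\,\beta^TA(x)^{-1}\beta = u^TA(x)u/\sqrt{\delta}$, yielding
\[
F(\gamma,\beta,\lambda;x) \,=\, \ell\bigl(\beta^T(x+u)\bigr) \,-\, \frac{\lambda}{\sqrt{\delta}}\,u^TA(x)u \,+\, \lambda\sqrt{\delta}.
\]
As $\gamma$ varies over $\mathbb{R}$, $u$ traces out the line spanned by $A(x)^{-1}\beta$. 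The main technical step is to show that enlarging the feasible set of $u$ to all of $\mathbb{R}^d$ leaves the supremum unchanged. I will argue this via an $A(x)$-orthogonal decomposition: for any $u\in\mathbb{R}^d$, write $u = u_\parallel + u_\perp$ with $u_\parallel = tA(x)^{-1}\beta$ for some $t\in\mathbb{R}$ and $\beta^Tu_\perp = 0$. Then $\langle u_\parallel,u_\perp\rangle_{A(x)} = \beta^Tu_\perp = 0$, hence $u^TA(x)u = u_\parallel^TA(x)u_\parallel + u_\perp^TA(x)u_\perp \geq u_\parallel^TA(x)u_\parallel$, while $\beta^T(x+u) = \beta^T(x+u_\parallel)$. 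Because $\lambda\geq 0$, setting $u_\perp = 0$ does not decrease the objective, so the supremum over $u\in\mathbb{R}^d$ equals the supremum over the line, which is $\ell_{rob}(\beta,\lambda;x)$.

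With this reformulation in hand, convexity is immediate. For each fixed $u\in\mathbb{R}^d$, the function
\[
(\beta,\lambda)\,\mapsto\, \ell\bigl(\beta^T(x+u)\bigr) \,-\, \frac{u^TA(x)u}{\sqrt{\delta}}\,\lambda \,+\, \sqrt{\delta}\,\lambda
\]
is jointly convex on $B\times\mathbb{R}_+$: the first summand is convex in $\beta$ (convex $\ell$ precomposed with the affine map $\beta\mapsto\beta^T(x+u)$) and constant in $\lambda$, while the remaining summands are affine in $\lambda$ and constant in $\beta$, and a sum of a convex function of one variable and an affine function of the other is jointly convex. The pointwise supremum of jointly convex, extended-real-valued functions is jointly convex, and this supremum equals $\ell_{rob}(\beta,\lambda;x)$. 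The resulting $+\infty$ values on $\{\lambda < \kappa\sqrt{\delta}\beta^TA(x)^{-1}\beta\}$ are consistent with the effective-domain description in Lemma \ref{Lem-eff-dom-f}.

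I expect the only delicate step to be the $A(x)$-orthogonal decomposition: one has to decompose with respect to the Mahalanobis inner product induced by $A(x)$ rather than the Euclidean one, and one has to use $\lambda\geq 0$ to conclude that removing the orthogonal component can only increase the objective. Everything else is bookkeeping.
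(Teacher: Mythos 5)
Your proof is correct and is essentially the same as the paper's: both arguments hinge on rewriting $\ell_{rob}(\beta,\lambda;x)$ as a supremum over an \emph{unrestricted} displacement $\Delta\in\mathbb{R}^d$ of functions that are jointly convex in $(\beta,\lambda)$ (convex $\ell$ composed with a map affine in $\beta$, plus a term affine in $\lambda$ with the sign controlled by $\lambda\ge 0$), and then use that a pointwise supremum of convex functions is convex --- the paper just writes this last step out as the explicit convex-combination inequality. The only difference is that your $A(x)$-orthogonal decomposition re-derives the identity the paper already has as \eqref{Inter-SD} from the proof of Theorem \ref{Thm-strong-duality}, where it is obtained via $\inf\{\Delta^TA(x)\Delta:\beta^T\Delta=c\}=c^2/(\beta^TA(x)^{-1}\beta)$; your version is a clean, correct alternative derivation of the same fact.
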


Proposition \ref{prop:firstorderinfo} below utilizes envelope theorem (see
\cite{milgrom2002envelope}) to characterize the gradients of $\ell_{rob}
(\cdot).$ Recall that we use $\partial_{-}\ell(u), \partial_+ \ell(u)$ to denote the left and right derivatives of  $\ell(\cdot)$ when evaluated at $u \in \mathbb{R}.$

\begin{proposition}
Suppose that $\ell: \mathbb{R} \rightarrow \mathbb{R}$ satisfies
Assumption \ref{Assump-Obj-Conv} and is of the form
$\ell(u) = \max_{i=1,\ldots,K}\ell_i(u)$ for continuously
differentiable $\ell_i:\mathbb{R} \rightarrow \mathbb{R}$ and a positive integer $K.$  The
following statements hold for $P_0-$almost every $x$:
	\begin{itemize}
\item[a)] The set of maximizers,
$\Gamma^\ast(\beta,\lambda;x)\neq \varnothing,$ for any
$(\beta,\lambda) \in \mathbb{U}_1.$
\item[b)] The maps
$\lambda\mapsto \ell_{rob}(\beta,\lambda;x)$,
$\beta_{j}\mapsto \ell_{rob}(\beta,\lambda;x)$ are
absolutely continuous for
$(\beta,\lambda) \in \mathbb{U}_1,$ and their directional
derivatives are given by,
\begin{subequations}
\label{eqn-directional-derivative}
\begin{align}
\frac{\partial_{-} \ell_{rob}}{\partial \beta_{j} }(\beta,\lambda;x)
&= \min_{\gamma
\in\Gamma^\ast(\beta,\lambda;x)}\partial_{-}\ell\left(\beta^T(x
+ \sqrt{\delta} \gamma A(x)^{-1}\beta)\right) (x +
\sqrt{\delta}\gamma A(x)^{-1}\beta)_j,
\label{eqn-directional-derivative-a}\\
\frac{\partial_{+} \ell_{rob}}{\partial \beta_{j}}(\beta,\lambda;x)
&= \max_{\gamma \in\Gamma^\ast(\beta,\lambda;x)}
\partial_+ \ell\left(\beta^T(x
+ \sqrt{\delta} \gamma A(x)^{-1}\beta)\right) (x +
\sqrt{\delta}\gamma A(x)^{-1}\beta)_j,
\label{eqn-directional-derivative-b}\\
\quad \frac{\partial_{-}
\ell_{rob}}{\partial\lambda}(\beta,\lambda;x)
&=  \min_{\gamma \in\Gamma^\ast(\beta,\lambda;x)}
-\sqrt{\delta}\left( \gamma^2 \beta^T A(x)^{-1}\beta -
1\right),
\label{eqn-directional-derivative-c}\\
\frac{\partial_{+} \ell_{rob}}{\partial \lambda}(\beta,\lambda;x)
&= \max_{\gamma \in\Gamma^\ast(\beta,\lambda;x)}-\sqrt{\delta
}\left( \gamma^2 \beta^T A(x)^{-1}\beta - 1\right).
\label{eqn-directional-derivative-d}
			\end{align}
		\end{subequations}
		Furthermore, $\lambda\mapsto
\ell_{rob}(\beta,\lambda;x)$ is differentiable if and
only if
$\{\frac{\partial_+ F}{\partial \lambda} (\gamma,
\beta, \lambda;x), \frac{\partial_{-} F}{\partial
\lambda} (\gamma, \beta, \lambda;x): \gamma \in
\Gamma^{\ast}(\beta,\lambda;x)\}$ is a
singleton. Likewise, for $j$ in $\{1,\ldots,d\}$
$\beta_{j}\mapsto \ell_{rob}(\beta,\lambda;x)$ is
differentiable if and only if the respective set
$\{\frac{\partial_+ F}{\partial \beta_j} (\gamma,
\beta, \lambda;x), \frac{\partial_{-} F}{\partial
\beta_j} (\gamma, \beta, \lambda;x): \gamma \in
\Gamma^{\ast}(\beta,\lambda;x)\}$ is a singleton. When
all these sets are singleton , if we let
$\tilde{x} := x + \sqrt{\delta}gA(x)^{-1}\beta$ for
any $g \in \Gamma^{\ast}(\beta,\lambda;x)$ then the
derivative is given by,
\begin{align}
\quad\quad\quad \frac{\partial \ell_{rob}}{\partial \beta}(\beta,\lambda;x)
=\ell^{\prime}\left(\beta^T\tilde{x}\right)\tilde{x}
\quad \text{and}\quad
\frac{\partial  \ell_{rob}}{\partial
\lambda}(\beta,\lambda;x)  =
-\sqrt{\delta }\left( g^2 \beta^T
A(x)^{-1}\beta - 1\right).
\label{derivative}
\end{align}
\end{itemize}
\label{prop:firstorderinfo}
\end{proposition}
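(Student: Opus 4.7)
The plan is to derive all parts by applying the envelope theorem of Milgrom--Segal (\cite{milgrom2002envelope}) to the univariate maximization $\ell_{rob}(\beta,\lambda;x) = \sup_{\gamma \in \mathbb{R}} F(\gamma,\beta,\lambda;x)$. Part~(a) is immediate from Lemma~\ref{Lem-eff-dom-f}(a): on $\mathbb{U}_1$ we have $\lambda > \kappa\sqrt{\delta}\beta^{T}A(x)^{-1}\beta$ for $P_0$-almost every $x$, so the quadratic penalty in $\gamma$ strictly dominates the quadratic upper envelope of $\ell$ coming from Assumption~\ref{Assump-Obj-Conv}. By the upper-semicontinuity of $\ell = \max_i \ell_i$ the supremum is attained on a nonempty compact set $\Gamma^\ast(\beta,\lambda;x)$.

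For part~(b), I will verify the two hypotheses of Milgrom--Segal. First, for every fixed $\gamma$, the map $F(\gamma,\cdot,\cdot;x)$ is absolutely continuous along every coordinate line: affine in $\lambda$, and in $\beta_j$ a composition of the convex function $\ell$ (which admits both one-sided derivatives everywhere thanks to the $\max_i\ell_i$ structure) with a smooth map, plus a smooth quadratic. Second, I need a local uniform bound on $\Gamma^\ast$: for $(\beta,\lambda)\in\mathbb{U}_1$, choose a small neighborhood $N$ and $\eta>0$ such that $\lambda' - (\kappa+\eta)\sqrt{\delta}(\beta')^{T}A(x)^{-1}\beta' \geq \eta'>0$ for every $(\beta',\lambda')\in N$; the quadratic growth condition of Assumption~\ref{Assump-Obj-Conv} then makes $F(\gamma,\beta',\lambda';x) \to -\infty$ as $|\gamma|\to\infty$ uniformly on $N$, confining $\Gamma^\ast$ to a fixed compact interval. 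These are exactly the ingredients Milgrom--Segal needs to yield absolute continuity of $\ell_{rob}$ along each coordinate and the envelope identities $\partial_{\pm}\ell_{rob}/\partial t = \max/\min\{\partial_{\pm}F/\partial t(\gamma,\beta,\lambda;x):\gamma\in\Gamma^\ast(\beta,\lambda;x)\}$ for each $t\in\{\lambda,\beta_1,\dots,\beta_d\}$.

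Now I just compute. For $\lambda$, $\partial F/\partial \lambda = -\sqrt{\delta}(\gamma^{2}\beta^{T}A(x)^{-1}\beta - 1)$ is smooth, which gives (c)--(d) directly. For $\beta_j$, the chain rule yields
\[
\frac{\partial_{\pm}F}{\partial \beta_j}(\gamma,\beta,\lambda;x)
= \partial_{\pm}\ell(\beta^{T}\tilde x_\gamma)\bigl(x_j + 2\gamma\sqrt{\delta}(A(x)^{-1}\beta)_j\bigr)
- 2\lambda\sqrt{\delta}\gamma^{2}(A(x)^{-1}\beta)_j,
\]
with $\tilde x_\gamma := x + \sqrt{\delta}\gamma A(x)^{-1}\beta$. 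The first-order condition in $\gamma$ at any interior maximizer $g\in\Gamma^\ast$, $\partial F/\partial\gamma(g,\beta,\lambda;x)=0$, reduces (using $\beta\neq\mathbf{0}$) to $\partial_{\pm}\ell(\beta^{T}\tilde x_g) = 2\lambda g$ in the one-sided sense when $\ell$ is non-smooth at $\beta^{T}\tilde x_g$. Substituting cancels the penalty contribution and collapses the expression to $\partial_{\pm}\ell(\beta^{T}\tilde x_g)(\tilde x_g)_j$, which matches (a)--(b). The iff characterization of differentiability then follows from convexity of $\ell_{rob}$ in $(\beta,\lambda)$ (Lemma~\ref{Lem-conv-lrob}): left and right partials of $\ell_{rob}$ coincide iff the set of $\partial_{\pm}F$-values over $\Gamma^\ast$ collapses to a single point. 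In that case the singleton condition at each $g$ forces $\partial_-\ell(\beta^{T}\tilde x_g) = \partial_+\ell(\beta^{T}\tilde x_g)$, so $\ell$ is differentiable at $\beta^{T}\tilde x_g$ and the clean formula \eqref{derivative} is recovered.

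The main obstacle is establishing the local uniform bound on $\Gamma^\ast$ in the second step, since the coercivity of $F(\cdot,\beta',\lambda';x)$ must hold uniformly over a neighborhood of $(\beta,\lambda)$, yet both the penalty coefficient $\lambda'\sqrt{\delta}(\beta')^{T}A(x)^{-1}\beta'$ and the comparison with $\kappa$ depend continuously on the varying parameters. Pinning down an explicit uniform gap $\eta'$ requires combining Assumption~\ref{Assump-Obj-Conv} with a neighborhood argument using continuity of $(\beta,\lambda) \mapsto \lambda - \kappa\sqrt{\delta}\beta^{T}A(x)^{-1}\beta$; a secondary subtlety, handled by keeping the formulas in one-sided form, is that $\ell$ can fail to be differentiable at the finitely many values of $\beta^{T}\tilde x_g$ where two of the $\ell_i$'s tie, which is why the envelope identity must be stated with $\partial_{\pm}$ before the FOC simplification can be invoked.
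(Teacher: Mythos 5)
Your overall strategy --- compactify the effective choice set via Lemma \ref{Lem-Finite-Exp}, invoke the Milgrom--Segal envelope theorem, and then use the first-order condition in $\gamma$ to collapse $\partial F/\partial\beta_j$ to $\ell^{\prime}(\beta^T\tilde x)\tilde x_j$ --- is the same as the paper's, and your algebraic cancellation and the final iff characterization are correct. The gap is the step where you assert that Milgrom--Segal delivers the identity $\partial_{\pm}\ell_{rob}/\partial t = \max/\min\{\partial_{\pm}F/\partial t(\gamma,\beta,\lambda;x):\gamma\in\Gamma^{\ast}(\beta,\lambda;x)\}$. The directional-derivative envelope theorem (Theorem 3 / Corollary 4 of \cite{milgrom2002envelope}) requires the objective to be \emph{differentiable} in the parameter for \emph{every} choice variable in the compact choice set, with derivative jointly continuous; your $F(\gamma,\cdot,\cdot;x)$ fails this in $\beta_j$ at every $\gamma$ for which $\beta^Tx+\sqrt{\delta}\gamma\beta^TA(x)^{-1}\beta$ sits at a kink of $\ell$, and the cited reference contains no version of the theorem stated for one-sided derivatives of the objective. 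Absolute continuity of $F$ along coordinate lines gives absolute continuity of the value function, but it does not by itself identify the one-sided derivatives of the value function with extrema of $\partial_{\pm}F$ over the argmax set. The paper's proof exists precisely to close this hole: it exploits $\ell=\max_{i}\ell_i$ by rewriting $\ell_{rob}(\beta,\lambda;x) = \sup_{i,\gamma}G_x(i,\gamma,\beta,\lambda)$ with each $G_x(i,\cdot,\cdot,\cdot)$ continuously differentiable in $(\beta,\lambda)$, applies Corollary 4 to the joint maximization over the product of $\{1,\ldots,K\}$ (discrete topology) and a compact interval, and only then reassembles the resulting $\min/\max$ over the active indices $i$ into $\partial_{\pm}\ell$. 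Your argument needs this lifting (or an equivalent device) to be complete.

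A secondary point: your caveat about reading the first-order condition ``in the one-sided sense when $\ell$ is non-smooth at $\beta^T\tilde x_g$'' is vacuous. For $\beta\neq\mathbf{0}$, writing $a:=\sqrt{\delta}\beta^TA(x)^{-1}\beta>0$, stationarity of $F(\cdot,\beta,\lambda;x)$ at a maximizer $g$ gives $a\,\partial_{+}\ell(\beta^T\tilde x_g)-2\lambda a g\leq 0$ and $a\,\partial_{-}\ell(\beta^T\tilde x_g)-2\lambda a g\geq 0$, which combined with the convexity inequality $\partial_{-}\ell\leq\partial_{+}\ell$ forces $\partial_{-}\ell(\beta^T\tilde x_g)=\partial_{+}\ell(\beta^T\tilde x_g)=2\lambda g$. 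So $\ell$ is automatically differentiable at $\beta^T\tilde x_g$ for every $g\in\Gamma^{\ast}(\beta,\lambda;x)$; this is also what resolves the sign ambiguity between $\max_{i}\ell_i^{\prime}(u)\,v$ and $\partial_{+}\ell(u)\,v$ when the multiplier $v=(x+\sqrt{\delta}\gamma A(x)^{-1}\beta)_j$ is negative, an issue your chain-rule expression otherwise leaves open.
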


A proof of Proposition \ref{prop:firstorderinfo} can be found in  Appendix
\ref{Sec-App-Proofs}. Recall that a simple subgradient descent (or) stochastic subgradient descent  for solving
the `non-robust' problem  $\inf_{\beta\in B} E_{P_{0}}\left[  \ell(\beta
^{T}X)\right] $ assumes  access to first-order oracle evaluations $\ell
(\cdot)$ and $ \partial_{+}\ell(\cdot),\partial_{-}\ell(\cdot).$ Likewise, due
to the  characterization in Proposition \ref{prop:firstorderinfo}, all the
function evaluation information required to implement a stochastic
subgradient descent type iterative scheme for minimizing its  robust
counterpart $f_{\delta}(\beta,\lambda)$ are evaluations of  $\ell(\cdot)$ and
$\partial_{+}\ell(\cdot),\partial_{-}\ell(\cdot).$ Indeed, when it is feasible
to  exchange the gradient (or subgradient) and the expectation  operators in
$\nabla_{(\beta,\lambda)}E_{P_{0}}[\ell_{rob} \xspace(\beta,\lambda;  X)]$
(as in Proposition \ref{Prop-sDelta-gradients} in Section  \ref{Sec-First-SGD} below),
the subgradients of  $\ell_{rob} \xspace(\beta,\lambda;X)$ yield noisy
subgradients of  $f_{\delta}(\beta,\lambda).$  For a given  $(\beta,\lambda) \in\mathbb{U}_{1},$ a
univariate optimization  procedure such as bisection (or) Newton-Raphson
methods is  used to solve \eqref{Univar-Opt}. 



\vspace{-0.1in}

\subsection{A stochastic gradient descent scheme for differentiable
$f_{\delta}(\cdot)$.}
\label{Sec-First-SGD}
For ease of notation, we write $\theta$ in  place of $(\beta,\lambda) \in B \times\mathbb{R}_{+}.$ We describe the  algorithm
initially assuming that the conditions in Theorem  \ref{result-smoothness} are
satisfied. Then as a consequence of  Theorem \ref{result-smoothness}, we have
that $f_{\delta}(\cdot)$ is  differentiable over the set $\mathbb{V}$. Here,
recall the  characterization of the set $\mathbb{V}$ in Proposition
\ref{Prop-V-cont-OPT} and the constants $K_{1},K_2$ therein and the constant $R_\beta$ in Assumption \ref{Assump-Compact}. Define the  set,
\begin{align}
\label{Defn-W}\mathbb{W} := \{(\beta,\lambda) \in B \times\mathbb{R}:
K_{1}\Vert\beta\Vert\leq\lambda
\leq K_2R_\beta\}.
\end{align}
See that $\mathbb{W}$ is a closed convex set containing $\mathbb{V}.$
Therefore, when $\delta< \delta_{0},$ as a consequence of Theorem
\ref{Thm-strong-duality} and Proposition \ref{Prop-V-cont-OPT}, we have that
\begin{align*}
\inf_{\beta\in B} \sup_{P: D_{c}(P,P_{0}) \leq\delta} E_{P}\left[  \ell
(\beta^{T}X)\right]  = \inf_{\theta\in\mathbb{W}} f_{\delta}(\theta).
\end{align*}






\begin{proposition}
Suppose that Assumptions \ref{Assump-c} - \ref{Assump-Compact} hold
and $\delta < \delta_0.$ Then
$E_{P_0} [\nabla_{\theta} \ell_{rob}(\theta;X)]$ is well-defined and
\[\nabla_{\theta} f_\delta(\theta) = E_{P_0} [\nabla_{\theta} \ell_{rob}
\xspace(\theta;X)],\] for any
$\theta \in \{(\beta,\lambda): \beta \in B, \lambda >
\lambda_{thr}^\prime \xspace
(\beta)\} \supset \mathbb{W}.$ 
\label{Prop-sDelta-gradients}
\end{proposition}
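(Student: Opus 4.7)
The plan is to justify the exchange of gradient and expectation by a standard Leibniz-rule argument based on the dominated convergence theorem applied to difference quotients. Two ingredients are needed at each $\theta_0 = (\beta_0,\lambda_0)$ with $\lambda_0 > \lambda_{thr}^\prime(\beta_0)$: \textbf{(a)} $P_0$-almost sure differentiability of $\theta \mapsto \ell_{rob}(\theta;x)$ at $\theta_0$ together with a workable formula for the gradient, and \textbf{(b)} a $P_0$-integrable envelope $M(x)$ that dominates $\|\nabla_\theta \ell_{rob}(\theta;x)\|$ uniformly for $\theta$ in some open neighborhood $N$ of $\theta_0$ contained in $\{\lambda > \lambda_{thr}^\prime(\beta)\}$.

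For \textbf{(a)}, I would combine Theorem \ref{Thm-WC-Dist}(e) with Proposition \ref{prop:firstorderinfo}. Indeed, whenever $\lambda > \lambda_{thr}^\prime(\beta)$, Theorem \ref{Thm-WC-Dist}(e) shows that $\Gamma^\ast(\beta,\lambda;x)$ is the singleton $\{g^\ast(\theta,x)\}$ for every $x \in \mathbb{R}^d$; Proposition \ref{prop:firstorderinfo} then yields pointwise differentiability of $\theta \mapsto \ell_{rob}(\theta;x)$ and the closed-form derivative displayed in \eqref{derivative}. Measurability of the selector $x \mapsto g^\ast(\theta,x)$ follows from the fact that $g^\ast = \inf \Gamma^\ast = \sup \Gamma^\ast$, both of which are measurable in $x$ by standard arguments applied to the strictly concave univariate program defining $\ell_{rob}(\theta;x)$.

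For \textbf{(b)}, the key task is to bound $|g^\ast(\theta,x)|$ by $C(1+\|x\|)$ uniformly for $\theta \in N$. Under Assumption \ref{Assump-Obj-2diff} one has $\ell^{\prime\prime} \leq M$, so $\ell^\prime$ is $M$-Lipschitz; writing the first-order condition for the inner maximum in \eqref{Univar-Opt} as $\ell^\prime(\beta^T x + g\sqrt{\delta}\,\beta^T A(x)^{-1}\beta) = 2\lambda g$ and applying the triangle inequality yields
\[
\bigl(2\lambda - M\sqrt{\delta}\,\beta^T A(x)^{-1}\beta\bigr)\,|g^\ast(\theta,x)| \;\leq\; |\ell^\prime(\beta^T x)|.
\]
The parenthesized factor is strictly positive and, for $\theta$ in a small enough $N$, uniformly bounded below by some $c_N > 0$ for $P_0$-a.e.\ $x$, because $\lambda_0 > \lambda_{thr}^\prime(\beta_0) = \tfrac{M\sqrt{\delta}}{2}\,P_0\textrm{-ess-sup}_x \beta_0^T A(x)^{-1}\beta_0$ and both sides depend continuously on $\theta$. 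Combined with the linear growth of $\ell^\prime$, this yields $|g^\ast(\theta,x)| \leq C_N(1+\|x\|)$; with $\|A(x)^{-1}\| \leq \rho_{\min}^{-1}$ from Assumption \ref{Assump-c}, it follows that $\|\tilde x\| \leq C(1+\|x\|)$, and plugging into \eqref{derivative} bounds both the $\beta$- and $\lambda$-partial derivatives of $\ell_{rob}$ by $C(1+\|x\|^2)$, which is $P_0$-integrable by the moment hypothesis $E_{P_0}\|X\|^4 < \infty$ in Assumption \ref{Assump-Obj-Conv}. With \textbf{(a)} and \textbf{(b)} in hand, differentiation under the integral sign delivers the stated identity.

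The main technical obstacle is securing the uniform lower bound $c_N > 0$ above: it must hold $P_0$-almost everywhere and uniformly over $\theta$ in an open neighborhood $N$. Obtaining it requires a careful use of the continuity of $\beta \mapsto \lambda_{thr}^\prime(\beta)$ (which follows from the essential-sup being a continuous function of $\beta$ on $B$) together with the strict inequality $\lambda_0 > \lambda_{thr}^\prime(\beta_0)$; this uniform gap is precisely what restricts the validity of the gradient-expectation exchange to the open region $\{\lambda > \lambda_{thr}^\prime(\beta)\}$ rather than only $\{\lambda \geq \lambda_{thr}(\beta)\}$.
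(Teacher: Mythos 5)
Your proposal is correct, but it takes a genuinely different route from the one the paper uses for this family of interchange results. The paper's template (visible in its proof of the subgradient analogue, Proposition \ref{Prop-sDelta-subgradients}) leans on convexity of $\ell_{rob}(\cdot\,;x)$ from Lemma \ref{Lem-conv-lrob}: one checks integrability of the candidate gradient at the single point $\theta$ (via the bound on $|g|$ in Lemma \ref{Lem-Finite-Exp}a, which needs only the growth condition of Assumption \ref{Assump-Obj-Conv}), writes the pointwise subgradient inequality, and takes expectations to conclude $E_{P_0}[\nabla_\theta \ell_{rob}(\theta;X)] \in \partial f_\delta(\theta)$; the upgrade from subgradient to gradient then comes from the singleton property of $\Gamma^\ast$ for $\lambda > \lambda_{thr}^\prime(\beta)$ — for which Lemma \ref{Lem-Str-conv-Thr} is the apt citation rather than Theorem \ref{Thm-WC-Dist}(e), which is stated only at $\lambda = \lambda_\ast(\beta)$ — combined with the interchange of one-sided directional derivatives and expectations for convex integrands (\cite[Proposition 2.1]{Bertsekas1973}, already invoked in Lemma \ref{Lem-threshold}). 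Your mean-value-theorem-plus-dominated-convergence route is more elementary and self-contained, but it buys this at the cost of needing an integrable envelope valid on a whole neighborhood $N$, whereas the convexity route needs integrability only at $\theta$ itself; you correctly identify the uniform lower bound $c_N>0$ on $2\lambda - M\sqrt{\delta}\,\beta^T A(x)^{-1}\beta$ as the crux and resolve it correctly using $\ell^{\prime\prime}\leq M$, the essential-supremum definition of $\lambda_{thr}^\prime$, and $A(x)^{-1} \preceq \rho_{\min}^{-1}\mathbb{I}_d$. Two small points to tidy: dominated convergence applied to difference quotients yields only the partial derivatives of $f_\delta$, so conclude total differentiability by invoking convexity of $f_\delta$ (Theorem \ref{thm:convex}), since a convex function possessing all partial derivatives at a point is differentiable there; and the strong-concavity/singleton claim degenerates at $\beta = \mathbf{0}$ (where $\gamma \mapsto F$ is constant), an edge case worth a sentence.
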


The proof of Proposition \ref{Prop-sDelta-gradients} is available in Appendix
\ref{Sec-App-Proofs}.

\vspace{-0.1in}

\subsubsection{The iterative scheme.}
\label{sec-iterative-scheme} 
Due to Proposition \ref{Prop-sDelta-gradients},
samples of the random vector $\nabla_{\theta}\ell_{rob} \xspace(\theta; X),$ where $X \sim P_{0},$ are unbiased estimators of the desired gradient
$\nabla_{\theta}f_{\delta}(\theta)$ and are called `stochastic gradients' of
$f_{\delta}(\theta).$ Utilising these noisy gradients, we generate
averaged iterates $\{\bar{\theta}_{k}: k \geq1\}$
according to the following scheme:\newline Fix $\xi\geq0$ and initialize
$\bar{\theta}_{0} = \theta_{0} \in\mathbb{W}.$ For $k > 0,$ given the iterate
$\theta_{k-1}$ from the $(k-1)$-th step,

\begin{itemize}
\item[a)] generate an independent sample $X_{k}$ from the distribution
$P_{0},$

\item[b)] compute $\nabla_{\theta}\ell_{rob} \xspace(\theta_{k};X_{k})$
characterized in \eqref{derivative} by solving $\sup_{\gamma\in\mathbb{R}%
}F(\gamma,\theta;X_{k}),$ and

\item[c)] compute the $k$-th iterate $\theta_{k}$ and its weighted running
average $\bar{\theta}_{k}$ as follows:
\begin{align}
\label{SGD-1}\theta_{k} := \Pi_{\mathbb{W}} \big(\theta_{k-1} - \alpha_{k}
\nabla_{\theta}\ell_{rob} \xspace(\theta_{k-1};X_{k})\big) \quad\text{and }
\quad\bar{\theta}_{k} = \left(  1 - \frac{\xi+ 1}{k + \xi}\right)  \bar
{\theta}_{k-1} + \frac{\xi+ 1}{k + \xi} \theta_{k},
\end{align}
where $\Pi_{\mathbb{W}}(\cdot)$ denotes the projection operation on to the
closed convex set $\mathbb{W}$ and $(\alpha_{k})_{k \geq1}$ is referred to as
the step-size sequence (or) learning rate of the iterative scheme. A closed form expression for the projection $\Pi_{\mathbb{W}}$ is given in Appendix \ref{Sec-Analytic-Projection} and a detailed algorithmic description of the above steps is described in Appendix \ref{sec-algorithm}.
\end{itemize}

\begin{assumption}
The step-size sequence $(\alpha_k)_{k \geq 1}$ is taken to satisfy,
$\alpha_k = \alpha k^{-\tau},$ 
for some constants $\alpha > 0$ and $\tau \in [1/2,1].$ \label%
{Assump-step-sizes}
\end{assumption}

The iterates $(\theta_{k})_{k \geq1}$ are the classical Robbins-Monro iterates
with slower step-sizes (see \cite{robbins1951}). If $\xi= 0$ in the definition
of $\bar{\theta}_{k}$ in \eqref{SGD-1}, the iterate $\bar{\theta}_{k}$ is
simply the running average of $\theta_{1},...\theta_{k-1}$ and the averaging
scheme is the well-known Polyak-Ruppert averaging for stochastic gradient
descent (see \cite{PJ92} and references therein). On the other hand, the
averaging scheme with $\xi> 0$ is referred as polynomial-decay averaging (see
\cite{Shamir:2013:SGD:3042817.3042827}).


\subsubsection{Rates of convergence.\label{subsect_rates_of_convergence}}

Our objective here is to characterize the convergence of $(f_{\delta}%
(\bar{\theta}_{k}))_{k \geq1}$ for the iteration scheme \eqref{SGD-1}. Let
$f_{\ast}:= \inf_{\theta\in B \times\mathbb{R}_{+}} f_{\delta}(\theta)$ be the
optimal value. It is well-known that stochastic gradient descent schemes for
smooth objective functions enjoy $f_{\delta}(\bar{\theta}_{k}) - f_{\ast}=
O_{p}(k^{-1})$ rate of convergence if $f_{\delta}$ is strongly convex and
$f_{\delta}({\theta}_{k}) - f_{\ast}= O_{p}(k^{-1/2})$ if $f_{\delta}$ is
simply convex, for suitable choices of step sizes
(see, for example, \cite{Shamir:2013:SGD:3042817.3042827} and references
therein). While $f_{\delta}(\cdot)$ is convex for all $\delta\geq0,$ it
follows from Theorem \ref{Thm-Str-Convexity} that $f_{\delta}(\cdot)$ is
locally strongly convex in the region containing the optimizer when $\delta<
\delta_{1}.$ As a result, we have the following better rate of convergence for
$f_{\delta}(\bar{\theta}_{k}) - f_{\ast}$ when $\delta< \delta_{1}.$ The proof
of Proposition \ref{Prop-Conv-SGD-1} is presented in Section
\ref{Sec-RatesofConv-Proofs}.

\begin{proposition}
Suppose that Assumptions \ref{Assump-c} - \ref{Assump-Compact} hold. Then we have,
\begin{itemize}
\item[a)] $f_\delta(\bar{\theta}_k) - f_\ast = O_p(k^{-1/2})$ if $\delta <
\delta_0, $ $\xi \geq 1$ in \eqref{SGD-1} and $\tau = 1/2$ in Assumption \ref%
{Assump-step-sizes}; 
\item[b)]
$f_\delta(\bar{\theta}_k) - f_\ast = O_p(
k^{-1})$ if
$\delta < \delta_1,$ $\xi = 0,$ $\tau \in (1/2,1)$ in Assumption
\ref%
{Assump-step-sizes}, and Assumption \ref{Assump-nondegeneracy} is
satisfied.
\end{itemize}
\label{Prop-Conv-SGD-1}
\end{proposition}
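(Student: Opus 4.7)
\textbf{Proposal for the proof of Proposition \ref{Prop-Conv-SGD-1}.}

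The plan is to reduce both statements to standard convergence theorems for projected stochastic gradient descent with either polynomial-decay or Polyak--Ruppert averaging, after supplying the structural ingredients already established in the paper. First I would verify three inputs. \emph{Unbiasedness}: Proposition \ref{Prop-sDelta-gradients} gives $E_{P_0}[\nabla_\theta \ell_{rob}(\theta;X)] = \nabla_\theta f_\delta(\theta)$ for every $\theta \in \mathbb{W}$, so the random vector computed in step b) of the iterative scheme is an unbiased estimator of $\nabla_\theta f_\delta(\theta_{k-1})$ conditional on the past. \emph{Bounded second moment}: from the closed-form \eqref{derivative} and the quadratic growth of $\ell$ imposed in Assumption \ref{Assump-Obj-Conv}, together with $E_{P_0}\Vert X\Vert^4 < \infty$, compactness of $B$ (Assumption \ref{Assump-Compact}) and the boundedness of $\lambda$ on the closed convex set $\mathbb{W}$, one checks $\sup_{\theta \in \mathbb{W}} E_{P_0}\Vert\nabla_\theta \ell_{rob}(\theta;X)\Vert^2 < \infty$; the only nontrivial point is controlling the optimizer $g \in \Gamma^\ast(\theta;x)$ appearing in \eqref{derivative}, which by the first-order optimality condition for \eqref{Univar-Opt} is bounded in terms of $\sqrt{\delta}\,\lambda^{-1}$ and $|\ell^\prime(\cdot)|$ at the perturbed argument and can thus be absorbed by the same moment bound. \emph{Smoothness}: Theorem \ref{result-smoothness}a) yields a uniformly bounded Hessian of $f_\delta$ on $\mathbb{V}$, which, combined with convexity of $f_\delta$ on the closed convex set $\mathbb{W} \supset \mathbb{V}$, supplies a uniform Lipschitz bound for $\nabla_\theta f_\delta$ along the trajectory.

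For part a), global convexity of $f_\delta$ on $\mathbb{W}$ (Theorem \ref{thm:convex}), together with the Lipschitz-smoothness and $L_2$-bounded gradient noise established above, allows one to invoke the standard $O(k^{-1/2})$ convergence rate for projected SGD under polynomial-decay averaging with $\tau = 1/2$ and $\xi \geq 1$; see, for instance, Theorem 3 of \cite{Shamir:2013:SGD:3042817.3042827}. The step-size schedule in Assumption \ref{Assump-step-sizes} and the averaging weights in \eqref{SGD-1} match the hypotheses of that reference exactly, yielding $f_\delta(\bar\theta_k) - f_\ast = O_p(k^{-1/2})$.

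For part b), the new ingredient is Theorem \ref{Thm-Str-Convexity}, which gives $\nabla^2 f_\delta(\theta) \succeq \sqrt\delta\,\kappa_1 \mathbb{I}_{d+1}$ for every $\theta \in \mathbb{V}$, provided $\delta < \delta_1$. Since Proposition \ref{Prop-V-cont-OPT} places the minimizer $\theta_\ast$ in $\mathbb{V}$ and $\mathbb{V}$ is convex, this translates into the quadratic growth inequality $f_\delta(\theta) - f_\ast \geq \tfrac{1}{2}\sqrt\delta\,\kappa_1 \Vert \theta - \theta_\ast\Vert^2$ throughout $\mathbb{V}$. Combined with global convexity of $f_\delta$ on $\mathbb{W}$ and the smoothness/bounded-noise inputs already collected, this is precisely the hypothesis needed by the Polyak--Ruppert averaging theorem (\cite{PJ92}), applied with $\xi = 0$ and $\tau \in (1/2,1)$, to deliver the improved rate $O_p(k^{-1})$.

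The main obstacle I foresee is the fact that strong convexity holds only on $\mathbb{V}$ and not on the larger projection set $\mathbb{W}$ onto which the iterates are clipped. A clean remedy is to first use the result of part a) (or a direct a.s. convergence argument for projected Robbins--Monro iterates under convexity and $L_2$-bounded noise) to show that $\theta_k \to \theta_\ast$ in probability, so that for any open neighborhood of $\theta_\ast$ contained in $\mathbb{V}$ there exists a (random) finite time after which every $\theta_k$ lies in that neighborhood; the pre-entry iterations then contribute only an $O_p(1)$ term that is absorbed into the $O_p(k^{-1})$ bound via a deterministic time shift. Alternatively, one may invoke a local-rate version of the Polyak--Juditsky theorem requiring strong convexity only in a neighborhood of the optimizer, in which case the argument collapses to verifying the hypotheses and no explicit localization step is needed.
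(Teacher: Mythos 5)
Your proposal is correct and follows essentially the same route as the paper: part a) reduces to the Shamir--Zhang rate for averaged projected SGD after verifying unbiasedness via Proposition \ref{Prop-sDelta-gradients} and the uniform second-moment bound on the stochastic gradients over $\mathbb{W}$ (which the paper packages as Lemma \ref{Lem-Grad-Sec-Moment}), and part b) invokes the Polyak--Juditsky averaging theorem \cite{PJ92} using the local strong convexity supplied by Theorem \ref{Thm-Str-Convexity}, exactly as the paper does. One small slip worth flagging: the set $\mathbb{V}=\{(\beta,\lambda): K_1\Vert\beta\Vert\le\lambda\le K_2\Vert\beta\Vert\}$ is not convex (the constraint $\lambda\le K_2\Vert\beta\Vert$ is a superlevel set of a convex function), so the quadratic-growth inequality cannot be asserted ``throughout $\mathbb{V}$'' by integrating the Hessian bound along segments; this is harmless, since --- as your own localization remark anticipates and as the paper's proof implicitly exploits --- the hypotheses of \cite[Theorem 2]{PJ92} are local around $\theta_\ast$, and only a neighborhood of the minimizer is needed.
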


For the strongly convex case, the averaged procedure endows the sequence
$(f_{\delta}(\bar{\theta}_{k}))_{k \geq1}$ with the robustness property that
the precise choice of step-size $(\alpha_{k})_{k \geq1}$ does not affect the
convergence behaviour as long as the step size choice satisfies Assumption
\ref{Assump-step-sizes}. Contrast this with the vanilla stochastic
approximation iterates $(\theta_{k})_{k \geq1}$ with step-size $\alpha_{k} =
\alpha k^{-1},$ in which case the constant $\alpha$ has to be chosen larger
than a threshold that depends on the Hessian of $f_{\delta}$ at $\theta$
minimizing $f_{\delta}(\theta),$ in order to have $f_{\delta}(\theta_{k}) -
f_{\ast}= O_{p}(k^{-1})$ (see, for example, \cite{NIPS2011_4316,nemirovski2009robust} for discussions on the effect of step sizes on error
$f_{\delta}(\theta_{k}) - f_{\ast}$).

Recall that $\delta_{0},\delta_{1}$ are positive constants that do not depend
on the size of the support of $P_{0}.$ For data-driven optimization problems,
the radius of ambiguity, $\delta,$ is typically chosen to decrease to zero
with the number of data samples $n$ (see, for example,
\cite{blanchet2016robust,shafieezadeh-abadeh_distributionally_2015}).
Therefore the requirement that $\delta< \delta_{1}$ is typically satisfied in
practice in data-driven applications.

Indeed if $\delta<\delta_{1},$ due to Proposition \ref{Prop-Conv-SGD-1}b), it
suffices to terminate after $O_{p}(\varepsilon^{-1})$ iterations
in order to obtain an iterate $\bar{\theta}_{k}$ that satisfies $f_{\delta
}(\bar{\theta}_{k})-f_{\ast}\leq\varepsilon.$ On the other hand, if
$\delta>\delta_{1},$ we require the usual $O_{p}(\varepsilon^{-2})$ iteration
complexity to obtain $f_{\delta}(\theta_{k})-f_{\ast}\leq\varepsilon$, which
is identical to the sample complexity of stochastic gradient descent for the
non-robust problem $\inf_{\beta}E_{P_{0}}[\ell(\beta^{T}X)]$ in the presence
of convexity (see, for example, \cite{Shamir:2013:SGD:3042817.3042827}). Here,
recall from the discussion following Theorem \ref{Thm-Str-Convexity} that the
non-robust stochastic optimization objective $\inf_{\beta}E_{P_{0}}[\ell
(\beta^{T}X)]$ need not be strongly convex even if $\ell(\cdot)$ is strongly
convex, whereas the corresponding worst-case objective $f_{\delta}%
(\beta,\lambda)$ is jointly strongly convex in $(\beta,\lambda)$ more
generally under the conditions identified in Theorem \ref{Thm-Str-Convexity}.

As a result, if we let $L$ denote the complexity of the univariate line search
that solves $\sup_{\gamma\in\mathbb{R}} F(\gamma, \theta; x)$ for any
$(\beta,\lambda) \in\mathbb{W},$ then the computational effort involved in
solving \eqref{DRO_B0} scales as $O_{p}(\varepsilon^{-1}L)$ when
$\delta< \delta_{1}$ and $O_{p}(\varepsilon^{-2}L)$ when $\delta\in[\delta
_{1},\delta_{0}).$ As mentioned earlier, this complexity does not scale with
the size of the support of $P_{0}$ for a given $\delta.$ See Appendix
\ref{Sec-Line-Search} for a brief discussion on $L,$ the complexity introduced
by line search schemes.

The analysis of stochastic gradient descent with small bias can be done without significant complications under regularity conditions. The following result summarizes the overall rate of convergence analysis for the classical Robbins-Monro iterates ($\theta_k: k \geq 1$), including bias induced by the line search, in the strongly convex case. The proof
of Proposition \ref{Prop-Conv-SGD-Over-All} is presented in Appendix
\ref{Sec-App-Proofs}.

\begin{proposition}
 Suppose that Assumptions \ref{Assump-c} - \ref{Assump-step-sizes} hold and
 $\delta<\delta_1$. At the $k$-th iteration, the bisection method is employed with at least $\tau\log_2(k)-\log_2(\alpha) + 2\log_2(1+\|X_{k}\|)$ cuts to compute $\nabla_{\theta}\ell_{rob} \xspace(\theta_{k-1};X_{k})$.
 Then we have,
 \begin{itemize}
 \item[a)] $f_\delta(\theta_k) - f_\ast = O_p(k^{-\tau})$ if $\tau \in (1/2,1)$ in Assumption \ref{Assump-step-sizes};
 \item[b)] $f_\delta(\theta_k) - f_\ast = O_p(k^{-1})$ if $\alpha$ is larger than the smallest eigenvalue of $\nabla_{\theta}^2f_\delta(\theta_\ast)$ and $\tau=1$ in Assumption \ref{Assump-step-sizes};
\end{itemize}
\label{Prop-Conv-SGD-Over-All}
\end{proposition}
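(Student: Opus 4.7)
The plan is to reduce Proposition \ref{Prop-Conv-SGD-Over-All} to the classical analysis of projected stochastic gradient descent with a small, summable bias. Write the iteration as
\[
\theta_k = \Pi_{\mathbb{W}}\bigl(\theta_{k-1} - \alpha_k(g_k + b_k + \xi_k)\bigr),
\]
where $g_k := \nabla_\theta f_\delta(\theta_{k-1})$, $\xi_k := \nabla_\theta \ell_{rob}(\theta_{k-1};X_k) - g_k$ is the zero-mean stochastic noise (whose second moment is controlled by $E_{P_0}\|X\|^4 < \infty$ from Assumption \ref{Assump-Obj-Conv}), and $b_k$ captures the error introduced because bisection returns only an approximate maximizer $\hat\gamma_k$ in place of the true $\gamma_k^\ast \in \Gamma^\ast(\theta_{k-1};X_k)$.

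The first step is to show that $\|b_k\| = O(\alpha k^{-\tau})$ deterministically under the specified cut schedule. Since $\delta < \delta_1 < \delta_0$, the iterates remain in $\mathbb{W}$, on which $\gamma \mapsto F(\gamma,\theta;x)$ is concave (using $\lambda \geq \lambda_{thr}^\prime(\beta)$ implied by $\lambda \geq K_1\|\beta\|$), so the bisection discussion of Appendix \ref{Sec-Line-Search} applies and $|\hat\gamma_k - \gamma_k^\ast| \leq I_k\, 2^{-N_k}$, where $I_k$ is the length of the initial bracketing interval. Using $\lambda \geq K_1\|\beta\|$ together with the first-order condition $\ell^\prime(\beta^Tx + \gamma\sqrt\delta\beta^T A(x)^{-1}\beta) = 2\lambda\gamma$ and the bound $\ell^{\prime\prime} \leq M$ of Assumption \ref{Assump-Obj-2diff}, one checks that $I_k$ is at most a constant multiple of $1+\|X_k\|$. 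Differentiating the gradient expression \eqref{derivative} with respect to $\gamma$ shows that $\nabla_\theta\ell_{rob}(\theta_{k-1};X_k)$, as a function of $\gamma$, has Lipschitz constant of order $(1+\|X_k\|)^2$. Combining the two bounds,
\[
\|b_k\| \;\leq\; C\,(1+\|X_k\|)^2 \cdot 2^{-N_k} \;\leq\; C\alpha\, k^{-\tau},
\]
with the choice $N_k = \tau\log_2 k - \log_2\alpha + 2\log_2(1+\|X_k\|)$. Crucially the $X_k$-dependence is absorbed into $N_k$, leaving a deterministic bias bound.

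The second step is the standard contraction argument. Let $\theta_\ast$ be the unique minimizer of $f_\delta$ on $\mathbb{W}$ (existence and uniqueness follow from Theorem \ref{Thm-Str-Convexity}), set $V_k := \|\theta_k - \theta_\ast\|^2$ and $\mu := \sqrt\delta\kappa_1$. Nonexpansiveness of $\Pi_{\mathbb{W}}$, strong convexity of $f_\delta$ on $\mathbb{V}\subset\mathbb{W}$ (Theorem \ref{Thm-Str-Convexity}), and the Lipschitz gradient bound (Theorem \ref{result-smoothness}) yield the recursion
\[
E[V_k\mid\mathcal{F}_{k-1}] \;\leq\; (1 - 2\mu\alpha_k + C_1\alpha_k^2)V_{k-1} + 2\alpha_k\|b_k\|\sqrt{V_{k-1}} + C_2\alpha_k^2.
\]
Because $\|b_k\| = O(\alpha k^{-\tau})$, the cross term $\alpha_k\|b_k\| = O(\alpha^2 k^{-2\tau})$ is summable for $\tau > 1/2$ and is dominated by the $\alpha_k^2$ noise term. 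Applying the classical Chung lemma (or the recursion analysis in \cite{PJ92}) to this inequality yields $E V_k = O(k^{-\tau})$ for $\tau \in (1/2,1)$, and $E V_k = O(k^{-1})$ for $\tau = 1$ provided $\alpha$ exceeds $1/(2\mu_{\min})$, where $\mu_{\min}$ denotes the smallest eigenvalue of $\nabla^2_\theta f_\delta(\theta_\ast)$. Converting from $V_k$ to function values via the Lipschitz gradient bound $f_\delta(\theta_k) - f_\ast \leq (L/2) V_k$ and Markov's inequality gives the $O_p(k^{-\tau})$ and $O_p(k^{-1})$ conclusions of parts (a) and (b), respectively.

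The main technical obstacle I anticipate is the construction of a bracketing interval of length $O(1+\|X_k\|)$ for the bisection and the careful verification that the Lipschitz constant in $\gamma$ of $\nabla_\theta\ell_{rob}$ scales exactly as $(1+\|X_k\|)^2$: these two facts are precisely what let the $2\log_2(1+\|X_k\|)$ term in $N_k$ absorb all sample-dependence in the bias. The SGD recursion analysis itself is routine given the strong convexity and smoothness of $f_\delta$ already established in Theorems \ref{result-smoothness} and \ref{Thm-Str-Convexity}.
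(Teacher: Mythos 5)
Your overall strategy matches the paper's: first show that the prescribed number of bisection cuts makes the line-search bias $O(\alpha_k)$ (this is exactly Lemma \ref{lemma-line-search-bias}, and your bracketing-interval and Lipschitz-in-$\gamma$ estimates are the same two ingredients the paper uses), and then run a Lyapunov recursion for $V_k = \Vert\theta_k-\theta_\ast\Vert^2$ with the bias term absorbed into the $O(\alpha_k^2)$ remainder. Parts (a) and (b) then follow from the standard rate analysis, as in your appeal to Chung's lemma versus the paper's appeal to the recursion in \cite[Theorem 10.4.1]{kushner2003stochastic}.

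There is one gap in your step 2. You write the contraction $E[V_k\mid\mathcal{F}_{k-1}] \leq (1-2\mu\alpha_k + C_1\alpha_k^2)V_{k-1} + \cdots$ citing ``strong convexity of $f_\delta$ on $\mathbb{V}\subset\mathbb{W}$,'' but the iterates are projected onto $\mathbb{W}$, which is strictly larger than $\mathbb{V}$, and Theorem \ref{Thm-Str-Convexity} only guarantees $\nabla^2 f_\delta \succeq \sqrt{\delta}\kappa_1\mathbb{I}_{d+1}$ on $\mathbb{V}$; away from $\theta_\ast$ you only have plain convexity, so the inequality $(\theta_{k-1}-\theta_\ast)^T\nabla f_\delta(\theta_{k-1}) \geq \mu V_{k-1}$ is not justified for all $\theta_{k-1}\in\mathbb{W}$. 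The paper closes this by first proving $\Vert\theta_k-\theta_\ast\Vert\to 0$ almost surely via \cite[Theorem 5.2.1]{kushner2003stochastic} (using the summability $\sum_k\alpha_k\Vert\varepsilon_k\Vert<\infty$ of the biased updates), and then shifting the time origin so that, with probability at least $1-\nu$, all subsequent iterates lie in an arbitrarily small neighborhood of $\theta_\ast$ where a Taylor expansion of $\nabla f_\delta$ around $\theta_\ast$ supplies the drift $-\rho\alpha_k V_{k-1}$. You need this localization (or an argument that the contraction holds on all of $\mathbb{W}$) before your recursion is valid; once it is inserted, the rest of your argument goes through and agrees with the paper's. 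A minor point: the threshold in part (b) should be checked against the drift coefficient you actually obtain (the paper's condition is $\rho\alpha>1$ with $\rho$ just below the smallest eigenvalue of $\nabla^2_\theta f_\delta(\theta_\ast)$), so your $1/(2\mu_{\min})$ differs by a constant factor from the stated hypothesis.
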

\begin{remark}
Proposition \ref{Prop-Conv-SGD-Over-All} indicates that if the bisection method is applied with $O(\log_2(k))$ cuts at $k$-th iterates, then the classical Robbins-Monro algorithm still achieves the optimal $O_p(1/k)$ rate even if the bias of line search is taken into consideration. Assumption in part b) on requiring a lower bound on $\alpha$ is standard. Typically, avoiding an estimate of such a lower bound can be done by Polyak-Ruppert-Juditsky  averaging  and  choosing $\tau\in(1/2,1)$. This is most often studied in the case of unbiased gradients. An adaptation is required for the case of biased gradients. While we believe that such an adaptation should be quite doable, we do not pursue it in this paper as it would be a significant distraction from our objective. Our goal here is to showcase the applicability of the structural results in Section \ref{Sec-Convexity} towards designing efficient algorithms for DRO based on flexible cost functions.
\end{remark}

To complete this discussion, recall that the dual formulation,
\[
\inf_{\lambda\geq0} E_{P_{0}}\left[ \sup_{\gamma\in\mathbb{R}} F(\gamma
,\beta,\lambda;X)\right] ,
\]
that we are working with is is a result of the change of variables $c =
\sqrt{\delta}\gamma\beta^{T} A(X)^{-1} \beta$ and $\lambda\sqrt{\delta}$ to
$\lambda$ in the proof of Theorem \ref{Thm-strong-duality}. Evidently, these
change of variables involve scaling by a factor $\sqrt{\delta}.$ It is a
consequence of this scaling by $\sqrt{\delta}$ that
an optimal $\lambda_{\ast}(\beta)$ is bounded, thus allowing the optimization
to be restricted to values of $\lambda$ over a compact interval $[0,K_{2}%
R_{\beta}]$ regardless of how small the radius of ambiguity $\delta$ is.
Moreover, if we let $g_{\delta}(x)$ denote a maximizer for the inner
maximization $\sup_{\gamma\geq0}F(\gamma,\beta,\lambda_{\ast}(\beta);x)$ for
any $\delta,x$ and a fixed $\beta\in B,$ we shall also witness in Proposition
\ref{prop:secondorder-info-smoothcase}b that $g_{\delta}(X) = O_{p}(1),$ as $\delta
\rightarrow0.$ These two properties ensure that the inner and outer
optimization problems $\inf_{\lambda\geq0} E_{P_{0}}\left[ \sup_{\gamma
\in\mathbb{R}}  F(\gamma,\beta,\lambda;X)\right] $ are well-conditioned and
their solutions remain scale-free (with respect to $\delta$).

For algorithms that directly proceed with the dual reformulation in
\cite[Theorem 1]{blanchet_quantifying_2016} or \cite[Theorem 1]%
{gao2016distributionally} without employing the above described scaling of
variables by factor $\sqrt{\delta},$ the resulting dual formulation will have
the property that the solutions to the inner and outer optimization problems
are $O_{p}(\sqrt{\delta})$ and $O(\delta^{-1/2})$ respectively. Consequently,
the local strong convexity coefficient of the dual reformulation obtained
without scaling can be shown to be $O(\delta),$ which is inferior when
compared to the $O(\sqrt{\delta})$ strong convexity coefficient that we have
identified in Theorem \ref{Thm-strong-duality}. Indeed, the focus on strong
convexity and its effect of computational performance in this paper has helped
bring out this nuanced and important effect of the scaling that appears to be
absent in the existing algorithmic approaches for Wasserstein DRO.

\vspace{-0.1in}

\subsection{Enhancements to the SGD scheme in Section \ref{Sec-First-SGD}.}

\label{Sec-Mod-SGD} Our focus in this section is to describe natural
enhancements to the vanilla SGD scheme described in Section
\ref{Sec-First-SGD} by utilizing the convexity characterizations in Section
\ref{Sec-Convexity}.

\vspace{-0.1in}

\subsubsection{A two-time scale stochastic approximation scheme.}

\label{Sec-two-timescale-SGD} Since $\lambda$ is an auxiliary variable
introduced by the duality formulation, it is rather natural to update the
variables $\beta$ and $\lambda$ at different learning rates (step sizes) as
follows: Given iterate $(\beta_{k-1},\lambda_{k-1}),$ generate a sample
$X_{k}$ independently from $P_{0}$ in order to update as follows:
\begin{subequations}
\begin{align}
\tilde{\beta}_{k}  & = \beta_{k-1} - \alpha_{k} \frac{\partial f_{\delta}%
}{\partial\beta}(\beta_{k-1},\lambda_{k-1};X_{k})\label{Two-Time-Scale-1}\\
\tilde{\lambda}_{k}  & = \lambda_{k-1} - \gamma_{k} \frac{\partial f_{\delta}%
}{\partial\lambda}(\beta_{k-1},\lambda_{k-1};X_{k}), \text{ and }%
\label{Two-Time-Scale-2}\\
\theta_{k}  & = \Pi_{\mathbb{W}} \left(  (\tilde{\beta}_{k},\tilde{\lambda
}_{k})\right) .
\end{align}
where the step-sizes $(\alpha_{k})_{k \geq1}, (\gamma_{k})_{k \geq1}$ satisfy
the step-size requirement in Assumption \ref{Assump-step-sizes} with $\tau
\in(1/2,1)$ and $\alpha_{k}/\gamma_{k} \rightarrow0.$ Since $\alpha_{k}$ is
very small relative to $\gamma_{k},$ the iterates $\beta_{k}$ remain
relatively static compared to $\lambda_{k},$ thus having an effect of fixing
$\beta_{k}$ and running \eqref{Two-Time-Scale-2} for a long time. As a result,
the iterates $\lambda_{k}$ appear ``most of the time'' as $\lambda_{\ast
}(\beta_{k})$ in the view of $\beta_{k},$ thus resulting in effective updates
of the form,
\end{subequations}
\begin{align*}
\beta_{k} = \beta_{k-1} - \alpha_{k} \frac{\partial f_{\delta}}{\partial\beta}
(\beta_{k-1}, \lambda_{\ast}(\beta_{k-1}); X_{k}).
\end{align*}

Once again, we consider the averaged iterates $\bar{\theta}_{k},$ defined as
in \eqref{SGD-1} with $\xi= 0.$ Similar to Section \ref{Sec-First-SGD}, if we
let $f_{\ast}:= \inf_{\theta\in B \times\mathbb{R}_{+}} f_{\delta}(\theta),$
it can be argued that $f_{\delta}(\bar{\theta}_{k}) - f_{\ast}= O_{p}(k^{-1})$ in the presence of strong convexity (see \cite[Theorem
2]{mokkadem2006}) that holds in the $\delta< \delta_{1}$ case.
As a result, if $\delta< \delta_{1},$ it suffices to terminate after
$O_{p}(\varepsilon^{-1})$ iterations in order to obtain an
iterate $\bar{\theta}_{k}$ that satisfies $f_{\delta}(\bar{\theta}_{k}) -
f_{\ast}\leq\varepsilon. $
We leave it as a question for future research to develop a precise
understanding of the effect of two time scales in affecting the convergence behaviour.


\vspace{-0.1in}

\subsubsection{Line search based SGD scheme.}

\label{Sec-GSS-SGD} When $\delta< \delta_{0},$ Theorem \ref{result-smoothness}
asserts that $f_{\delta}(\beta,\lambda)$ satisfies strong convexity in the
variable $\beta$ for every fixed $\lambda.$ This strong convexity in variable
$\beta$ holds even if $f_{\delta}(\beta,\lambda)$ may not be jointly strongly
convex in $(\beta,\lambda)$ (for example, when $\delta\in[\delta_{1}%
,\delta_{0})).$ We make use of this observation in this section to describe an
SGD scheme that a) quickly evaluates $h(\lambda) := \inf_{\beta\in B}%
f_{\delta}(\beta,\lambda)$ for any given $\lambda$ and b) utilizes univariate
line search for minimizing $h(\cdot)$ in a suitable interval.

Since $f_{\delta}(\cdot)$ is a convex function, the partial minimization
$h(\lambda) := \inf_{\beta\in B}f_{\delta}(\beta,\lambda)$ defines a
univariate convex function in $\lambda.$ For any fixed $\lambda> 0,$ consider
stochastic gradient descent iterates of the form,
\begin{align*}
\beta_{k}  & := \beta_{k-1} - \alpha_{k} \frac{\partial f_{\delta}}%
{\partial\beta} (\beta_{k-1},\lambda;X_{k}), \quad\text{ and } \quad\bar
{\beta}_{k} := \frac{1 }{k}\sum_{i=1}^{k}\beta_{i},
\end{align*}
where $(X_{k})_{k \geq1}$ are i.i.d. samples of $P_{0}$ and the step-sizes
$(\alpha_{k})_{k \geq1}$ satisfy the requirement in Assumption
\ref{Assump-step-sizes} with $\tau\in(1/2,1)$ and $\xi= 0.$ Then it follows
from the strong convexity characterization in Theorem \ref{result-smoothness}
that $f_{\delta}(\bar{\beta}_{k},\lambda) - h(\lambda) = O_{p}(k^{-1})$ if $\delta< \delta_{0}.$ With the ability to evaluate
the function $h(\lambda) = \inf_{\beta\in B}f_{\delta}(\beta,\lambda)$ within
desired precision, any standard line search method, such as triangle section
method (see Algorithm 3 in \cite{Triangle_Search}), that exploits convexity of
$h(\cdot)$ to achieve linear convergence for line search can be employed to
evaluate $\min_{\lambda} h(\lambda)$ to any desired precision.

With line searches requiring identification of an interval (where the minimum
is attained) to begin with, we restrict the line search over $\lambda$ to the
interval $[0,K_{2}R_{\beta}]$. This is because, due to
Proposition \ref{Prop-V-cont-OPT} and that $\Vert\beta\Vert\leq R_{\beta}$, we
have that the interval $[0,K_{2}R_{\beta}]$ contains optimal $\lambda_{\ast}(\beta)$
for every $\beta\in B.$ It can be argued that the described approach results
in iteration complexity of $O_p(\varepsilon^{-1}\text{poly}%
(\log\varepsilon^{-1}))$ to solve $\min f_{\delta}(\beta,\lambda)$ within
$\varepsilon-$precision when $\delta<\delta_{0}.$ We do not pursue this
derivation here as our objective is to simply demonstrate the versatility of
applications of the structural insights given by the results in Section
\ref{Sec-Convexity}.

Likewise, one could consider a variety of algorithms that accelerate SGD at a
greater computational cost per iteration; such algorithms utilize either
variance reduction (see, for example, \cite{NIPS2013_4937, NIPS2014_5258}), or
momentum based acceleration (see \cite{Allen-Zhu:2017:KFD:3055399.3055448}).
The strong convexity results in Section \ref{Sec-Convexity} could be used to
establish improved rates of convergence for such extensions as well.

\vspace{-0.1in}

\subsection{SGD for nondifferentiable $f_{\delta}$.}
\label{Sec-Large-delta} 
The function $f_{\delta}(\cdot)$ need not be differentiable when the radius of ambiguity $\delta$ exceeds $\delta_{0}$ (or) when the set $B$ is not bounded.  The
iterative algorithms described in Sections \ref{Sec-First-SGD} and
\ref{Sec-Mod-SGD} rely on restricting the iterates $\theta_{k}$ to the set
$\mathbb{W}.$ Such an approach is not feasible when $\delta> \delta_{0}.$ In that case, with the characterization of the
effective domain of $f_{\delta}$ as in Lemma \ref{Lem-eff-dom-f},
define the family of closed convex sets, $(\mathbb{U}_{\eta}: \eta\geq0)$ as,
\begin{align}
\label{Defn-Ueta}\mathbb{U}_{\eta}:= \left\{  (\beta,\lambda) \in B
\times\mathbb{R}_{+}: \lambda\geq\lambda_{thr} \xspace(\beta) + \eta\right\} .
\end{align}
Let $\partial f_{\delta}(\beta,\lambda)$ and $\partial\ell_{rob}
\xspace (\beta,\lambda;x),$ respectively, be the set of subgradients of
$f_{\delta}(\cdot)$ and $\ell_{rob} \xspace(\ \cdot\ ;x)$ at $(\beta
,\lambda).$ Likewise, let $\partial\ell(u) := \text{conv}\{\partial_{-}%
\ell(u)/\partial u , \, \partial_{+}\ell(u)/\partial u \}$ denote the
subgradient set of the univariate function $\ell(\cdot)$ evaluated at $u.$
Then it follows from Proposition \ref{prop:firstorderinfo}b that the set,
\begin{align}
D(\beta,\lambda;x) := \text{conv}\left\{
\begin{pmatrix}
\partial\ell(\beta^{T}\tilde{x})\tilde{x}\\
\sqrt{\delta}\left( 1- g^{2}\beta^{T}A(x)^{-1}\beta\right)
\end{pmatrix}
:
\begin{array}
[c]{c}%
\tilde{x} = x + \sqrt{\delta} gA(x)^{-1}\beta,\\
g \in\Gamma^{\ast}(\beta,\lambda;x)
\end{array}
\right\} \label{noisy-subgradient}%
\end{align}
comprise the subgradient set $\partial\ell_{rob} \xspace(\beta,\lambda;x).$
Similar to Proposition \ref{Prop-sDelta-gradients}, Proposition
\ref{Prop-sDelta-subgradients} below helps in characterizing noisy
subgradients of $f_{\delta}(\cdot).$

\begin{proposition}
Suppose that Assumptions \ref{Assump-c}-\ref{Assump-Obj-Conv} are satisfied and the loss  $\ell(\cdot)$ is of the form $\ell(u) = \max_{i=1,\ldots,K}\ell_i(u)$ for continuously differentiable $\ell_i:\mathbb{R} \rightarrow \mathbb{R}$ and a positive integer $K.$
For any $\eta > 0$ and
fixed $%
(\beta, \lambda) \in \mathbb{U}_\eta,$ let $(X,h(\beta,\lambda;X))$
be such that $X \sim P_0$ and
$h(\beta,\lambda,X) \in D(\beta,\lambda;X),$ $P_0-$%
almost surely. Then $E[h(\beta,\lambda;X)]$ is well-defined and $%
E[h(\beta,\lambda;X)] \in \partial f_\delta(\beta,\lambda).$
\label{Prop-sDelta-subgradients}
\end{proposition}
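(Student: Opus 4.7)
\textbf{Proof proposal for Proposition \ref{Prop-sDelta-subgradients}.} The plan is to use the pointwise characterization $D(\beta,\lambda;x) \subseteq \partial \ell_{rob}(\beta,\lambda;x)$ (which follows from Proposition \ref{prop:firstorderinfo}b together with the convexity in Lemma \ref{Lem-conv-lrob}) and then push the subgradient inequality through the expectation against $P_0$. Concretely, for any selection $h(\beta,\lambda;X) \in D(\beta,\lambda;X)$ and any $\theta' = (\beta',\lambda') \in B \times \mathbb{R}_+$, convexity of $\ell_{rob}(\cdot,\cdot;x)$ yields the pointwise subgradient inequality
\begin{align*}
\ell_{rob}(\theta';X) \,\geq\, \ell_{rob}(\beta,\lambda;X) + \langle h(\beta,\lambda;X),\, \theta' - (\beta,\lambda)\rangle, \qquad P_0\text{-a.s.}
\end{align*}
Once integrability of the three quantities is secured, integration against $P_0$ delivers $f_\delta(\theta') \geq f_\delta(\beta,\lambda) + \langle E[h(\beta,\lambda;X)],\, \theta' - (\beta,\lambda)\rangle$, which is precisely the assertion $E[h(\beta,\lambda;X)] \in \partial f_\delta(\beta,\lambda)$; for $\theta' \notin \mathbb{U}$ the inequality is trivial under the convention $f_\delta(\theta') = +\infty$.

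The substantive step is establishing $L^1(P_0)$-integrability, and the key is the gap $\lambda \geq \lambda_{thr}(\beta) + \eta$ afforded by $(\beta,\lambda) \in \mathbb{U}_\eta$. By Assumption \ref{Assump-Obj-Conv}, $\ell$ is convex with $\ell(u) \leq \kappa u^2 + C$, and a standard convex-analytic estimate (integrating the monotone non-decreasing $\partial \ell$ against the quadratic majorant to extract the asymptotic slope) implies $|\partial \ell(u)| \leq 2\kappa|u| + C_\ell$ for a constant $C_\ell$. The first-order optimality $\partial_\gamma F(g,\beta,\lambda;x) \ni 0$ for $g \in \Gamma^\ast(\beta,\lambda;x)$ gives $2\lambda g \in \partial \ell\bigl(\beta^T x + \sqrt\delta\, g\, \beta^T A(x)^{-1}\beta\bigr)$, so that
\begin{align*}
|g|\bigl(2\lambda - 2\kappa\sqrt\delta\, \beta^T A(x)^{-1}\beta\bigr) \,\leq\, 2\kappa|\beta^T x| + C_\ell.
\end{align*}
Invoking $\lambda \geq \kappa \sqrt\delta\, \beta^T A(x)^{-1}\beta + \eta$ $P_0$-a.e.\ (valid on $\mathbb{U}_\eta$), the bracketed coefficient is bounded below by $2\eta$, producing the uniform bound $|g| \leq C_1(1 + \|X\|)$, $P_0$-a.s., over all $g \in \Gamma^\ast(\beta,\lambda;X)$.

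Combining this uniform bound on $g$ with $\|A(x)^{-1}\| \leq \rho_{\min}^{-1}$ (Assumption \ref{Assump-c}), the linear-growth estimate for $\partial \ell$, and the fourth-moment condition $E_{P_0}\|X\|^4 < \infty$, each component of any selection $h(\beta,\lambda;X) \in D(\beta,\lambda;X)$ is dominated by a degree-two polynomial in $\|X\|$, so $h(\beta,\lambda;X) \in L^1(P_0)$ and $E[h(\beta,\lambda;X)]$ is well-defined. Using the affine minorant $\ell(u) \geq a + bu$ available for any convex real-valued $\ell$, the quadratic upper bound, and the same bound on $g$, the random variables $\ell_{rob}(\beta,\lambda;X)$ and $\ell_{rob}(\theta';X)$ (for $\theta' \in \mathbb{U}$) are likewise dominated by integrable majorants, legitimizing the integration step. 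The principal obstacle is the uniform bound on $g$: it rests on extracting a linear-growth estimate for $\partial \ell$ from the quadratic-growth constant $\kappa$ and on using the positivity of $\lambda - \lambda_{thr}(\beta)$ to keep the coefficient of $|g|$ bounded away from zero. Once this bound is in hand, the remainder of the argument is a standard convex-analytic calculation.
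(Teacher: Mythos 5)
Your overall architecture is the same as the paper's: identify $h(\beta,\lambda;X)$ as a pointwise subgradient of the convex map $\theta \mapsto \ell_{rob}(\theta;X)$, establish integrability, and take expectations of the pointwise subgradient inequality. The problem lies in how you establish the a priori bound on the maximizer $g$, which is the only substantive step. Your claim that convexity plus the quadratic-growth constant $\kappa$ yields $|\partial \ell(u)| \leq 2\kappa|u| + C_\ell$ is false in general. The sharp statement is this: if $\ell$ is convex, $\ell(u) \leq s u^2 + C$, and a subgradient at $u_0$ equals $L$, then the affine minorant $\ell(u_0)+L(u-u_0)$ must lie below $su^2+C$ everywhere, which forces only $L \leq 2su_0 + 2\sqrt{s^2u_0^2 + s(C-\ell(u_0))}$; when $\ell(u_0)$ is bounded this gives $L \leq 4s|u_0| + O(\sqrt{|u_0|})$, and the coefficient $4s$ is attained (take $\ell(u) = \sup_{n}\max\{0,\,4s u_n(u-u_n)\}$ for a rapidly increasing sequence $u_n$; each line stays below $su^2$, the envelope has $\kappa = s$, yet the slope near $u_n + u_{n-1}$ is $4su_n \approx 4su$). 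With the correct coefficient $4\kappa$ your key inequality becomes $|g|\bigl(2\lambda - 4\kappa\sqrt{\delta}\,\beta^TA(x)^{-1}\beta\bigr) \leq 4\kappa|\beta^Tx| + C_\ell$, and membership in $\mathbb{U}_\eta$ only guarantees $\lambda \geq \kappa\sqrt{\delta}\,\beta^TA(x)^{-1}\beta + \eta$, so the bracketed coefficient can be negative (e.g. $\lambda = (\kappa+\varepsilon)\sqrt{\delta}\,\beta^TA(x)^{-1}\beta$ with $\varepsilon < \kappa$) and the inequality yields no bound on $|g|$ at all.

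The underlying reason the first-order condition is insufficient is that it is only a local statement; the correct bound requires global optimality. This is exactly what Lemma \ref{Lem-Finite-Exp}a (already stated in the paper and proved in the appendix) exploits: comparing $F(g,\beta,\lambda;x) \geq F(0,\beta,\lambda;x)$ and using $\ell(u) \leq (\kappa+\varepsilon/2)u^2 + C_\varepsilon$ together with $\lambda \geq (\kappa+\varepsilon)\sqrt{\delta}\,\beta^TA(x)^{-1}\beta$ produces a quadratic inequality in $g$ whose leading coefficient is strictly negative (of order $\varepsilon$), yielding $\sqrt{\delta}\,|g|\,\beta^TA(x)^{-1}\beta \leq 1 + C_1\varepsilon^{-1}(1+|\beta^Tx|)$ uniformly over $\Gamma^\ast(\beta,\lambda;x)$; on $\mathbb{U}_\eta$ one may take $\varepsilon = \eta\delta^{-1/2}\rho_{\min}R_\beta^{-2}$. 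Replacing your derivation of the bound on $g$ with an appeal to this lemma repairs the proof, and the remainder of your argument (linear growth of $\partial\ell$ with an unspecified constant, Cauchy--Schwarz, the moment assumptions, and the expectation of the subgradient inequality with the convention $f_\delta = +\infty$ off $\mathbb{U}$) then matches the paper's proof.
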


The proof of Proposition \ref{Prop-sDelta-subgradients} is available in
Appendix \ref{Sec-App-Proofs}. Following Proposition
\ref{Prop-sDelta-subgradients}, consider an iterative scheme utilizing noisy
subgradients as follows. Given fixed $\eta> 0, \xi\geq1$ and iterate
$\theta_{k-1} = (\beta_{k-1},\lambda_{k-1})$ from $(k-1)$-st iteration, the $k
$-th iterate is computed as follows:
\begin{align}
\theta_{k} := \Pi_{\mathbb{U}_{\eta}} \big(\theta_{k-1} - \alpha_{k}
H_{k}\big) \quad\text{and } \quad\bar{\theta}_{k} = \left(  1 - \frac{\xi+
1}{k + \xi}\right)  \bar{\theta}_{k-1} + \frac{\xi+ 1}{k + \xi} \theta
_{k},\label{SGD-Nonsmooth}%
\end{align}
where the step-size sequence $(\alpha_{k})_{k \geq1}$ satisfies Assumption
\ref{Assump-step-sizes} with $\tau= 1/2$ and $H_{k}$ is computed as follows:

\begin{itemize}
\item[a)] Generate a sample $X_{k}$ independently from the distribution
$P_{0};$

\item[b)] Pick any $g \in\Gamma^{\ast}(\beta,\lambda;X_{k})$ by solving the
univariate search $\sup_{\gamma\in\mathbb{R}}F(\gamma, \beta_{k-1}%
,\lambda_{k-1};X_{k});$

\item[c)] Let $\tilde{X}_{k} := X_{k} + \sqrt{\delta} g A(X_{k})^{-1}\beta,$
and take $H_{k} \in D(\beta_{k-1},\lambda_{k-1};X_{k})$ as,
\begin{align*}
H_{k} :=
\begin{pmatrix}
L^{\prime}\tilde{X}_{k}\\
\sqrt{\delta}\left( 1- g^{2}\beta_{k-1}^{T}A(X_{k})^{-1}\beta_{k-1}\right)
\end{pmatrix}
,
\end{align*}
where $L^{\prime}$ is selected uniformly at random from the interval
$[\partial_{-}\ell(\beta_{k-1}^{T}\tilde{X}_{k})/\partial u , \, \partial
_{+}\ell(\beta_{k-1}^{T}\tilde{X}_{k})/\partial u] =: \partial\ell(\beta
_{k-1}^{T}\tilde{X}_{k}).$
\end{itemize}

It is immediate from \eqref{noisy-subgradient} that $H_{k} \in D(\beta
_{k-1},\lambda_{k-1};X_{k}).$ Then due to Proposition
\ref{Prop-sDelta-subgradients}, we have that $EH_{k} \in\partial f_{\delta}(\beta
_{k-1},\lambda_{k-1}).$
Due to the convexity of $f_{\delta}(\cdot)$ characterized in Theorem
\ref{Thm-Str-Convexity}, we have the following rates of convergence for
$f_{\delta}(\bar{\theta}_{k}) - f_{\ast},$ as $k \rightarrow\infty.$ The proof
of Proposition \ref{Prop-Conv-SGD-2} is presented in Appendix
\ref{Sec-App-Proofs}.

\begin{proposition}
Suppose that Assumptions \ref{Assump-c}-\ref{Assump-Obj-Conv} are satisfied and the loss  $\ell(\cdot)$ is of the form $\ell(u) = \max_{i=1,\ldots,K}\ell_i(u)$ for continuously differentiable $\ell_i:\mathbb{R} \rightarrow \mathbb{R}$ and a positive integer $K.$ In addition, suppose that the constants $\xi$ in \eqref{SGD-Nonsmooth} and $\tau$ in Assumption \ref{Assump-step-sizes} are such that $\xi \geq 1$ and $\tau = 1/2.$ Then we have
$f_\delta(\bar{\theta}_k) - f_\ast \leq \eta \sqrt{\delta} +
O_p(k^{-1/2}).$ \label%
{Prop-Conv-SGD-2}
\end{proposition}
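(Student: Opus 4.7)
\emph{Proof proposal.} The plan is to decompose the error as
\[
f_\delta(\bar{\theta}_k) - f_\ast \, = \, \bigl[f_\delta(\bar{\theta}_k) - \inf\nolimits_{\theta \in \mathbb{U}_\eta}f_\delta(\theta)\bigr] \, + \, \bigl[\inf\nolimits_{\theta \in \mathbb{U}_\eta} f_\delta(\theta) - f_\ast\bigr],
\]
and handle the two terms separately. The first term is the optimization error of the projected stochastic subgradient method on the set $\mathbb{U}_\eta$; the second term is the bias introduced by restricting the feasible set to $\mathbb{U}_\eta$ rather than all of $B \times \mathbb{R}_+$. A preliminary observation is that $\mathbb{U}_\eta$ is closed and convex: the map $\beta \mapsto \beta^T A(x)^{-1}\beta$ is convex for each $x$ (since $A(x)\in\mathbb{S}_d^{++}$), and the $P_0$-essential supremum of a convex family is again convex, so $\mathbb{U}_\eta$ is the shifted epigraph of a convex function in $\beta.$

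For the bias term, I would let $\theta_\ast = (\beta_\ast,\lambda_\ast(\beta_\ast))$ attain $f_\ast.$ By Theorem \ref{Thm-WC-Dist}(b), $\lambda_\ast(\beta_\ast) \geq \lambda_{thr}(\beta_\ast),$ hence $(\beta_\ast,\lambda_\ast(\beta_\ast)+\eta) \in \mathbb{U}_\eta.$ Using convexity of $f_\delta$ in $\lambda$ together with the directional derivative formula \eqref{eqn-directional-derivative-d}, namely $\partial_+\ell_{rob}/\partial\lambda = \sqrt{\delta}\max_{g\in\Gamma^\ast}(1 - g^2\beta^T A(x)^{-1}\beta),$ I would argue that for $\lambda \geq \lambda_\ast(\beta_\ast)$ the right $\lambda$-derivative of $f_\delta$ is nonnegative (by optimality of $\lambda_\ast$) and bounded above by $\sqrt{\delta}$ (since $g^2\beta^T A^{-1}\beta \geq 0$). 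Integrating this bound over $[\lambda_\ast(\beta_\ast),\lambda_\ast(\beta_\ast)+\eta]$ yields $\inf_{\mathbb{U}_\eta}f_\delta \leq f_\delta(\beta_\ast,\lambda_\ast(\beta_\ast)+\eta) \leq f_\ast + \eta\sqrt{\delta}.$

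For the optimization error, I would invoke a standard convergence bound for projected stochastic subgradient descent with polynomial-decay averaging ($\xi \geq 1,$ $\tau=1/2$) on a convex objective over a closed convex set; such results give $\inf_{\mathbb{U}_\eta} f_\delta$-suboptimality of order $O_p(k^{-1/2})$ (see, e.g., \cite{Shamir:2013:SGD:3042817.3042827}). Two hypotheses must be verified: (i) $E[H_k \mid \mathcal{F}_{k-1}] \in \partial f_\delta(\theta_{k-1}),$ which follows from the construction of $H_k$ together with Proposition \ref{Prop-sDelta-subgradients}; and (ii) $\sup_k E\Vert H_k\Vert^2 < \infty.$

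The main obstacle is (ii). For the $\lambda$-component $\sqrt{\delta}(1 - g^2\beta_{k-1}^T A(X_k)^{-1}\beta_{k-1})$ of $H_k$, the projection onto $\mathbb{U}_\eta$ enforces $\lambda_{k-1} \geq \lambda_{thr}(\beta_{k-1}) + \eta,$ and Lemma \ref{Lem-eff-dom-f} combined with standard envelope arguments on \eqref{Univar-Opt} then yields a uniform upper bound on $g^2\beta_{k-1}^T A(X_k)^{-1}\beta_{k-1}$ depending only on $\eta,\delta,\kappa,\rho_{\min},\rho_{\max}$; this gives an $L_2$ bound on this component. For the $\beta$-component $L^\prime \tilde{X}_k,$ I would use that the quadratic growth encoded by Assumption \ref{Assump-Obj-Conv} implies $\vert \partial\ell(u)\vert \leq c_0(1+\vert u\vert)$ for a constant $c_0,$ then bound $\Vert \tilde{X}_k \Vert \leq \Vert X_k\Vert + \sqrt{\delta}\vert g\vert \rho_{\min}^{-1} R_\beta$ and use $E_{P_0}\Vert X\Vert^4 < \infty$ together with the uniform bound on $\vert g\vert$ above. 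These estimates produce a constant $C(\eta,\delta,R_\beta,\rho_{\min},\rho_{\max},\kappa)$ such that $\sup_k E\Vert H_k \Vert^2 \leq C,$ closing the argument. Combining (i)--(ii) with the bias bound yields $f_\delta(\bar{\theta}_k) - f_\ast \leq \eta\sqrt{\delta} + O_p(k^{-1/2}),$ as claimed.
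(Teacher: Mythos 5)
Your proposal is correct and follows essentially the same route as the paper: the same decomposition into an optimization error over $\mathbb{U}_\eta$ plus a restriction bias, the same $\sqrt{\delta}$ bound on $\partial_+ f_\delta/\partial\lambda$ to control the bias by $\eta\sqrt{\delta}$, and the same appeal to the averaged-SGD rates of \cite{Shamir:2013:SGD:3042817.3042827} after verifying unbiasedness via Proposition \ref{Prop-sDelta-subgradients} and the uniform second-moment bound (which the paper packages as Lemma \ref{Lem-Grad-Sec-Moment}, proved from the same ingredients you list). Your explicit check that $(\beta_\ast,\lambda_\ast(\beta_\ast)+\eta)\in\mathbb{U}_\eta$ is a detail the paper leaves implicit, but otherwise the arguments coincide.
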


Consequently, if we choose $\eta$ small enough and use $L$ to denote the
computational effort needed to solve the line search $\sup_{\gamma}%
F(\gamma,\beta,\lambda;X)$ for any $(\beta,\lambda) \in\mathbb{U}_{\eta},$
then the total computational effort needed to obtain estimates of $f_{\ast}$
within $\varepsilon-$precision is $O_{p}(L\varepsilon^{-2}).$ A brief
description of the complexity $L$ introduced by the line search can be found
in Appendix \ref{Sec-Line-Search}.


\section{Numerical experiments}
\label{Sec-Eg}
In this section, we provide some illustrative examples in the contexts of supervised learning and portfolio optimization. All the numerical examples were carried out in a laptop computer with a 2.2GHz Inter Core i7 CPU and 16GB memory. We keep in mind that our goal in this section is to demonstrate empirically the structural properties that we derived and their implications for algorithmic performance. We are not concerned with a specific choice of $\delta$, which is typically done via cross validation in a typical data driven setting.

\subsection{Illustrative examples from supervised learning.}
\label{subsec-supervised-earling}
 The out-of-sample performance advantages of utilizing optimal transport costs with Mahalanobis distances have been demonstrated comprehensively with real data classification examples in \cite{blanchet2017data}.
Therefore, in the interest of space and to avoid repetition, we restrict the focus in this subsection to reporting the results of stylized numerical experiments which accomplish the following enumerated goals: 1) compare the iteration complexity of the iterative scheme  proposed
in Section \ref{Sec-First-SGD} for the DRO formulation  \eqref{DRO_B0} with that of the benchmark stochastic gradient  descent for its non-robust
counterpart \eqref{Stoch_Opt_Model}; 2) provide a visualization of the worst case distribution; and 3)  study the iteration complexity when the twice  differentiability assumption (made in order to prove Theorem  \ref{Thm-Str-Convexity}) is relaxed.

\vspace{-0.1in}

\subsubsection{Modifications of notations for supervised learning.}
As supervised learning problems typically involve a response variable in
addition to the predictor variables $X,$ we first discuss how the DRO
formulation in \eqref{DRO_B0} can be utilized in the presence of the
additional response variable. Let us use $Y$ to denote the response variable
in the rest of this section. We begin by
treating the response $Y$ as a random parameter of the loss function
$\ell(\cdot)$, so the assumptions applied to $\ell(\cdot)$ should be replaced
by that of $\ell(\,\cdot\ ;Y)$ when considering problems with response
variable $Y.$ In addition, the reference measure $P_{0}\in\mathcal{P}%
(\mathbb{R}^{d}\times\mathbb{R})$ is modified to characterize the joint
distribution of $(X,Y)$. Further, as we assume the ambiguity only appears on
the predictors $X,$ we defined the optimal transport between $P\in
\mathcal{P}(\mathbb{R}^{d}\times\mathbb{R})$ and $P_{0}\in\mathcal{P}%
(\mathbb{R}^{d}\times\mathbb{R})$ can be modified as,
\begin{align*}
D_{c}\left(  P_{0},P\right)  =\inf\left\{ E_{\pi}\left[ c\left(  X,X^{\prime
}\right)  \right]  :
\begin{array}
[c]{ll}%
\pi\in\mathcal{P}\big(\mathbb{R}^{d}\times\mathbb{R}\times\mathbb{R}^{d}%
\times\mathbb{R}\big), \pi(Y=Y^{\prime}) = 1, & \\
\pi_{(X,Y)}=P_{0},\pi_{(X^{\prime},Y^{\prime})}=P. &
\end{array}
\right\} ,
\end{align*}
where $\pi$ is the joint distribution of $(X,Y,X^{\prime},Y^{\prime}).$

Using the modified model, if $\ell(\cdot;y)$ satisfies the assumptions of
$\ell(\cdot)$ for $P_{0}-$almost every $y$, then all the results and
algorithms developed in the previous sections are still valid. The proof of
the generalized result is essentially same as before, as we just need to
replace $\ell(\cdot)$ by $\ell(\cdot;Y)$ in the proof as well.

\vspace{-0.1in}

\subsubsection{Logistic regression.}

We consider the case of binary classification, where the data is given by
$\{(X_{i},Y_{i})\}_{i = 1}^{n}$, with predictor $X_{i}\in\mathbb{R}^{d}$ and
label $Y_{i}\in\{-1,1\}$. In this case, the logistic loss function is
\[
\ell(u;y) = \log\left( 1+\exp(-yu)\right) .
\]
We are interested in solving the distributionally robust logistic regression
problem,
\begin{align*}
\inf_{\beta\in B} \sup_{P: D_{c}(P_{n},P) \leq\delta} E_{P}\left[  \ell
(\beta^{T}X;Y)\right]
\end{align*}
where $P_{0} = P_{n}(dx,dy):= \frac{1}{n}\sum_{i=1}^{n}\delta_{\{(X_{i},Y_{i}%
)\}}(dx,dy)$ is the empirical measure of data.

 In Appendix \ref{Sec-table} we demonstrate that the assumptions in Section \ref{Sec-Convexity} are naturally satisfied by the logistic loss $\ell(\cdot\,;y),$ and therein we also include computation of related constants. Consequently, all of the algorithms and theoretical results developed in this paper are applicable to the logistic regression example.





We design a numerical experiment to test the performance of our algorithm on
distributionally robust logistic regression. The data is generated from normal
distribution, with different mean for each class and same variance. 
The total
number of data points ranges among $n\in\{64, 256, 1024\}$, and the dimension of data is $d=32$.

We implement the iterative scheme provided in Section
\ref{sec-iterative-scheme} to solve the ordinary logistic regression (with
$\delta=0)$ and its distributionally robust counterpart ($\delta>0)$. In the numerical experiment we choose $A(x) = \mathbb{I}_d$.
To compare the rates of convergence of these two models, same learning rate (or
step size) on $\beta$ is adapted. The parameter $\tau$ in Assumption
\ref{Assump-step-sizes} is chosen to be 0.55. We use the value of loss
function at $10^{5}$ iterations as the approximate optimal loss, then we plot
the optimality gap (Error) versus number of iterations for DRO-model and
ordinary logistic model in Figure \ref{fig-logistic-fig2}.

\begin{figure}[ptb]
\centering
\begin{subfigure}[]{5.3cm}
    \centering
	\includegraphics[width = 5.3cm]{./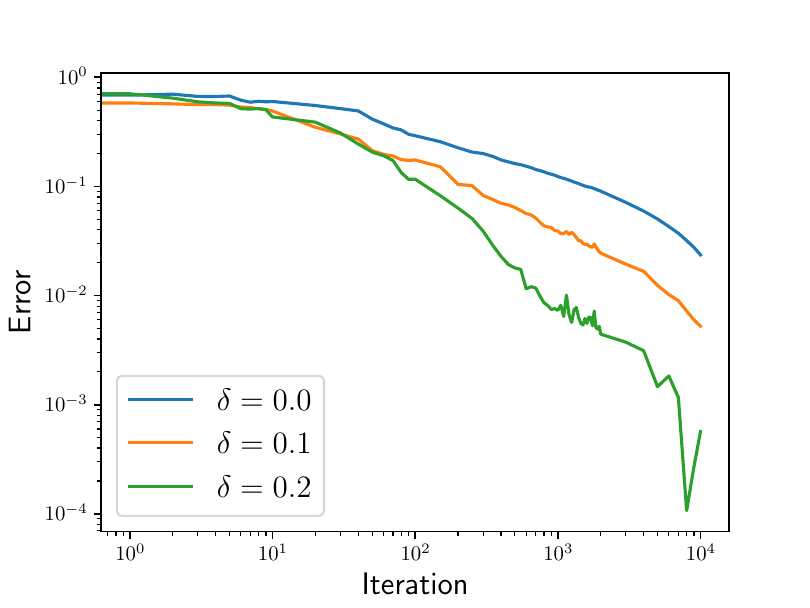}
	\caption{$n = 64$}
\end{subfigure}
\begin{subfigure}[]{5.3cm}
    \centering
    \includegraphics[width=5.3cm]{./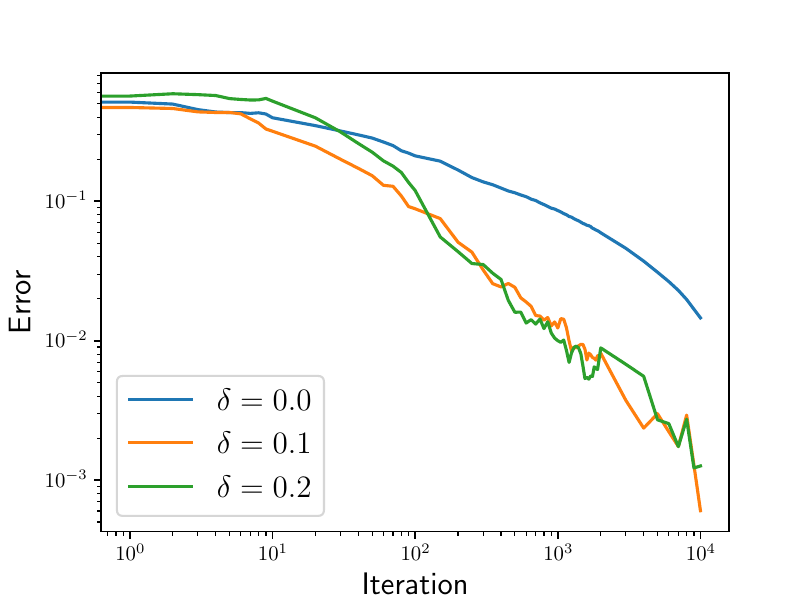}
    \caption{$n = 256$}
\end{subfigure}
\begin{subfigure}[]{5.3cm}
    \centering
	\includegraphics[width = 5.3cm]{./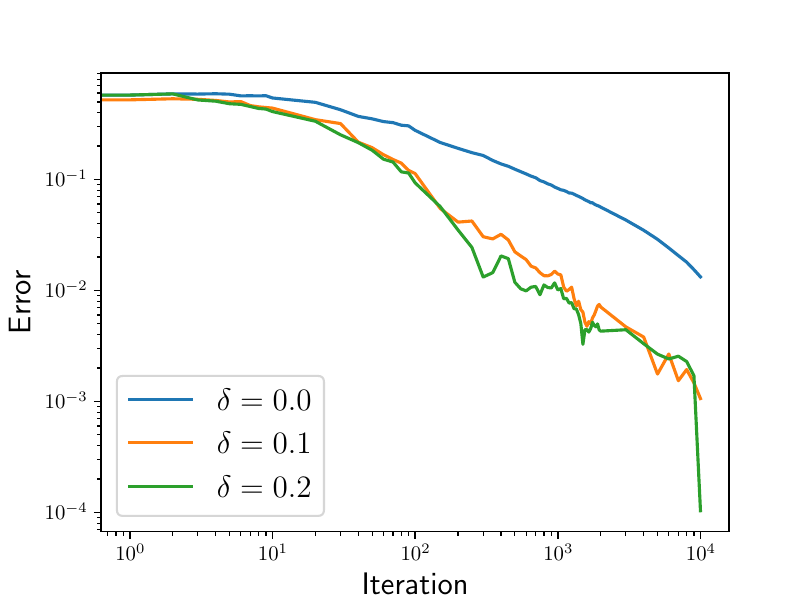}
	\caption{$n = 1024$}
\end{subfigure}
\caption{Convergence of loss function for logistic regression}
\label{fig-logistic-fig2}
\end{figure}


\begin{figure}[ptb]
\vspace{-0.15in} \hspace{-0.81in}
\includegraphics[width = 8in]{./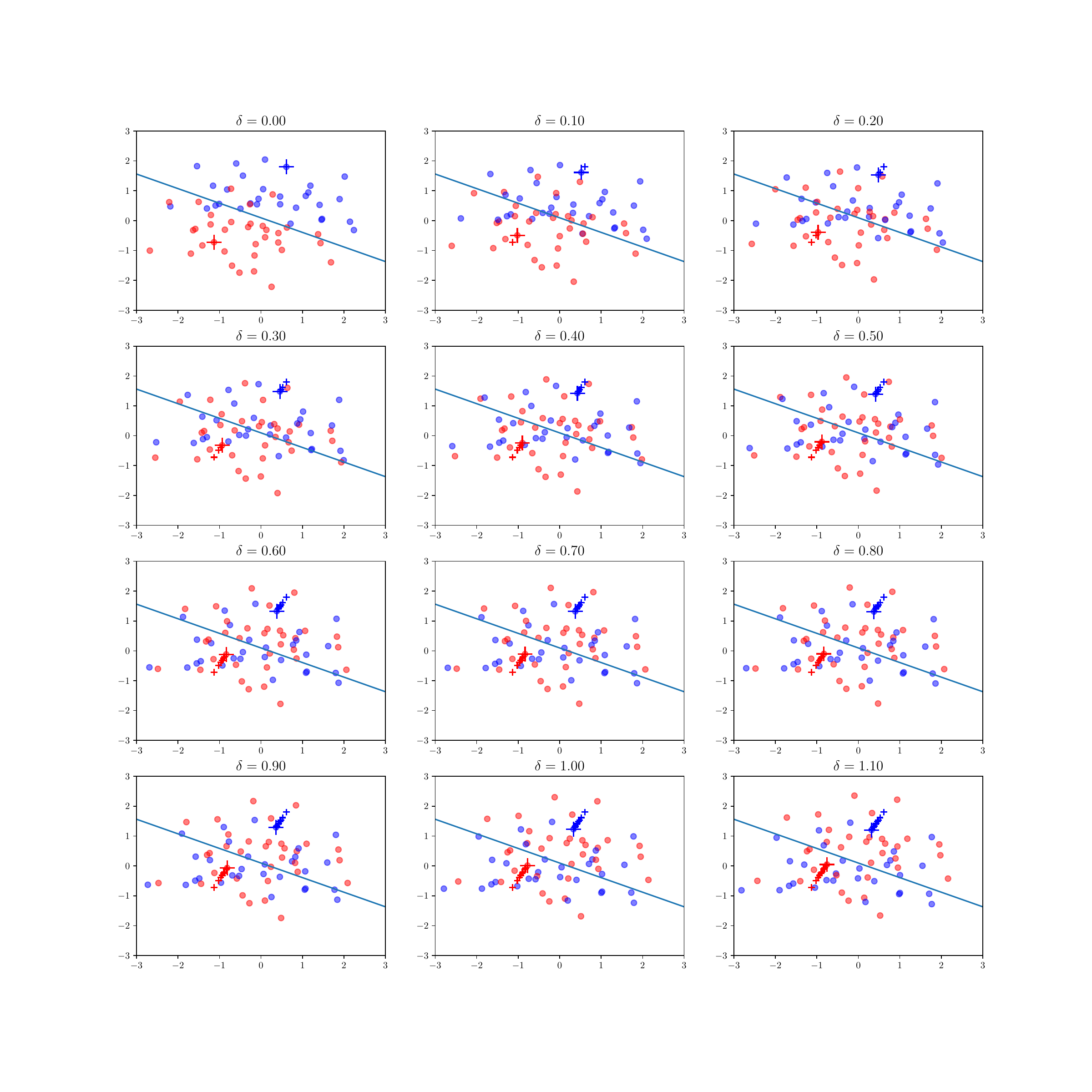} \vspace
{-0.8in}\caption{Decision boundary and worst case distribution. To facilitate tracking the change of $X_{\delta}^{\ast}$ when $\delta$ is increasing, we select one point from each class and use a big $+$ to mark its position. We also employ a small $+$ to mark its previous position when $\delta$ is smaller so that the trajectory of the point is visible. We can observe, as predicted by our theoretical results, that $X_{\delta}^{\ast}$ moves parametrically in a linear direction as $\delta$ changes. Moreover, the speed of displacement is decreasing, which is consistent with the $\sqrt{\delta}$ scaling size discussed in Theorem \ref{Prop-Comp-Stat}. It is worth noting the dynamics of the worse-case distribution, which transports the different classes in opposite directions in order to maximize the loss for misclassification.}
\label{fig-logistic-fig1}%
\end{figure}

Next, in the sequence of subplots in Figure \ref{fig-logistic-fig1}, we attempt to visualize how the worst case distributions $\{X_{\delta}^{\ast}:\delta > 0\}$ change as the radius $\delta$ is increased. In the first subplot corresponding to $\delta = 0,$ we have 64 independent samples of $X \in \mathbb{R}^2$ and the decision boundary obtained from the ordinary logistic regression.  The dots in different color denote the data in different classes: on the lower left side the data is classified to be red and on the upper right side the data is classified to be blue. Naturally, when $\delta= 0$ most of the data points are correctly classified.
Then, fixing the decision boundary to be the same as that obtained from the ordinary logistic regression, we increase the transportation budget $\delta$ and display the respective worst case
distribution computed with $\beta$ fixed to that obtained from the  ordinary logistic regression estimator. The
worst case distributions $X_{\delta}^{\ast}$ for different $\delta$ are visualized in the
subsequent plots. We can observe that more and more points are misclassified when $\delta$ is increasing, and in the last plot the misclassification rate is
larger than 50\%. In addition, the trajectory of $X_{\delta}^{\ast}$ forms a straight line moving towards the wrong side of the decision boundary, which are aligned with our observations pertaining to comparative statics in Theorem \ref{Thm-WC-Dist} (see Section \ref{Sec-Comp-Statics}).

\vspace{-0.1in}

\subsubsection{Linear regression.}
\label{sec-linear-regression} Now we turn to consider the example of linear
regression with squared loss function. In this data is given by $\{(X_{i}%
,Y_{i})\}_{i = 1}^{n}$, with predictor $X_{i}\in\mathbb{R}^{d}$ and label
$Y_{i}\in\mathbb{R}$. We consider the squared loss function $\ell(u;y) =
(y-u)^{2}$ in this example, and the reference measure is defined as the
empirical measure $P_0 = P_{n}(dx,dy):= \frac{1}{n}\sum_{i=1}^{n}\delta_{\{(X_{i}%
,Y_{i})\}}(dx,dy)$. Then, the distributionally robust linear regression problem
is defined as
\begin{align*}
\inf_{\beta\in B} \sup_{P: D_{c}(P_{n},P) \leq\delta} E_{P}\left[  \ell
(\beta^{T}X;Y)\right] .
\end{align*}


Following a similar argument as in the example of logistic regression, it is
not hard to verify the squared loss function satisfies all the assumptions regarding the loss function. We refer the interested readers to Appendix \ref{Sec-table} for verification of assumptions and computation of related constants.

Actually, in this example, the dual objective function can be computed in
closed form. The distributionally robust linear regression problem is
equivalent to
\[
\inf_{\beta\in B}\inf_{\lambda\geq0} \left\{
\lambda\sqrt{\delta}+
\frac{1}{n} \sum_{i=1}^{n}%
\frac{\lambda(\beta^{T}X_{i}-Y_{i})^{2}}{\lambda-\sqrt{\delta}\beta^{T}%
A(X_{i})^{-1}\beta} \right\}
\]

Now we explain the setting of our numerical experiment in this example. The
dimension of data is $d = 16$, 

and we randomly generate three different training datasets of size $n\in\{64,256,1024\}$. The matrix appears in the cost function is chosen as $A(x) = \mathbb{I}_d$. We apply the iterative scheme in Section
\ref{sec-iterative-scheme} to solve the ordinary linear regression model (with
$\delta= 0$) and its distributionally robust counterpart ($\delta>0$). Again,
we adapt the same learning rate for both model and chosen parameter $\tau=
0.55$ in Assumption \ref{Assump-step-sizes}. The plot of optimality gaps
(Error) versus iterations for DRO-model and ordinary linear regression model
is given in Figure \ref{fig-linear}.

\begin{figure}[ptb]
\centering
\begin{subfigure}[]{5.3cm}
    \centering
	\includegraphics[width = 5.3cm]{./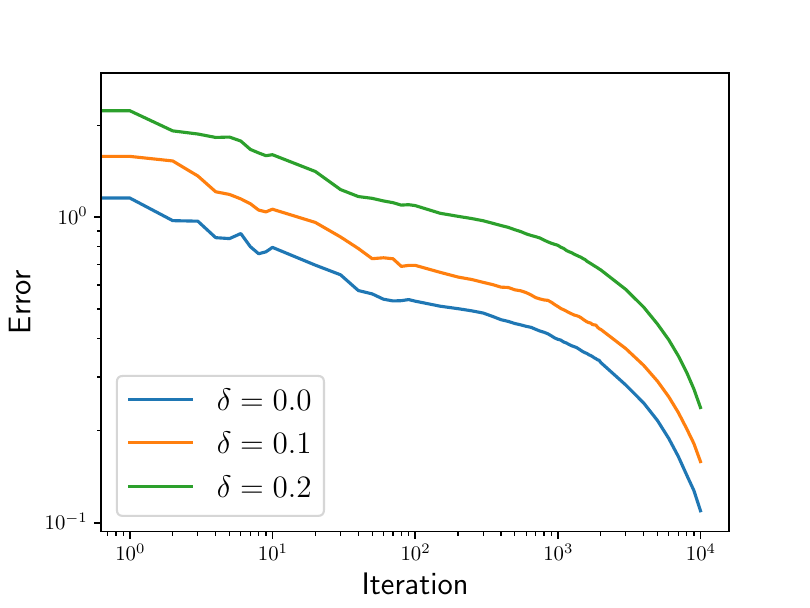}
	\caption{$n = 64$}
\end{subfigure}
\begin{subfigure}[]{5.3cm}
    \centering
    \includegraphics[width=5.3cm]{./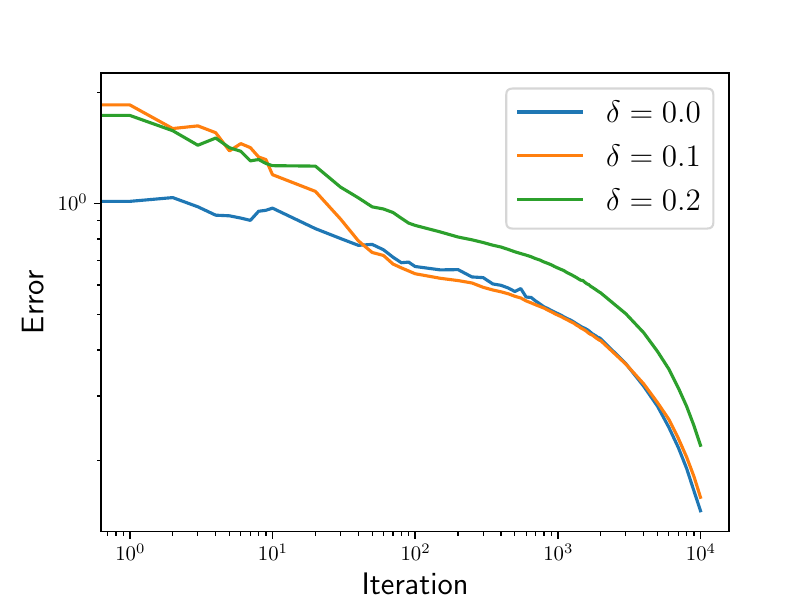}
    \caption{$n = 256$}
\end{subfigure}
\begin{subfigure}[]{5.3cm}
    \centering
	\includegraphics[width = 5.3cm]{./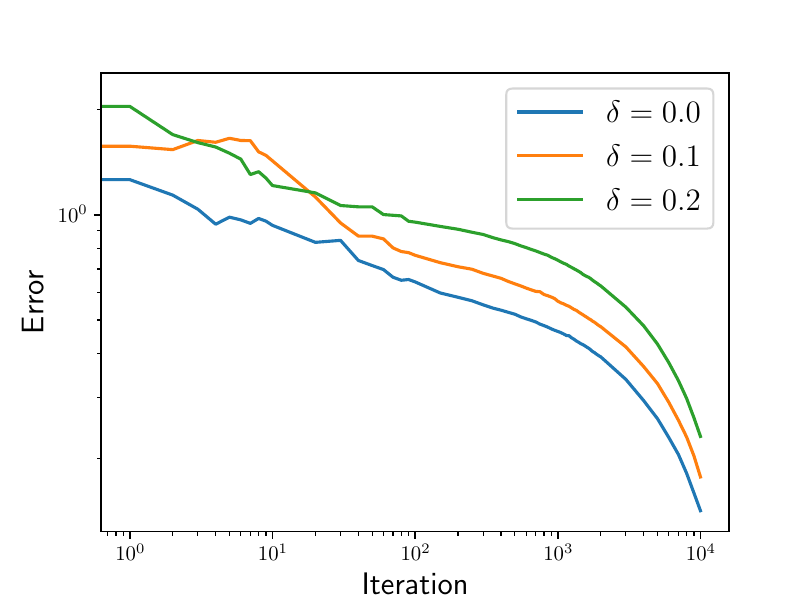}
	\caption{$n = 1024$}
\end{subfigure}
\caption{Convergence of loss function for linear regression}
\label{fig-linear}
\end{figure}

\subsubsection{Support vector machines.}

We consider the case of binary classification, where the data is given by
$\{(X_{i},Y_{i})\}_{i = 1}^{n}$, same as the data in the example of logistic
regression. The hinge loss function is $\ell(u;y) = \max\left( 0,1-yu\right) .
$ We are interested in solving the distributionally robust hinge loss
minimization problem,
\begin{align*}
\inf_{\beta\in B} \sup_{P: D_{c}(P_{n},P) \leq\delta} E_{P}\left[  \ell
(\beta^{T}X;Y)\right] ,
\end{align*}
where $P_0 = P_{n}(dx,dy):= \frac{1}{n}\sum_{i=1}^{n}\delta_{\{(X_{i},Y_{i}%
)\}}(dx,dy)$ is the empirical measure of data.

The algorithm to solve DRO with piecewise continuously differentiable function is discussed in Section \ref{Sec-Large-delta}. We present the procedure of verification of related assumptions and computation of constants in Appendix \ref{Sec-table}.



In the numerical experiment, we use the same data as the example of logistic
regression. Again, we set the learning rate to be same for DRO and non-DRO algorithms. Figure \ref{fig-svm-fig2} shows the path of optimality
gaps of loss functions during iterations. We use the value of loss function at
$10^{5}$ iteration as the approximate optimal loss given training samples,
and plot the optimality gap (Error) versus number of iterations in Figure
\ref{fig-svm-fig2}.

\begin{figure}[ptb]
\centering
\begin{subfigure}[]{5.3cm}
    \centering
	\includegraphics[width = 5.3cm]{./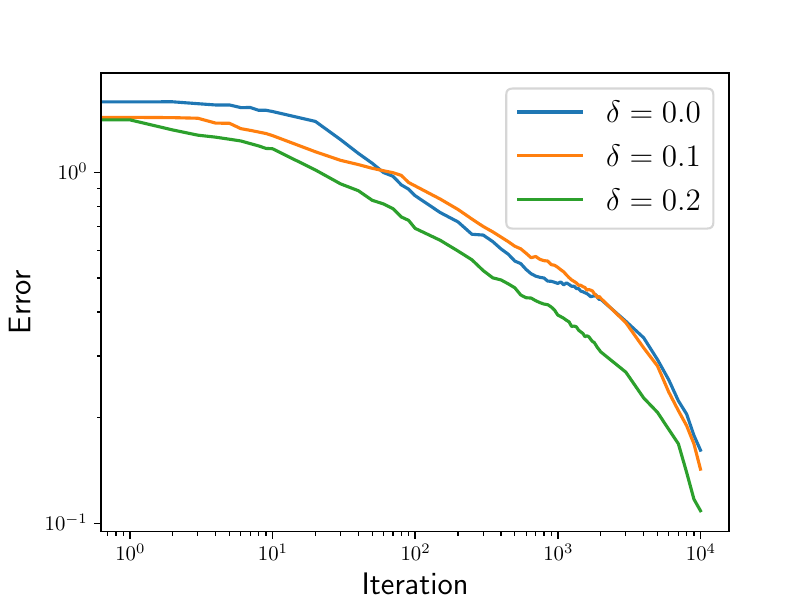}
	\caption{$n = 64$}
\end{subfigure}
\begin{subfigure}[]{5.3cm}
    \centering
    \includegraphics[width=5.3cm]{./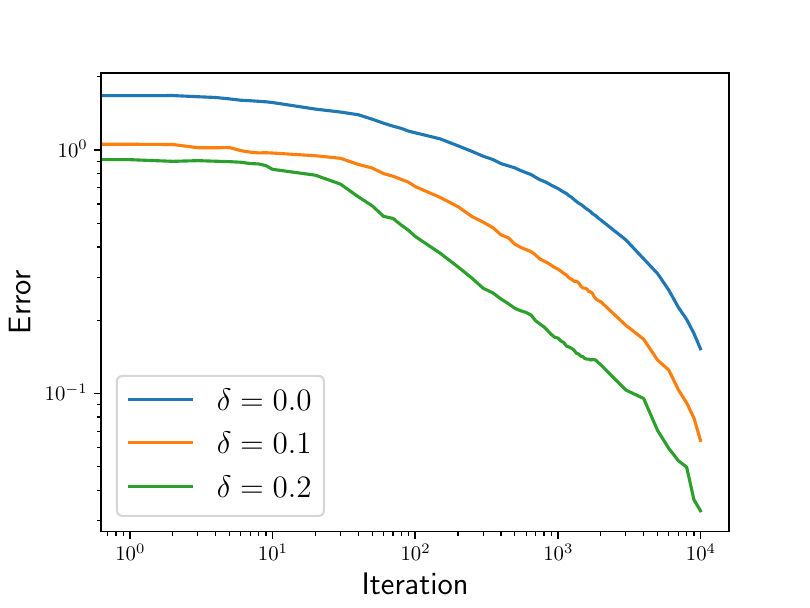}
    \caption{$n = 256$}
\end{subfigure}
\begin{subfigure}[]{5.3cm}
    \centering
	\includegraphics[width = 5.3cm]{./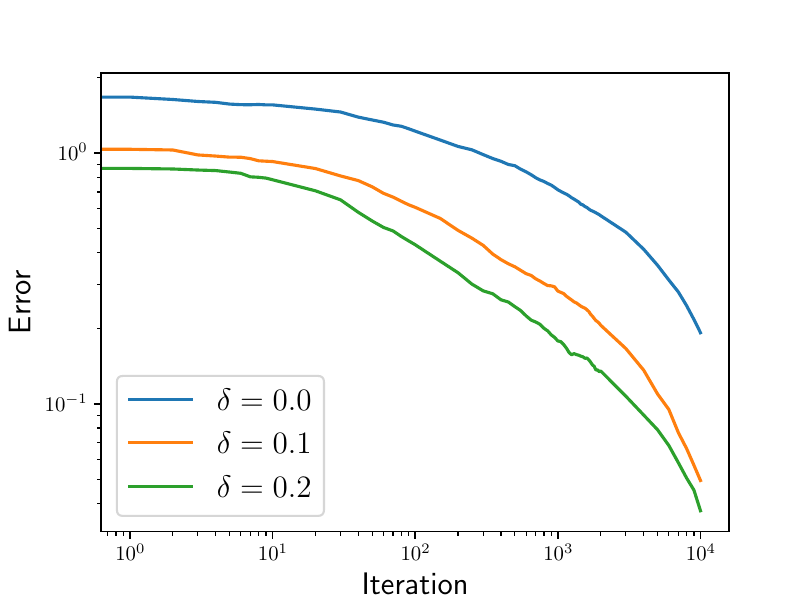}
	\caption{$n = 1024$}
\end{subfigure}
\caption{Convergence of loss function for support vector machines}
\label{fig-svm-fig2}
\end{figure}

\subsubsection{Comparison against conic programming reformulation}
\label{sec-sgd-socp}

Here we provide a comparative numerical example against a direct convex optimization approach \cite[Proposition 4]{hanasusanto2018conic}. For data-driven DRO with piecewise linear convex loss function of the form $\ell(u; y) = \max_{k = 1,\ldots,K}\{a_k\cdot (u-y) + b_k\},$ and matrix appears in the cost function chosen as $A(x) = \mathbb{I}_d$, the second order cone program (SOCP) reformulation in \cite[Proposition 4]{hanasusanto2018conic} obtained by letting  $P_0 = P_{n}(dx,dy):= \frac{1}{n}\sum_{i=1}^{n}\delta_{\{(X_{i},Y_{i})\}}(dx,dy)$ is given as below in \eqref{eq-socp-reform}.
\begin{align}
\label{eq-socp-reform}
\begin{array}{ll}
\inf~ & \lambda\delta + \frac{1}{n} \sum_{i = 1}^n s_i\\
\mathrm{s.t.}~ &  \beta \in B,\; \lambda\in\mathbb{R}_{+},\; s \in \mathbb{R}^{n}_+, \\
& s_i+a_k Y_i -b_k - a_k \beta^{T}X_i \geq 0
\qquad \forall k = 1,\ldots, K,\; \forall i = 1,\ldots, n,
\\
& \left\Vert
  \begin{array}{c}
  2c_k\beta\\
  s_i+a_k Y_i -b_k - a_k \beta^{T}X_i - \lambda
  \end{array}
  \right\Vert_2
  \leq s_i+a_k Y_i -b_k - a_k \beta^{T}X_i + \lambda\qquad
  \begin{array}{l}
  \forall k = 1,\ldots, K,\\
  \forall i = 1,\ldots, n. 
  \end{array}
\end{array}
\end{align}
If the loss function is not piecewise linear, such as square or logistic loss, one may solve the SOCP reformulation corresponding to a piecewise linear approximation. We invoke the linear regression model in Section \ref{sec-linear-regression} as an example for comparing the numerical performances of the direct convex optimization approach and the proposed SGD approach. We approximate the square loss function $\ell(u;y) = (u-y)^2$ with piecewise linear functions comprising $K = 9$ and $K=19$ linear functions in separate instances. The linear functions are chosen to be the tangent line of the loss function $\ell(u;y) = (u-y)^2$ with distinct integer supporting points $u$ satisfying $|u|\leq (K-1)/2$. We reformulated the resulting DRO with approximated loss as SOCP \eqref{eq-socp-reform}, which thereafter is solved using MOSEK\cite{mosek}. For the SGD approach, we terminate the algorithm if its optimality gap is smaller than the optimality gap of SOCP solution (the SOCP solution is suboptimal due to the linear approximation error). The data generating process of $\{(X_{i},Y_{i})\}_{i = 1}^{n}$ is same as Section \ref{sec-linear-regression}, with varying sample size $n$ to test the scalability of the algorithms. 

\begin{figure}[ptb]
\centering
\centering
\begin{subfigure}[]{8cm}
    \centering
	\includegraphics[width = 8cm]{./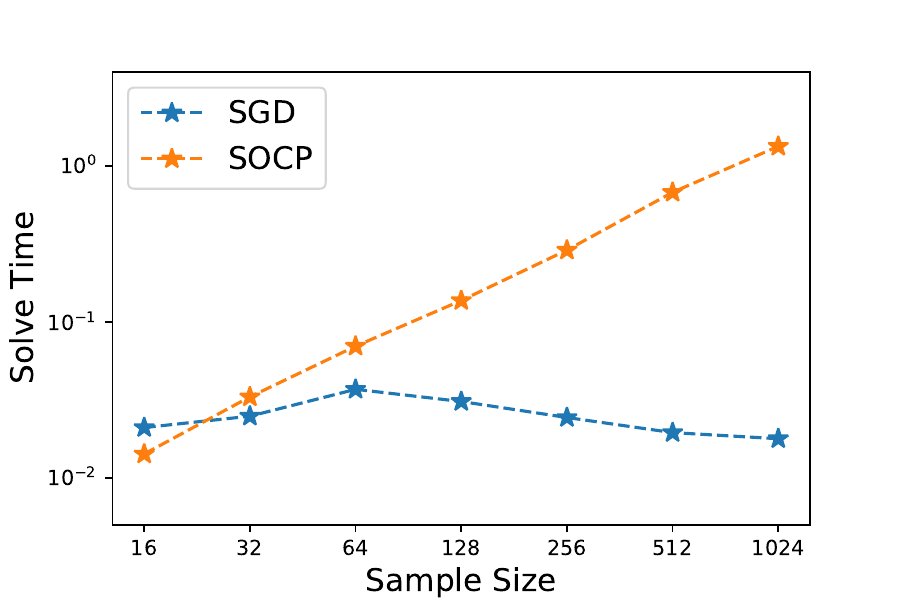}
	\caption{$K = 9$}
\end{subfigure}
\begin{subfigure}[]{8cm}
    \centering
    \includegraphics[width=8cm]{./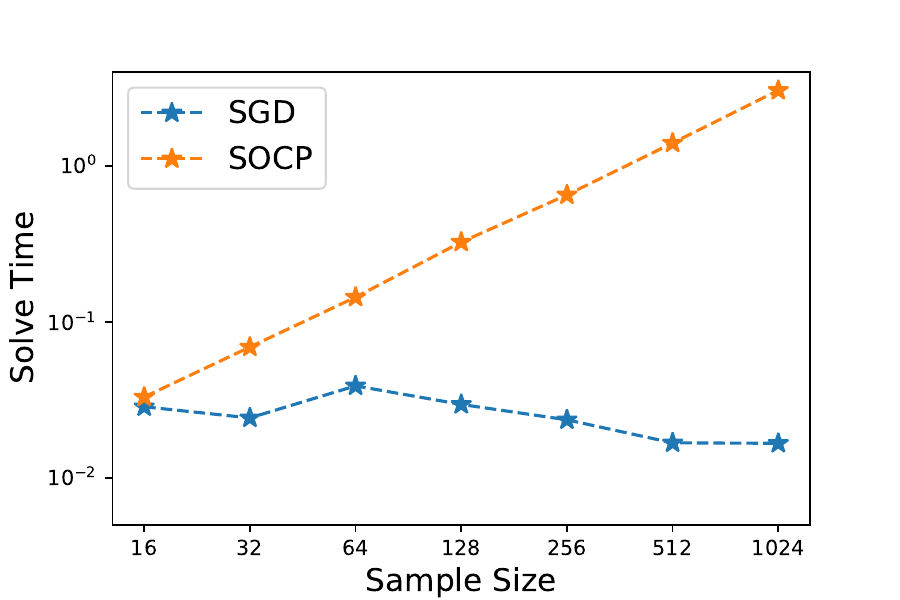}
    \caption{$K = 19$}
\end{subfigure}
\caption{Comparison of computational efforts for SGD and SOCP approaches for sample sizes $n\in\{16,32,64,128,256,512,1024\}.$
The number of linear functions are (a) $K = 9$ and (b) $K = 19$.}
\label{fig-socp-sgd}
\end{figure}

We compare the required time to solve SGD and SOCP in Figure \ref{fig-socp-sgd}. One can quickly remark that the SGD approach outperforms the SOCP approach for medium and large sample sizes. Though SOCP is a more efficient method for minimal sample size, its computational complexity rapidly deteriorates when $n$ is increasing, due to the $n\times K$ of cone constraints involved in the problem. In contrast, the computational time required by SGD is independent of the sample size.

\subsection{Portfolio optimization\label{subsec_portfolio}}
In this section, we demonstrate an example application of the proposed DRO framework in the context of mean-variance portfolio optimization. Suppose that $X$ is an  $\mathbb{R}^d-$valued random vector representing the relative monthly returns of $d$ securities.
Let us use $P^{\ast}$ to denote the probability distribution of $X.$ The classical Markowitz mean-variance model suggests that the portfolio choices lying on the efficient frontier can be determined by solving an optimization problem of the form,
\begin{equation}
\label{eq-mean-variance}\min_{\beta:\beta^{T}\mathbf{1} = 1} \text{Var}%
_{P^{\ast}}[\beta^{T}X] - \zeta\cdot E_{P^{\ast}}[\beta^{T}X],
\end{equation}
where $\beta$ is a $d-$dimensional weight vector and $\zeta\in[0,\infty)$ is
a suitable parameter choice determining the extent of risk-aversion. By adding an additional variable $\mu \in
\mathbb{R}$ representing the mean return of the portfolio, formulation
\eqref{eq-mean-variance} can be rewritten as the following stochastic optimization problem
with affine decision rules:
\begin{equation}
\label{eq-mean-variance-first-order}\inf_{\mu} \inf_{\beta:\beta^{T}\mathbf{1}
= 1}E_{P^{\ast}}[(\beta^{T}X-\mu)^{2} - \zeta\cdot\beta^{T}X].
\end{equation}
In practice, the probability distribution $P^{\ast}$ is not known  and it  is  common to work with historical returns data to arrive at a suitable portfolio choice. Suppose that we use  $P_{n}:=\frac{1}{n}\sum_{i = 1}^{n}\delta_{\{X_{i}\}}$ to denote the empirical distribution  corresponding to  $n$ historical return samples $\{X_1,\ldots,X_n\}.$
Due to the discrepancy between the ground-truth measure $P^{\ast}$ and the
reference measure $P_0 = P_{n}$, we consider the following distributionally robust variant of 
\eqref{eq-mean-variance-first-order}:
\begin{equation}
\label{eq-mean-variance-DRO}\inf_{\mu} \inf_{\beta:\beta^{T}\mathbf{1} =
1}\sup_{P:D_{c}(P_{0},P) \leq\delta}E_{P_{0}}[(\beta^{T}X-\mu)^{2} -
\zeta\cdot\beta^{T}X].
\end{equation}
As with most data-driven DRO formulations, the insertion of the inner supremum allows quantifying the impact of the 
model mismatch between the empirical distribution and plausible model variations which are a result of future market interactions. Additional information about such future variations can typically be inferred from current market data, in the form of,
for example, the implied volatility which can be elicited from the derivative prices. In such instances, a suitable choice of state-dependent Mahalanobis cost function $c(\cdot)$ in the proposed 
framework allows us to include this additional market information in the ambiguity set
$\{P:D_{c}(P_{0},P) \leq\delta\}$ which corresponds to the set of plausible model variations. To demonstrate  this idea in the  portfolio example, suppose that  we observe the implied volatility time  series $\{V_{i}: i  =1,\ldots,n\}$ in addition to the returns data  $\{X_i:i=1,\ldots,n\};$ here, $V_{i}$ is a positive scalar that represents the implied volatilities corresponding to the $i$-th observation $X_i.$ Let $\overline{V}  =  n^{-1}\sum_{i=1}^n V_i$ denote the average implied volatility. Corresponding to every point $X_i$ in the support of $P_n,$ we take the  state-dependent Mahalanobis cost to  be $c(X_i,x)  = (X_i - x)^T  A_i (X_i - x),$ where 
\begin{align}
 A_i  = \frac{\overline{V}}{V_i} \mathbb{I}_d, \quad i = 1,\ldots,n.
\label{Ax-Portfolio-Eg}
\end{align}
The rationale behind this choice is the hypothesis that a large implied volatility is suggestive of the anticipation of larger price uncertainty in future returns by the collective market. As a result,  the inverse proportionality relationship $A_i \propto V_i^{-1} \mathbb{I}_d$ in \eqref{Ax-Portfolio-Eg} is such that it is cheaper to perturb returns (or  transport mass) for observations with higher  implied volatility. The normalization by $\overline{V}$ is introduced to allow comparisons with the choice of standard Euclidean squared norm (corresponding to the choice $A(x)  := \mathbb{I}_d$) as the transportation cost function.

To test the effectiveness  of the DRO formulation \eqref{eq-mean-variance-DRO} with real data, we randomly pick 20 stocks from the constituents of S\&P 500 as the stock pool. The weights of the portfolio are adjusted on a monthly basis during the test period constituting the years 2000 - 2017. For every month in this test period, the portfolio  weights are obtained by training the formulation  \eqref{eq-mean-variance-DRO} with the respective stock pool data from the previous 10 years.  For example,
at the beginning of January 2000, the training data $\{X_1,\ldots,X_n\}$ for the model \eqref{eq-mean-variance-DRO} is the monthly historical returns of the selected 20 stocks observed during the period January 1990 - January 2000 (thus, $n=119$ and $d = 20$). The CBOE volatility index (VIX), which is a popular gauge of the stock market's forward looking
volatility implied by S\&P 500 index options, is used to inform the market implied volatility. The parameter $\delta$ is treated as a hyper-parameter and the out-of-sample efficient frontier is generated by considering different values of the parameter $\zeta.$ In Figure \ref{fig-frontier1}, we report the mean and the standard deviation of the portfolio returns (during the test period 2000-2017) obtained from 100 random stock pool choices. 

The data used for computing an optimal
portfolio is different from the data used for evaluating the portfolio, which is the reason we address the efficient frontiers in  Figure \ref{fig-frontier1} as ``out-of-sample''. These out-of-sample efficient frontiers reveal that the DRO formulation \eqref{eq-mean-variance-DRO} with state-dependent Mahalanobis cost choice (as in \eqref{Ax-Portfolio-Eg}) performs uniformly better than that obtained with  the Euclidean distance choice (corresponding to constant $A(x) = \mathbb{I}_d,$ addressed as constant model in Figure \ref{fig-frontier1}). We also observe that a larger value of
distributional uncertainty $\delta$ results in larger mean annualized return. Unlike the case of an efficient frontier generated and tested with samples from the same probability distribution, the negative slopes in the out-of-sample efficient frontiers in Figure \ref{fig-frontier1} suggest that the out-of-sample effects (such as non-stationarity in data) are significant.  

As a sanity check to verify our implementation, we also report the results of the same experiment with simulated data constituting i.i.d. training  and test samples (see Figure \ref{fig-frontier2}) for the choice $A(x) =\mathbb{I}_d.$  In this simulation experiment, the DRO model is observed to produce less efficient portfolios relative to non-robust formulations, which is not surprising given that the experiment has been designed with simulated data and there is little model error. The efficient frontiers of the DRO model, as expected for relatively small values of $\delta$, have positive slopes in out-of-sample simulated frontiers, and is consistent with the observations of the classical Markowitz theory. These experiment results can be viewed as underscoring the need for DRO model formulations such as the one we  study in this paper. In addition to historical returns data, these model formulations incorporate the flexibility to use additional information such as implied volatilities to elicit collective market expectations about future uncertainty.

\begin{figure}[ptb]
\centering
\begin{subfigure}{3in}
		\includegraphics[width = 3in]{./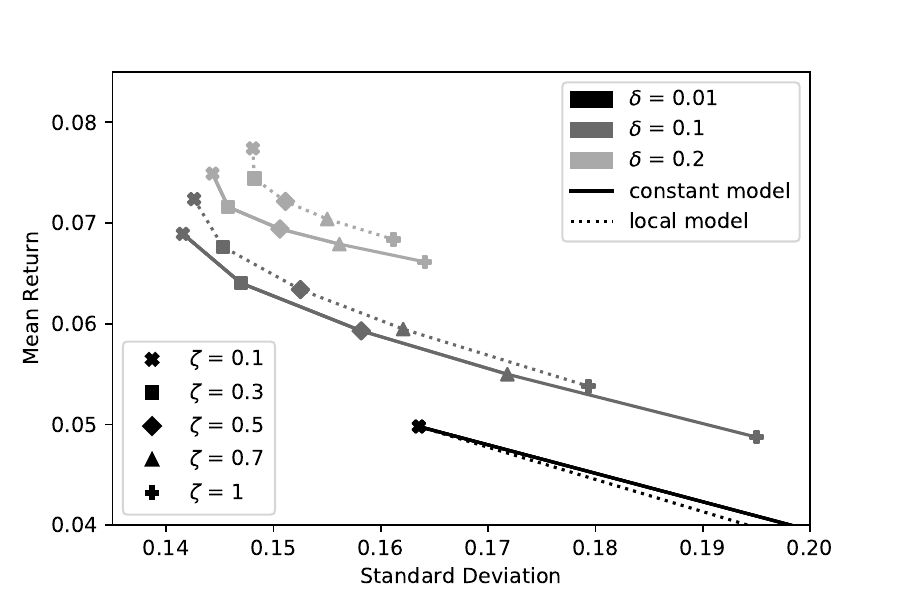}
		\caption{Real Data Experiment}
		\label{fig-frontier1}
	\end{subfigure}
\begin{subfigure}{3in}
		\includegraphics[width = 3in]{./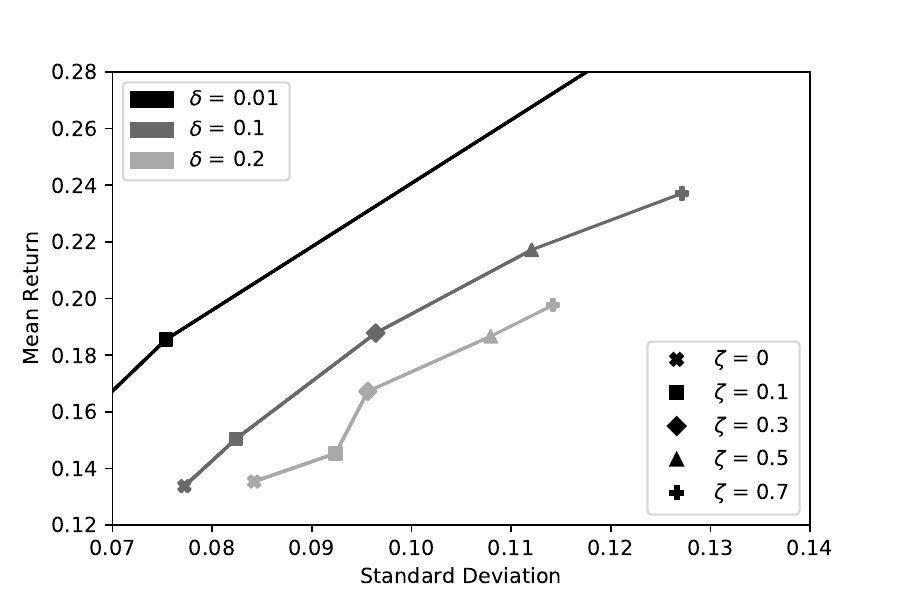}
		\caption{Simulated Data Experiment}
		\label{fig-frontier2}
	\end{subfigure}
\caption{Out of sample efficient frontier. The mean and the standard deviation
are annualized. We use solid lines to represent models with constant optimal
transport cost function, and use dashed lines to represent the models with
state-dependent Mahalanobis optimal transport cost function. The different choices of
$\delta$ are denoted by different colors. The values of $\zeta$ are
represented by different shapes of the markers.}%
\end{figure}

\section{Proofs of main results.}

\label{Sec-Proofs} We shall provide proofs of all the main results in Sections
\ref{Sec-Reform-Convexity} - \ref{Sec-SA-Algo} in this section. The proofs of
auxiliary results, which are technical in nature, are provided in the
subsequent technical appendix Section \ref{Sec-App-Proofs} for ease of reading.

\vspace{-0.1in}

\subsection{Proofs of the results on dual reformulation and convexity.}

\label{sec:proofs-dualreform-convex} In this section, we shall see the proofs
of of Theorems \ref{Thm-strong-duality} - \ref{thm:convex} and Lemma
\ref{Lem-conv-lrob}.

\proof{\textbf{Proof of Theorem \ref{Thm-strong-duality}}.} Since $c(\cdot)$
is lower semicontinuous and $\ell(\cdot)$ is upper semicontinuous, it follows
from the the strong duality result in Theorem 1 of
\cite{blanchet_quantifying_2016} that
\begin{align*}
\sup_{P: D_{c}(P_{0},P) \leq\delta} E_{P}[\ell(\beta^{T}X)]  & = \inf
_{\lambda\geq0} E_{P_{0}}\left[  \sup_{\Delta\in\mathbb{R}^{d}} \left\{
\ell(\beta^{T}(X + \Delta)) - \lambda\left( \Delta^{T}A(X)\Delta-\delta\right)
\right\} \right] \\ & =\inf_
{\lambda\geq0} E_{P_{0}}\left[ \sup_{c \in\mathbb{R}} \left\{  \ell(\beta^{T}X + c) - \lambda\left( \inf_{\Delta: \beta^{T} \Delta= c} \Delta^{T}A(X)\Delta- \delta\right)  \right\} \right] ,
\end{align*}
and that the infimum on the right hand side is attained for every $\beta\in
B.$ Since
\[
\inf\{ \Delta^{T} A(X) \Delta: \beta^{T}\Delta= c\} = c^{2}/(\beta^{T}
A(X)^{-1}\beta)
\]
for $\beta\neq\mathbf{0},$ changing variables  as in $c = \sqrt{\delta}%
\gamma\beta^{T} A(X)^{-1} \beta$ and from  $\lambda\sqrt{\delta}$ to $\lambda$
lets us conclude that
\begin{align}
\sup_{\Delta\in\mathbb{R}^{d}} \left\{  \ell(\beta^{T}(X + \Delta)) -
\lambda\left( \Delta^{T}A(X)\Delta-\delta\right) \right\}  = \sup_{\gamma
\in\mathbb{R}} F(\gamma,\beta,\lambda;X) =: \ell_{rob} \xspace(\beta
,\lambda;X),\label{Inter-SD}%
\end{align}
thus resulting in $\sup_{P: D_{c}(P_{0},P) \leq\delta} E_{P}[\ell(\beta^{T}X)]
= \inf_{\lambda\geq0} E_{P_{0}}[\ell_{rob} \xspace(\beta,\lambda;X)].$ This
completes the proof of Theorem \ref{Thm-strong-duality}. \hfill\Halmos
\endproof
\ 

The proof of Theorem \ref{thm:convex} follows immediately as a consequence of
Lemma \ref{Lem-conv-lrob} (stated in Section \ref{Sec-Oracle-Info-Extr}) and
Lemma \ref{Lem-Finite-Exp} below, whose proof is furnished in the technical
Appendix \ref{Sec-App-Proofs}.

\begin{lemma}
	Suppose that Assumptions \ref{Assump-c}, \ref{Assump-Obj-Conv} hold.
	Consider any $\varepsilon > 0,$ $x \in \mathbb{R}^d$ and $\beta \in B.$ If $%
	\lambda \geq (\kappa + \varepsilon)\sqrt{\delta}\beta^TA(x)^{-1}\beta,$ then
	there exist positive constants $C_1,C_2$ such that
\begin{itemize}
\item[a)] any $g \in \Gamma^\ast(\beta,\lambda;x)$ satisfies
$\sqrt{\delta}\vert g \vert \beta^TA(x)^{-1}\beta \leq 1 +
C_1 \varepsilon^{-1}(1 + \vert \beta^Tx \vert);$ and
\item[b)]
$\ell_{rob} \xspace(\beta,\lambda;x) \leq
\lambda\sqrt{\delta} + C_2( 1 + \varepsilon +
\varepsilon^{-1})(1 + \vert \beta^Tx\vert)^2.$
	\end{itemize}	
	\label{Lem-Finite-Exp}
\end{lemma}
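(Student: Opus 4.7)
The plan is to change variables from $\gamma$ to $u := \beta^Tx + \gamma\sqrt{\delta}\beta^TA(x)^{-1}\beta$ and exploit the quadratic growth bound on $\ell$ implied by the definition of $\kappa$ in Assumption \ref{Assump-Obj-Conv} to reduce the analysis to a concave quadratic upper envelope.  Throughout, write $a := \beta^TA(x)^{-1}\beta$, $b := \beta^Tx$, $s := \sqrt{\delta}$ (we may assume $\beta \neq \mathbf{0}$, else the statement is trivial). By the definition of $\kappa$, choosing $s' := \kappa + \varepsilon/2$ gives a finite constant $M_{s'}$ with $\ell(u) \leq s'u^2 + M_{s'}$ for all $u \in \mathbb{R}$. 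Under the substitution $u = b + \gamma sa$, the objective becomes $\tilde F(u) := \ell(u) - \frac{\lambda}{sa}(u-b)^2 + \lambda s$, and the hypothesis $\lambda \geq (\kappa+\varepsilon)sa$ yields $\lambda/(sa) \geq \kappa+\varepsilon$.  Combining these two inequalities and expanding gives the pointwise bound
\[
\tilde F(u) \;\leq\; -\tfrac{\varepsilon}{2}u^2 + 2(\kappa+\varepsilon)ub - (\kappa+\varepsilon)b^2 + M_{s'} + \lambda s,
\]
which is a concave quadratic in $u$ whose unique maximizer is $u_0 := 2(\kappa+\varepsilon)b/\varepsilon$.

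For part (b), I simply evaluate the above concave quadratic at $u_0$: its maximum value equals $\lambda s + \frac{(\kappa+\varepsilon)(2\kappa+\varepsilon)}{\varepsilon}b^2 + M_{s'}$. Since this is a pointwise upper bound for $\tilde F(u)$ and $\ell_{rob}(\beta,\lambda;x) = \sup_u \tilde F(u)$, absorbing $\kappa$-dependent constants and using $b^2 \leq (1+|b|)^2$ delivers an inequality of the form $\ell_{rob}(\beta,\lambda;x) \leq \lambda\sqrt{\delta} + C_2(1+\varepsilon+\varepsilon^{-1})(1+|b|)^2$ for a constant $C_2 = C_2(\kappa, M_{\kappa+\varepsilon/2})$ independent of $\beta, x, \lambda$.

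For part (a), any maximizer $u^\ast = b + g\,sa$ must satisfy $\tilde F(u^\ast) \geq \tilde F(b) = \ell(b) + \lambda s$. Substituting $u = u^\ast$ into the concave quadratic upper bound, cancelling $\lambda s$, and completing the square in $u^\ast$ around $u_0$ yields
\[
\tfrac{\varepsilon}{2}(u^\ast - u_0)^2 \;\leq\; M_{s'} - \ell(b) + \tfrac{(\kappa+\varepsilon)(2\kappa+\varepsilon)}{\varepsilon}b^2.
\]
Since $\ell:\mathbb{R}\to\mathbb{R}$ is convex and finite-valued, it is bounded below by any of its tangent lines, so $-\ell(b) \leq C(1+|b|)$ for a constant $C$ depending only on $\ell$. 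Therefore the right-hand side is at most $O_{\kappa}(\varepsilon^{-1})(1+|b|)^2$, which gives $|u^\ast - u_0| \leq O_\kappa(\varepsilon^{-1})(1+|b|)$. Since $|u_0 - b| = \frac{2\kappa+\varepsilon}{\varepsilon}|b|$, the triangle inequality produces $sa|g| = |u^\ast - b| \leq C_1\varepsilon^{-1}(1+|b|)$ for a suitable $C_1$, which is stronger than (and hence implies) the claimed inequality.

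The main technical obstacle is a bookkeeping one: separating the portion of the constants that depends on $\varepsilon$ from the $\varepsilon$-independent portion (so that $C_1, C_2$ may be chosen uniformly), and handling the lower bound on $\ell(b)$ carefully via the affine minorant provided by convexity.  Once the change of variables is made and the concave quadratic majorant is identified, both parts reduce to elementary one-dimensional quadratic optimization.
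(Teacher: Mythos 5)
Your proposal is correct and is essentially the paper's argument in a change of variables: your comparison $\tilde F(u^\ast)\geq\tilde F(b)$ is exactly the paper's $F(g,\beta,\lambda;x)\geq F(0,\beta,\lambda;x)$, your completed square around $u_0$ reproduces the paper's inequality for $\bigl(g\sqrt{\delta}\beta^TA(x)^{-1}\beta-\tfrac{2\kappa+\varepsilon}{\varepsilon}\beta^Tx\bigr)^2$, and the affine minorant of $\ell$ plays the same role. The only (minor, and arguably cleaner) deviation is in part (b), where you maximize the concave quadratic majorant directly instead of bounding the cross term and substituting the part (a) estimate as the paper does; both yield the stated bound, with the same caveat present in the paper's own proof that the constants inherit a dependence on $\varepsilon$ through the quadratic-growth constant $M_{\kappa+\varepsilon/2}$.
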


We shall first see the proof of Lemma \ref{Lem-conv-lrob} before proceeding to
the proof of Theorem \ref{thm:convex}.
\proof{\textbf{Proof of Lemma \protect\ref{Lem-conv-lrob}}.} Take any
$\theta_{1} := (\beta_{1},\lambda_{1})$ and $\theta_{2} := (\beta_{2}%
,\lambda_{2})$ in $B \times\mathbb{R}_{+}.$ Given $\alpha\in[0,1],$ it follows
from \eqref{Inter-SD} that $\ell_{rob} \xspace(\alpha\theta_{1} +
(1-\alpha)\theta_{2};x)$ equals
\begin{align}
& \sup_{\Delta\in\mathbb{R}^{d}} \left\{  \ell\left( (\alpha\beta_{1} +
(1-\alpha)\beta_{2})^{T}(x + \Delta)\right)  - \left( \alpha\lambda_{1} +
(1-\alpha) \lambda_{2}\right) \left( \Delta^{T}A(x)\Delta- \delta\right)
\right\} \nonumber\\
& \quad= \left( \alpha\lambda_{1} + (1-\alpha) \lambda_{2}\right)
\delta\nonumber\\
& \quad\quad+ \sup_{\Delta\in\mathbb{R}^{d}} \left\{  \ell\left( \alpha
\beta_{1}^{T}(x + \Delta) + (1-\alpha)(\beta_{2}^{T}(x + \Delta))\right)  -
\left( \alpha\lambda_{1} + (1-\alpha) \lambda_{2}\right) \Delta^{T}%
A(x)\Delta\right\} .\label{Inter-Conv-lrob}%
\end{align}
Since $\ell(\cdot)$ is convex, we have $\ell(\alpha u_{1} + (1-\alpha)u_{2})
\leq\alpha\ell(u_{1}) + (1-\alpha)\ell(u_{2})$ for $u_{1},u_{2} \in
\mathbb{R}.$ Combining this with the fact that $\sup_{\Delta}(\alpha
f_{1}(\Delta)+ (1-\alpha)f_{2}(\Delta)) \leq\alpha\sup_{\Delta}f_{1}(\Delta) +
(1-\alpha)\sup_{\Delta}f_{2}(\Delta)$ for any two functions $f_{1},f_{2},$ we
have that the term involving supremum in \eqref{Inter-Conv-lrob} is bounded
from above by,
\begin{align*}
\alpha\sup_{\Delta\in\mathbb{R}^{d}} \left\{  \ell(\beta_{1}^{T}(x + \Delta))
- \lambda_{1}\Delta^{T}A(x)\Delta\right\}  + (1-\alpha) \sup_{\Delta
\in\mathbb{R}^{d}} \left\{  \ell(\beta_{2}^{T}(x + \Delta)) - \lambda
_{2}\Delta^{T}A(x)\Delta\right\} .
\end{align*}
This observation, in conjunction with \eqref{Inter-Conv-lrob}, establishes
that $\ell_{rob} \xspace(\alpha\theta_{1} + (1-\alpha)\theta_{2};x) \leq
\alpha\ell_{rob} \xspace(\theta_{1};x) + (1-\alpha)\ell_{rob} \xspace(\theta
_{2};x),$ thus verifying the desired convexity of $\ell_{rob} \xspace(\,\cdot
\, ;x).$ \hfill\Halmos
\endproof
\ 

\proof{\textbf{Proof of Theorem \ref{thm:convex}}.} Since $f_{\delta}%
(\beta,\lambda) := E_{P_{0}}[\ell_{rob} \xspace(\beta,\lambda;X)],$ the
convexity of $f_{\delta}(\cdot)$ follows as a consequence of Lemma
\ref{Lem-conv-lrob} and linearity of expectations. The fact that $f_{\delta
}(\cdot)$ is proper follows from the observation that $\ell_{rob}%
(\beta,\lambda;X)$ is almost surely finite for all $\lambda$ sufficiently
large (see Lemma \ref{Lem-Finite-Exp}b) and the assumption that $E_{P_{0}%
}\Vert X \Vert^{2} < \infty$ (see Assumption \ref{Assump-Obj-Conv}).
\hfill\Halmos \endproof

\vspace{-0.1in}

\subsection{Bounds for dual optimizer $\lambda_{\ast}(\beta)$ and a proof of
Proposition ~\ref{Prop-V-cont-OPT}.}

\label{Sec-Prep-results} It follows from Theorem \ref{Thm-strong-duality} that
$\arg\min_{\lambda\geq0} f_{\delta}(\beta,\lambda)$ is nonempty for any
$\beta\in B.$ Lemma \ref{Lem-threshold} - \ref{Lem-Lbars} below, whose proofs
are provided in Appendix \ref{Sec-App-Proofs}, are useful towards establishing
bounds for any $\lambda_{\ast}(\beta)$ in $\arg\min_{\lambda\geq0} f_{\delta
}(\beta,\lambda)$ (see Lemma \ref{Lemma-Lambda-Bound}). In turn, these bounds
are useful towards identifying the region $\mathbb{V}$
in the main results Proposition \ref{Prop-V-cont-OPT} and Theorem
\ref{result-smoothness}.

\begin{lemma}
Suppose that Assumptions \ref{Assump-c} - \ref{Assump-Obj-Conv} are
satisfied and $\beta \in B.$ Then for any
$\lambda_\ast(\beta) \in \arg%
\min_{\lambda \geq 0} f_\delta(\beta,\lambda),$ we have
$\Gamma^\ast(\beta,\lambda_\ast(\beta);x) \neq \varnothing,$ for $P_0-$almost every
$x \in \mathbb{R}%
^d. $ Moreover,
\begin{align*}
\frac{\partial_+ f_\delta}{\partial \lambda}(\beta,\lambda_\ast(\beta))
= \sqrt{\delta} \left( 1 - E_{P_0}\left[ \beta^TA(X)^{-1}\beta \min_{\gamma \in
\Gamma^\ast(\beta,\lambda_\ast(\beta); X)} \gamma^2\right]\right).
\end{align*}
\label{Lem-threshold}
\end{lemma}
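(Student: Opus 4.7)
The plan is to prove the two claims in sequence: first establish that $\Gamma^\ast(\beta,\lambda_\ast(\beta);x)\neq\varnothing$ for $P_0$-a.e.\ $x$, and then derive the integral formula for $\partial_+ f_\delta/\partial\lambda$ by combining the Milgrom--Segal envelope theorem with a monotone convergence argument.

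For the nonemptiness claim, since $\lambda_\ast(\beta)$ attains the infimum of $f_\delta(\beta,\cdot)$ by Theorem~\ref{Thm-strong-duality} and Lemma~\ref{Lem-Finite-Exp}b guarantees $f_\delta(\beta,\lambda)<\infty$ for all $\lambda$ sufficiently large, one has $f_\delta(\beta,\lambda_\ast(\beta))<\infty$ and consequently $\ell_{rob}(\beta,\lambda_\ast(\beta);X)<\infty$ $P_0$-a.s. Lemma~\ref{Lem-eff-dom-f}b then forces $\lambda_\ast(\beta)\geq \kappa\sqrt{\delta}\,\beta^T A(x)^{-1}\beta$ for $P_0$-a.e.\ $x$, and on the subset where the inequality is strict Lemma~\ref{Lem-eff-dom-f}a yields $\Gamma^\ast\neq\varnothing$ directly. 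On the residual boundary subset, I would proceed by a limiting procedure: choose $\lambda_n\downarrow\lambda_\ast(\beta)$ with $\lambda_n$ strictly above the threshold, select $\gamma_n\in\Gamma^\ast(\beta,\lambda_n;x)$, establish boundedness of $(\gamma_n)$ by combining the inequality $F(\gamma_n,\beta,\lambda_n;x)\geq F(0,\beta,\lambda_n;x)=\lambda_n\sqrt{\delta}$ with the growth restriction embedded in the definition of $\kappa$, and extract a convergent subsequence whose limit is a maximizer at $\lambda_\ast(\beta)$ by upper semi-continuity of $F$ in $(\gamma,\lambda)$.

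With $\Gamma^\ast$ nonempty almost surely, I apply the envelope theorem (as already invoked for Proposition~\ref{prop:firstorderinfo}) to the sup $\ell_{rob}(\beta,\lambda;x)=\sup_\gamma F(\gamma,\beta,\lambda;x)$. Because $\partial F/\partial\lambda=-\sqrt{\delta}(\gamma^2\beta^T A(x)^{-1}\beta-1)$ is continuously differentiable in $\lambda$, the pointwise right derivative of $\ell_{rob}$ equals
\[
\frac{\partial_+ \ell_{rob}}{\partial\lambda}(\beta,\lambda_\ast(\beta);x)=\sqrt{\delta}\Big(1-\beta^T A(x)^{-1}\beta\min_{\gamma\in\Gamma^\ast(\beta,\lambda_\ast(\beta);x)}\gamma^2\Big).
\]
To promote this pointwise identity to its expectation, I exploit convexity of $\ell_{rob}(\beta,\cdot;x)$ from Lemma~\ref{Lem-conv-lrob}: the difference quotients $h^{-1}[\ell_{rob}(\beta,\lambda_\ast+h;X)-\ell_{rob}(\beta,\lambda_\ast;X)]$ are monotone nonincreasing as $h\downarrow 0$ with pointwise limit equal to the right derivative just displayed. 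Fixing any $h_0>0$, Lemma~\ref{Lem-Finite-Exp}b provides an $L_1(P_0)$ envelope for the quotient evaluated at $h=h_0$, so dominated convergence permits interchanging $E_{P_0}$ with the limit and yields the claimed formula.

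The principal obstacle is the boundary case in the first step: when $\lambda_\ast(\beta)$ coincides with the threshold $\kappa\sqrt{\delta}\,\beta^T A(x)^{-1}\beta$ on a $P_0$-positive set, the quadratic coercivity of $F(\cdot,\beta,\lambda_\ast;x)$ in $\gamma$ degenerates and a direct appeal to Lemma~\ref{Lem-eff-dom-f}a is unavailable. The limiting-subsequence argument outlined above needs a quantitative, $x$-uniform bound on $|\gamma_n|$ as $\lambda_n$ approaches the threshold, which in turn leans on a careful use of the defining property of $\kappa$ in Assumption~\ref{Assump-Obj-Conv} to offset the vanishing strict gap $\lambda_n-\kappa\sqrt{\delta}\,\beta^T A(x)^{-1}\beta$; navigating this boundary situation is the most technically delicate part of the proof.
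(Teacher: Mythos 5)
Your handling of the non-threshold case (Lemma \ref{Lem-eff-dom-f}a) and your interchange of expectation and right derivative — monotone difference quotients from convexity plus the integrable majorant supplied by Lemma \ref{Lem-Finite-Exp}b — are both sound and consistent with the paper. The gap is exactly where you flag delicacy, and it is fatal to the route you propose. On the boundary set where $\lambda_\ast(\beta)=\kappa\sqrt{\delta}\,\beta^TA(x)^{-1}\beta$, the bound of Lemma \ref{Lem-Finite-Exp}a degrades like $\varepsilon^{-1}$ in the gap $\varepsilon_n=\lambda_n/(\sqrt{\delta}\beta^TA(x)^{-1}\beta)-\kappa\to 0$, and this is not an artifact of the estimate: at the threshold the quadratic-in-$\gamma$ penalty in $F(\cdot,\beta,\lambda;x)$ exactly cancels against the quadratic growth of $\ell$, so $F$ can be bounded above yet fail to attain its supremum, with the maximizers at nearby $\lambda_n$ genuinely escaping to infinity. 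For instance, take $\ell(u)=u^2-\log(1+e^{-u})$ (convex, $\kappa=1$), $P_0=\delta_{\{\mathbf 0\}}$, $A(x)=\mathbb{I}_d$, $\Vert\beta\Vert=1$: at $\lambda=\lambda_{thr}(\beta)=\sqrt{\delta}$ one computes $F(\gamma,\beta,\lambda;\mathbf 0)=\delta-\log\left(1+e^{-\gamma\sqrt{\delta}}\right)$, whose supremum $\delta$ is finite but attained by no $\gamma$. So the pointwise statement "$\Gamma^\ast(\beta,\lambda;x)\neq\varnothing$ whenever $\ell_{rob}(\beta,\lambda;x)<\infty$" is simply false at the threshold, and no refinement of the compactness/subsequence argument can rescue it. The tell is that your nonemptiness argument uses only finiteness of $f_\delta$, never the minimality of $\lambda_\ast(\beta)$ — yet the conclusion fails for non-optimal threshold values of $\lambda$.

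What makes the lemma true is that the conclusion is only claimed $P_0$-almost everywhere and that $\lambda_\ast(\beta)$ is a minimizer. The paper's argument in the boundary case is measure-theoretic rather than pointwise: let $A$ be the set of $x$ for which $\Gamma^\ast(\beta,\lambda_\ast(\beta);x)=\varnothing$; using right-continuity of $\lambda\mapsto\ell_{rob}(\beta,\lambda;x)$ at the threshold (Lemma \ref{Lem-Right-Cont-lambda}) and continuity of $F$, show that for $x\in A$ any selections $g_{\lambda}(x)\in\Gamma^\ast(\beta,\lambda;x)$ must satisfy $g_{\lambda}^2(x)\to\infty$ as $\lambda\downarrow\lambda_\ast(\beta)$; then feed this into the first-order optimality condition
\[
0\ \leq\ \frac{\partial_+f_\delta}{\partial\lambda}(\beta,\lambda_\ast(\beta))\ \leq\ \sqrt{\delta}\left(1-\lim_{\lambda\downarrow\lambda_\ast(\beta)}E_{P_0}\left[\beta^TA(X)^{-1}\beta\,g_{\lambda}^2(X)\right]\right)
\]
and apply Fatou's lemma to conclude $\infty\cdot P_0(A)\leq 1$, i.e.\ $P_0(A)=0$. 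Some version of this integrated argument — trading the impossible pointwise control for an a.e.\ statement extracted from optimality of $\lambda_\ast(\beta)$ — is needed to close your proof; once it is in place, your envelope-theorem and monotone-convergence steps for the derivative formula go through.
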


\begin{lemma}
\label{Lemma-Gamma} Suppose that Assumptions \ref{Assump-c} -
\ref{Assump-Obj-Conv} are satisfied and
$\Gamma^\ast(\beta,\lambda,x)$ is not empty for a given
$\beta \in B,$ $x \in \mathbb{R}^d$ and $\lambda \geq 0.$ Then for
any $\gamma \in \Gamma^\ast(\beta,\lambda;x),$ we have,
$\gamma = {\ell^{\prime}(\beta^Tx +
\sqrt{\delta}\gamma\beta^TA(x)^{-1}\beta )}/{(2\lambda)},$
and  consequently,
\begin{align} \label{Lower-bound-g-i}
\vert \gamma \vert \geq
\frac{\vert\ell^{\prime}(\beta^T x)\vert}{2\lambda}.
\end{align}
\end{lemma}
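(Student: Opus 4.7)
The plan is to read off the fixed-point identity from the first-order optimality condition for a maximizer of $F(\cdot,\beta,\lambda;x)$, and then to establish the magnitude lower bound by comparing the optimal value $F(\gamma,\beta,\lambda;x)$ with the value at the test point $\tilde{\gamma} := \ell^{\prime}(\beta^{T}x)/(2\lambda)$ via the two standard tangent-line inequalities available for convex $\ell$.

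Writing $a := \sqrt{\delta}\,\beta^{T}A(x)^{-1}\beta$ for brevity, the inner objective becomes $F(\gamma,\beta,\lambda;x) = \ell(\beta^{T}x + \gamma a) - \lambda a \gamma^{2} + \lambda\sqrt{\delta}$. The case $\beta = \mathbf{0}$ is trivial because then $a = 0$, $F$ is constant in $\gamma$, and both sides of the asserted identities vanish; so assume $\beta \neq \mathbf{0}$, which together with Assumption \ref{Assump-c} gives $a > 0$. For any $\gamma \in \Gamma^{\ast}(\beta,\lambda;x)$, the one-sided-derivative tests at a local maximizer yield $\partial_{-}F(\gamma) \geq 0 \geq \partial_{+}F(\gamma)$. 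Computing these derivatives and using $\partial_{-}\ell \leq \partial_{+}\ell$ for the convex function $\ell$, the chain $0 \leq \partial_{-}F(\gamma) \leq \partial_{+}F(\gamma) \leq 0$ forces both sides to vanish, so $\ell$ is differentiable at $\beta^{T}x + \gamma a$ and $2\lambda\gamma = \ell^{\prime}(\beta^{T}x + \gamma a)$, which is exactly the fixed-point identity of part~(1).

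For the bound \eqref{Lower-bound-g-i}, I would first upper bound $F(\gamma,\beta,\lambda;x) - F(0,\beta,\lambda;x)$ by invoking the subgradient inequality for $\ell$ anchored at the shifted point $\beta^{T}x + \gamma a$ and using the fixed-point identity just derived: convexity gives $\ell(\beta^{T}x) \geq \ell(\beta^{T}x + \gamma a) + 2\lambda\gamma\cdot(-\gamma a) = \ell(\beta^{T}x + \gamma a) - 2\lambda a \gamma^{2}$, and rearranging yields $F(\gamma,\beta,\lambda;x) - F(0,\beta,\lambda;x) \leq \lambda a \gamma^{2}$. Symmetrically, the tangent-line inequality anchored at $\beta^{T}x$ evaluated at $\tilde\gamma$ gives $\ell(\beta^{T}x + \tilde\gamma a) \geq \ell(\beta^{T}x) + \tilde\gamma a\,\ell^{\prime}(\beta^{T}x)$, and substituting $\tilde\gamma = \ell^{\prime}(\beta^{T}x)/(2\lambda)$ yields $F(\tilde\gamma,\beta,\lambda;x) - F(0,\beta,\lambda;x) \geq a(\ell^{\prime}(\beta^{T}x))^{2}/(4\lambda)$.

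Combining these two bounds through the global-maximality inequality $F(\gamma,\beta,\lambda;x) \geq F(\tilde\gamma,\beta,\lambda;x)$ produces $\lambda a \gamma^{2} \geq a(\ell^{\prime}(\beta^{T}x))^{2}/(4\lambda)$, from which \eqref{Lower-bound-g-i} follows upon cancelling the common positive factor $a$. The only delicate bookkeeping concerns the interpretation of $\ell^{\prime}(\beta^{T}x)$ when $\ell$ fails to be differentiable at $\beta^{T}x$: the argument works for any selection of subgradient $s \in \partial\ell(\beta^{T}x)$ (indeed it yields the slightly stronger conclusion that $|\gamma| \geq |s|/(2\lambda)$ for every such $s$), so no real obstacle arises. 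Apart from this routine care, the ingredients are classical convex-analytic inequalities.
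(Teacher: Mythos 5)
Your proof is correct, and for the inequality \eqref{Lower-bound-g-i} it takes a genuinely cleaner route than the paper. The fixed-point identity is obtained the same way in both arguments (first-order optimality at the maximizer), though your one-sided-derivative version is more careful about non-differentiability of $\ell$, which Assumption \ref{Assump-Obj-Conv} does not rule out. For the lower bound, the paper splits into cases according to the signs of $\ell^{\prime}(\beta^{T}x)$ and $\gamma$: when the signs agree it uses monotonicity of $\ell^{\prime}$ in one line, and when they disagree it compares $F(\gamma,\beta,\lambda;x)$ against $\sup_{\tilde\gamma\geq 0}$ of the tangent-line minorant (yielding the same $a(\ell^{\prime}(\beta^{T}x))^{2}/(4\lambda)$ lower bound you derive) and upper-bounds $F(\gamma,\beta,\lambda;x)-F(0,\beta,\lambda;x)$ by $\lambda a\gamma^{2}$ via the fundamental theorem of calculus plus a pointwise bound on the integrand. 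Your single sandwich argument --- subgradient inequality anchored at the shifted point (using the fixed-point identity) for the upper bound, tangent at $\beta^{T}x$ evaluated at $\tilde\gamma=\ell^{\prime}(\beta^{T}x)/(2\lambda)$ for the lower bound --- reaches the same two estimates without any case analysis and subsumes the paper's Case 1 as well; it is shorter and, as you note, extends verbatim to any subgradient selection at $\beta^{T}x$.

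Two small edge cases deserve a remark, though both are also glossed over by the paper's statement. First, the identity and the bound divide by $2\lambda$, so $\lambda>0$ is implicitly required; your test point $\tilde\gamma$ is undefined at $\lambda=0$. Second, your claim that the case $\beta=\mathbf{0}$ is trivial because ``both sides vanish'' is not quite right: there $F$ is constant in $\gamma$, so $\Gamma^{\ast}=\mathbb{R}$ and the identity $\gamma=\ell^{\prime}(0)/(2\lambda)$ cannot hold for every maximizer; the lemma is only meaningful for $\beta\neq\mathbf{0}$ (consistent with Remark \ref{Rem-WC-Dist-Zero-Beta}). Neither point affects the substance of your argument where the lemma is actually invoked.
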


\begin{lemma}
Suppose that Assumptions \ref{Assump-Obj-Conv} -
\ref{Assump-Compact} are satisfied. Then there exist positive
constants $\underline{L},\overline{L}$ such that
$\underline{L} \leq E_{P_0}[\ell^\prime(\beta^TX)^2] \leq
\overline{L}$ for every $\beta \in B.$
\label{Lem-Lbars}
\end{lemma}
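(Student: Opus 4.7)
The plan is to establish the two bounds separately. The upper bound will follow from the Lipschitz property of $\ell'$ inherited from the second-derivative bound in Assumption \ref{Assump-Obj-2diff}, combined with the moment assumption on $P_0$ and compactness of $B$. The lower bound, which is the more delicate part, will follow from a continuity-plus-compactness argument together with the non-degeneracy clause in Assumption \ref{Assump-Obj-2diff}.

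For the upper bound, since $\ell$ is convex with $\ell'' \leq M$, the derivative $\ell'$ is $M$-Lipschitz, giving $|\ell'(u)| \leq |\ell'(0)| + M|u|$. Therefore
\[
\ell'(\beta^T X)^2 \leq 2\ell'(0)^2 + 2M^2 (\beta^T X)^2 \leq 2\ell'(0)^2 + 2M^2 R_\beta^2 \Vert X \Vert^2.
\]
Taking expectations and using $E_{P_0}\Vert X \Vert^2 \leq (E_{P_0}\Vert X \Vert^4)^{1/2} < \infty$ from Assumption \ref{Assump-Obj-Conv}, I obtain a finite $\overline{L}$ independent of $\beta \in B$.

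For the lower bound, define $g(\beta) := E_{P_0}[\ell'(\beta^T X)^2]$ on the compact set $B$. I would first show $g$ is continuous: if $\beta_n \to \beta$, continuity of $\ell'$ gives $\ell'(\beta_n^T X)^2 \to \ell'(\beta^T X)^2$ pointwise, and the uniform domination $\ell'(\beta_n^T X)^2 \leq 2\ell'(0)^2 + 2M^2 R_\beta^2 \Vert X \Vert^2 \in L^1(P_0)$ lets me invoke dominated convergence to conclude $g(\beta_n) \to g(\beta)$. By Assumption \ref{Assump-Obj-2diff}, for every $\beta \in B$ the random variable $\ell'(\beta^T X)$ is not identically zero under $P_0$, so $g(\beta) > 0$ pointwise. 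Since $B$ is compact (Assumption \ref{Assump-Compact}) and $g$ is continuous and strictly positive, it attains its positive minimum $\underline{L} := \min_{\beta \in B} g(\beta) > 0$.

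The main subtlety is the uniform positivity of the lower bound: without compactness of $B$, the infimum of $g(\beta)$ could conceivably degenerate to zero along a sequence $\beta_n$ with $\Vert \beta_n \Vert \to \infty$ or some otherwise pathological direction. Assumption \ref{Assump-Compact} is precisely what rules this out. A secondary point worth verifying is that the non-degeneracy clause in Assumption \ref{Assump-Obj-2diff} holds at every $\beta \in B$, not just at a subset; this is exactly what is assumed, so no further argument is needed there.
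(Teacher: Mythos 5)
Your proof is correct and follows essentially the same route as the paper: define $\beta \mapsto E_{P_0}[\ell'(\beta^T X)^2]$, show it is continuous and strictly positive (using the non-degeneracy clause of Assumption \ref{Assump-Obj-2diff}), and invoke compactness of $B$ to extract the uniform bounds. Your version is in fact slightly more complete, since you spell out the dominated-convergence step behind the continuity claim and obtain the upper bound explicitly from the Lipschitz bound $|\ell'(u)| \leq |\ell'(0)| + M|u|$ together with the moment condition, whereas the paper asserts continuity and boundedness more tersely.
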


\begin{lemma}
Suppose that Assumptions \ref{Assump-c} - \ref{Assump-Obj-2diff} are satisfied.
Then any minimizer $\lambda_{\ast}(\beta) \in \mathnormal{arg}\min_{\lambda
\geq 0}f_\delta(\beta,\lambda)$ satisfies 
$\lambda_{\mathrm{min}}(\beta)\leq \lambda_{\ast}(\beta)\leq \lambda_{%
\mathrm{max}}(\beta),$ 
where
\begin{align*}
\lambda_{\mathrm{min}}(\beta) &:= \frac{1}{2} \rho_{\max}^{-1/2}\Vert \beta
\Vert \sqrt{E_{P_0}\left[ \ell^{\prime}(\beta^T X)^2\right]} \text{ and } \\
\lambda_{\mathrm{max}}(\beta) &:= \rho_{\min}^{-1/2}\|\beta\|\sqrt{E_{P_0}%
\left[\ell^{\prime}(\beta^T X)^2\right]} + \frac{1}{2}\sqrt{\delta}%
M\rho_{\min}^{-1}\|\beta\|^2.
\end{align*}
\label{Lemma-Lambda-Bound}
\end{lemma}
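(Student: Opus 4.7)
The plan is to leverage first-order optimality of $\lambda_\ast(\beta)$ together with the characterization of the derivative $\partial f_\delta/\partial \lambda$ in Lemma~\ref{Lem-threshold} and the fixed-point structure of $\Gamma^\ast(\beta,\lambda;x)$ from Lemma~\ref{Lemma-Gamma}. By convexity of $f_\delta(\beta,\cdot)$ (Lemma~\ref{Lem-conv-lrob}), any minimizer $\lambda_\ast(\beta)\geq 0$ satisfies $\partial_+ f_\delta/\partial\lambda \geq 0$ and, if $\lambda_\ast(\beta)>0$, also $\partial_- f_\delta/\partial\lambda \leq 0$. Lemma~\ref{Lem-threshold} gives
\[
\tfrac{\partial_+ f_\delta}{\partial \lambda}(\beta,\lambda_\ast(\beta)) = \sqrt{\delta}\bigl(1 - E_{P_0}[\beta^T A(X)^{-1}\beta \cdot \textstyle\min_{\gamma\in\Gamma^\ast}\gamma^2]\bigr),
\]
and an entirely analogous argument (using the upper semi-continuity of the argmax correspondence and the envelope-theorem style derivation already used for the right derivative) yields the left-derivative formula with $\min$ replaced by $\max$. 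I will either state this as a direct byproduct of the proof of Lemma~\ref{Lem-threshold} or justify it in one sentence.

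For the lower bound, from $\partial_+ f_\delta/\partial\lambda \geq 0$ I obtain $E_{P_0}[\beta^T A(X)^{-1}\beta \min_\gamma \gamma^2] \leq 1$. I then combine the lower bound $|\gamma|\geq |\ell'(\beta^T x)|/(2\lambda_\ast(\beta))$ from Lemma~\ref{Lemma-Gamma} with $\beta^T A(x)^{-1}\beta \geq \rho_{\max}^{-1}\|\beta\|^2$ (which follows from Assumption~\ref{Assump-c} since $A(x)\preceq \rho_{\max}\mathbb{I}_d$ implies $A(x)^{-1}\succeq \rho_{\max}^{-1}\mathbb{I}_d$) to lower bound the expectation by $\|\beta\|^2 E_{P_0}[\ell'(\beta^T X)^2]/(4\rho_{\max}\lambda_\ast(\beta)^2)$. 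Rearranging gives $\lambda_\ast(\beta)\geq \lambda_{\min}(\beta)$.

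The upper bound requires an upper bound on $|\gamma|$ for $\gamma\in\Gamma^\ast$, and this is the main obstacle. I would obtain it from the fixed-point equation $\gamma = \ell'(\beta^T x + \sqrt{\delta}\gamma\beta^T A(x)^{-1}\beta)/(2\lambda)$ of Lemma~\ref{Lemma-Gamma} combined with the Lipschitz bound $|\ell'(u)-\ell'(v)|\leq M|u-v|$ coming from $\ell''\leq M$ in Assumption~\ref{Assump-Obj-2diff}. This yields $|\gamma|(2\lambda - M\sqrt{\delta}\beta^T A(x)^{-1}\beta) \leq |\ell'(\beta^T x)|$, which is useful only when the parenthesized coefficient is positive. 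To handle the other case, I split into two regimes: if $\lambda_\ast(\beta) \leq \tfrac{1}{2}M\sqrt{\delta}\rho_{\min}^{-1}\|\beta\|^2$ then $\lambda_\ast(\beta)\leq \lambda_{\max}(\beta)$ trivially (since the second summand of $\lambda_{\max}$ already dominates); otherwise $2\lambda_\ast(\beta) - M\sqrt{\delta}\beta^T A(x)^{-1}\beta \geq 2\lambda_\ast(\beta) - M\sqrt{\delta}\rho_{\min}^{-1}\|\beta\|^2 > 0$ uniformly in $x$, and the pointwise bound $\gamma^2 \leq \ell'(\beta^T x)^2/(2\lambda_\ast(\beta) - M\sqrt{\delta}\rho_{\min}^{-1}\|\beta\|^2)^2$ applies.

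Finally, in the second regime I plug the pointwise bound together with $\beta^T A(x)^{-1}\beta \leq \rho_{\min}^{-1}\|\beta\|^2$ into $E_{P_0}[\beta^T A(X)^{-1}\beta \max_\gamma \gamma^2]\geq 1$ (from $\partial_- f_\delta/\partial\lambda \leq 0$) to obtain
\[
\bigl(2\lambda_\ast(\beta) - M\sqrt{\delta}\rho_{\min}^{-1}\|\beta\|^2\bigr)^2 \leq \rho_{\min}^{-1}\|\beta\|^2 E_{P_0}[\ell'(\beta^T X)^2],
\]
and taking square roots (noting positivity on the left in this regime) and rearranging yields $\lambda_\ast(\beta)\leq \tfrac{1}{2}\rho_{\min}^{-1/2}\|\beta\|\sqrt{E_{P_0}[\ell'(\beta^T X)^2]} + \tfrac{1}{2}M\sqrt{\delta}\rho_{\min}^{-1}\|\beta\|^2 \leq \lambda_{\max}(\beta)$. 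The anticipated obstacles are (i) making the Lipschitz/fixed-point argument watertight for every $\gamma$ in the (possibly non-singleton) set $\Gamma^\ast$, and (ii) justifying the max-version of the derivative formula from Lemma~\ref{Lem-threshold}; both are routine extensions of the arguments already developed in Section~\ref{Sec-Prep-results}.
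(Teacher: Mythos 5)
Your lower-bound argument is exactly the paper's: first-order optimality $\partial_+ f_\delta/\partial\lambda(\beta,\lambda_\ast(\beta))\geq 0$, the right-derivative formula of Lemma~\ref{Lem-threshold}, the bound $|\gamma|\geq |\ell'(\beta^Tx)|/(2\lambda)$ from Lemma~\ref{Lemma-Gamma}, and $A(x)^{-1}\succeq\rho_{\max}^{-1}\mathbb{I}_d$. Your upper bound, however, takes a genuinely different route. The paper argues on the primal side: it uses $f_\delta(\beta,\lambda_\ast(\beta))\geq \lambda_\ast(\beta)\sqrt{\delta}+E_{P_0}[\ell(\beta^TX)]$ (from evaluating $F$ at $\gamma=0$) together with the quadratic envelope $\ell_{rob}(\beta,\lambda;X)-\ell(\beta^TX)-\lambda\sqrt{\delta}\leq \sqrt{\delta}\,\beta^TA(X)^{-1}\beta\,[\ell'(\beta^TX)]^2/(4\lambda-2M\sqrt{\delta}\beta^TA(X)^{-1}\beta)^+$, and then minimizes the resulting one-dimensional convex expression in $\lambda$ in closed form. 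You instead use the other half of the first-order condition, $\partial_- f_\delta/\partial\lambda\leq 0$, i.e.\ $E_{P_0}[\beta^TA(X)^{-1}\beta\,\sup_{\gamma\in\Gamma^\ast}\gamma^2]\geq 1$, and a pointwise Lipschitz bound on the maximizer. Both work; your version even yields the slightly sharper constant $\tfrac{1}{2}\rho_{\min}^{-1/2}\|\beta\|\sqrt{E_{P_0}[\ell'(\beta^TX)^2]}+\tfrac{1}{2}M\sqrt{\delta}\rho_{\min}^{-1}\|\beta\|^2$ in the nontrivial regime, while the paper's avoids any appeal to the left derivative.

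Two points you flagged as obstacles deserve explicit resolution, and your case split resolves both for free. Since $\kappa\leq M/2$ and $\beta^TA(x)^{-1}\beta\leq\rho_{\min}^{-1}\|\beta\|^2$, we have $\lambda_{thr}(\beta)\leq\lambda_{thr}'(\beta)\leq\tfrac{1}{2}M\sqrt{\delta}\rho_{\min}^{-1}\|\beta\|^2$. Hence in your nontrivial regime $\lambda_\ast(\beta)>\tfrac{1}{2}M\sqrt{\delta}\rho_{\min}^{-1}\|\beta\|^2\geq\lambda_{thr}'(\beta)\geq\lambda_{thr}(\beta)$, so $(\beta,\lambda_\ast(\beta))\in\mathbb{U}_1$, $f_\delta(\beta,\cdot)$ is finite in a neighborhood, and the left-derivative formula with the $\max$ follows from Proposition~\ref{prop:firstorderinfo} and \cite[Proposition 2.1]{Bertsekas1973} exactly as in the proof of Theorem~\ref{Thm-WC-Dist}c; moreover Lemma~\ref{Lem-Str-conv-Thr} makes $\Gamma^\ast$ a singleton there, so the $\min$/$\max$ distinction is moot and the fixed-point bound applies to the unique maximizer. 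The only scenario where the left-derivative formula would genuinely fail is $\lambda_\ast(\beta)=\lambda_{thr}(\beta)$ (where $f_\delta=+\infty$ to the left), and that scenario falls into your trivial regime. You should state this ordering of thresholds explicitly so the reader sees why the case split covers it.
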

\vspace{-0.2in} 

\proof{\textbf{Proof of Lemma \protect\ref{Lemma-Lambda-Bound}.}}
\textbf{Lower bound.}
Combining the observations in Lemma \ref{Lem-threshold} - \ref{Lemma-Gamma}
and the first order optimality condition that $\partial_{+} f_{\delta}%
(\beta,\lambda_{\ast}(\beta))/\partial\lambda\geq0,$ we obtain,
\begin{align*}
0 \leq\frac{\partial_{+}f_{\delta}}{\partial\lambda}(\beta,\lambda_{\ast
}(\beta)) \leq\sqrt{\delta}\left( 1 - E_{P_{0}}\left[  \beta^{T}A(X)^{-1}%
\beta\frac{\ell^{\prime}(\beta^{T}X)^{2}}{4\lambda_{\ast}(\beta)^{2}}\right]
\right) .
\end{align*}
Because of Assumption \ref{Assump-c}b, the above inequality results in,
\begin{align*}
\lambda_{\ast}(\beta) \geq\frac{1}{2}E_{P_{0}}^{1/2}\left[  \ell^{\prime
}(\beta^{T}X)^{2}\beta^{T}A(X)^{-1}\beta\right]  \geq\frac{1}{2}\rho_{\max
}^{-1/2} \Vert\beta\Vert\sqrt{E_{P_{0}}\left[  \ell^{\prime}(\beta^{T}%
X)^{2}\right] } =:\lambda_{\min}(\beta).
\end{align*}

\textbf{Upper bound.} As $\ell^{\prime\prime}(\cdot) \leq M$ due to Assumption
\ref{Assump-Obj-2diff}, we have that
$\ell_{rob} \xspace(\beta,\lambda;X) - \ell(\beta^{T}X)$ is bounded from above
by,
\begin{align*}
& \sup_{\gamma\in\mathbb{R}}\left\{  \ell\left( \beta^{T}X + \gamma
\sqrt{\delta} \beta^{T} A(X)^{-1}\beta\right) - \ell\left( \beta^{T}X \right)
- \lambda\sqrt{\delta} \beta^{T} A(X)^{-1}\beta\gamma^{2}\right\} \\
& \quad\quad\leq\sup_{\gamma\in\mathbb{R}}\left\{  \ell^{\prime} \left(
\beta^{T}X\right) \sqrt{\delta}\beta^{T} A(X)^{-1}\beta\gamma+ \frac{1}{2}M
\left( \gamma\sqrt{\delta} \beta^{T} A(X)^{-1}\beta\right) ^{2}- \lambda
\sqrt{\delta} \beta^{T} A(X)^{-1}\beta\gamma^{2}\right\} \\
& \quad\quad= \frac{\sqrt{\delta} \beta^{T} A(X)^{-1}\beta[\ell^{\prime}%
(\beta^{T}X)]^{2}}{(4\lambda- 2M\sqrt{\delta} \beta^{T} A(X)^{-1}\beta)^{+}}.
\end{align*}
Next, since $\lambda_{\ast}(\beta)\sqrt{\delta} + E_{P_{0}}\left[ \ell
(\beta^{T}X)\right]  \leq f_{\delta}(\beta,\lambda_{\ast}(\beta)) =
\inf_{\lambda\geq0}E_{P_{0}}[\ell_{rob} \xspace(\beta,\lambda;X)],$ we use the
above result and the bounds in Assumption \ref{Assump-c}b to write,
\begin{align*}
\lambda_{\ast}(\beta) & \leq\inf_{\lambda\geq0}\left\{  \lambda+ \delta
^{-1/2}E_{P_{0}}\left[ \ell_{rob} \xspace(\beta,\lambda;X) - \ell(\beta
^{T}X)\right] \right\} \\
& \leq\inf_{\lambda> \frac{1}{2}\sqrt{\delta}M\rho_{\min}^{-1}\|\beta
\|_{2}^{2}}\left\{  \lambda+E_{P_{0}}\left[ \frac{ \beta^{T} A(X)^{-1}%
\beta[\ell^{\prime}(\beta^{T}X)]^{2}}{4\lambda- 2M\sqrt{\delta} \beta^{T}
A(X)^{-1}\beta}\right] \right\} \\
& \leq\inf_{\lambda> \frac{1}{2}\sqrt{\delta}M\rho_{\min}^{-1}\|\beta
\|_{2}^{2}}\left\{  \lambda+\frac{ \rho_{\min}^{-1}\|\beta\|^{2}}{4\lambda-
2M\sqrt{\delta} \rho_{\min}^{-1}\|\beta\|^{2}} E_{P_{0}}\left[ \ell^{\prime
}(\beta^{T}X)^{2}\right]  \right\}
\end{align*}
The expression in the right hand side is a one dimensional convex optimization
problem which can be solved in closed form to obtain,
\begin{align*}
\lambda_{\ast}(\beta) \leq\frac{1}{2}\sqrt{\delta}M\rho_{\min}^{-1}%
\|\beta\|^{2} + \rho_{\min}^{-1/2}\|\beta\|\sqrt{E_{P_{0}}\left[  \ell
^{\prime}(\beta^{T} X)^{2}\right] } =: \lambda_{\text{max}}(\beta).
\end{align*}
This completes the proof of Lemma \ref{Lemma-Lambda-Bound}. \hfill\Halmos
\endproof
\ 

\proof{\textbf{Proof of Proposition \protect\ref{Prop-V-cont-OPT}.}} For a
given $\beta\in B,$ it follows from Lemma \ref{Lemma-Lambda-Bound} that any
optimal $\lambda_{\ast}(\beta)$ lies in the interval $[\lambda_{\min}(\beta),
\lambda_{\max}(\beta)].$ Recalling the definitions of $R_{\beta}$ from
Assumption \ref{Assump-Compact} and the characterization of $\overline{L}$ and
$\underline{L}$ in Lemma \ref{Lem-Lbars}, we have from Lemma
\ref{Lemma-Lambda-Bound} above that $\lambda_{\min}(\beta) \geq K_{1}
\Vert\beta\Vert$ and $\lambda_{\max}(\beta) \leq K_{2} \Vert\beta\Vert,$
where
\begin{align}
K_{1} := \frac{1}{2} \sqrt{\underline{L}\rho_{\max}^{-1}} \quad\text{ and }
\quad K_{2} := \frac{1}{2}\sqrt{\delta}MR_{\beta}\rho_{\min}^{-1} + \sqrt{
\rho_{\min}^{-1}\overline{L}}.\label{defn-K1-K2}%
\end{align}
Thus we obtain that $(\beta,\lambda_{\ast}(\beta))\in\mathbb{V}$ for all
$\beta\in B$. \hfill\Halmos
\endproof

\subsection{Verifying smoothness and strong convexity of the dual DRO
objective.}

\label{subsec:proofs-thms-smooth-str-convex} In this section, we provide
proofs of Theorems \ref{result-smoothness} - \ref{Thm-Str-Convexity}. We
accomplish this primarily by identifying the Hessian matrix of the dual DRO
objective $f_{\delta}(\beta,\lambda) = E_{P_{0}}\left[ \ell_{rob}%
(\beta,\lambda;X)\right] .$

Recall the definition of the functions $\ell_{rob}(\cdot)$ and $F(\cdot)$ in
Theorem \ref{Thm-strong-duality}. Let $S_{X}$ be the support of the
distribution $P_{0}.$ For a given $(\beta,\lambda)$ and $x \in S_{X},$ we use
the set $\Gamma^{\ast}(\beta,\lambda;x)$ to denote the respective set of
maximizers $\arg\max_{\gamma}F(\gamma,\beta,\lambda;x)$ (see
(\ref{Eqn-Optimal-Set})). A characterization of the gradient of the function
$\ell_{rob}(\beta,\lambda;x)$ is derived in Proposition
\ref{prop:firstorderinfo} with the help of envelope theorem. Likewise, if the
loss $\ell(\cdot)$ is twice differentiable, implicit function theorem allows
us to characterize the Hessian of $\ell_{rob}(\beta,\lambda;x).$ To accomplish
this, define
\begin{align*}
\mathcal{U} := \left\{  (\beta,\lambda,x) \in B \times\mathbb{R}_{+} \times
S_{X}: \Gamma^{\ast}(\beta,\lambda;x) \neq\varnothing, \ \varphi(\gamma,
\beta,\lambda;x) > 0 \text{ for some } \gamma\in\Gamma^{\ast}(\beta,\lambda;x)
\right\} ,
\end{align*}
where
\begin{align*}
\varphi(\gamma,\beta,\lambda;x) := 2\lambda- \sqrt{\delta}\beta^{T}%
A(x)^{-1}\beta\ell^{\prime\prime}\left( \beta^{T}x + \sqrt{\delta}\gamma
\beta^{T}A(x)^{-1}\beta\right) .
\end{align*}
Further consider the set valued map $x \mapsto\mathcal{U}(x)$ to be the
projection,
\begin{align*}
\mathcal{U}(x) := \left\{ (\beta,\lambda): (\beta,\lambda,x) \in\mathcal{U}
\right\} .
\end{align*}
Then, as a consequence of implicit function theorem, the function $\ell
_{rob}(\beta,\lambda;x)$ is twice differentiable for every $(\beta,\lambda)$
in the interior of $\mathcal{U}(x).$ Indeed, this follows from the observation
that $\partial^{2} F/\partial\gamma^{2}(\cdot) = -2\sqrt{\delta} \beta
^{T}A(x)^{-1}\beta\varphi(\cdot)$ is negative when $(\beta,\lambda,x)
\in\mathcal{U}.$ Next, consider any measurable selection $g:\mathcal{U}
\rightarrow\mathbb{R}$ such that
\begin{align}
g(\beta,\lambda;x) \in\Gamma^{\ast}(\beta,\lambda;x) \quad\text{ and }
\quad\varphi(g(\beta,\lambda;x),\beta,\lambda,x) >
0,\label{meas-sel-twice-diff}%
\end{align}
for $P_{0}-$almost every $x$ and almost every $(\beta,\lambda) \in
\mathcal{U}(x).$ The existence of such a measurable selection follows from
Jankov-Von Neumann theorem (see, for example, \cite[Proposition 7.50]%
{bertsekas1978stochastic}). To proceed further, define,
\begin{align}
T_{g}(x) := x + \sqrt{\delta} & g(\beta,\lambda;x)A(x)^{-1}\beta, \quad\bar
{T}_{g}(x) := x + 2\sqrt{\delta}g(\beta,\lambda;x) A(x)^{-1}\beta, \text{ and
}\nonumber\\
& \varphi_{g}(\beta,\lambda;x) := \varphi\left(  g(\beta,\lambda
;x),\beta,\lambda;x\right) ,\label{Str-Conv-Pf-Defns-1}%
\end{align}
for any $(\beta,\lambda,x) \in\mathcal{U},$ where the dependence on
$(\beta,\lambda)$ is hidden in the notation of the transport maps
$T_{g}(x),\bar{T}_{g}(x)$ and has to be understood implicitly. Likewise, once
the choice of measurable selection $g(\cdot)$ is fixed, we often suppress the
arguments $(\beta,\lambda;x)$ while writing the functions such as
$g(\beta,\lambda;x)$ and $\varphi_{g}(\beta, \lambda;x)$ in order to reduce
clutter in the resulting expressions; for example, we simply write
$\varphi_{g}$ and $g$, respectively, for $\varphi_{g}(\beta,\lambda;x)$ and
$g(\beta,\lambda;x).$

\begin{proposition}
Suppose that Assumptions \ref{Assump-c} - \ref{Assump-Obj-2diff} are
satisfied, $\mathcal{U}$ is not empty, and
$g:\mathcal{U} \rightarrow \mathbb{R}$ is a measurable selection
satisfying (\ref{meas-sel-twice-diff}). Then for almost every
$x \in S_X,$ $(\beta,\lambda) \in \textnormal{int}(\mathcal{U}(x)),$
we have,
\begin{align*}
\frac{\partial^2 \ell_{rob} \xspace}{\partial \beta^2}&(\beta,\lambda;x) =
2\sqrt{\delta}\lambda g^2A(x)^{-1} + \frac{2\lambda\ell^{\prime\prime}
\left(\beta^TT_g(x) \right) }{\varphi_g} \bar{T}_g(x)\bar{T}_g(x)^T, \quad \frac{%
\partial^2\ell_{rob} \xspace}{\partial \lambda^2}(\beta,\lambda;x) = \frac{4%
\sqrt{\delta} g^2 \beta^TA(x)^{-1}\beta}{\varphi_g}, \\
&\quad\quad\quad \frac{\partial^2\ell_{rob} \xspace}{\partial \lambda
\partial \beta}(\beta,\lambda;x) = -2\sqrt{\delta} g^2 \left( A(x)^{-1}\beta
+ \frac{\beta^T A(x)^{-1}\beta \ell^{\prime\prime}(\beta^TT_g(x))}{g\varphi_g} \bar{T}_g(x) \right),
\end{align*}
where $T_g(\cdot), \bar{T}_g(\cdot),\varphi_g$ are defined as in
\eqref{Str-Conv-Pf-Defns-1}. Moreover, we have
\begin{align}
\nabla_\theta^2\ell_{rob} \xspace(\theta;x) -\Lambda(\theta;x)B(x)
\succeq 0,
\label{PDness-2}
\end{align}
where
\begin{align}
\Lambda(\beta,\lambda;x) &:= \frac{4 \left(\beta^TT_g(x)\right)^2 \ell^{\prime
\prime}(\beta^T T_g(x))}{1 + \bar{T}_g(x)^T A(x) \bar{T}_g(x)
\ell^{\prime\prime}(\beta^T T_g(x))/(\sqrt{\delta}g^2\varphi)}
\frac{1}{2\lambda \varphi_g + 4\beta^TA(x)^{-1}\beta}
\label{PD-coeff}
\end{align}
and
\begin{align*}
B(x) &=
\begin{bmatrix}
A(x)^{-1}+\frac{\ell^{\prime\prime}(\beta^TT_g(x))}{\sqrt{\delta}
g^2\varphi}\bar{T}_g(x)\bar{T}_g(x)^T &  & & \mathbf{0} \\
&  & & \\
\mathbf{0}^T &  & & 1
\end{bmatrix}.
\end{align*}
\label{prop:hessian-lrob}
\end{proposition}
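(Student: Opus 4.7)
The plan is a two-stage argument: first derive the second-partial-derivative formulas via the envelope theorem combined with the implicit function theorem applied to the first-order condition in $\gamma$, and then establish the matrix inequality \eqref{PDness-2} by a direct Schur-complement / positive-semidefinite decomposition.

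For the Hessian formulas, observe that on $\mathcal{U}$ the condition $\varphi_g > 0$ is precisely what makes $\partial^2 F/\partial \gamma^2 = -\sqrt{\delta}\beta^T A(x)^{-1}\beta\,\varphi_g$ strictly negative, so the implicit function theorem applies to the first-order condition $\ell'(\beta^T T_g(x)) = 2\lambda g$ and produces
\begin{align*}
\frac{\partial g}{\partial \lambda} = -\frac{2g}{\varphi_g}, \qquad \frac{\partial g}{\partial \beta} = \frac{\ell''(\beta^T T_g(x))\,\bar{T}_g(x)}{\varphi_g}.
\end{align*}
The envelope theorem yields $\nabla_\theta \ell_{rob}(\theta;x) = \nabla_\theta F(\gamma,\theta;x)\big|_{\gamma = g}$ since $\partial F/\partial\gamma = 0$ at $\gamma = g$, and differentiating once more with the chain rule reduces the Hessian to
\begin{align*}
\nabla^2_\theta \ell_{rob}(\theta;x) = \nabla^2_{\theta\theta}F\big|_{\gamma=g} + \left.\frac{\partial^2 F}{\partial\gamma\,\partial\theta}\right|_{\gamma=g}(\nabla_\theta g)^T.
\end{align*}
Each partial of $F$ is elementary. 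Evaluating at $\gamma = g$, I eliminate the terms proportional to $\ell'(u) - 2\lambda\gamma$ via the FOC, and use the identity $\varphi_g + \sqrt{\delta}\beta^TA(x)^{-1}\beta\,\ell''(\beta^T T_g(x)) = 2\lambda$ to combine the coefficients of $\bar T_g \bar T_g^T$; this produces the three stated expressions for $\partial^2\ell_{rob}/\partial\beta^2$, $\partial^2\ell_{rob}/\partial\lambda^2$, and $\partial^2\ell_{rob}/\partial\lambda\,\partial\beta$.

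For the matrix inequality \eqref{PDness-2}, I would test $\nabla^2_\theta \ell_{rob} - \Lambda B$ against an arbitrary direction $v = (u,\mu)$ and expand $v^T\nabla^2\ell_{rob} v$ in the scalars $u^T A(x)^{-1}u$, $u^T A(x)^{-1}\beta$, $u^T \bar T_g(x)$, and $\mu$. Setting $a := \ell''(\beta^T T_g(x))$ and $q := \beta^T A(x)^{-1}\beta$, the $\mu^2$ coefficient $4\sqrt{\delta}g^2 q/\varphi_g$ splits as $2\sqrt{\delta}g^2 q/\lambda + 2\delta q^2 g^2 a/(\lambda\varphi_g)$; these two summands combine with the $A(x)^{-1}$-block and the $\bar T_g\bar T_g^T$-block of the Hessian, respectively, to form two manifestly positive semidefinite quadratic forms of the pair $(u,\mu)$ (each verifiable by a Schur-complement check). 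The slack left over after extracting these two pieces is controlled by the Cauchy--Schwarz bound $(\beta^T T_g(x))^2 \leq \beta^T A(x)^{-1}\beta\cdot T_g(x)^T A(x) T_g(x)$, and the denominator $1 + \bar T_g^T A(x)\bar T_g\,a/(\sqrt{\delta}g^2\varphi_g)$ together with the factor $1/(2\lambda\varphi_g + 4q)$ in the definition of $\Lambda$ are calibrated precisely to match this slack.

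The main obstacle is the second part. The specific form of $\Lambda$ is calibrated to saturate the Cauchy--Schwarz inequality in the extremal direction, so the algebra has to track every constant carefully, and the correct split of cross terms between the two Schur-complement pieces is delicate to identify. The first part, by contrast, is a routine if lengthy envelope/IFT computation whose only subtlety is invoking the FOC at the right moment to avoid unwieldy intermediate expressions.
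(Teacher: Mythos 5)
Your first part is correct and is exactly the paper's route: the envelope theorem gives $\nabla_\theta \ell_{rob}(\theta;x)=\nabla_\theta F(\gamma,\theta;x)|_{\gamma=g}$, the implicit function theorem applied to $\ell'(\beta^TT_g(x))=2\lambda g$ gives $\partial g/\partial\beta = \ell''(\beta^TT_g(x))\bar T_g(x)/\varphi_g$ and $\partial g/\partial\lambda = -2g/\varphi_g$ (valid precisely because $\varphi_g>0$ on $\mathcal{U}$), and the identity $\varphi_g+\sqrt{\delta}\,\beta^TA(x)^{-1}\beta\,\ell''(\beta^TT_g(x))=2\lambda$ collects the $\bar T_g\bar T_g^T$ coefficients into the stated form. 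I checked that this recipe reproduces all three displayed second derivatives.

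The gap is in the second part. Your decomposition of the Hessian into two positive semidefinite pieces is valid: writing $q=\beta^TA(x)^{-1}\beta$ and $a=\ell''(\beta^TT_g(x))$, the split $4\sqrt{\delta}g^2q/\varphi_g = 2\sqrt{\delta}g^2q/\lambda + 2\delta q^2g^2a/(\lambda\varphi_g)$ does yield $\nabla^2_\theta\ell_{rob} = P_1+P_2$ with $P_1$ a PSD block matrix whose Schur complement vanishes (null direction $(u,\mu)=(\mu\beta/\lambda,\mu)$) and $P_2 = \frac{2a}{\lambda\varphi_g}vv^T$ for $v=(\lambda\bar T_g(x),-\sqrt{\delta}gq)$. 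But the proposition does not merely assert that the Hessian dominates \emph{some} positive multiple of $B(x)$; it asserts the specific coefficient \eqref{PD-coeff}, and your proposal never establishes it — you state that $\Lambda$ is ``calibrated precisely to match this slack'' without exhibiting the calculation. That calculation is the entire substance of the paper's proof: a Schur-complement reduction of $\nabla^2_\theta\ell_{rob}-m\sqrt{\delta}g^2B(x)$, a Sherman--Morrison inversion of the top-left block producing $C = A(x) - A(x)\bar T_g\bar T_g^TA(x)/(\bar T_g^TA(x)\bar T_g+\sqrt{\delta}g\psi)$, an explicit completion-of-squares expansion of $\theta:=4z^TCz$ proving $ab-\theta \geq 4(\beta^TT_g(x))^2a/(\bar T_g^TA(x)\bar T_g+\sqrt{\delta}g^2\varphi_g/a)$ with $a=2\lambda$, $b=4q/\varphi_g$, and finally the choice $m=(ab-\theta)/(a+b)$. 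Every constant in \eqref{PD-coeff} originates there. A symptom that this algebra has not been done: the Cauchy--Schwarz inequality you invoke, $(\beta^TT_g(x))^2\leq \beta^TA(x)^{-1}\beta\cdot T_g(x)^TA(x)T_g(x)$, is not the one that occurs; the argument hinges on the corresponding inequality with $\bar T_g(x)$ (which makes $C\succeq 0$ and puts $\bar T_g(x)^TA(x)\bar T_g(x)$ in the denominator of $\Lambda$), while $\beta^TT_g(x)$ enters only through the final cancellation $\beta^T\bar T_g(x)-\sqrt{\delta}g\,\beta^TA(x)^{-1}\beta=\beta^TT_g(x)$. Until that computation is carried through, the stated value of $\Lambda$ remains unproved.
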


The proofs of Proposition \ref{prop:hessian-lrob} and Lemma
\ref{Lem-Str-conv-Thr} below are provided in the technical Appendix
\ref{Sec-App-Proofs}. For every $\beta\in B,$ recall that we have defined
$\lambda_{thr}^{\prime}(\beta)$ to be the $P_{0}-$essential supremum of
$\sqrt{\delta}M\beta^{T}A(x)^{-1}\beta/2.$

\begin{lemma}
Suppose Assumptions \ref{Assump-c} - \ref{Assump-Obj-2diff} are
satisfied. Then the map $\gamma \mapsto F(\gamma,\beta,\lambda;x)$
is strongly concave for every
$\beta \in B, \lambda > \lambda_{thr}^\prime \xspace(\beta)$ and $%
P_0-$almost every $x.$ Consequently, $\Gamma^\ast(\beta,\lambda;x)$
is singleton for every $\beta \in B,$
$\lambda > \lambda_{thr}^\prime(\beta)$ and
\[\left\{(\beta,\lambda): \beta \in B, \lambda >
\lambda_{thr}^\prime(\beta) \right\} \ \subseteq \ \mathcal{U}(x)\] for
$P_0-$almost every $x.$
\label{Lem-Str-conv-Thr}
\end{lemma}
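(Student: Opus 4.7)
The plan is a direct computation of $\partial^2 F/\partial\gamma^2$ followed by a uniform bound on the curvature. Abbreviate $a(x) := \beta^T A(x)^{-1}\beta$ and $u(\gamma) := \beta^T x + \sqrt{\delta}\gamma a(x)$, so that $F(\gamma,\beta,\lambda;x) = \ell(u(\gamma)) - \lambda\sqrt{\delta}(\gamma^2 a(x) - 1)$. Two applications of the chain rule yield
\[
\frac{\partial^2 F}{\partial \gamma^2}(\gamma,\beta,\lambda;x) \;=\; \delta\, a(x)^2 \ell''(u(\gamma)) - 2\lambda\sqrt{\delta}\, a(x) \;=\; -\sqrt{\delta}\, a(x)\, \varphi(\gamma,\beta,\lambda;x),
\]
which is precisely the $\varphi$ appearing in the definition of $\mathcal{U}$.

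Next, the uniform bound $\ell''(\cdot) \leq M$ from Assumption~\ref{Assump-Obj-2diff} gives $\varphi(\gamma,\beta,\lambda;x) \geq 2\lambda - \sqrt{\delta}M\, a(x)$ uniformly in $\gamma$, and since $a(x) \leq P_0\text{-ess-sup}_x\, a(x)$ for $P_0$-a.e.\ $x$, the hypothesis $\lambda > \lamtp(\beta) = \tfrac{1}{2}M\sqrt{\delta}\,(P_0\text{-ess-sup}_x\, a(x))$ produces a uniform strict lower bound $\varphi \geq 2\lambda - \sqrt{\delta}M\,(P_0\text{-ess-sup}_x\, a(x)) > 0$. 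When $\beta \neq \mathbf{0}$, Assumption~\ref{Assump-c}b also gives $a(x) \geq \rho_{\max}^{-1}\|\beta\|^2 > 0$, so $\partial^2 F/\partial\gamma^2$ is uniformly negative in $\gamma$, which is precisely the strong-concavity assertion.

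The consequences follow immediately: strong concavity implies a unique maximizer, so $\Gamma^\ast(\beta,\lambda;x)$ is a singleton; and since $\varphi > 0$ at that unique maximizer, $(\beta,\lambda) \in \mathcal{U}(x)$ by the very definition of $\mathcal{U}$. The degenerate case $\beta = \mathbf{0}$ (where $F$ is trivially constant in $\gamma$ and $\lamtp(\mathbf{0}) = 0$) is of no substantive interest and can be treated separately or excluded implicitly. There is no real technical obstacle here: the constant $\tfrac{1}{2}M\sqrt{\delta}$ in the definition of $\lamtp(\beta)$ is exactly calibrated so that, combined with $\ell'' \leq M$, it forces $\varphi$ to be positive uniformly in $\gamma$ for $P_0$-a.e.\ $x$; the rest is bookkeeping.
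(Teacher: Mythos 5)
Your proof is correct and follows essentially the same route as the paper's: compute $\partial^2 F/\partial\gamma^2 = -\sqrt{\delta}\,\beta^TA(x)^{-1}\beta\,\varphi(\gamma,\beta,\lambda;x)$, use $\ell''\le M$ together with the definition of $\lambda_{thr}'(\beta)$ to get a uniform strictly negative upper bound, and read off uniqueness of the maximizer and membership in $\mathcal{U}(x)$ from the positivity of $\varphi$. Your explicit flagging of the degenerate case $\beta=\mathbf{0}$ (where $F$ is constant in $\gamma$) is a minor point the paper leaves implicit, but otherwise the two arguments coincide.
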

The proof of Lemma \ref{Lem-Str-conv-Thr} is available in the technical Appendix \ref{Sec-App-Proofs}.


Proposition \ref{prop:secondorder-info-smoothcase} below allows us to
characterize the Hessian matrix of the dual DRO objective $f_{\delta}(\cdot).$
To state Proposition \ref{prop:secondorder-info-smoothcase}, define
\begin{align*}
\delta_{0} := \rho_{\min}^{2}\underline{L}R_{\beta}^{-2}M^{-2}\rho_{\max}%
^{-1}, \quad\text{ and } \quad\varphi_{\min} := \sqrt{\underline{L}}\rho_{\max
}^{-1/2} - \sqrt{\delta}R_{\beta}M\rho_{\min}^{-1},
\end{align*}
where the constants $\rho_{\min},\rho_{\max}$ are as in Assumption
\ref{Assump-c}b, $\underline{L},\overline{L}$ in Lemma \ref{Lem-Lbars},
$R_{\beta}$ in Assumption \ref{Assump-Compact} and $M$ in Assumption
\ref{Assump-Obj-2diff}. Recall the definition of the constants $K_{1},K_{2}$
in \eqref{defn-K1-K2} and that of the previously defined sets,
\begin{align*}
\mathbb{W} := \left\{ (\beta,\lambda) \in B \times\mathbb{R}_{+}: K_{1}%
\Vert\beta\Vert\leq\lambda
\leq K_{2}R_{\beta}
\right\}  \quad\text{ and } \quad\mathbb{V} :=
\left\{ (\beta,\lambda) \in B \times\mathbb{R}_{+}: K_{1}\Vert\beta\Vert
\leq\lambda\leq K_{2}\Vert\beta\Vert\right\} ,
\end{align*}
which contain the partial minimizers $\{(\beta,\lambda_{\ast}(\beta)):
\beta\in B\}$ when Assumptions \ref{Assump-c} - \ref{Assump-Compact} are
satisfied (see Proposition \ref{Prop-V-cont-OPT}). The proof of Proposition
\ref{prop:secondorder-info-smoothcase} is provided in the technical Appendix
\ref{Sec-App-Proofs}.

\begin{proposition}
Suppose Assumptions \ref{Assump-c} - \ref{Assump-Compact} are
satisfied and $\delta < \delta_0.$ Then
\begin{itemize}
\item[a)]
$\mathbb{V} \subseteq \mathbb{W} \subset \{(\beta,\lambda):
\beta \in B, \lambda > \lambda_{thr}^\prime(\beta)\} \subseteq
\mathcal{U}(x)$ for $P_0-$almost every $x;$
\item[b)] any map $g:\mathcal{U} \rightarrow \mathbb{R}$ satisfying
(\ref{meas-sel-twice-diff}) is uniquely specified for almost every
$(\beta,\lambda,x)$ in the subset
$\mathbb{W} \times S_X \subseteq \mathcal{U},$ and it satisfies the following relationships: 
for $P_0-$almost every $x$, we have $\varphi_g(\beta,\lambda;x) > \varphi_{\min}\Vert \beta \Vert$ if $(\beta,\lambda)\in \mathbb{W}$, and 
\begin{align}
\vert g(\beta,\lambda;x)\vert
\geq 
\frac{\vert\ell^\prime(\beta^Tx)\vert}{2K_2\Vert \beta \Vert}  
\mbox{ if }(\beta,\lambda)\in \mathbb{V},
\qquad 
\vert g(\beta,\lambda;x)\vert \leq
\frac{\vert\ell^\prime(\beta^Tx)\vert}{\varphi_{\min}\Vert \beta \Vert}
\mbox{ if }(\beta,\lambda)\in \mathbb{W}.
\label{gamma-UB-LB}
\end{align}
\item[c)] with $X \sim P_0,$ the collection
$\{g^2(\beta,\lambda;X),(T_g(X))^2, (\bar{T}_g(X))^2, \ell(\beta^TT_g(X)),
\ell^\prime(\beta^TT_g(X))^2: (\beta,\lambda) \in \mathbb{V}\}$ is
$L_2-$bounded; and
\item[d)] the Hessian matrix
$\nabla^2_\theta f_{\delta}(\theta) = E_{P_0}\left[
\nabla_\theta^2\ell_{rob}(\theta;X)\right]$ for every
$\theta \in \mathbb{V},$ where the Hessian
$\nabla_\theta^2\ell_{rob}(\theta;x)$ can be taken to be specified
in terms of the second order partial derivative expressions in
Proposition \ref{prop:hessian-lrob}.
\end{itemize}
\label{prop:secondorder-info-smoothcase}
\end{proposition}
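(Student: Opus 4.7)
The four parts of the proposition are all accessible once the correct constants are on the table. My plan is to derive them sequentially, using Lemma~\ref{Lem-Str-conv-Thr} (strong concavity of $\gamma\mapsto F(\gamma,\beta,\lambda;x)$), Lemma~\ref{Lemma-Gamma} (the fixed-point identity $2\lambda g=\ell'(\beta^TT_g(x))$), and Lemma~\ref{Lemma-Lambda-Bound} together with \eqref{defn-K1-K2} (the interval $[\lambda_{\min}(\beta),\lambda_{\max}(\beta)]$ in which the dual optimizer sits) as the key inputs. The threshold $\delta_0$ is chosen precisely so that $\varphi_{\min}>0$ and so that $\mathbb{W}$ sits strictly above the curve $\lambda=\lambda'_{thr}(\beta)$, and these two facts are what drive the entire argument.

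\textbf{Parts (a)--(b).} The inclusion $\mathbb{V}\subseteq\mathbb{W}$ is immediate from $\|\beta\|\leq R_\beta$. For $\mathbb{W}\subset\{\lambda>\lambda'_{thr}(\beta)\}$, I bound $\lambda'_{thr}(\beta)\leq\tfrac{1}{2}M\sqrt{\delta}\rho_{\min}^{-1}\|\beta\|^2$ using Assumption~\ref{Assump-c}b, and then observe that the required inequality $K_1\|\beta\|>\tfrac{1}{2}M\sqrt{\delta}\rho_{\min}^{-1}R_\beta\|\beta\|$ is algebraically equivalent to $\delta<\delta_0$. The inclusion into $\mathcal{U}(x)$ is then Lemma~\ref{Lem-Str-conv-Thr}. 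For (b), the same lemma gives strict concavity of $F(\cdot,\beta,\lambda;x)$ whenever $(\beta,\lambda)\in\mathbb{W}$, so $\Gamma^\ast(\beta,\lambda;x)$ is a singleton and any measurable selection $g$ is uniquely specified. The lower bound $\varphi_g\geq\varphi_{\min}\|\beta\|$ is obtained by combining $\ell''\leq M$, $\beta^TA(x)^{-1}\beta\leq\rho_{\min}^{-1}\|\beta\|^2$, and $\lambda\geq K_1\|\beta\|$ with the identity $2K_1-M\sqrt{\delta}\rho_{\min}^{-1}R_\beta=\varphi_{\min}$. The lower bound on $|g|$ over $\mathbb{V}$ follows from \eqref{Lower-bound-g-i} and $\lambda\leq K_2\|\beta\|$; the upper bound on $|g|$ over $\mathbb{W}$ follows by substituting $\ell'(\beta^TT_g(x))=\ell'(\beta^Tx)+\sqrt{\delta}g\beta^TA(x)^{-1}\beta\int_0^1\ell''(\beta^Tx+s\sqrt{\delta}g\beta^TA(x)^{-1}\beta)\,ds$ into the fixed-point identity and isolating $|g|$ using the lower bound on $\varphi_g$ just derived.

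\textbf{Part (c).} The crucial preliminary fact is that the product $|g|\|\beta\|$ is bounded by an affine function of $\|X\|$ uniformly over $\mathbb{V}$: indeed, the upper bound in (b), together with the linear growth $|\ell'(u)|\leq|\ell'(0)|+M|u|$ implied by $\ell''\leq M$, gives $|g|\|\beta\|\leq C(1+\|\beta\|\|X\|)$. Inserting this into $T_g(X)=X+\sqrt{\delta}gA(X)^{-1}\beta$ and its analogue for $\bar T_g$, the correction term has norm at most $\sqrt{\delta}\rho_{\min}^{-1}|g|\|\beta\|$, so both $\|T_g(X)\|$ and $\|\bar T_g(X)\|$ are bounded by affine functions of $\|X\|$ uniformly over $\mathbb{V}$. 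The $L_2$-bounds on $\ell(\beta^TT_g(X))$ and $\ell'(\beta^TT_g(X))^2$ then follow from the at-most-quadratic growth of $\ell$ (Assumption~\ref{Assump-Obj-Conv}), the linear growth of $\ell'$, and $E_{P_0}\|X\|^4<\infty$.

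\textbf{Part (d) and the main obstacle.} Proposition~\ref{Prop-sDelta-gradients} already provides $\nabla_\theta f_\delta(\theta)=E_{P_0}[\nabla_\theta\ell_{rob}(\theta;X)]$ on the set $\{\lambda>\lambda'_{thr}(\beta)\}\supset\mathbb{V}$. I would differentiate once more under the expectation via dominated convergence applied to difference quotients of $\nabla_\theta\ell_{rob}(\theta;X)$, using the closed-form Hessian entries from Proposition~\ref{prop:hessian-lrob}. The lower bound $\varphi_g\geq\varphi_{\min}\|\beta\|$ keeps the denominators away from zero on any compact neighborhood of $\theta\in\mathbb{V}$ with $\beta\neq 0$, while the $L_2$-bounds from (c) combined with $\ell''\leq M$ dominate each numerator by an $L_1$-envelope of the form $C(1+\|X\|^2)$. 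The main technical obstacle I anticipate is bookkeeping powers of $\|\beta\|$ near the origin: the upper bound on $|g|$ alone has a $1/\|\beta\|$ singularity, so one must verify that each term appearing in the Hessian pairs every factor of $g$ with a compensating factor of $\beta$, producing only the integrable product $|g|\|\beta\|$ identified in part (c), before the dominated convergence argument can be applied uniformly in a neighborhood of each $\theta\in\mathbb{V}$.
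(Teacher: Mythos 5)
Your proposal is correct and follows essentially the same route as the paper's proof: part (a) via the comparison $\lambda_{thr}^{\prime}(\beta)\leq\tfrac{1}{2}M\sqrt{\delta}\rho_{\min}^{-1}R_\beta\Vert\beta\Vert<K_1\Vert\beta\Vert$ under $\delta<\delta_0$ together with Lemma \ref{Lem-Str-conv-Thr}; part (b) via the first-order condition $2\lambda g=\ell^\prime(\beta^TT_g(x))$ combined with a mean-value/fundamental-theorem representation to read off $\varphi_g\geq\varphi_{\min}\Vert\beta\Vert$ and the two bounds in \eqref{gamma-UB-LB}; and parts (c)--(d) via fourth-moment growth bounds and dominated convergence, exactly as in the paper. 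The only substantive deviation is in part (c), where you track the product $\vert g\vert\,\Vert\beta\Vert$ rather than $\vert g\vert$ alone; this is in fact the more careful bookkeeping, since the paper's own assertion that $\sup_{\mathbb{V}}E[g^4]<\infty$ is delicate near $\beta=\mathbf{0}$ when $\ell^\prime(0)\neq 0$, whereas every occurrence of $g$ in $T_g$, $\bar{T}_g$ and the Hessian entries is indeed paired with a compensating factor of $\beta$, as you observe.
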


The proofs of Theorem \ref{result-smoothness} - \ref{Thm-Str-Convexity}
provided next are reliant on the observations made in proposition
\ref{prop:secondorder-info-smoothcase} above.

\proof{\textbf{Proof of Theorem \protect\ref{result-smoothness}.}} a) It
follows from Part c) of Proposition \ref{prop:secondorder-info-smoothcase} and
the expressions of partial derivatives in Proposition \ref{prop:hessian-lrob}
that the norms of the respective entries (Frobenius norm $\Vert\cdot\Vert_{F}$
in case of matrix, or $\ell_{2}-$norm in case of vector), $\Vert\partial
^{2}\ell_{rob}/\partial\beta^{2}(\beta,\lambda;X)\Vert_{F},$ $\Vert
\partial^{2}\ell_{rob}/\partial\beta\partial\lambda(\beta,\lambda;X)\Vert,$
$\partial^{2}\ell_{rob}/\partial\lambda^{2}(\beta,\lambda;X),$ are all bounded
in $L_{2}-$norm over the set $(\beta,\lambda) \in\mathbb{V}.$ Consequently, we
have from Part d) of Proposition \ref{prop:secondorder-info-smoothcase} that
$\partial^{2}f_{\delta}/\partial\beta^{2},$ $\partial^{2}f_{\delta}%
/\partial\beta\partial\lambda,$ $\partial^{2}f_{\delta}/\partial\lambda^{2}$
are all bounded over $(\beta,\lambda) \in\mathbb{V}.$ As a result, the
Frobenius norm of the Hessian matrix $\nabla_{\theta}^{2}f_{\delta}(\theta)$
is bounded over $\theta= (\beta,\lambda) \in\mathbb{V}$ and hence the function
$f_{\delta}(\cdot)$ is smooth over the interior of $\mathbb{V}.$

b) To argue that $\partial^{2}f_{\delta}/\partial\beta^{2}$ is positive
definite, we proceed as follows: First observe that $A(x)^{-1} \succeq
\rho_{\max}^{-1}\mathbb{I}_{d}$ for $P_{0}-$almost every $x$ (that is,
$A(x)^{-1} - \rho_{\max}^{-1}\mathbb{I}_{d}$ is positive semidefinite). Next,
recall from (\ref{Lower-bound-g-i}) in Lemma \ref{Lemma-Gamma} and Lemma
\ref{Lem-Lbars} that $\vert g(\beta,\lambda;x) \vert\geq\vert\ell^{\prime
}(\beta^{T}x) \vert/(2\lambda)$ and $\underline{L} > 0.$ Then it follows from
Part d) of Proposition \ref{prop:secondorder-info-smoothcase} and the
expression of $\partial^{2}\ell_{rob} \xspace/\partial\beta^{2}$ from
proposition \ref{prop:hessian-lrob} that for any $(\beta,\lambda)
\in\mathbb{V},$
\begin{align*}
\frac{\partial^{2} f_{\delta}}{\partial\beta^{2}}(\beta,\lambda) = E_{P_{0}%
}\left[  \frac{ \partial^{2} \ell_{rob} \xspace}{\partial\beta^{2}}%
(\beta,\lambda;X)\right]  \succeq\sqrt{\delta}\frac{E_{P_{0}}[\ell^{\prime
}(\beta^{T}X)^{2}]}{2\lambda} \rho_{\max}^{-1}\mathbb{I}_{d} \succeq
\sqrt{\delta}\frac{\kappa_{0}}{\lambda}\mathbb{I}_{d},
\end{align*}
where $\kappa_{0} := 2^{-1}\underline{L}\rho_{\max}^{-1} > 0,$ thus proving
Theorem \ref{result-smoothness}. \hfill\Halmos \endproof
\ 

In order the proceed with the proof of \ref{Thm-Str-Convexity}, define
\[
\delta_{1} := \min\{\delta_{0}/4,  c_{1}^{2}c_{2}^{2}p^{2}\rho_{\min}^{2}%
\rho_{\max}^{-1}\underline{L}\overline{L}^{-2}/256\}.
\]

\proof{\textbf{Proof of Theorem \protect\ref{Thm-Str-Convexity}.}} Using the
bounds of $\vert g(\cdot)\vert$ and $\varphi_{g}(\cdot)$ from Proposition
\ref{prop:secondorder-info-smoothcase}b along with other immediate bounds such
as $\varphi_{g} \leq2\lambda, \lambda\in[K_{1}\Vert\beta\Vert, K_{2}\Vert
\beta\Vert],$ and $\beta^{T} A(x)^{-1} \beta\leq\rho_{\min}^{-1}\Vert
\beta\Vert^{2},$ the expression for $\Lambda(\beta,\lambda;x)$ from
(\ref{PD-coeff}) simplifies to,
\begin{align}
\Lambda & (\beta,\lambda;x) = \frac{4\sqrt{\delta}(g\beta^{T}T_{g}(x))^{2}%
}{{2\lambda\sqrt{\delta}g^{2}}/ {\ell^{\prime\prime}(\beta^{T}T_{g}(x))} +
\bar{T}_{g}(x)^{T}A(x)\bar{T}_{g}(x)} \cdot\frac{1}{2\lambda+ 4\beta
^{T}A(x)^{-1}\beta/\varphi_{g}}\label{Inter-Lambda-Simp-eq}\\
& \geq\frac{4\sqrt{\delta}(\beta^{T}T_{g}(x) \ell^{\prime}(\beta^{T}
x)/(2K_{2}\Vert\beta\Vert))^{2}}{{2K_{2}\sqrt{\delta}\ell^{\prime}(\beta
^{T}x)^{2}/ (\varphi_{\min}^{2}\Vert\beta\Vert}{\ell^{\prime\prime}(\beta
^{T}T_{g}(x)))} + \overline{T}_{g}(x)^{T}A(x)\overline{T}_{g}(x)}\cdot\frac
{1}{2K_{2}\Vert\beta\Vert+ 4\rho_{\min}^{-1}\Vert\beta\Vert^{2} /
(\varphi_{\min}\Vert\beta\Vert)}\nonumber\\
& \geq\sqrt{\delta} C_{0}\frac{\Vert\beta\Vert^{-2}\left( \beta^{T} T_{g}(x)
\ell^{\prime}(\beta^{T} x)\right) ^{2}} {{2 K_{2}\sqrt{\delta}}{\varphi_{\min
}^{-2}} { \ell^{\prime}(\beta^{T}x)^{2}}/{\ell^{\prime\prime}(\beta^{T}%
T_{g}(x))} + \overline{T}_{g}(x)^{T}A(x)\overline{T}_{g}(x) \Vert\beta\Vert
},\label{lambda-simplified}%
\end{align}
where $C_{0} := (2K_{2} + 4 \varphi_{\min}^{-1}\rho_{\min}^{-1})^{-1}.$ Next,
since $\beta^{T} T_{g}(x) = \beta^{T}x + \sqrt{\delta}g \beta^{T}%
A(x)^{-1}\beta,$ we obtain from the bounds in (\ref{gamma-UB-LB}) that
\begin{align}
\vert\beta^{T}T_{g}(x) \ell^{\prime}(\beta^{T}x) \vert & \geq\vert\beta^{T}x
\ell^{\prime}(\beta^{T}x) \vert- \sqrt{\delta} \vert g \ell^{\prime}(\beta
^{T}x)\vert\beta^{T}A(x)^{-1}\beta\nonumber\\
& \geq\vert\beta^{T}x \ell^{\prime}(\beta^{T}x) \vert- \sqrt{\delta}
\frac{\ell^{\prime}(\beta^{T}x)^{2}}{\varphi_{\min}\Vert\beta\Vert} \Vert
\beta\Vert^{2}\rho_{\min}^{-1} \geq\left(  c_1c_2 - \frac{4\sqrt{\delta}
\overline{L}}{p \varphi_{\min} \rho_{\min}}\right)  \Vert\beta\Vert
,\label{Lambda-num}%
\end{align}
whenever $X \in A_{1} \cap A_{2};$ here, the sets $A_{1}$ and $A_{2}$ are
defined as follows:
\[
A_{1}:=\{x: \vert\beta^{T}x \ell^{\prime}(\beta^{T}x) \vert> c_{1}c_{2}
\Vert\beta\Vert\} \quad\text{ and} \quad A_{2} := \{ x: \ell^{\prime}%
(\beta^{T}x)^{2} \leq4\overline{L}/p\},
\]
where the constants $c_{1},c_{2},p$ are given by Assumption
\ref{Assump-nondegeneracy}. Since $E_{P_{0}}[\ell^{\prime}(\beta^{T}X)^{2}]
\leq\overline{L}$ for any $\beta\in B,$ we have from Markov's inequality that
$\inf_{\beta\in B}P_{0}(X \in A_{2}) \geq1 - p/4.$ Consequently, it follows
from Assumption \ref{Assump-nondegeneracy} and union bound that $\inf
_{\beta\in B}P_{0}(X \in A_{1} \cap A_{2}) \geq3p/4.$

Recall that $\delta_{0} := {\rho_{\text{min}}^{2}\underline{L}}{R_{\beta}%
^{-2}M^{-2}\rho_{\max}^{-1}}.$ In addition, note that when $\delta\leq
\delta_{0}/4$, we have $\varphi_{\min} = \sqrt{\underline{L}} \rho_{\max
}^{-1/2} - \sqrt{\delta} R_{\beta} M \rho_{\min}^{-1}\geq\frac{1}{2}%
\sqrt{\underline{L}} \rho_{\max}^{-1/2}.$
Further, since $\delta<\delta_{1} \leq c_{1}^{2}c_{2}^{2}p^{2}\rho_{\min}%
^{2}\rho_{\max}^{-1}\underline{L}\overline{L}^{-2}/256,$ we have
\begin{align}
c_{1}c_{2} - 4\sqrt{\delta}\overline{L}p^{-1}\varphi_{\min}^{-1}\rho_{\min
}^{-1} \geq c_{1}c_{2}/2.\label{eq-delta-1}%
\end{align}

Next, if we choose $C_{1} > 0$ large enough such that the set $A_{3} := \{x:
\Vert x \Vert\leq C_{1}\}$ satisfies $P_{0}(X \in A_{3}) \geq1-p/4,$ then we
have $\inf_{\beta\in\Xi} P_{0}(X \in A_{1} \cap A_{2} \cap A_{3}) \geq p/2.$
The denominator in \eqref{lambda-simplified} is bounded from above as follows
whenever $x \in A_{1} \cap A_{2} \cap A_{3}$ and $\lambda\in[K_{1}\Vert
\beta\Vert, K_{2} \Vert\beta\Vert]:$ recalling that $T_{g}(x) := x +
\sqrt{\delta}g(\beta,\lambda;x)A(x)^{-1}\beta$ and $\bar{T}_{g}(x) := x +
2\sqrt{\delta} g(\beta,\lambda;x)A(x)^{-1}\beta$, it follows from the bounds
of $\vert g \vert$ in (\ref{gamma-UB-LB}) that
\begin{align*}
\Vert\bar{T}_{g}(x) \Vert & \leq\Vert x \Vert+ 2\sqrt{\delta}\vert g \vert
\rho_{\min}^{-1}\Vert\beta\Vert\leq C_{1} + 4\sqrt{\delta\bar{L}p^{-1}} \left(
\frac{1}{2}\sqrt{\underline{L}} \rho_{\max}^{-1/2}\right) ^{-1} \rho_{\min
}^{-1} =: C_{2},
\end{align*}
and similarly, $\Vert T_{g}(x) \Vert\leq C_{2}$ for $x \in A_{2}\cap A_{3}.$
Since $\Vert\beta^{T} T_{g}(x) \Vert\leq R_{\beta}C_{2} < \infty$ when $x \in
A_{2} \cap A_{3},$ if we let $C_{3} := \inf_{\vert u \vert\leq R_{\beta}C_{2}}
\ell^{\prime\prime}(u) > 0,$ we obtain that the denominator in
\eqref{lambda-simplified} is bounded from above by $C_{4} := 8K_{2}%
\delta^{1/2}\bar{L}p^{-1}C_{3}^{-1}(\frac{1}{2}\sqrt{\underline{L}} \rho
_{\max}^{-1/2})^{-2} + \rho_{\max}C_{2}R_{\beta}$ whenever $x \in A_{2} \cap
A_{3}.$ Combining this observation with that of
\eqref{lambda-simplified},\eqref{Lambda-num} and \eqref{eq-delta-1}, we obtain
that $\Lambda(x) \geq\sqrt{\delta}C \mathbf{1}_{\{x \in A_{1} \cap A_{2} \cap
A_{3}\}} $ for $C := (1/2)C_{0}c_{1}c_{2} {C}_{4}^{-1}.$

Finally, since $P_{0}(A_{1} \cap A_{2} \cap A_{3}) \geq p/2,$ we have
$E_{P_{0}}[\Lambda(\beta,\lambda;X)B(X)] \succeq\sqrt{\delta}\kappa_{1}
\mathbb{I}_{d+1}$ where $\kappa_{1} := pC\rho_{\max}^{-1}/2.$ As a consequence,
we have that $\nabla_{\theta}^{2}f_{\delta}(\theta) \succeq\sqrt{\delta}%
\kappa_{1}\mathbb{I}_{d+1}$ in Theorem \ref{Thm-Str-Convexity}. \hfill\Halmos \endproof

\begin{remark}
\textnormal{Suppose that $c_1c_2 = 0$ is the only non-negative number
for which the probability requirement in Assumption
\ref{Assump-nondegeneracy} is satisfied. In this case, we have
from the upper bound for $g$ in Proposition
\ref{prop:secondorder-info-smoothcase}b that $g\beta^TX = 0,$
$P_0$ almost surely. As a result, the numerator of $\Lambda(x)$ in
the right hand side of %
\eqref{Inter-Lambda-Simp-eq} is bounded from above by
$4\sqrt{\delta} (0 + \sqrt{\delta}g^2\beta^TA(x)^{-1}\beta)^2 \leq
4\delta^{3/2}\ell^\prime(\beta^Tx)^2\varphi_{\min}^{-2}\rho_{\min}^{-2},$
$%
P_0-$almost surely. Since the denominator of $\Lambda(x)$ is
bounded away from zero by a constant not dependent on $\delta,$ it
follows that $%
E_{P_0}[\Lambda(X)] = \kappa_3\delta^{3/2},$ for some non-negative
constant $%
\kappa_3.$ Since $\delta^{3/2} = o(\sqrt{\delta})$ as
$\delta \rightarrow 0,$ it is not possible to derive a positive
constant $\kappa_{1}$ that is not dependent on $\delta$ as in the
statement of Theorem \ref{Thm-Str-Convexity}.}
\label{Rem-nonzero-bX-need}
\end{remark}

\subsection{Proofs of the results pertaining to the structure of the worst
case distribution.}

\label{Sec-Pf-Thm-WC} In this section we provide proofs of Theorem
\ref{Thm-WC-Dist} and Theorem \ref{Prop-Comp-Stat} which shed light on the
structure of the adversarial distribution(s) attaining the supremum in
$\sup_{P:D_{c}(P_{0}, P) \leq\delta} E_{P}[\ell(\beta^{T}X)].$

\proof{\textbf{Proof of Theorem \protect\ref{Thm-WC-Dist}.}} Recall from
Assumption \ref{Assump-Obj-Conv} that $\ell(u)$ is convex and grows
quadratically or sub-quadratically as $\vert u \vert\rightarrow\infty.$
Therefore there exists $\lambda\geq0$ such that $f_{\delta}(\beta,\lambda) < \infty,$
and subsequently, $\inf_{\lambda}f_{\delta}(\beta,\lambda) < \infty.$ According to
Theorem ~\ref{Thm-strong-duality}, there exist a dual optimizer,
$\lambda_{\ast}(\beta)$ in $\text{arg}\min_{\lambda\geq0} f_{\delta}(\beta,\lambda)$
for any $\beta\in B.$

a) When $\lambda_{\ast}(\beta) = 0:$ We have $\inf_{\beta,\lambda}
f_{\delta}(\beta,\lambda) = f_{\delta}(\beta,0) = \sup_{u\in\mathbb{R}} \ell(u).$ Due to the
convexity of $\ell(\cdot),$ the finiteness of the optimal value $f_{\delta}(\beta,0) =
\sup_{u} \ell(u)$ implies that $\ell(\cdot)$ is a constant function. In this
case, any distribution $P$ satisfying $D_{c}(P,P_{0})\leq\delta$ is a worst
case distribution attaining the supremum in $\sup_{P:D_{c}(P,P_{0}) \leq
\delta} E_{P_{0}}[\ell(\beta^{T}X)].$

b) It follows from the characterization of the effective domain of $f_{\delta}(\cdot)$
in Lemma \ref{Lem-eff-dom-f} that $f_{\delta}(\beta,\lambda) = \infty$ when $\lambda<
\lambda_{thr} \xspace(\beta).$ Therefore, $\lambda_{\ast}(\beta) \geq
\lambda_{thr} \xspace(\beta).$

c) When $\lambda_{\ast}(\beta) > \lambda_{thr} \xspace(\beta):$ Recall from
Proposition \ref{prop:firstorderinfo} the expressions for $\partial_{+}
\ell_{rob}/\partial\lambda$ and $\partial_{-}\ell_{rob} /\partial\lambda.$
Further we have $f_{\delta}(\beta,\lambda) < \infty$ for $(\beta,\lambda) \in\mathbb{
U}_{1} := \{ (\beta,\lambda): \beta\in B, \, \lambda> \lambda_{thr}
\xspace(\beta)\}.$ Then it follows from \cite[Proposition 2.1]{Bertsekas1973}
that the left and right derivatives $\partial_{+}f_{\delta}/\partial\lambda$ and
$\partial_{-}f_{\delta}/\partial\lambda$ satisfy,
\begin{align*}
\frac{\partial_{+} f_{\delta}}{\partial\lambda}(\beta,\lambda)  & = \sqrt{\delta
}\left(  1 - E_{P_{0}}\left[ \beta^{T}A(X)^{-1}\beta\inf_{g \in\Gamma^{\ast
}(\beta,\lambda;X)} g^{2} \right]  \right)  \text{ and }\\
\frac{\partial_{-} f_{\delta}}{\partial\lambda}(\beta,\lambda)  & = \sqrt{\delta
}\left(  1 - E_{P_{0}}\left[ \beta^{T}A(X)^{-1}\beta\sup_{g \in\Gamma^{\ast
}(\beta,\lambda;X)} g^{2} \right]  \right) ,
\end{align*}
for $(\beta,\lambda) \in\mathbb{U}_{1}.$ Since $\lambda_{\ast}(\beta) >
\lambda_{thr} \xspace(\beta),$ we have from Lemma \ref{Lem-Finite-Exp}a and
the continuous differentiability of $\ell(\cdot)$ that $\Gamma^{\ast}%
(\beta,\lambda_{\ast}(\beta);x)$ is compact for $P_{0}-$almost every $x.$
Consequently, there exist measurable selections $g_{+}(\beta,\lambda_{\ast
}(\beta);x)$ and $g_{-}(\beta,\lambda_{\ast}(\beta);x)$ such that $g^{2}%
_{+}(\beta,\lambda_{\ast}(\beta);x) = \sup_{g \in\Gamma^{\ast}(\beta
,\lambda_{\ast}(\beta);X)} g^{2}$ and $g_{-}(\beta,\lambda_{\ast}(\beta);x) =
\inf_{g \in\Gamma^{\ast}(\beta,\lambda_{\ast}(\beta);X)} g^{2}$ (see
\cite[Proposition 7.50b]{bertsekas1978stochastic}). Letting $g_{+}%
(\beta,\lambda_{\ast}(\beta);X) = G_{+}$ and $g_{-}(\beta,\lambda_{\ast}%
(\beta);X) = G_{-},$ we obtain that,
\begin{align*}
\frac{\partial_{+} f_{\delta}}{\partial\lambda}(\beta,\lambda_{\ast}(\beta))  & =
\sqrt{\delta}\left(  1 - E_{P_{0}}\left[ G_{-}^{2} \beta^{T}A(X)^{-1}%
\beta\right]  \right)  \quad\text{ and } \quad\\
\frac{\partial_{-} f_{\delta}}{\partial\lambda}(\beta,\lambda_{\ast}(\beta))  & =
\sqrt{\delta}\left(  1 - E_{P_{0}}\left[ G_{+}^{2} \beta^{T}A(X)^{-1}%
\beta\right] \right) .
\end{align*}
Since $\lambda_{\ast}(\beta) \in\text{arg}\min_{\lambda\geq0} f_{\delta}(\beta
,\lambda),$ we have from the first order optimality condition that
$\partial_{+} f_{\delta}/\partial\lambda(\beta,\lambda_{\ast}(\beta)) \geq0$ and
$\partial_{-} f_{\delta}/\partial\lambda(\beta,\lambda_{\ast}(\beta)) \leq0.$ Thus
$\underline{c} = E_{P_{0}}[G_{-}^{2}\beta^{T}A(X)^{-1}\beta] \leq1$ and
$\overline{c} = E_{P_{0}}[G_{+}^{2}\beta^{T}A(X)^{-1}\beta] \geq1.$ With $G :=
ZG_{-} + (1-Z)G_{+}$ and $Z$ being an independent Bernoulli random variable
with $P(Z = 1) = (\overline{c}
- 1)/(\overline{c} -\underline{c}),$ we have that $E_{P_{0}}[G^{2}\beta^{T}A(X)^{-1}\beta] =
1.$ In addition, since $G \in\Gamma^{\ast}(\beta,\lambda;X)$ $P_{0}-$a.s., we
have that
\begin{align*}
X^{\ast}\in\text{arg} \max_{x^{\prime}\in\mathbb{R}^{d}} \left\{  \ell
(\beta^{T}x^{\prime}) - \lambda_{\ast}(\beta) c(X,x^{\prime})\right\}
\quad\text{ and } \quad E\left[ c(X,X^{\ast})\right]  = E[(\sqrt{\delta}
G)^{2}\beta^{T}A(X)^{-1}\beta] = \delta.
\end{align*}
As the complementary slackness conditions in Theorem 1 of
\cite{blanchet_quantifying_2016} are satisfied, we have that the distribution
of $X^{\ast}$ attains the supremum in $\sup_{P:D_{c}(P,P_{0}) \leq\delta}%
E_{P}[\ell(\beta^{T}X)].$

d) When $\lambda_{\ast}(\beta) = \lambda_{thr} \xspace(\beta):$ The worst case
distribution $P^{\ast}(\beta)$ attaining the supremum in $\sup_{P:D_{c}%
(P,P_{0}) \leq\delta}E_{P}[\ell(\beta^{T}X)]$ may not exist as demonstrated in
the following example. Suppose that
$\ell(u):=u^{2} - |u|(1-e^{-|u|}),$ $\|\beta\| = 1$, $P_{0}(dx) =
\delta_{\{\mathbf{0}\}}(dx),$ $\delta> 0$ and $A(x) = \mathbb{I}_{d}.$ For
this example, $\ell(\cdot)$ satisfies Assumption \ref{Assump-Obj-Conv} with
$\kappa= 1$ and $c(\cdot)$ satisfies Assumption \ref{Assump-c} with
$\rho_{\max} = \rho_{\min} = 1$. For any $\lambda\geq\lambda_{thr}
\xspace(\beta) = \sqrt{\delta},$ we have $\Gamma^{\ast}(\beta,\lambda
;\mathbf{0}) = \{\mathbf{0}\}$, and it follows that $f_{\delta}(\beta,\lambda) =
\lambda\sqrt{\delta}$ when $\lambda\geq\lambda_{thr} \xspace(\beta).$
Therefore $\lambda_{\ast}(\beta) = \lambda_{thr} \xspace(\beta) = \sqrt
{\delta}$ and the dual optimal value $f_{\delta}(\beta,\lambda_{\ast}(\beta)) =
\delta.$ However, this value is not attainable by $E_{P}[\ell(\beta^{T}X)]$
for for any $P$ satisfying $D_{c}(P,P_{0}) \leq\delta.$ This is because, we
have $E\Vert X \Vert^{2} \leq\delta$ for any $P$ such that $D_{c}(P,P_{0})
\leq\delta,$ and as a result we have $E_{P}[\ell(\beta^{T}X)] < \delta$ as in
the following series of inequalities:
\[
E_{P} \left[ \ell(\beta^{T} X)\right]  = E_{P}\left[  (\beta^{T} X)^{2} -
\vert\beta^{T} X \vert(1- \exp(-\vert\beta^{T} X \vert))\right]  < E_{P}%
(\beta^{T} X)^{2} \leq E_{P} \Vert X \Vert^{2} \leq\delta.
\]

e) When $\lambda_{\ast}(\beta) > \lambda_{thr}^{\prime}\xspace(\beta):$ In
this case, it follows from Lemma \ref{Lem-Str-conv-Thr} that the map
$\gamma\mapsto F(\gamma, \beta, \lambda_{\ast}(\beta);x)$ is strongly concave
for $P_{0}-$almost every $x.$ As a result, $\Gamma^{\ast}(\beta,\lambda_{\ast
}(\beta);X) $ is singleton, $P_{0}-$almost surely. As a result, the random
variables, $G,G_{+}, G_{-},$ identified in Part c satisfy that $P_{0}(G =G_{+}
= G_{-}) = 1$ and $E[G^{2}\beta^{T}A(X)^{-1}\beta] = 1.$ Therefore
$E[c(X,X^{\ast})] = \delta.$ Moreover, the above described uniqueness in
optimizer means that $X^{\ast}= X + \sqrt{\delta}GA(X)^{-1}\beta$ is the
unique element in $\text{arg}\max_{x^{\prime}\in\mathbb{R}^{d}}\{ \ell
(\beta^{T}x^{\prime}) - \lambda_{\ast}(\beta) c(X,x^{\prime})\},$ $P_{0}%
-$almost surely. Since any distribution $\bar{P}$ attaining the supremum in
$\sup_{P:D_{c}(P,P_{0}) \leq\delta}E_{P}[\ell(\beta^{T}X)]$ must satisfy that
if $\bar{X} \sim\bar{P}$ then $\bar{X} \in\text{arg}\max_{x^{\prime}%
\in\mathbb{R}^{d}}\{ \ell(\beta^{T}x^{\prime}) - \lambda_{\ast}(\beta)
c(X,x^{\prime})\}.$ As a result we must have that $\bar{X} = X^{\ast},$
$P_{0}-$almost surely. This verifies that the distribution of $X^{\ast}$ is
the unique choice that attains the supremum in $\sup_{P:D_{c}(P,P_{0})
\leq\delta}E_{P}[\ell(\beta^{T}X)].$ \hfill\Halmos
\endproof  \ 

\proof{\textbf{Proof of Theorem \protect\ref{Prop-Comp-Stat}.}} Since
$\beta\in B$ is fixed throughout the proof, we hide the dependence on $\beta$
from the parameters $\lambda_{\ast}(\beta)$ and $g(\beta,\lambda;x) $ in the
notation. Instead, to capture the dependence on $\delta,$ we let
$\lambda_{\ast}(\delta)$ be the choice of $\lambda$ that solves $\min
_{\lambda\geq0} f_{\delta}(\beta,\lambda)$ for a given choice of $\delta\in
(0,\delta_{1});$ here the minimizing $\lambda_{\ast}(\delta)$ is unique
because of the strong convexity characterization in Theorem
\ref{Thm-Str-Convexity}. For every $\delta< \delta_{1},$ we have from Part (a)
of Proposition \ref{prop:secondorder-info-smoothcase} that $\lambda_{\ast
}(\delta) > \lambda_{thr}^{\prime}\xspace(\beta).$ Then, we obtain the
following reasoning from Part (e) of Theorem \ref{Thm-WC-Dist}:

\begin{itemize}
\item[i)] For every $\delta< \delta_{1},$ the distribution of $X_{\delta
}^{\ast}= X + \sqrt{\delta}G_{\delta}A(x)^{-1}\beta$ is the unique choice that
attains the supremum in $\sup_{P:D_{c}(P,P_{0}) \leq\delta_{i}}E_{P}%
[\ell(\beta^{T}X)],$ with
$G_{\delta}:= g(\delta, \lambda_{\ast}(\delta);X),$
where $g(\delta,\lambda;x)$ is the unique real number that maximizes
$F(\gamma,\beta,\lambda; x)$ for $P_{0}-$almost every $x$ and $\lambda>
\lambda_{thr}^{\prime}\xspace(\beta);$

\item[ii)] Moreover, we have that $E[c(X,X^{\ast}_{\delta})] = \delta,$ and
consequently, $g(\delta,\lambda_{\ast}(\delta);X)$ satisfies  $E_{P_{0}}%
[g^{2}(\delta,\lambda_{\ast}(\delta); X)\beta^{T}A(X)^{-1}\beta] =  1.$
\end{itemize}


Following the implicit function theorem application in the proof of
Proposition \ref{prop:hessian-lrob} (see appendix Section \ref{Sec-App-Proofs}%
), we obtain that
\begin{align*}
\frac{\partial g}{\partial\delta}(\delta,\lambda_{\ast}(\delta);x) = -
\frac{\partial^{2}F/\partial\delta}{\partial^{2}F/\partial\gamma}%
(g(\delta,\lambda_{\ast}(\delta);x), \beta,\lambda_{\ast}(\delta);x) =
\frac{\ell^{\prime\prime}(\beta^{T}X_{\delta}^{\ast})g\beta^{T}A(X)^{-1}\beta
}{2\sqrt{\delta}\varphi_{g}},
\end{align*}
where $g$ and $\varphi$ in the right hand side denote, respectively,
$g(\delta,\lambda_{\ast};x)$ and $\varphi_{g}(\beta,\lambda_{\ast};x) :=
2\lambda_{\ast}(\delta) - \sqrt{\delta}\beta^{T}A(X)^{-1}\beta\ell
^{\prime\prime}(\beta^{T} X_{\delta}^{\ast}) > \varphi_{\min}\Vert\beta\Vert>
0$ (see Proposition \ref{prop:secondorder-info-smoothcase}b).

Next, define $H(\delta, \lambda) := E_{P_{0}}[g(\delta,\lambda;X)^{2}\beta
^{T}A(X)^{-1}\beta] - 1.$ Since $\lambda_{\ast}(\delta)$ satisfies $H(\delta,
\lambda_{\ast}(\delta)) = 0,$ a similar application of the implicit function
theorem results in,
\begin{align*}
\frac{\partial\lambda_{\ast}(\delta)}{\partial\delta} = -\frac{\partial
H/\partial\delta}{\partial H/\partial\lambda}(\delta,\lambda_{\ast}(\delta)) =
\frac{E_{P_{0}}[\ell^{\prime\prime}(\beta^{T}{X}^{\ast}_{\delta})(g\beta
^{T}A(X)^{-1}\beta)^{2}/\varphi]}{4\sqrt{\delta}E_{P_{0}}[g^{2}\beta
^{T}A(X)^{-1}\beta/\varphi]}.
\end{align*}
If we let $L(\delta) := \sqrt{\delta}g(\delta,\lambda_{\ast}(\delta);x),$ then
with an application of chain rule and use of above expressions for $\partial
g/\partial\delta, \partial\lambda_{\ast}(\delta)/\partial\delta$ and that of
$\partial g/\partial\lambda$ in the proof of Proposition
\ref{prop:hessian-lrob} (see (\ref{partial-g-lambda})), we obtain that
\begin{align*}
\frac{\partial L}{\partial\delta}(\delta) = \frac{g}{2\sqrt{\delta}} +
\frac{g\beta^{T}A(X)^{-1}\beta\ell^{\prime\prime}(\beta^{T}X_{\delta}^{\ast}%
)}{2\varphi} - \frac{g}{2\varphi}\frac{E_{P_{0}}[\ell^{\prime\prime}(\beta
^{T}{X}^{\ast}_{\delta})(g\beta^{T}A(X)^{-1}\beta)^{2}/\varphi]}{E_{P_{0}%
}[g^{2}\beta^{T}A(X)^{-1}\beta/\varphi]},
\end{align*}
if $g \neq0.$ When $\delta< \delta_{1},$ we have $\varphi> \varphi_{\min}%
\Vert\beta\Vert> 0$ (see \ref{prop:secondorder-info-smoothcase}b). Moreover,
$\beta^{T}A(X)^{-1}\beta\leq R_{\beta}\rho_{\min}^{-1}\Vert\beta\Vert$ and
$\ell^{\prime\prime}(\cdot) \in(0,M]$ (see Assumptions \ref{Assump-c} -
\ref{Assump-Obj-2diff}). As a result, we obtain that
\begin{align*}
\frac{2}{g}\frac{\partial L}{\partial\delta}(\delta) > \frac{1}{\sqrt{\delta}}
- \frac{\rho_{\min}^{-1}MR_{\beta}\Vert\beta\Vert}{\varphi_{\min} \Vert
\beta\Vert} = \frac{1}{\sqrt{\delta}} - \frac{1}{\sqrt{\delta_{0}} -
\sqrt{\delta}},
\end{align*}
where the last equality follows from the definitions of $\delta_{0}$ and
$\varphi_{\min}$ in the earlier Subsection
\ref{subsec:proofs-thms-smooth-str-convex}. Since $\delta< \delta_{1}
\leq\delta_{0}/4,$ we have that $2g^{-1}\partial L(\delta)/\partial\delta> 0 $
if $g \neq0$ and $\partial L(\delta)/\partial\delta= 0$ if $g = 0.$ Further,
observe that, as a consequence of the mean value theorem, the first order
optimality condition \eqref{Eq-First-Order-Opt-Gamma} means that
$g(\delta,\lambda_{\ast}(\delta);X) = \ell^{\prime}(\beta^{T}X)/(2\lambda
_{\ast}(\delta) - \sqrt{\delta}\beta^{T} A(X)^{-1}\beta\ell^{\prime\prime
}(\eta)),$ for some $\eta$ between the real numbers $\beta^{T}X$ and
$\beta^{T}{X}_{\delta}^{\ast}.$ Since $2\lambda_{\ast}(\delta) - \sqrt{\delta
}\beta^{T} A(X)^{-1}\beta\ell^{\prime\prime}(\eta) \geq\varphi_{\min}%
\Vert\beta\Vert> 0,$ we have that the sign of $G_{\delta}:= g(\delta
,\lambda_{\ast}(\delta);X)$ matches with that of $\ell^{\prime}(\beta^{T}X).$
As a result, with $L(\delta) := \sqrt{\delta}g(\delta,\lambda;X) =
\sqrt{\delta}G_{\delta},$ the claims made in Proposition \ref{Prop-Comp-Stat}b
- \ref{Prop-Comp-Stat}d are verified. This completes the proof of Theorem
\ref{Prop-Comp-Stat}. \hfill\Halmos \endproof

\vspace{-0.1in}

\subsection{Proofs of the results on rates of convergence.}

\label{Sec-RatesofConv-Proofs} Lemma \ref{Lem-Grad-Sec-Moment} below,
establishing finite second moments for the gradients (or) subgradients
utilized in SGD schemes, is useful towards proving Propositions
\ref{Prop-Conv-SGD-1} and \ref{Prop-Conv-SGD-2}. Recall the definitions of
$\mathbb{U}_{\eta}$ in (\ref{Defn-Ueta}) and $D(\beta,\lambda;X)$ in \eqref{noisy-subgradient}.

\begin{lemma}
Suppose that Assumptions \ref{Assump-c}, \ref{Assump-Obj-Conv} are
satisfied, $\ell(\cdot)$ is continuously differentiable, $\eta > 0$ and $%
E_{P_0}\Vert X \Vert^4 < \infty.$ For any $\theta \in \mathbb{U}_\eta,$ let $%
h(\theta;X)$ be such that $h(\theta;X) \in D(\theta;X),$ $P_0-$almost
surely. Then there exists a positive constant $G_\eta$ such that $%
E_{P_0}\Vert h(\theta;X)\Vert^2 \leq G_\eta$ for any $\theta \in \mathbb{U}%
_\eta.$ \label{Lem-Grad-Sec-Moment}
\end{lemma}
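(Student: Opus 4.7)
The plan is to bound the two coordinates of any admissible $h(\theta;X) \in D(\beta,\lambda;X)$ separately, show that each is dominated by a polynomial in $\|X\|$ of degree at most four (with coefficients controlled in terms of $\eta,\delta,\rho_{\min},\rho_{\max}$ and $\|\beta\|$), and then integrate against $P_0$ using the hypothesis $E_{P_0}\|X\|^4<\infty.$ The starting observation is that on $\mathbb{U}_\eta$ the defining inequality $\lambda\ge \lambda_{thr}(\beta)+\eta$ translates into a uniform spectral gap $\lambda-\kappa\sqrt{\delta}\beta^T A(x)^{-1}\beta\ge \eta$ for $P_0$-almost every $x.$ Setting $\varepsilon=\eta/(\sqrt{\delta}\beta^T A(x)^{-1}\beta)$ in Lemma \ref{Lem-Finite-Exp}a gives, for any $g\in\Gamma^*(\beta,\lambda;x),$
\[
\sqrt{\delta}|g|\beta^T A(x)^{-1}\beta \ \le\ 1+C_1\sqrt{\delta}\eta^{-1}\beta^T A(x)^{-1}\beta\,(1+|\beta^T x|).
\]

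First I would use the above to bound the second entry $\sqrt{\delta}(1-g^2\beta^T A(x)^{-1}\beta)$ of the subgradient. Squaring the displayed inequality and dividing by $\delta\beta^T A(x)^{-1}\beta$ expresses $g^2\beta^T A(x)^{-1}\beta$ as at most a quadratic in $|\beta^T x|$ multiplied by $(\beta^T A(x)^{-1}\beta)^{-1}\delta^{-1},$ and Assumption \ref{Assump-c} bounds $\beta^T A(x)^{-1}\beta$ between $\rho_{\max}^{-1}\|\beta\|^2$ and $\rho_{\min}^{-1}\|\beta\|^2.$ Similarly, $\|\tilde{x}\|=\|x+\sqrt{\delta}gA(x)^{-1}\beta\|$ is controlled by $\|x\|+\sqrt{\delta}|g|\rho_{\min}^{-1}\|\beta\|,$ and the same bound on $\sqrt{\delta}|g|\beta^T A(x)^{-1}\beta$ gives a linear-in-$\|x\|$ majorant $\|\tilde{x}\|\le a_1+a_2\|x\|$ with constants $a_1,a_2$ explicit in $\eta,\delta,\rho_{\min},\rho_{\max},\|\beta\|.$

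Next I would handle the first entry $\ell^\prime(\beta^T\tilde{x})\tilde{x}.$ Convexity of $\ell$ combined with the quadratic majorization $\ell(u)\le \kappa u^2+\mathrm{const}$ from Assumption \ref{Assump-Obj-Conv} produces linear growth of $|\ell^\prime|$: using monotonicity of $\ell^\prime,$ the secant bound $\ell^\prime(u)(u-u^\ast)\le \ell(u)-\ell(u^\ast)$ evaluated at a near-minimizer $u^\ast$ of $\ell$ gives $|\ell^\prime(u)|\le c_1+c_2|u|$ for constants $c_1,c_2$ depending only on $\ell.$ Combining with the linear bound on $\|\tilde{x}\|$ yields $\|\ell^\prime(\beta^T\tilde{x})\tilde{x}\|^2\le (c_1+c_2\|\beta\|\|\tilde{x}\|)^2\|\tilde{x}\|^2,$ which is a polynomial of degree four in $\|x\|.$ Together with the analogous quadratic-in-$\|x\|$ bound for the second entry, integrating against $P_0$ and invoking $E_{P_0}\|X\|^4<\infty$ produces the uniform bound $G_\eta.$

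The main obstacle is uniformity of $G_\eta$ over the full set $\mathbb{U}_\eta,$ which is not bounded in $\|\beta\|$ a priori: the coefficients $a_1,a_2$ above depend on $\|\beta\|,$ and naively they blow up as $\|\beta\|\to 0$ through the factor $(\beta^T A(x)^{-1}\beta)^{-1}.$ This must be reconciled by exploiting the $\mathbb{U}_\eta$-balance in both extremes. For large $\|\beta\|$ one uses the forced scaling $\lambda\ge\kappa\sqrt{\delta}\rho_{\max}^{-1}\|\beta\|^2+\eta,$ together with the first-order identity $g=\ell^\prime(\beta^T\tilde{x})/(2\lambda)$ from Lemma \ref{Lemma-Gamma}, so that the $1/\lambda$ decay of $|g|$ cancels the $\|A(x)^{-1}\beta\|\lesssim\|\beta\|$ growth. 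For small $\|\beta\|$ one notes that the apparent singular term $g^2\beta^T A(x)^{-1}\beta$ is in fact continuous: the relevant product is $\sqrt{\delta}gA(x)^{-1}\beta,$ whose Lemma \ref{Lem-Finite-Exp}a estimate carries the extra factor $\beta^T A(x)^{-1}\beta$ that cancels the divergence. A split into the regimes $\|\beta\|\le R$ and $\|\beta\|>R$ with a suitably chosen threshold $R,$ handled by the bound from Lemma \ref{Lem-Finite-Exp}a on the former and by the first-order identity on the latter, together with the uniform lower bound $\lambda\ge \eta$ on $\mathbb{U}_\eta,$ is how I would assemble the final constant $G_\eta.$
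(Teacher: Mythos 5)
Your core argument coincides with the paper's: both bound the two blocks of $h(\theta;x)$ by combining the estimate on $\sqrt{\delta}\,\vert g\vert\,\beta^TA(x)^{-1}\beta$ from Lemma \ref{Lem-Finite-Exp}a with the linear-growth bound $\vert \ell^\prime(u)\vert \leq C_0 + C_1\vert u \vert$ (a consequence of convexity plus at most quadratic growth), obtaining a degree-four polynomial majorant in $\Vert x \Vert$ which is then integrated using $E_{P_0}\Vert X \Vert^4 < \infty.$ Your observation that the apparent singularity as $\Vert \beta \Vert \rightarrow 0$ is spurious --- because what enters $h$ is $\sqrt{\delta}\,g A(x)^{-1}\beta$ and $g^2\beta^TA(x)^{-1}\beta,$ controlled via the stationarity identity $g = \ell^\prime(\beta^T\tilde{x})/(2\lambda)$ of Lemma \ref{Lemma-Gamma} together with $\lambda \geq \eta$ --- is correct, and is in fact more careful than the paper's own two-line treatment of that regime.

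The gap is in your large-$\Vert\beta\Vert$ branch. The paper's proof simply invokes $\Vert \beta \Vert \leq R_\beta$ (Assumption \ref{Assump-Compact}, which is not listed among the lemma's hypotheses but is in force wherever the lemma is applied), fixes a single $\varepsilon = \eta\delta^{-1/2}\rho_{\min}R_\beta^{-2}$ valid on all of $\mathbb{U}_\eta,$ and is done. You instead attempt uniformity over an unbounded $B$ via the ``forced scaling'' $\lambda \geq \kappa\sqrt{\delta}\rho_{\max}^{-1}\Vert\beta\Vert^2 + \eta.$ This collapses when $\kappa = 0$ (e.g.\ the logistic loss, or any loss of subquadratic growth): then $\lambda_{thr}(\beta) = 0,$ the constraint reduces to $\lambda \geq \eta,$ and $\vert g \vert = \vert \ell^\prime(\beta^T\tilde{X})\vert/(2\lambda)$ need not decay in $\Vert \beta \Vert,$ so $\Vert \tilde{X}\Vert$ and hence $\Vert \ell^\prime(\beta^T\tilde{X})\tilde{X}\Vert$ grow linearly in $\Vert \beta \Vert.$ Concretely, with $P_0 = \delta_{\{x_0\}},$ $A(x) \equiv \mathbb{I}_d,$ $\lambda = \eta$ and $\beta = -t x_0/\Vert x_0 \Vert,$ $t \rightarrow \infty,$ one gets $E\Vert h \Vert^2 \rightarrow \infty,$ so no uniform $G_\eta$ exists over an unbounded $B$ and the obstacle you identify cannot be overcome. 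The correct resolution is to assume $\Vert \beta \Vert \leq R_\beta$; this also keeps your pointwise choice $\varepsilon = \eta/(\sqrt{\delta}\beta^TA(x)^{-1}\beta)$ bounded away from zero, which is needed because the constant $C_1$ in Lemma \ref{Lem-Finite-Exp} depends on $\varepsilon$ (through the growth constant $C_\varepsilon$) and would otherwise not be uniform.
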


The proof of Lemma \ref{Lem-Grad-Sec-Moment} is presented in Appendix
\ref{Sec-App-Proofs}.

\proof{\textbf{Proof of Proposition \protect\ref{Prop-Conv-SGD-1}.}} a) When
$\delta< \delta_{0},$ it follows from Proposition \ref{prop:firstorderinfo}
and Proposition \ref{Prop-sDelta-gradients} that the subgradient set
$\partial\ell_{rob} \xspace(\beta,\lambda;X) = \{\nabla_{\theta}\ell_{rob}
\xspace(\beta,\lambda;X)\},$ $P_{0}-$almost surely. Since $\lambda>
\lambda_{thr}^{\prime}\xspace(\beta) \geq\lambda_{thr} \xspace(\beta)$ for
every $(\beta,\lambda) \in\mathbb{W}$ (see Proposition
\ref{prop:secondorder-info-smoothcase}a), it follows from Lemma
\ref{Lem-Grad-Sec-Moment} that $\sup_{\theta\in\mathbb{W}} E\Vert
\nabla_{\theta}\ell_{rob} \xspace(\theta;X)\Vert^{2} < \infty,$ when $\delta<
\delta_{0}.$
As a consequence, we have from Theorem 2 and the remark following Theorem 4 in
\cite{Shamir:2013:SGD:3042817.3042827} that $E[f_{\delta}(\theta_{k})]
-f_{\ast}= O(k^{-1/2}\log k)$ and $E[f_{\delta}(\bar{\theta}_{k})] - f_{\ast}=
O(k^{-1/2}),$ as $k \rightarrow\infty.$ Proposition \ref{Prop-Conv-SGD-1}a now
follows as a consequence of Markov's inequality.

b) When $\delta< \delta_{1},$ it follows from the positive definiteness of
Hessian around the unique minimizer $\theta_{\ast}:= \text{arg}\min f_{\delta
}(\theta) $ (see Theorem \ref{Thm-Str-Convexity}) that there exists
$\varepsilon> 0$ satisfying $(\theta- \theta_{\ast})^{T}\nabla_{\theta
}f_{\delta}(\theta) \geq\kappa_{1}\sqrt{\delta} \Vert\theta-\theta_{\ast}%
\Vert^{2}$ for all $\theta\in\mathbb{V} $ and $\Vert\theta- \theta_{\ast}%
\Vert\leq\varepsilon.$ Further, due to the uniqueness of the minimizer, we
also have $(\theta- \theta_{\ast})^{T} \nabla_{\theta}f_{\delta}(\theta) > 0.$
Similar to Part a), as $\lambda> \lambda_{thr}^{\prime}\xspace(\beta)
\geq\lambda_{thr} \xspace(\beta)$ for every $(\beta,\lambda) \in\mathbb{W},$
we have due to Lemma \ref{Lem-Grad-Sec-Moment} that $\sup_{\theta\in
\mathbb{W}} E\Vert\nabla_{\theta}\ell_{rob} \xspace(\theta;X)\Vert^{2} <
\infty.$ Taylor's expansion of $\nabla_{\theta}f_{\delta}(\theta)$ results
in,
\begin{align}
\Vert\nabla_{\theta}f_{\delta}(\theta) - \nabla^{2}_{\theta}f_{\delta}%
(\theta_{\ast})^{T}(\theta- \theta_{\ast})\Vert= o\left(  \Vert\theta-
\theta_{\ast}\Vert\right) ,\label{Inter-Prop-Conv}%
\end{align}
for $\theta\in\mathbb{W}.$ With these conditions being satisfied, it follows
from \cite[Theorem 2]{PJ92} that
$
\sqrt{k}(\bar{\theta}_{k} - \theta_{\ast}) \overset{D}{\longrightarrow}
\mathcal{N}(\mathbf{0}, \Sigma),
$
as $k \rightarrow\infty,$ where $\Sigma:= (\nabla^{2}_{\theta}f_{\delta
}(\theta_{\ast}))^{-1} \text{Cov}[\nabla_{\theta}\ell_{rob} \xspace(\theta
_{\ast};X)] ((\nabla^{2}_{\theta}f_{\delta}(\theta_{\ast}))^{-1})^{T}.$
If we let $Z \sim\mathcal{N}(0,\mathbb{I}_{d+1}),$
then due to continuous mapping theorem, we have that the distribution of
$k (\bar{\theta}_{k} - \theta_{\ast})^{T} \nabla_{\theta}^{2} f_{\delta
}(\theta_{\ast}) (\bar{\theta_{k}}-\theta_{\ast})$
is convergent to that of
\[
Z^{T}\Sigma^{1/2} \nabla^{2}_{\theta}f_{\delta}(\theta_{\ast}) \Sigma^{1/2}Z =
Z^{T} \nabla^{2}_{\theta}f_{\delta}(\theta_{\ast})^{-1/2}\text{Cov}%
[\nabla_{\theta}\ell_{rob} \xspace(\theta_{\ast};X)] \nabla^{2}_{\theta
}f_{\delta}(\theta_{\ast})^{-1/2}Z.
\]
The local strong convexity characterization in Theorem \ref{Thm-Str-Convexity}
yields that that the maximum eigen value of $\nabla^{2}_{\theta}f_{\delta
}(\theta_{\ast})^{-1/2}$ is bounded from above by a constant times
$\delta^{-1/4}.$ As a result of the above described convergence in
distribution, we have that
\begin{align*}
(\bar{\theta}_{k} - \theta_{\ast})^{T} \nabla_{\theta}^{2} f_{\delta}%
(\theta_{\ast})(\bar{\theta}_{k}-\theta_{\ast}) = O_{p}\left(  k^{-1} \right) .
\end{align*}
Now it follows from the local joint strong convexity of $f_{\delta}(\cdot)$ in
Theorem \ref{Thm-Str-Convexity} and \eqref{Inter-Prop-Conv} that
\begin{align*}
f_{\delta}(\bar{\theta}_{k}) -f_{\ast} & \leq\nabla_{\theta}f_{\delta}%
(\bar{\theta}_{k})^{T}(\bar{\theta}_{k} - \theta_{\ast}) - \frac{\kappa
\sqrt{\delta}}{2} \Vert\theta_{k} -\theta_{\ast}\Vert^{2}\\
& = (\bar{\theta}_{k} - \theta_{\ast})^{T} \nabla_{\theta}^{2} f_{\delta
}(\theta_{\ast})(\bar{\theta_{k}}-\theta_{\ast}) - \left( \frac{\kappa
\sqrt{\delta}}{2} + o(1)\right) \Vert\bar{\theta}_{k} - \theta_{\ast}\Vert^{2}
= O_{p}\left(  k^{-1} \right) .
\end{align*}
This completes the proof of Proposition \ref{Prop-Conv-SGD-1}. \hfill\Halmos
\endproof

\section{Conclusions.}

\label{Sec-Conclusions}

Our main objective in this paper has been to set the stage for algorithms and analysis of a flexible class of DRO problems. Our  motivation stem from the observations that i) a flexible choice of the distributional uncertainty region is useful towards to fully exploiting the advantages of DRO in data-driven contexts, and  that ii) the existing computational methods largely pertain to Lipschitz losses and do not scale well with data-size. We show that in the case of affine decision rules and convex loss functions, robustification with a more flexible state-dependent Mahalanobis cost function does not introduce significantly additional computational complexity relative to the non-DRO counter-part (in terms of standard benchmark iterative algorithms used to solve the non-DRO problem). In some cases, interestingly, DRO introduces strong-convexity which results in lower iteration complexity. 

Naturally, the algorithmic approach and structural analysis presented in this paper can be considered in DRO formulations with further general cost functions of the form $c(x,x^{\prime}) = u(x-x^{\prime})$ or $c(x,x^{\prime}) = u(x^{\prime})-u(x) -\nabla u(x)(x'-x)^T$, for a strongly convex function $u(\cdot)$ with Lipschitz-continuous gradients. While such extensions may render the inner maximization in \eqref{Rob-Obj-Fn} as a multi-dimensional optimization problem (as opposed to the line-search in the state-dependent Mahalanobis case), a number of observations and structural properties are expected to continue to hold; for example, observations relating to convexity properties, magnitude of mass transportation in the worst-case distribution being of size $O_{p}(\sqrt{\delta}),$ computation  of stochastic gradients by means of envelope theorem, etc. are expected to generalize  to the above families  of strongly convex, smooth transportation cost functions. We leave this exploration as a question for future research. 

Our philosophy is that by providing a general analysis for a flexible class of cost functions, a modeler will be able  to choose a cost function that enhances out-of-sample performance in a way that is convenient and meaningful for the needs of the modeling situation. While examples of how one may choose the transportation cost function in a data-driven way are available in existing literature (see, for example, \cite{blanchet2017data}), systemic treatment of the contextual choice of transportation cost is an essential question for future research. 

\section*{Acknowledgments.}

Material in this paper is based upon work supported by the Air Force Office of Scientific Research under award number FA9550-20-1-0397. Additional support is gratefully acknowledged from NSF grants 1915967, 1820942 and 1838676, DARPA grant N660011824028, MOE SRG ESD 2018 134 and China Merchants Bank.



\bibliographystyle{informs2014}
\bibliography{DR2}
\clearpage

\begin{APPENDICES}
\section{Proofs of technical results.}
\label{Sec-App-Proofs}
The proofs of technical results in this section are presented in a
logical order determined by their dependence on earlier proved results
(rather than being based on the order in which they appear in the
paper).
\proof{\textbf{Proof of Lemma \protect\ref{Lem-Finite-Exp}.}}  a)
Given $\varepsilon > 0,$ it follows from the growth condition in
Assumption \ref{Assump-Obj-Conv} that there exist a positive constant
$%
C_{\varepsilon}$ satisfying
$\ell(u) \leq (\kappa + \varepsilon/2)u^2 + C_\varepsilon$ for all
$u \in \mathbb{R}.$ Since any $g \in \Gamma^\ast(\beta,\lambda;x)$ is
a maximizer of $F(\cdot,\beta,\lambda;x)$, it follows immediately
that 
$F(g,\beta,\lambda;x) \geq F(0,\beta,\lambda;x).$ 
Recalling the definition of $F(\cdot,\beta,\lambda;x)$ from %
\eqref{Univar-Opt}, the above inequality results in,
	\begin{equation*}
(\kappa+\varepsilon/2)\left(\beta^Tx+ g\sqrt{\delta}\beta^TA(x)^{-1}\beta
\right)^2 + C_{\varepsilon}
-(\kappa+\varepsilon)\delta\left(\beta^TA(X)^{-1}\beta g\right)^2\geq
\ell(\beta^Tx),
	\end{equation*}
	once we utilize that $\lambda \geq (\kappa+\varepsilon)\sqrt{\delta}%
	\beta^TA(x)^{-1}\beta$ and
$\ell(u) \leq (\kappa + \varepsilon/2)u^2 + C_\varepsilon.$
The above inequality can be equivalently written after a few
basic algebraic steps as,
	\begin{equation*}
\left(g\sqrt{\delta}\beta^TA(x)^{-1}\beta - \frac{2\kappa+\varepsilon}{%
\varepsilon}\beta^Tx\right)^2\leq \frac{2}{\varepsilon}C_{\varepsilon}-\frac{%
2}{\varepsilon}\ell(\beta^Tx) + \frac{(\beta^Tx)^2}{\varepsilon^2}%
(2\varepsilon^2 + 6\kappa\varepsilon + 4\kappa^2).
	\end{equation*}
	We first upper bound the right hand side by using
$\ell(\beta^Tx) \geq \ell(0) + \beta^Tx \ell^\prime(0),$ which
holds due to the convexity of $%
	\ell(\cdot).$ Next, utilizing the inequality
$\vert a - b\vert \geq \vert \vert a \vert - \vert b \vert
\vert$ in the left hand side, we arrive at,
	\begin{align*}
\sqrt{\delta}\vert g \vert \beta^TA(x)^{-1}\beta \leq \sqrt{\frac{2}{%
\varepsilon}\left(C_{\varepsilon}+ \vert \ell(0) \vert + \vert
\ell^\prime(0) \vert \vert \beta^Tx \vert \right)} + 4\frac{\kappa +
\varepsilon}{\varepsilon}\vert \beta^Tx \vert.
	\end{align*}
	Since $\sqrt{x} \leq 1+x$ for $x \geq 0,$ the above inequality
verifies Part a) of Lemma \ref{Lem-Finite-Exp}.
	
	b) Utilizing the bounds
$\lambda \geq (\kappa+\varepsilon)\sqrt{\delta}%
	\beta^TA(x)^{-1}\beta$ and
$\ell(u) \leq (\kappa + \varepsilon/2)u^2 + C_\varepsilon$ in
the expression for $F(\cdot)$ in \eqref{Univar-Opt}, we obtain
that
	\begin{align*}
F(g,\beta,\lambda;x) \leq \lambda\sqrt{\delta} + C_\varepsilon + (\kappa +
\varepsilon/2)(\beta^Tx)^2 + 2(\kappa + \varepsilon/2)\vert \beta^Tx \vert
\sqrt{\delta} \vert g \vert\beta^TA(x)^{-1}\beta.
	\end{align*}
	Since $\ell_{rob} \xspace(\beta,\lambda;x) =
F(g,\beta,\lambda;x)$ for $g \in \Gamma(\beta,\lambda;x),$ we
obtain the following bound for $\ell_{rob} %
	\xspace(\beta,\lambda;x)$ once we substitute the bound for
$\sqrt{\delta}%
	\vert g \vert \beta^TA(x)^{-1}\beta$ from Part a):
	\begin{align*}
\ell_{rob} \xspace(\beta,\lambda; x) \leq \lambda\sqrt{\delta} +
C_\varepsilon + (2\kappa + \varepsilon)\vert \beta^Tx \vert \left( 1 + \vert
\beta^Tx \vert\right)\left( 1 + C_1\varepsilon^{-1}\right).
	\end{align*}
	This verifies Part b) of Lemma \ref{Lem-Finite-Exp}.\hfill \Halmos
\endproof
\
\proof{\textbf{Proof of Lemma \protect\ref{Lem-eff-dom-f}.}}
	For any fixed $\beta,\lambda$ and $x,$ it follows from the growth condition
	in Assumption \ref{Assump-Obj-Conv} that i) $\lim_{\gamma \rightarrow \pm
		\infty}F(\gamma, \beta,\lambda;x) = -\infty$ if $\lambda > \kappa\sqrt{\delta%
	}\beta^TA(x)^{-1}\beta$ and ii) $\lim_{\gamma \rightarrow \pm \infty}
	F(\gamma, \beta,\lambda;x) = +\infty$ if $\lambda < \kappa\sqrt{\delta}%
	\beta^TA(x)^{-1}\beta.$ Further, $F(\gamma,\beta,\lambda,x)$ is continuous
	in $\gamma$ because of the continuity of $\ell(\cdot).$ Therefore we obtain
	that $\Gamma^\ast(\beta,\lambda;x) \neq \varnothing$ when $\lambda > \kappa%
	\sqrt{\delta}\beta^TA(x)^{-1}\beta.$ Likewise, $\Gamma^\ast(\beta,\lambda;x)
	= \varnothing$ when $\lambda < \kappa\sqrt{\delta}\beta^TA(x)^{-1}\beta.$
	This completes the proof of Parts a) and b) of Lemma \ref{Lem-eff-dom-f}.
	
	To verify the inclusions in the final statement of Lemma \ref{Lem-eff-dom-f}
	we proceed as follows: Whenever $\lambda < \lambda_{thr} \xspace(\beta)$ we
	have $\ell_{rob} \xspace(\beta,\lambda;x) = +\infty$ with positive
	probability. Therefore, $\mathbb{U}$ is contained in $\{(\beta,\lambda):
	\beta \in B, \lambda \geq \lambda_{thr} \xspace(\beta)\}.$ On the other
	hand, if $\lambda > \lambda_{thr} \xspace(\beta),$ we have $\lambda \geq
	(\kappa + \varepsilon) \sqrt{\delta}\beta^TA(x)^{-1}\beta$ for some $%
	\varepsilon > 0,$ $P_0-$almost every $x.$ Since $\Vert \beta \Vert \leq
	R_\beta$ and $E\Vert X \Vert^2 < \infty,$ it follows Lemma \ref%
	{Lem-Finite-Exp}b that $f_\delta(\beta,\lambda) = E_{P_0}[\ell_{rob} \xspace%
	(\beta,\lambda;X)] < \infty.$ Therefore $\{(\beta,\lambda): \beta \in B,
	\lambda > \lambda_{thr} \xspace(\beta)\}$ is contained in $\mathbb{U}.$ This
	completes the proof of Lemma \ref{Lem-eff-dom-f}.\hfill \Halmos
\endproof
\
\proof{\textbf{Proof of Proposition \protect\ref{prop:firstorderinfo}.}}
a) Since
$\lambda > \kappa\sqrt{\delta}\beta^TA(x)^{-1}\beta$ for
$P_0-$%
	almost surely every $x,$ Proposition \ref{prop:firstorderinfo}
follows directly from Lemma \ref{Lem-eff-dom-f}.\\
b) Consider any fixed $x \in \mathbb{R}^d, C_3 < \infty$ and
$\eta > 0.$ Define the set
$A := \{(\beta,\lambda) \in B \times \mathbb{R}_+: \Vert \beta
\Vert < C_3, \ \lambda \geq \lambda_{thr} \xspace(\beta) +
\eta\}.$ Then for $(\beta,\lambda) \in A,$ we have the
following two conditions satisfied: i)
$\lambda \geq (\kappa+\varepsilon)\sqrt{\delta}%
	\beta^TA(x)^{-1}\beta$ for some $\varepsilon > 0,$ for
$P_0-$almost every $%
	x; $ and ii) $\beta^T A(x)^{-1}\beta $ is bounded away from
zero if $\beta\neq\mathbf{0}$ (due to Assumption \ref{Assump-c}). Therefore, for any
$(\beta,\lambda)$ in $A,$ we have from Lemma
\ref{Lem-Finite-Exp}a that there exists a positive constant
$C_x$ such that
$\Gamma^\ast(\beta,\lambda;x) \subseteq [-C_x, C_x].$ Thus for
$%
	(\beta,\lambda) \in A,$ it suffices to restrict the univariate
optimization problem \eqref{Univar-Opt} within the compact set
$[-C_x,C_x],$ as in, $%
	f_\delta(\beta,\lambda;x) = \sup_{\gamma\in[-C_x,C_x]} F(\gamma,
\beta,\lambda;x).$
	
	Next, for $i=1,\ldots,K$, define
\begin{align*}
G_x(i,\gamma,\beta,\lambda) := \ell_i \left(\beta^Tx + \gamma\sqrt{\delta}
\beta^T A(x)^{-1}\beta \right) - \lambda \sqrt{\delta} \big(\gamma^2 \beta^T
A(x)^{-1}\beta - 1\big).
\end{align*}
Then see that
$F(\gamma,\beta,\lambda;x) =
\max_{i=1,\ldots,K}G_x(i,\gamma,\beta,\lambda)$ and
\[ f_\delta(\beta,\lambda;x) = \sup_{\overset{i \in \{1,\ldots,k\},}
{\gamma\in[-C_x,C_x]}} G_x(i,\gamma, \beta,\lambda). \]
Considering discrete topology for the variable $i,$ see that
$G_x(i,\gamma,\beta,\lambda)$ is upper semicontinuous in
$(\gamma,i)$ and continuously differentiable over variables
$(\beta,\lambda)$. Specifically, for $i \in \{1,\ldots,K\},$ $j\in \left\{1,\dots,d\right\}$ and $\gamma^\ast \in \arg\max_x G_x(i,\gamma^\ast,\beta,\lambda), $ we have from the first-order optimality condition ($\ell_i^\prime(\beta^T x + \gamma^\ast \sqrt{\delta}\beta^T A(x)^{-1}\beta) -2 \lambda \gamma^\ast =0$) that
	\begin{align*}
	\frac{\partial G_x}{\partial \beta_j}&(i,\gamma^\ast,\beta,\lambda) =
	\ell^{\prime}_i(\beta^T x + \gamma^\ast\sqrt{\delta}\beta^T
	A(x)^{-1}\beta)(x +\gamma^\ast\sqrt{\delta}A(x)^{-1}\beta)_j, \text{ and }\\
	 &\frac{\partial G_x}{\partial \lambda}(i,\gamma^\ast,\beta,\lambda) = \sqrt{\delta}
	\big(1-{\gamma^\ast}^2\beta^T A(x)^{-1}\beta\big),
	\end{align*}
	where $(x+ \gamma^\ast \sqrt{\delta} A(x)^{-1}\beta)_j$ is the $j$th element of the vector $x+ \gamma^\ast \sqrt{\delta} A(x)^{-1}\beta.$  Moreover, since
$\ell(u) := \max_{i = 1,\ldots,K} \ell_i(u),$ we have that
\[\frac{\partial_+\ell}{\partial u}(u) = \max_{i \, \in \, \text{argmax}_j
\ell_j(u)} \ell_i^\prime(u) \quad \text{ and } \quad
\frac{\partial_{-}\ell}{\partial u}(u) = \min_{i \, \in \,
\text{argmax}_j \ell_j(u)} \ell_i^\prime(u),\] for any
$u \in \mathbb{R}.$ Equipped with these observations and the
fact that
$\ell_{rob}(\beta,\lambda;x) =
\sup_{\gamma,i}G_x(\gamma,i,\beta,\lambda),$ we arrive at the
following conclusions (i) and (ii) below as a consequence of
Envelope theorem \cite[Corollary 4]{milgrom2002envelope}:
i) When $(\beta,\lambda) \in A,$ the functions
$\lambda\mapsto \ell_{rob}(\beta,\lambda;x)$, $%
	\beta_j\mapsto \ell_{rob}(\beta,\lambda;x)$ are absolutely
continuous, and have left and right derivative given by
\eqref{eqn-directional-derivative-a} -
\eqref{eqn-directional-derivative-d}. Indeed, as an example
for deriving $\partial_+\ell_{rob}/\partial \beta_j$, see that,
\begin{align*}
&\frac{\partial_+ \ell_{rob}}{\partial \beta_j}(\beta,\lambda;x)
=  \max\left\{ \frac{\partial G_x}{\partial \beta_j}(i^\ast,\gamma^\ast,\beta,\lambda):
(i^\ast,\gamma^\ast) \in \arg\max_{i,\gamma} G_x(i,\gamma,\beta,\lambda)\right\}\\
&\quad=  \max\left\{ \frac{\partial G_x}{\partial \beta_j}(i^\ast,\gamma^\ast,\beta,\lambda):
\gamma^\ast  \in \Gamma^\ast(\beta,\lambda;x), \
i^\ast  \in \arg\max_{i=1,..,K} G_x(i,\gamma^\ast,\beta,\lambda)\right\}\\
&\quad= \max_{\gamma  \in \Gamma^\ast(\beta,\lambda;x)}  \left\{\max_{i \in
\arg \max_{j} \ell_j\left( \beta^Tx + \gamma\sqrt{\delta}\beta^TA(x)^{-1}\beta\right)}
\ell^{\prime}_i(\beta^T x + \gamma\sqrt{\delta}\beta^T
A(x)^{-1}\beta)  (x+\gamma\sqrt{\delta}A(x)^{-1}\beta)_j
\right\}
\\
&\quad = \max_{\gamma  \in \Gamma^\ast(\beta,\lambda;x)}  \frac{\partial_+\ell
}{\partial u}\left( \beta^T x + \gamma\sqrt{\delta}\beta^TA(x)^{-1}\beta\right)
(x+\gamma\sqrt{\delta}A(x)^{-1}\beta)_j.
\end{align*}
Likewise,
$\frac{\partial_- \ell_{rob}}{\partial
\beta_j}(\beta,\lambda;x)$ can be seen equal to,
\begin{align*}
&  \min_{\gamma  \in \Gamma^\ast(\beta,\lambda;x)}  \left\{\min_{i \in
\arg \max_{j} \ell_j\left( \beta^T x + \gamma\sqrt{\delta}\beta^TA(x)^{-1}\beta\right)}
\ell^{\prime}_i(\beta^T x + \gamma\sqrt{\delta}\beta^T
A(x)^{-1}\beta)  (x+\gamma\sqrt{\delta}A(x)^{-1}\beta)_j
\right\}
\\
&\quad = \max_{\gamma  \in \Gamma^\ast(\beta,\lambda;x)}  \frac{\partial_-\ell
}{\partial u}\left( \beta^T x + \gamma\sqrt{\delta}\beta^TA(x)^{-1}\beta\right)
(x+\gamma\sqrt{\delta}A(x)^{-1}\beta)_j.
\end{align*}
The directional derivatives with respect to the variable
$\lambda$ can be derived similarly.
ii) Then we have that the partial derivatives exist as in
\eqref{derivative} if and only if the respective sets,
\begin{align*}
&\left\{\frac{\partial G_x}{\partial \beta_j} (i^\ast,\gamma^\ast,
\beta,\lambda): (i^\ast,\gamma^\ast) \in
\arg\max_{i,\gamma}G_x(i,\gamma,\beta,\lambda) \right\},  \\
&\left\{\frac{\partial G_x}{\partial \lambda} (i^\ast,\gamma^\ast,
\beta,\lambda): (i^\ast,\gamma^\ast) \in
\arg\max_{i,\gamma}G_x(i,\gamma,\beta,\lambda) \right\}
\end{align*}
are singleton; this condition of being singleton is satisfied
if and only if the respective sets,
\begin{align*}
&\left\{\frac{\partial_+F}{\partial \beta_j} (\gamma,\beta,\lambda;x),
\frac{\partial_-F}{\partial \beta_j} (\gamma,\beta,\lambda;x):
\gamma \in \Gamma^\ast(\beta,\lambda;x) \right\},\\
&\left\{\frac{\partial_+F}{\partial \lambda} (\gamma,\beta,\lambda;x),
\frac{\partial_-F}{\partial \lambda} (\gamma,\beta,\lambda;x):
\gamma \in \Gamma^\ast(\beta,\lambda;x)\right\}
\end{align*}
are singleton.
Since these expressions hold for any $C_3 ,\eta \in
(0,\infty),$ Proposition \ref{prop:firstorderinfo} stands
verified. \hfill\Halmos \endproof \
The following technical result is useful towards proving Lemma
\ref{Lem-threshold}.
\begin{lemma}
	Suppose that Assumptions \ref{Assump-c},\ref{Assump-Obj-Conv} hold and $%
	\ell(\cdot)$ is continuously differentiable. Then for fixed $\beta \in B$
	and $x \in \mathbb{R},$ the map $\lambda \mapsto \ell_{rob} \xspace%
	(\beta,\lambda;x)$ is right-continuous at $\lambda = \lambda_{thr} \xspace%
	(\beta)$ if $\ell_{rob} \xspace(\beta,\lambda_{thr} \xspace(\beta);x) <
	\infty.$ \label{Lem-Right-Cont-lambda}
\end{lemma}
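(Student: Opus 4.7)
The plan is to reduce the statement to the one-dimensional question of right-continuity of the univariate envelope $g(s) := \sup_{u \in \mathbb{R}}[\ell(u) - s(u-a)^2]$ at the threshold $s = \kappa$. First I would dispense with the degenerate case $\beta = \mathbf{0}$ (where $F(\gamma,\mathbf{0},\lambda;x) = \ell(0) + \lambda\sqrt{\delta}$ is affine in $\lambda$) and assume $\beta \neq \mathbf{0}$ in the rest of the argument. Setting $a := \beta^T x$ and $b := \sqrt{\delta}\beta^T A(x)^{-1}\beta$ (strictly positive by Assumption \ref{Assump-c}) and performing the change of variable $u := a + \gamma b$ inside the supremum that defines $F$ yields the identity
\[
\ell_{rob}(\beta,\lambda;x) \;=\; g(\lambda/b) + \lambda\sqrt{\delta},
\qquad \lambda \geq \lambda_{0}(x) := \kappa b.
\]
Because $\lambda_{thr}(\beta)$ is the $P_0$-essential supremum of $\sqrt{\delta}\kappa \beta^T A(x)^{-1}\beta$, we always have $\lambda_{thr}(\beta) \geq \lambda_0(x)$.

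I would then split into two cases. If $\lambda_{thr}(\beta) > \lambda_0(x)$, then $\lambda_{thr}(\beta)$ lies in the interior of the effective domain of $\lambda \mapsto \ell_{rob}(\beta,\lambda;x)$ (which by Lemma \ref{Lem-eff-dom-f} contains $(\lambda_0(x),\infty)$), and the desired right-continuity is an immediate consequence of the convexity of $\ell_{rob}(\beta,\cdot;x)$ established in Lemma \ref{Lem-conv-lrob}, since finite convex functions are continuous on the interior of their effective domain. If instead $\lambda_{thr}(\beta) = \lambda_0(x)$, right-continuity at $\lambda_{thr}$ reduces to right-continuity of $g$ at $s = \kappa$, and the hypothesis $\ell_{rob}(\beta,\lambda_{thr};x) < \infty$ becomes the finiteness $g(\kappa) < \infty$.

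For this remaining boundary case I would leverage two structural properties of $g$: (i) as the supremum over $u$ of the affine-in-$s$ maps $s \mapsto \ell(u) - s(u-a)^2$, $g$ is lower-semicontinuous in $s$; and (ii) since each such map is nonincreasing in $s$ (its slope equals $-(u-a)^2 \leq 0$), $g$ is itself nonincreasing. For any sequence $s_n \downarrow \kappa$, monotonicity gives $g(s_n) \leq g(\kappa)$ so that $\limsup_n g(s_n) \leq g(\kappa)$, while lower-semicontinuity gives $\liminf_n g(s_n) \geq g(\kappa)$. Combining these yields $\lim_n g(s_n) = g(\kappa)$, so $g$ is right-continuous at $\kappa$ and therefore so is $\ell_{rob}(\beta,\cdot;x)$ at $\lambda_{thr}(\beta)$.

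The one delicate point in the argument is correctly identifying the pointwise-versus-essential-supremum dichotomy: $\lambda_{thr}(\beta)$ is defined globally via an essential supremum over $x$, so for an individual $x$ it may lie either in the interior or on the boundary of the effective domain of $\lambda \mapsto \ell_{rob}(\beta,\lambda;x)$. The hypothesis $\ell_{rob}(\beta,\lambda_{thr};x)<\infty$ is only genuinely needed in the boundary case, where it supplies the finiteness of $g(\kappa)$ required to close the lower-semicontinuity plus monotonicity argument.
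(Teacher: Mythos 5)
Your proof is correct and follows essentially the same route as the paper's: the lower bound $\liminf_{\lambda\downarrow\lambda_{thr}(\beta)}\ell_{rob}(\beta,\lambda;x)\geq \ell_{rob}(\beta,\lambda_{thr}(\beta);x)$ comes from lower semicontinuity of a supremum of maps that are continuous (indeed affine) in $\lambda$, the upper bound from the fact that $\ell_{rob}(\beta,\lambda;x)-\lambda\sqrt{\delta}$ (your $g(\lambda/b)$) is nonincreasing in $\lambda$, and the interior case is disposed of by the convexity of $\ell_{rob}(\beta,\cdot;x)$, so the reduction to the univariate envelope $g$ is only a cosmetic repackaging. One pedantic caveat: since $\lambda_{thr}(\beta)$ is a $P_0$-essential supremum, for an arbitrary fixed $x$ one can have $\lambda_{thr}(\beta)<\kappa\sqrt{\delta}\beta^TA(x)^{-1}\beta$, so your claim that $\lambda_{thr}(\beta)\geq\lambda_0(x)$ "always" holds is only true $P_0$-almost surely; but in the excluded case $\ell_{rob}(\beta,\lambda_{thr}(\beta);x)=\infty$ and the statement is vacuous, so your dichotomy still covers everything that matters.
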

\proof{\textbf{Proof of Lemma \protect\ref{Lem-Right-Cont-lambda}.}}
	Suppose that $\ell_{rob} \xspace(\beta,\kappa\sqrt{\delta}%
	\beta^TA(x)^{-1}\beta;x)<\infty.$ Then for any $\varepsilon>0$, there exist $%
	\gamma \in \mathbb{R}$ such that 
	$F(\gamma, \beta,\kappa\sqrt{\delta}\beta^TA(x)^{-1}\beta;x)>\ell_{rob} %
	\xspace(\beta,\kappa\sqrt{\delta}\beta^TA(x)^{-1}\beta;x)-\varepsilon.$
	Thanks to the continuity of $F(\gamma,\beta,\lambda;x)$ with respect to $%
	\lambda$,
	\begin{align*}
	&\liminf_{\lambda\downarrow (\kappa\sqrt{\delta}\beta^TA(x)^{-1}\beta)}%
	\ell_{rob} \xspace(\beta,\lambda;x) \geq \lim_{\lambda\downarrow (\kappa\sqrt{\delta}\beta^TA(x)^{-1}\beta)} F(\gamma,\beta,\lambda;x) \\
	&\qquad\qquad= F(\gamma,\beta,\kappa\sqrt{\delta}\beta^TA(x)^{-1}\beta;x)>
	\ell_{rob} \xspace(\beta,\kappa\sqrt{\delta}\beta^TA(x)^{-1}\beta;x)-%
	\varepsilon.
	\end{align*}
	Since $\varepsilon > 0$ is arbitrary, we have
	\begin{align*}
	\liminf_{\lambda\downarrow (\kappa\sqrt{\delta}\beta^TA(x)^{-1}\beta)}
	\ell_{rob} \xspace(\beta,\lambda;x) \geq \ell_{rob} \xspace(\beta,\kappa%
	\sqrt{\delta}\beta^TA(x)^{-1}\beta;x).
	\end{align*}
	Moreover, as $\ell_{rob} \xspace(\beta,\lambda;x) - \lambda\sqrt{\delta}$ is
	decreasing in $\lambda$, we also have that,
	\begin{align*}
	\limsup_{\lambda\downarrow (\kappa\sqrt{\delta}\beta^TA(x)^{-1}\beta)}%
	\ell_{rob} \xspace(\beta,\lambda;x)-\lambda\sqrt{\delta}\leq \ell_{rob} %
	\xspace(\beta,\kappa\sqrt{\delta}\beta^TA(x)^{-1}\beta;x)-\kappa\delta%
	\beta^TA(x)^{-1}\beta,
	\end{align*}
	thus yielding,
	$\limsup_{\lambda\downarrow (\kappa\sqrt{\delta}\beta^TA(x)^{-1}\beta)}%
	\ell_{rob} \xspace(\beta,\lambda;x)\leq \ell_{rob} \xspace(\beta,\kappa\sqrt{%
		\delta}\beta^TA(x)^{-1}\beta;x),$
	and consequently, 
	$$\lim_{\lambda\downarrow (\kappa\sqrt{\delta}\beta^TA(x)^{-1}\beta)}%
	\ell_{rob} \xspace(\beta,\lambda;x) = \ell_{rob} \xspace(\beta,\kappa\sqrt{%
		\delta}\beta^TA(x)^{-1}\beta;x).$$ 
	In addition, $\ell_{rob} \xspace(\beta,\lambda;x)$ is continuous at $\lambda
	$ even if $\lambda > \kappa\sqrt{\delta}\beta A(x)^{-1}\beta$ (due to the
	convexity of $\ell_{rob} \xspace(\cdot;x)$ as in Lemma \ref{Lem-conv-lrob}).
	Therefore, $\lambda \mapsto \ell_{rob} \xspace(\beta,\lambda;x)$ is
	right-continuous at $\lambda = \lambda_{thr} \xspace(\beta)$ if $\ell_{rob} %
	\xspace(\beta,\lambda_{thr} \xspace(\beta);x) < \infty.$
\hfill \Halmos
\endproof
\
\proof{\textbf{Proof of Lemma \protect\ref{Lem-threshold}.}}  Fix any
$\beta \in B.$ It
follows from the characterization of $\mathbb{U}$ in Lemma
\ref{Lem-eff-dom-f} that $f_\delta(\beta,\lambda) = +\infty$ if
$\lambda < \lambda_{thr} \xspace(\beta)$ and $%
f_\delta(\beta,\lambda) < +\infty$ if
$\lambda > \lambda_{thr} \xspace(\beta).$ Therefore, it is
necessary that $f_\delta(\beta,\lambda_\ast(\beta))$ is finite and
$\lambda_\ast(\beta) \geq \lambda_{thr} \xspace(\beta).$
	
	\textbf{Case 1.} Suppose that $\lambda_\ast(\beta) > \lambda_{thr} \xspace%
	(\beta).$ In this case we have from Lemma \ref{Lem-eff-dom-f} that $%
	\Gamma^\ast(\beta,\lambda_\ast(\beta);x)$ is not empty and $\partial_+
	\ell_{rob} \xspace/\partial \lambda(\beta,\lambda_\ast(\beta);x)$ is given
	as in \eqref{eqn-directional-derivative-d},
	for $P_0-$almost every $x.$ Since $f_\delta(\cdot)$ is finite in the neighborhood
	of $\lambda = \lambda_\ast(\beta),$ it follows from \cite[Proposition 2.1]%
	{Bertsekas1973} that
	\begin{align}
	\frac{\partial_+f}{\partial \lambda} (\beta,\lambda_\ast(\beta)) = \sqrt{%
		\delta}- \sqrt{\delta} E_{P_0}\left[\beta^TA(X)^{-1}\beta \min_{\gamma \in
		\Gamma^\ast(\beta,\lambda_\ast(\beta);X)} \gamma^2 \right].
	\label{Inter-right-deriv-f}
	\end{align}
	
	\textbf{Case 2.} Suppose that $\lambda_\ast(\beta) = \lambda_{thr} \xspace%
	(\beta).$ We first argue that $\partial_+ f/\partial \lambda
	(\beta,\lambda_\ast(\beta);x) \in [0,\sqrt{\delta}].$ For this purpose,
	observe that
	\begin{align*}
	f_\delta(\beta,\lambda) = \lambda \sqrt{\delta} + E_{P_0}\left[ \sup_{\gamma \in
		\mathbb{R}} \left\{ \ell\left(\beta^TX + \gamma\sqrt{\delta}%
	\beta^TA(X)^{-1}\beta \right) -\lambda \sqrt{\delta}\gamma^2\beta^TA(X)^{-1}%
	\beta\right\}\right],
	\end{align*}
	as a consequence of the duality representation in Theorem \ref%
	{Thm-strong-duality}. Since the second term in the right hand side of the
	above equality is non-increasing in $\lambda$ and $\lambda_\ast(\beta)$ is a
	minimizer, we have that
	\begin{equation*}
	0 \leq f_\delta(\beta,\lambda_\ast(\beta) + h) - f_\delta(\beta,\lambda_\ast(\beta)) \leq
	\sqrt{\delta}h,
	\end{equation*}
	for $h > 0.$ Due to the convexity of $f,$ we also have that $%
	h^{-1}(f_\delta(\beta,\lambda_\ast(\beta) + h) - f_\delta(\beta,\lambda_\ast(\beta)))$ is
	non-decreasing in $h.$ Therefore the right derivative $\partial_+ f/\partial
	\lambda (\beta,\lambda_\ast(\beta)) \in [0,\sqrt{\delta}].$ As a result, due
	to the convexity of $f_\delta(\beta,\cdot)$ and finiteness of $f_\delta(\beta,\lambda)$
	for any $\lambda > \lambda_\ast(\beta),$ we have from \cite[Proposition 2.1]{Bertsekas1973} and Proposition \ref{prop:firstorderinfo}b that
	\begin{align}
	0 \leq \frac{\partial_+ f}{\partial \lambda}(\beta,\lambda_\ast(\beta)) \leq
	\lim_{\lambda \downarrow \lambda_\ast(\beta)} \frac{\partial_{-} f}{\partial
		\lambda} (\beta,\lambda) \leq \sqrt{\delta}\left(1- \lim_{\lambda \downarrow
		\lambda_\ast(\beta)} E_{P_0}\left[ \beta^TA(X)^{-1}\beta g_{_\lambda}(X)^2%
	\right] \right),  \label{Inter-Case2-eq-Thr}
	\end{align}
	where $g_{_\lambda}(x)$ is such that $g_{_\lambda}(x) \in
	\Gamma^\ast(\beta,\lambda;x),$ $P_0-$almost every $x.$ The existence of
	measurable maps $\{g_\lambda(\cdot): \lambda > \lambda_\ast(\beta)\}$ follow
	from Proposition 7.50(b) of \cite{bertsekas1978stochastic}.
	
	For the chosen $\beta \in B,$  define the set
$A := \{x \in \mathbb{R}^d:
\Gamma^\ast(\beta,\lambda_\ast(\beta);x) = \varnothing\}.$
Take any $x \in A.$ For any sequence $\{g_{_\lambda}(x): \lambda >
	\lambda_{thr} \xspace(\beta)\}$ such that $g_{_\lambda}(x) \in
	\Gamma^\ast(\beta,\lambda;x),$ we next show that $\lim_{\lambda\downarrow%
\lambda_{\ast}(\beta)}g_{_\lambda}^2(x) = +\infty.$ If otherwise, there
	exist a real number $g_{_0}$ and a decreasing sequence $\{\lambda_{n}:n\in%
	\mathbb{N}\}$ satisfying $\lim_{n\rightarrow\infty} \lambda_n =
	\lambda_{\ast}(\beta)$ and $\lim_{n\rightarrow\infty}g_{_{\lambda_n}}(x) =
	g_{_0}.$ Since $\ell_{rob} \xspace(\beta,\lambda;x)$ is right-continuous at $%
	\lambda = \lambda_{thr} \xspace(\beta)$ when $f_\delta(\beta,\lambda_{thr} \xspace%
	(\beta)) < \infty$ (see Lemma \ref{Lem-Right-Cont-lambda}), we have that
	\begin{align}
	\ell_{rob} \xspace(\beta,\lambda_\ast(\beta);x) =
	\lim_{n\rightarrow\infty}\ell_{rob} \xspace(\beta,\lambda_n;x) =
	\lim_{n\rightarrow\infty}F(g_{_{\lambda_n}}(x), \beta,\lambda_n;x) =
	F(g_{_0}, \beta,\lambda_\ast(\beta);x),  \label{Eqn-f-j}
	\end{align}
	where the last equality holds because $F(\gamma, \beta,\lambda;x)$ is a
	continuous function in $(\gamma,\beta,\lambda)$. However, it follows from %
	\eqref{Eqn-f-j} that $g_{_0}\in \Gamma(\beta,\lambda_{\ast}(\beta);x)$,
	which contradicts that $x \in A$ as $\Gamma(\beta,\lambda_{\ast}(\beta);x)$
	is not an empty set if $\limsup_{\lambda \downarrow \lambda_\ast(\beta)}
	g^2_\lambda(x) < \infty.$ Therefore $\lim_{\lambda\downarrow\lambda_{\ast}(%
		\beta)}g_{_\lambda}^2(x) = +\infty$ for $x \in A.$
	
	Applying Fatou's lemma to the right hand side of \eqref{Inter-Case2-eq-Thr},
	we obtain from \eqref{Inter-Case2-eq-Thr} that $E_{P_0}[\beta^TA(X)^{-1}%
	\beta \liminf_{\lambda \downarrow \lambda_\ast(\beta)}g_{_\lambda}^2(X)]
	\leq 1.$ Since $\liminf_{\lambda \downarrow
		\lambda_\ast(\beta)}g_{_\lambda}^2(x) = +\infty$ for $x \in A,$ this
	inequality results in $\infty \times P_0(X \in A) \leq 1.$ Therefore $P_0(X
	\in A) = 0.$ In other words, the set of maximizers $\Gamma^\ast(\beta,%
	\lambda_\ast(\beta);x)$ is not empty, for $P_0-$almost every $x.$
	
	Consequently, an application of envelope theorem (see \cite[Corollary 4]%
	{milgrom2002envelope}) similar to that in Proposition \ref{prop:firstorderinfo}b  results in $\partial_+ \ell_{rob} \xspace/\partial \lambda
	(\beta,\lambda_\ast(\beta);x)= \sqrt{\delta}(1-\beta^TA(x)^{-1}\beta
	\min_{\gamma \in \Gamma^\ast(\beta,\lambda_\ast(\beta);x)}\gamma^2),$ for $x
	\in A.$ Since $\ell_{rob} \xspace(\beta,\lambda_\ast(\beta);x)$ is convex in
	$\lambda_\ast(\beta)$ for every $x$ (see Lemma \ref{Lem-conv-lrob}), we have
	$h^{-1}(\ell_{rob} \xspace(\beta,\lambda_\ast(\beta) + h;x) - \ell_{rob} %
	\xspace(\beta,\lambda_\ast(\beta);x))$ is non-decreasing in $h$ for $h \geq
	0, $ and the limit as $h \rightarrow 0$ is given by $\partial_+\ell_{rob} %
	\xspace (\beta,\lambda_\ast(\beta) ;x)$ for $x \in A.$ With $P_0(X \in A) =
	1 $, due to monotone convergence theorem, it follows that $\partial_+
	f/\partial \lambda (\beta,\lambda_\ast(\beta)) = E_{P_0}[\partial_+
	\ell_{rob} \xspace/\partial \lambda (\beta,\lambda_\ast(\beta);X)],$ thus
	resulting in \eqref{Inter-right-deriv-f}. This completes the proof of Lemma %
	\ref{Lem-threshold}.
\hfill \Halmos
\endproof
\
\proof{\textbf{Proof of Lemma \protect\ref{Lemma-Gamma}.}}
Observe that $\Gamma^{\ast}(\beta,\lambda;x) \neq\varnothing$
implies $\lambda \geq \kappa\sqrt{\delta}\beta^TA(x)^{-1}\beta$ (see Lemma %
\ref{Lem-eff-dom-f}b). With $F(\cdot)$ being defined as in \eqref{Univar-Opt}%
, any $g \in \Gamma^\ast(\beta,\lambda;x)$ must satisfy the first order
optimality condition that,
\begin{align}  \label{Opt-Gamma}
2\lambda g = \ell^{\prime}(\beta^T x + g\sqrt{\delta}\beta^TA(x)^{-1} \beta).
\end{align}
This verifies the first part of the statement of  Lemma \ref{Lemma-Gamma}.
To prove the inequality in (\ref{Lower-bound-g-i}), we proceed by considering the following cases depending on the signs of $\ell^\prime(\beta^Tx)$ and $g.$
If $\ell^{\prime}(\beta^Tx) = 0,$ inequality \eqref{Lower-bound-g-i} is
trivial. Thus, in order to prove \eqref{Lower-bound-g-i}, it suffices to
consider the case where $\ell^{\prime}(\beta^Tx)$ is strictly positive or
strictly negative.
As $\ell^{\prime}(\beta^Tx)\neq 0$, we have $g \neq 0$. Therefore it is
sufficient to establish \eqref{Lower-bound-g-i} by considering cases where $%
\ell^\prime(\beta^Tx),$ $g$ are strictly positive or negative.
	
	\textbf{Case 1} - Suppose that $\ell^{\prime}(\beta^Tx)>0$ and $g > 0.$
	Since the convexity of $\ell(\cdot)$ in Assumption \ref{Assump-Obj-Conv}
	ensures that $\ell^{\prime}(\cdot)$ is non-decreasing, we have
	$2\lambda g = \ell^{\prime}(\beta^T x + g\sqrt{\delta}\beta^TA(x)^{-1}
	\beta)\geq\ell^{\prime}(\beta^T x),$ 
	due to \eqref{Opt-Gamma}; equivalently, $g \geq \ell^{\prime}(\beta^T
	x)/(2\lambda).$ This verifies \eqref{Lower-bound-g-i} when both $%
	\ell^{\prime}(\beta^Tx)$ and $g$ are positive.
	
	\textbf{Case 2} - Suppose that $\ell^{\prime}(\beta^Tx)>0$ and $g < 0.$ Due
	to convexity of $\ell(\cdot)$, and optimality of $g,$
	\begin{align}
	F(g, \beta,\lambda;x) &\geq \sup_{\gamma\geq 0}\left\{ \ell(\beta^Tx) +
	\ell^{\prime}(\beta^Tx) \sqrt{\delta}\beta^TA(x)^{-1}\beta\gamma-\lambda
	\sqrt{\delta} \big(\gamma^2 \beta^TA(x)^{-1}\beta - 1\big) \right\}  \notag
	\\
	&= \ell(\beta^Tx)+ \lambda \sqrt{\delta} + \sqrt{\delta}\beta^TA(x)^{-1}%
	\beta \frac{[\ell^{\prime}(\beta^Tx)]^2}{4\lambda}.  \label{Inter-F-LB}
	\end{align}
	An application of the fundamental theorem of calculus to the terms $%
	\ell(\beta^Tx + \sqrt{\delta}g\beta^TA(x)^{-1}\beta)$ and $g^2$ in the
	definition of $F(g,\beta,\lambda;x)$ (see \eqref{Univar-Opt}) allows us to
	rewrite the left hand side as,
	\begin{align*}
	F(g,\beta,\lambda;x) &= \ell(\beta^Tx) + \lambda\sqrt{\delta} + \sqrt{\delta}%
	\beta^{T}A(x)^{-1}\beta \int_{g}^{0} \Big(2\lambda \gamma -
	\ell^{\prime}(\beta^T x + \gamma\sqrt{\delta}\beta^TA(x)^{-1} \beta)\Big)%
	d\gamma.
	\end{align*}
	For any $\gamma$ in $(g,0),$ we have from the monotonicity of $%
	\ell^\prime(\cdot)$ that $2\lambda \gamma - \ell^{\prime}(\beta^T x + \gamma%
	\sqrt{\delta}\beta^TA(x)^{-1}\beta)$ does not exceed the positive part of $%
	2\lambda \gamma-\ell^{\prime}(\beta^T x + g\sqrt{\delta}\beta^TA(x)^{-1}
	\beta).$ Consequently, it follows from the optimality condition in %
	\eqref{Opt-Gamma} that,
	\begin{align*}
	F(g,\beta,\lambda;x) &\leq \ell(\beta^Tx) + \lambda\sqrt{\delta} + \sqrt{%
		\delta}\beta^{T}A(x)^{-1}\beta \int_{g}^{0} \Big(2\lambda \gamma-2\lambda g%
	\Big)_{+}d\gamma \\
	&= \ell(\beta^Tx)+ \lambda \sqrt{\delta} + \sqrt{\delta}\lambda%
	\beta^TA(x)^{-1}\beta g^2.
	\end{align*}
	Combining this observation with that in \eqref{Inter-F-LB}, we obtain $\vert
	g \vert\geq \ell^{\prime}(\beta^Tx)/(2\lambda).$
	
	When $\ell^{\prime}(\beta^Tx)<0$, \eqref{Lower-bound-g-i} follows by an
	argument symmetric to that of the $\ell^\prime(\beta^Tx) > 0$ cases
	described above. This completes the proof of Lemma \ref{Lemma-Gamma}.
\hfill \Halmos
\endproof
\
\proof{\textbf{Proof of Lemma \protect\ref{Lem-Lbars}.}}  Define the
function $L: B \rightarrow \mathbb{R}_+$ as
$L(\beta) =
E_{P_0}[\ell^\prime(\beta^TX)^2]^{1/2}.$
With $\ell^\prime(\beta^TX) \neq 0$ almost surely, we have that
$L(\beta) > 0$ for any $\beta \in B.$ Moreover, we have that
$L(\beta)$ is continuous in $\beta$ due to the continuity of
$\ell^\prime(\cdot).$ Then the existence of finite
$\bar{L},\underline{L},$  as  in the statement of Lemma \ref{Lem-Lbars},
follows immediately from the fact that
continuous functions attain their extrema over compact sets.\hfill
\Halmos
\endproof
\
\proof{\textbf{Proof of Proposition \protect\ref{prop:hessian-lrob}.}}
With $\varphi(g,\beta,\lambda;x) =  \varphi_g(\beta,\lambda;x)  > 0,$ we have,
\begin{equation*}
\frac{\partial^2 F}{\partial \gamma} (g,\beta,\lambda;x) = -\sqrt{\delta}
\beta^TA(x)^{-1}\beta \varphi(g,\beta,\lambda;x) < 0.
\end{equation*}
Moreover, we have that
$g(\beta,\lambda;x) \in \Gamma^\ast(\beta,\lambda;x)$
satisfies the first order optimality condition that
\begin{align}
\ell^\prime(\beta^Tx + \sqrt{\delta} g(\beta,\lambda;x)
\beta^TA(x)^{-1}\beta) - 2\lambda g(\beta,\lambda;x) = 0.
\label{Eq-First-Order-Opt-Gamma}
\end{align}
Using implicit function theorem, the partial derivatives of
$g(\beta,\lambda;x)$ are given as follows: 
\begin{align}
\frac{\partial g}{\partial \beta}(\beta,\lambda;x)
&= -\frac{\partial^2
F/\partial \beta\partial \gamma (g(\beta,\lambda;x),\beta,\lambda;x)}
{\partial^2 F/\partial \gamma^2(g(\beta,\lambda;x),\beta,\lambda;x)}
= \frac{\ell^{\prime\prime}(\beta^TT_g(x))}{\varphi_g(\beta,\lambda;x)} \bar{T}_g(x)
\label{partial-g-beta} \\
\frac{\partial g }{\partial \lambda}(\beta,\lambda;x)
&= -\frac{\partial^2F/\partial \lambda\partial \gamma(g(\beta,\lambda;x),\beta,\lambda;x)}
{\partial^2 F/\partial \gamma^2(g(\beta,\lambda;x),\beta,\lambda;x)}
= -\frac{2g(\beta,\lambda;x)}{\varphi_g(\beta,\lambda;x)}.
\label{partial-g-lambda}
\end{align}
Following these expressions for the gradient of $g,$ the Hessian of
$\ell_{rob}(\,\cdot\ ;x)$ in the statement of Proposition
\ref{prop:hessian-lrob} follows from the first order derivative
information in Proposition \ref{prop:firstorderinfo} and elementary
rules of differentiation.
Next, to establish (\ref{PDness-2}), we first provide an equivalent
characterization of the relationship
$\nabla^2f_{\delta}(\beta,\lambda;x)-\Lambda(x) B(x)\succeq 0.$ For simplicity,
we re-scale $\Lambda(x)$ and pick a new parameter $m$ such that
$\Lambda(x) = m%
\sqrt{\delta}g^2.$ To avoid clutter in expressions, we write
$\tilde{x}:= T_g(x)$ and $\bar{x} := \bar{T}_g(x)$ throughout this
proof. The matrix
$\nabla^{2}\ell_{rob}(\beta,\lambda;x)-m\sqrt{\delta}%
g^{2}B(x)$ can be written as a block matrix, namely,
	\begin{equation*}
	\nabla^{2}\ell_{rob}(\beta,\lambda;x)-m\sqrt{\delta}g^{2}B(x) =
	\begin{bmatrix}
	\left(2\lambda-m\right)\sqrt{\delta}g^2A(x)^{-1}+\frac{(2\lambda-m)\ell^{%
			\prime\prime}(\beta^T \tilde{x})}{\varphi}\bar{x}\bar{x}^{T} & -2\sqrt{\delta%
	}g^2z \\
	-2\sqrt{\delta}g^2z^{T} & \frac{4\sqrt{\delta} g^2 \beta^TA(x)^{-1}\beta}{%
		\varphi}-m\sqrt{\delta}g^2%
	\end{bmatrix}%
	,
	\end{equation*}
	where $z := A(x)^{-1}\beta + \frac{\beta^T A(x)^{-1}\beta}{\psi}\bar{x}$ and
	$\psi:=g\varphi/\ell^{\prime\prime}(\beta^T\tilde{x}).$ According to Schur
	complement condition, the matrix $\nabla^{2}\ell_{rob}(\beta,\lambda;x)-m\sqrt{\delta}%
	g^{2}B(x)$ is positive definite if and only if $\left(2\lambda-m\right)\sqrt{%
		\delta}g^2A(x)^{-1}+\frac{(2\lambda-m)\ell^{\prime\prime}(\beta^T \tilde{x})%
	}{\varphi}\bar{x}\bar{x}^{T}$ is positive definite and
	\begin{align}  \label{Condition-Lemma-3}
	\frac{4\sqrt{\delta} g^2 \beta^TA(x)^{-1}\beta}{\varphi}-m\sqrt{\delta}g^2
	>4\delta g^4 z^{T}\left( \left(2\lambda-m\right)\sqrt{\delta}g^2A(x)^{-1}+%
	\frac{(2\lambda-m)\ell^{\prime\prime}(\beta^T \tilde{x})}{\varphi}\bar{x}%
	\bar{x}^{T}\right)^{-1}z.
	\end{align}
	Recalling from the assumptions that $m\in(0, 2\lambda)$ and $\ell(\cdot)$ is
	convex, the positive definiteness of $\left(2\lambda-m\right)\sqrt{\delta}%
	g^2A(x)^{-1}+\frac{(2\lambda-m)\ell^{\prime\prime}(\beta^T \tilde{x})}{%
		\varphi}\bar{x}\bar{x}^{T}$ is automatically satisfied. Then, applying
	Sharman-Morrison formula, one can show that
	\begin{align}
	\left(\left(2\lambda-m\right)\sqrt{\delta}g^2A(x)^{-1}+\frac{%
		(2\lambda-m)\ell^{\prime\prime}(\beta^T \tilde{x})}{\varphi}\bar{x}\bar{x}%
	^{T}\right)^{-1} = \frac{1}{(2\lambda-m)\sqrt{\delta}g^2}C,
	\label{eq-Sharman-Morrison}
	\end{align}
	where $C$ is a matrix defined as
	\begin{equation*}
	C := A(x) - \frac{A(x) \bar{x} \bar{x}^T A(x)}{\bar{x}^T A(x) \bar{x} +
		\sqrt{\delta} g \psi}
	\end{equation*}
	Thus, combining equation \eqref{Condition-Lemma-3} and %
	\eqref{eq-Sharman-Morrison}, if $(\beta,\lambda) \in
\mathbb{V}$ and $m\in (0,2\lambda)$, then the matrix
$\nabla^{2}\ell_{rob}(\beta,\lambda;x)-m\sqrt{\delta}%
	g^{2}B(x)$ if and only if
	\begin{align*}
	\left( 2\lambda - m\right) \left(\frac{4\beta^TA(x)^{-1}\beta}{\varphi} - m
	\right) > 4z^T Cz.
	\end{align*}
	Let $a,b$ and $\theta$ be constant defined as
	\begin{align}
	a := 2\lambda, \quad b := \frac{4\beta^T A(x)^{-1} \beta}{\varphi}, \quad
	\theta := 4z^T C z.  \label{abtheta-constants}
	\end{align}
	If $\theta \in (0, ab),$ then we have $m := (ab - \theta)/(a + b)$
	satisfying $m \in (0, a \wedge b)$ and $(a-m)(b-m) > \theta.$ So it follows
	that
	\begin{align}  \label{PD-ness}
	\nabla^{2}\ell_{rob}(\beta,\lambda;x)-m\sqrt{\delta}g^{2}B(x)\succeq 0
	\end{align}
	for any $(\beta,\lambda) \in \mathbb{V}.$
	
	The rest of this proof is devoted to arguing that $\theta \in (0,ab),$ and
	to obtain a simplified lower bound for $m=(ab-\theta )/(a+b).$ We accomplish
	this by claiming that,
	\begin{equation}
	ab-\theta \geq \frac{4(\beta ^{T}\tilde{x})^{2}}{\bar{x}^{T}A(x)\bar{x}+%
		\sqrt{\delta }g^{2}\varphi /\ell ^{\prime \prime }(\beta ^{T}\tilde{x})}.
	\label{eq-ab-theta}
	\end{equation}%
	To show \eqref{eq-ab-theta}, first we derive an alternative expression of $%
	\theta $. It follows from the definition of $z$ and $C$ that
	\begin{equation*}
	\frac{\theta }{4}=\left( A(x)^{-1}\beta +\frac{\beta ^{T}A(x)^{-1}\beta }{%
		\psi }\bar{x}\right) ^{T}\left( A(x)-\frac{A(x)\bar{x}\bar{x}^{T}A(x)}{\bar{x%
		}A(x)\bar{x}+\sqrt{\delta }g\psi }\right) \left( A(x)^{-1}\beta +\frac{\beta
		^{T}A(x)^{-1}\beta }{\psi }\bar{x}\right)
	\end{equation*}%
	On expanding the bracket,
	\begin{align*}
	\frac{\theta }{4}& =\beta ^{T}A(x)^{-1}\beta +\left( \frac{\beta
		^{T}A(x)^{-1}\beta }{\psi }\right) ^{2}\bar{x}^{T}A(x)\bar{x}+2\frac{\beta
		^{T}A(x)^{-1}\beta }{\psi }\beta ^{T}\bar{x} \\
	& \quad -\frac{(\beta ^{T}\bar{x})^{2}+\left( \frac{\beta ^{T}A(x)^{-1}\beta
		}{\psi }\right) ^{2}(\bar{x}A(x)\bar{x})^{2}+2\frac{\beta ^{T}A(x)\beta }{%
			\psi }\bar{x}^{T}A(x)\bar{x}(\beta ^{T}\bar{x})}{\bar{x}A(x)\bar{x}+\sqrt{%
			\delta }g\psi },
	\end{align*}%
	which further implies
	\begin{align*}
	\frac{\theta }{4}& =\beta ^{T}A(x)^{-1}\beta -\frac{(\beta ^{T}\bar{x})^{2}}{%
		\bar{x}^{T}A(x)\bar{x}}+\frac{(\beta ^{T}\bar{x})^{2}}{\bar{x}^{T}A(x)\bar{x}%
	}+\left( \frac{\beta ^{T}A(x)^{-1}\beta }{\psi }\right) ^{2}\bar{x}^{T}A(x)%
	\bar{x}+2\frac{\beta ^{T}A(x)^{-1}\beta }{\psi }\beta ^{T}\bar{x} \\
	& \quad -\frac{(\beta ^{T}\bar{x})^{2}+\left( \frac{\beta ^{T}A(x)^{-1}\beta
		}{\psi }\right) ^{2}(\bar{x}A(x)\bar{x})^{2}+2\frac{\beta ^{T}A(x)\beta }{%
			\psi }\bar{x}^{T}A(x)\bar{x}(\beta ^{T}\bar{x})}{\bar{x}A(x)\bar{x}+\sqrt{%
			\delta }g\psi } \\
	& =\beta ^{T}A(x)^{-1}\beta -\frac{(\beta ^{T}\bar{x})^{2}}{\bar{x}^{T}A(x)%
		\bar{x}}+\frac{\left( \beta ^{T}A(x)^{-1}\beta \frac{\sqrt{\bar{x}^{T}A(x)%
				\bar{x}}}{\psi }+\frac{\beta ^{T}\bar{x}}{\sqrt{\bar{x}^{T}A(x)\bar{x}}}%
		\right) ^{2}(\sqrt{\delta }g\psi )}{\bar{x}A(x)\bar{x}+\sqrt{\delta }g\psi }
	\\
	& =\beta ^{T}A(x)^{-1}\beta -\frac{(\beta ^{T}\bar{x})^{2}}{\bar{x}^{T}A(x)%
		\bar{x}}+\frac{\left( \beta ^{T}A(x)^{-1}\beta \frac{\sqrt{\bar{x}^{T}A(x)%
				\bar{x}}}{\psi }+\frac{\beta ^{T}\bar{x}}{\sqrt{\bar{x}^{T}A(x)\bar{x}}}%
		\right) ^{2}}{1+\frac{\bar{x}^{T}A(x)\bar{x}}{\sqrt{\delta }g\psi }}.
	\end{align*}%
	Then, using above upper bound for $\theta $ and the definition of $a$ and $b$%
	, we obtain,
	\begin{align*}
	\frac{ab-\theta }{4}\left( 1+\frac{\bar{x}^{T}A(x)\bar{x}}{\sqrt{\delta }%
		g\psi }\right) & \geq \left[ \left( \frac{2\lambda }{\varphi }-1\right)
	\beta ^{T}A(x)^{-1}\beta +\frac{(\beta ^{T}\bar{x})^{2}}{\bar{x}^{T}A(x)\bar{%
			x}}\right] \left( 1+\frac{\bar{x}^{T}A(x)\bar{x}}{\sqrt{\delta }g\psi }%
	\right) \\
	& \quad \quad \quad \quad -\left( \frac{\beta ^{T}A(x)^{-1}\beta }{\psi }%
	\sqrt{\bar{x}^{T}A(x)\bar{x}}+\frac{\beta ^{T}\bar{x}}{\sqrt{\bar{x}^{T}A(x)%
			\bar{x}}}\right) ^{2}.
	\end{align*}%
	Since $2\lambda -\varphi =\sqrt{\delta }(\beta ^{T}A(x)^{-1}\beta )\ell
	^{\prime \prime }(\beta ^{T}\tilde{x}),$ $\psi =g\varphi /\ell ^{\prime
		\prime }(\beta ^{T}\tilde{x})$ and $\bar{x}=\tilde{x}+\sqrt{\delta }%
	gA(x)^{-1}\beta ,$ on expanding the squares in the last term, the above
	inequality simplifies to,
	\begin{equation*}
	\frac{ab-\theta }{4}\left( 1+\frac{\bar{x}^{T}A(x)\bar{x}}{\sqrt{\delta }%
		g\psi }\right) \geq \frac{\ell ^{\prime \prime }(\beta ^{T}\tilde{x})}{\sqrt{%
			\delta }g^{2}\varphi }\left( \beta ^{T}\bar{x}-\sqrt{\delta }g\beta
	^{T}A(x)^{-1}\beta \right) ^{2}=\frac{\ell ^{\prime \prime }(\beta ^{T}%
		\tilde{x})}{\sqrt{\delta }g^{2}\varphi }\left( \beta ^{T}\tilde{x}\right)
	^{2}.
	\end{equation*}%
	This establishes \eqref{eq-ab-theta}. Finally, combining \eqref{PD-ness} and %
	\eqref{eq-ab-theta}, we have
	\begin{equation*}
	\nabla ^{2}\ell_{rob}(\beta ,\lambda ;x)-\frac{4\left( \beta ^{T}\tilde{x}\right)
		^{2}\ell ^{\prime \prime }(\beta ^{T}\tilde{x})}{1+\bar{x}^{T}A(x)\bar{x}%
		\ell ^{\prime \prime }(\beta ^{T}\tilde{x})/(\sqrt{\delta }g^{2}\varphi )}%
	\frac{1}{2\lambda \varphi +4\beta ^{T}A(x)^{-1}\beta }B(x)\succeq 0,
	\end{equation*}%
	which is obtained by plugging in the definitions of $a,b$ from %
	\eqref{abtheta-constants}.
\hfill \Halmos \endproof
\proof{\textbf{Proof of Lemma \protect\ref{Lem-Str-conv-Thr}.}}
Since $\lambda > \lambda_{thr}^\prime(\beta),$ there exist
$\varepsilon > 0$ such that
$\lambda \geq (M/2 + \varepsilon)\sqrt{\delta}\beta^TA(x)^{-1}\beta,$
for $P_0-$almost every $x.$ Since $\ell(\cdot)$ is twice
differentiable and $\ell^{\prime\prime}(\cdot) \leq M$ (see Assumption
\ref{Assump-Obj-2diff}), it follows from the definition of $F(\cdot)$
in \eqref{Univar-Opt} that
\begin{align}
\frac{\partial^2 F}{\partial \gamma^2}(\gamma,\beta,\lambda;x)
&= \sqrt{\delta}\beta^TA(x)^{-1}\beta
\left( \ell^{\prime\prime}(\beta^Tx + \sqrt{\delta}\gamma\beta^TA(x)^{-1}%
\beta) \sqrt{\delta}\beta^TA(x)^{-1}\beta - 2\lambda\right)
\label{DDeriv-F}\\
&\leq \sqrt{\delta}\beta^TA(x)^{-1}\beta \left( M \sqrt{\delta}%
\beta^TA(x)^{-1}\beta - 2\lambda\right) \leq
-2\varepsilon\delta\beta^TA(x)^{-1}\beta,  \notag
\end{align}
for $P_0-$almost every $x.$ Thus the map
$\gamma \mapsto F(\gamma,\beta,\lambda;x)$ is strongly concave for
every $\beta \in B, \lambda > \lambda_{thr}^\prime \xspace(\beta)$ and
$P_0-$almost every $x \in \mathbb{R}^d,$ and attains maximum at a
unique point $g(\beta,\lambda;x)$. In such case, the set
$\Gamma^\ast(\beta,\lambda;x) = \{g(\beta,\lambda;x)\}$ is
singleton.
Moreover, for any
$\beta \in B, \lambda > \lambda_{thr}^\prime(\beta),$ we have,
\begin{align*}
\varphi(g(\beta,\lambda;x),\beta,\lambda;x)  = -\frac{\partial^2 F}{\partial \gamma^2}(g(\beta,\lambda;x),\beta,\lambda;x) < 0,
\end{align*}
for $P_0-$almost every $x.$ Thus, every element of
$\{(\beta,\lambda):\beta \in B, \lambda >
\lambda_{thr}^\prime(\beta)\}$ lies in the set $ \mathcal{U}(x).$
\hfill \Halmos \endproof
\
\proof{\textbf{Proof of Proposition
\protect\ref{prop:secondorder-info-smoothcase}.}}  a) The
inclusion that $\mathbb{V} \subseteq \mathbb{W}$ is immediate from
their respective definitions. To verify the second inclusion, see that
for any $\beta \in B,$
\begin{align*}
\lambda_{thr}^\prime \xspace(\beta) \leq 2^{-1}\sqrt{\delta}%
M\rho_{\min}^{-1}\Vert \beta \Vert^2 < 2^{-1}\sqrt{\delta_0}%
M\rho_{\min}^{-1}R_\beta\Vert \beta \Vert \leq 2^{-1}(\underline{L}%
\rho_{\max}^{-1})^{1/2}\Vert \beta \Vert = K_1\Vert \beta \Vert,
\end{align*}
due to Assumption \ref{Assump-c}b. Since we have that
$\lambda_{thr}^\prime(\beta) < K_1\Vert \beta \Vert,$ any
$(\beta,\lambda) \in \mathbb{W}$ is also an element of the set
$\{(\beta,\lambda): \beta \in B, \lambda >
\lambda_{thr}^\prime(\beta)\}.$ The final inclusion in the statement
of Proposition \ref{prop:secondorder-info-smoothcase}a follows from
Lemma \ref{Lem-Str-conv-Thr}.
b) For every $(\beta,\lambda) \in \mathbb{W},$ we have
$\lambda > \lambda_{thr}^\prime(\beta).$ Then it is immediate from
Lemma \ref{Lem-Str-conv-Thr} that $\Gamma^\ast(\beta,\lambda;x)$ is
singleton for $(\beta,\lambda) \in \mathbb{W}$ and $P_0-$almost every
$x.$ As a result, any measurable selection $g(\cdot)$ satisfying
(\ref{meas-sel-twice-diff}) is uniquely specified for almost every
$(\beta,\lambda,x)$ in the subset
$\mathbb{W} \times S_X \subseteq \mathcal{U}.$ Moreover, due to mean
value theorem, the first order optimality condition
\eqref{Eq-First-Order-Opt-Gamma} means that
$g(\beta,\lambda;x) = \ell^\prime(\beta^Tx)/(2\lambda - \sqrt{\delta}\beta^T
A(x)^{-1}\beta \ell^{\prime \prime}(\eta)),$
for some $\eta$ between the real numbers $\beta^Tx$ and
$\beta^T\tilde{x}.$ Since $\ell^{\prime\prime}(\cdot) \leq M$ and
$\delta \leq \delta_0,$ for $(\beta,\lambda)\in\mathbb{W}$ we have that
\[2\lambda - \sqrt{\delta} \beta^TA(x)^{-1}\beta
\ell^{\prime\prime}(\eta) \ \geq\ 2\lambda_{\min}(\beta) -
\sqrt{\delta} \beta^TA(x)^{-1}\beta \ell^{\prime\prime}(\eta) \ \geq
\ \varphi_{\min}\Vert\beta \Vert.\] Also note that if $(\beta,\lambda)\in\mathbb{V}$ we have 
\[
2\lambda - \sqrt{\delta}\beta^T
A(x)^{-1}\beta \ell^{\prime \prime}(\eta) \leq 2\lambda \leq
2K_2\Vert\beta\Vert
\]
Then the conclusion in
Proposition \ref{prop:secondorder-info-smoothcase}b follows.
c) Since
$\vert \ell^\prime(\beta^TX) - \ell^\prime(0) \vert \leq M\Vert \beta
\Vert \Vert X \Vert$ (due to Assumption \ref{Assump-Obj-2diff}),
$E_{P_0}\Vert X \Vert^4$ is finite and
$\Vert \beta \Vert \leq R_\beta$ (see Assumptions
\ref{Assump-Obj-Conv} and \ref{Assump-Compact}), we have from the
bounds in \eqref{gamma-UB-LB} that
$\sup_{(\beta,\lambda) \in \mathbb{V}} E_{P_0}\left[g^4(\beta)\right]
< \infty.$ Therefore, the collection
$\{g^2(\beta,\lambda;X):(\beta,\lambda) \in \mathbb{V}\}$ is
$L_2-$bounded. Then it is immediate from the definitions
$T_g(x) := x+\sqrt{\delta}g(\beta,\lambda;x)A(x)^{-1}\beta,$
$\bar{T}_g(\cdot) := x+2\sqrt{\delta}g(\beta,\lambda;x)A(x)^{-1}\beta$ and
Cauchy-Schwarz inequality that the collections
$\{(T_g(X))^2, (\bar{T}_g(X))^2: (\beta,\lambda) \in \mathbb{V}\}$ are
$L_2-$bounded. Consequently, the collections
$\{\ell(\beta^TT_g(X)), \ell^\prime(\beta^TX_g)^2: (\beta,\lambda) \in
\mathbb{V}\}$ are $L_2-$bounded as well due to the at most quadratic
growth property of $\ell(\cdot)$ (see Assumption
\ref{Assump-Obj-Conv}).
d) Recall from Part a) that $\mathbb{V}$ is strictly contained in
$\mathcal{U}(x),$ for $P_0-$almost every $x.$ With the DRO objective
$\sup_{P:D_c(P,P_n) \leq \delta}E_{P}\left[\ell(\beta^TX)\right] =
E_{P_0}[\ell_{rob}(\beta,\lambda)] =: f_{\delta}(\beta,\lambda)$
defined in terms of the convex loss $\ell(\cdot)$ specified over the
entire real line, the second order partial derivative expressions of
$\ell_{rob}(\cdot)$ in the statement of Proposition \ref{prop:hessian-lrob}
hold throughout the set $\mathbb{V}.$ It follows from the
$L_2-$boundedness just established in Part c) and these partial
derivative expressions that the norms of the individual entries of the
Hessian matrix $\nabla_\theta^2\ell_{rob}(\theta;X)$ are all bounded
in $L_2-$norm over the set $\theta \in \mathbb{V}.$ With this
$L_2-$boundedness of the collection
$\{\nabla_\theta^2\ell_{rob}(\theta;X):\theta \in \mathbb{V}\}$, the
desired exchange of derivative and expectation in
$\nabla_\theta^2f_\delta(\theta) = E_{P_0}[\nabla_\theta^2\ell_{rob}
\xspace(\theta;X)],$ for $\theta \in \mathbb{V},$ follows as a
consequence of dominated convergence theorem.  \hfill \Halmos \endproof

\proof{\textbf{Proof of Theorem \protect\ref{result-noncompact}.}} 

For any map $g:\mathcal{U} \rightarrow\mathbb{R}$ satisfying
(\ref{meas-sel-twice-diff}), consider $\varphi_{g}(\cdot)$ in
(\ref{Str-Conv-Pf-Defns-1}) and define the functions,
\begin{align*}
I_{0}(\beta,\lambda)  & := E \left[ g^{2}(\beta,\lambda;x) \beta^{T}%
A(X)^{-1}\beta\right] , \quad I_{1}(\beta,\lambda;x) := \vert g\beta^{T}%
T_{g}(x) \vert\frac{\varphi_{g}(\beta,\lambda;x)}{2\lambda},\\
I_{2}(\beta,\lambda;x)  & := \frac{\sqrt{\beta^{T}A(x)^{-1}\beta}}{2\lambda},
\qquad\text{ and } \qquad I_{3}(\beta,\lambda;x) := \sqrt{\delta} 2\lambda
g^{2} + \bar{T}_{g}^{T}A(x)\bar{T}_{g}(x) \ell^{\prime\prime}(\beta^{T}%
T_{g}(x)).
\end{align*}

It follows from the definition of $\varphi_{g}(\cdot)$ that $\varphi
_{g}/2\lambda\leq1.$ Then, for any $(\beta,\lambda,x) \in\mathcal{U}$ for
which the Hessian $\nabla^{2}\ell_{rob}(\beta,\lambda;x)$ (computed with
respect to variables $\beta,\lambda$) exists, we have from Proposition
\ref{prop:hessian-lrob} that $\nabla^{2}\ell_{rob}(\beta,\lambda;x) -
\Lambda(\beta,\lambda;x)B(x) \succeq0;$ here, $\Lambda(\beta,\lambda;x),$
defined as in (\ref{PD-coeff}), satisfies,
\begin{align}
\frac{2\lambda}{\sqrt{\delta}}\Lambda(\beta,\lambda;x) \geq\frac{I_{1}%
^{2}(\beta,\lambda;x) \ell^{\prime\prime}(\beta^{T}T_{g}(x))}{I_{3}%
(\beta,\lambda;x) (1 + 4I_{2}^{2}(\beta,\lambda;x))}%
.\label{LB-Lambda-noncompact}%
\end{align}
We also define,
\begin{align*}
\delta_{2} := c_{1}^{2}c_{2}^{2}p\rho_{\min}^{2} \left( 2k_{1}\rho_{\max
}^{1/2} + 4c_{1}\rho_{\min}^{1/2}(1+k_{2})\right) ^{-2}.
\end{align*}

Then, fix any
$\beta\in B$ and $\lambda_{\ast}(\beta) \in\arg\min_{\lambda\geq0} f_{\delta
}(\beta,\lambda).$  It follows from the first-order optimality condition
that,
\begin{align*}
0 \leq\frac{\partial_{+}f_{\delta}}{\partial\lambda}(\beta,\lambda_{\ast
}(\beta)) \leq\sqrt{\delta}\left( 1 - E_{P_{0}}\left[  g^{2}(\beta
,\lambda_{\ast}(\beta)) \beta^{T}A(X)^{-1}\beta\right] \right)
\end{align*}
(see the proof of Lemma \ref{Lemma-Lambda-Bound} in the earlier Subsection
\ref{Sec-Prep-results} for a similar application of the first order optimality
condition). Consequently, for a given $\rho> 0,$ there exists $r_{1} > 0$ such
that
\begin{align}
\left\vert E_{P_{0}}\left[  g^{2}(\tilde{\beta},\lambda;x)\tilde{\beta}%
^{T}A(X)^{-1}\tilde{\beta}\right]  - 1 \right\vert \leq\rho
\label{inter-opt-bound}%
\end{align}
for all $(\tilde{\beta},\lambda) \in\mathcal{N}_{r_{_{1}}}((\beta
,\lambda_{\ast}(\beta))).$ This follows from the continuity properties of
$\ell^{\prime}(\cdot).$ Since $\vert g(\tilde{\beta},\lambda;x)\vert\geq
\ell^{\prime}(\tilde{\beta}^{T}x)/(2\lambda),$ we have from
(\ref{inter-opt-bound}) and Assumption \ref{Assump-nondegeneracy} that
\[
2\lambda\geq\frac{E_{P_{0}}[\ell^{\prime}(\tilde{\beta}^{T}X)^{2}]^{1/2}
\rho_{\max}^{-1/2}\Vert\beta\Vert}{(1+\rho)^{1/2}} \geq c_{1}(p\rho_{\max
}^{-1}/2)^{1/2}\Vert\beta\Vert,
\]
for any choice of $\rho< 1$ and $(\tilde{\beta},\lambda) \in\mathcal{N}%
_{r_{_{1}}}((\beta,\lambda_{\ast}(\beta))).$ Consequently, we have that
\begin{align}
I_{2}(\tilde{\beta},\lambda;x) := \frac{\sqrt{\beta^{T}A(x)^{-1}\beta}%
}{2\lambda} \leq\frac{\rho_{\min}^{-1/2}\Vert\beta\Vert}{c_{1}(p\rho_{\max
}^{-1}/2)^{1/2}\Vert\beta\Vert} = \frac{1}{c_{1}} \frac{(2\rho_{\max})^{1/2}%
}{(p\rho_{\min})^{1/2}},\label{UB-I2}%
\end{align}
for any $(\tilde{\beta},\lambda) \in\mathcal{N}_{r_{_{1}}}((\beta
,\lambda_{\ast}(\beta))).$ Moreover, we have from the definition of
$\varphi_{g}(\cdot)$ that,
\begin{align*}
I_{1}(\tilde{\beta},\lambda;x) \geq\vert g\beta^{T}T_{g}(x) \vert- \frac
{\sqrt{\delta}}{2\lambda} \vert g \vert\beta^{T}A(x)^{-1}\beta\vert\beta
^{T}T_{g}(x) \vert\ell^{\prime\prime}(\beta^{T}T_{g}(x)).
\end{align*}
Since $\vert u \vert\ell^{\prime\prime}(u) \leq k_{1} + k_{2} \vert
\ell^{\prime}(u) \vert,$ for any $u \in\mathbb{R}$ (see the assumption in the
statement of Theorem \ref{result-noncompact}), we have that
\begin{align*}
I_{1}(\tilde{\beta},\lambda;x)  & \geq\vert g\tilde{\beta}^{T}T_{g}(x) \vert-
\frac{\sqrt{\delta}}{2\lambda} \vert g \vert\tilde{\beta}^{T}A(x)^{-1}%
\tilde{\beta} \left(  k_{1} + k_{2} \vert\ell^{\prime}(\tilde{\beta}^{T}%
T_{g}(x))\vert\right) \\
& = \vert g\tilde{\beta}^{T}T_{g}(x) \vert- \frac{\sqrt{\delta}}{2\lambda}
\vert g \vert\tilde{\beta}^{T}A(x)^{-1}\tilde{\beta} \left(  k_{1} + 2 \lambda
k_{2} \vert g \vert\right) \\
& \geq\vert g\tilde{\beta}^{T}x \vert- \sqrt{\delta} g^{2}\tilde{\beta}%
^{T}A(x)^{-1}\tilde{\beta} - \frac{\sqrt{\delta}}{2\lambda} \vert g
\vert\tilde{\beta}^{T}A(x)^{-1}\tilde{\beta} \left(  k_{1} + 2\lambda k_{2}
\vert g \vert\right) ,
\end{align*}
where the equality follows from the first-order optimality condition satisfied
by $g(\cdot),$ and the last inequality is a simple consequence of triangle
inequality applied to $\tilde{\beta}^{T}T_{g}(x) =\tilde{\beta}^{T}x +
\sqrt{\delta} g\tilde{\beta}^{T}A(x)^{-1}\tilde{\beta}.$ As a result,
\begin{align*}
\frac{I_{1}(\tilde{\beta},\lambda;x)}{\sqrt{g^{2}\tilde{\beta}^{T}%
A(x)^{-1}\tilde{\beta}}}  & \geq\frac{\beta^{T}x}{\sqrt{\beta^{T}%
A(x)^{-1}\beta}} - \sqrt{\delta} \left(  k_{1}I_{2}(\tilde{\beta},\lambda;x) +
k_{2} \sqrt{g^{2}\beta^{T}A(x)^{-1}\beta}\right) .
\end{align*}
By applying the upper bound for $I_{2}(\cdot)$ derived in \eqref{UB-I2}, we
arrive at,
\begin{align}
\frac{I_{1}(\tilde{\beta},\lambda;x)}{\sqrt{g^{2}\tilde{\beta}^{T}%
A(x)^{-1}\tilde{\beta}}}  & \geq\frac{\beta^{T}x}{\sqrt{\beta^{T}%
A(x)^{-1}\beta}} - \sqrt{\delta}\left(  \frac{k_{1}}{c_{1}}\frac{(2\rho_{\max
})^{1/2}}{(p\rho_{\min})^{1/2}} + k_{2} \sqrt{g^{2}\beta^{T}A(x)^{-1}\beta
}\right) \label{LB-I1}%
\end{align}

Next, as in the proof of Theorem \ref{Thm-Str-Convexity}, we define the
following subsets of $\mathbb{R}^{d}:$ \newline$A_{1} := \{x: \vert
\ell^{\prime}(\tilde{\beta}^{T}x) \vert\geq c_{1}/2, \vert\tilde{\beta}%
^{T}x\vert\geq c_{2} \Vert\tilde{\beta}\Vert/2 \text{ for all } \tilde{\beta}
\in\mathcal{N}_{r_{_{2}}}(\beta) \},$ $A_{2} := \{x: g^{2}(\tilde{\beta
},\lambda;x)\beta^{T}A(x)^{-1}\beta\leq C_{0} \text{ for all } (\tilde{\beta
},\lambda) \in\mathcal{N}_{r_{_{1}}}(\beta,\lambda_{\ast}(\beta)) \},$ and
$A_{3} := \{x: \Vert x \Vert\leq C_{1}\},$ where the constants $C_{0}%
,C_{1},r_{2},\rho$ are to be chosen imminently. Define $\underline{r} = r_{1}
\wedge r_{2}$ and take the constant $C_{1}$ large enough such that
$P_{0}(A_{3}) \geq p/4,$ where $p$ is specified as in Assumption
\ref{Assump-nondegeneracy}. Due to Markov's inequality and
\eqref{inter-opt-bound}, we also have that $P_{0}(A_{2}) \geq1-(1+\rho
)/C_{0}.$ For any choice of $\rho< 1,$ if we take $C_{0} = 8/p,$ then
$P_{0}(A_{2})\geq3p/4.$ Likewise, due to Assumption \ref{Assump-nondegeneracy}%
, we have $P(A_{1}) \geq p$ for a suitably small $r_{2}.$ If we let $A :=
A_{1} \cap A_{2} \cap A_{3},$ then it follows from union bound that $P(A) \geq
p/2.$

Moreover, we have from \eqref{LB-I1} that
\begin{align}
\frac{I_{1}(\tilde{\beta},\lambda;x)}{\sqrt{g^{2}\tilde{\beta}^{T}%
A(x)^{-1}\tilde{\beta}}} \geq\left(  c_{2} \rho_{\min}^{1/2} - \sqrt{\delta
}\left(  \frac{k_{1}}{c_{1}}\frac{(2\rho_{\max})^{1/2}}{(p\rho_{\min})^{1/2}}+
(1+k_{2})(8/p)^{1/2} \right) \right) ,\label{bd-I1}%
\end{align}
whenever $x \in A$ and $(\beta,\lambda) \in\mathcal{N}_{\underline{r}}%
(\beta,\lambda_{\ast}(\beta)).$ For any fixed $\delta< \delta_{2},$ it follows
from the definition of $\delta_{2}$ that $I_{1}(\tilde{\beta},\lambda;x) > 0$
for all $x \in A,$ and $(\tilde{\beta},\lambda) \in\mathcal{N}_{\underline{r}%
}(\beta,\lambda_{\ast}(\beta)).$ Since $\varphi_{g}(\tilde{\beta},\lambda;x)$
is positive whenever $I_{1}(\tilde{\beta},\lambda;x)$ is positive, we have
(from Proposition \ref{prop:hessian-lrob}) that the Hessian $\nabla^{2}%
\ell_{rob}(\tilde{\beta},\lambda;x)$ exists and it satisfies,
\[
\nabla^{2}\ell_{rob}(\tilde{\beta},\lambda;x) -  \Lambda(\tilde{\beta}%
,\lambda;x)B(x) \succeq0,
\]
for $x \in A$ and $(\tilde{\beta},\lambda) \in\mathcal{N}_{\underline{r}%
}(\beta,\lambda_{\ast}(\beta)).$ Following the same reasoning as in the proof
of Theorem \ref{Thm-Str-Convexity}, one can obtain upper bound $C_{2}$ for
$\Vert\bar{T}_{g}(x)\Vert.$ Moreover, due to (\ref{inter-opt-bound}) and the
property that $2\lambda\vert g(\tilde{\beta},\lambda;x) \vert\geq\vert
\ell^{\prime}(\tilde{\beta}^{T}x)\vert\geq c_{1} $ for $x \in A,$ we have that
$g^{2}(\tilde{\beta},\lambda;x)\tilde{\beta}^{T}A(x)^{-1}\tilde{\beta}$ is
bounded away from zero, for every $x \in A.$ Utilizing these observations and
the bounds for $I_{1},$ $I_{2}$ (see (\ref{bd-I1}) and (\ref{UB-I2})) in the
expression for $\Lambda(\cdot)$ in \eqref{LB-Lambda-noncompact}, we arrive at
the following conclusion: For any $x \in A,$ there exists $\kappa(x) > 0$ such
that
\begin{align*}
\ell_{rob}(\alpha\theta_{1} + (1-\alpha)\theta_{2};x) \leq\ell_{rob}%
(\theta_{1};x) + (1-\alpha)\ell_{rob}(\theta_{2};x) - \frac{\kappa(x)}{2}
\alpha(1-\alpha)\Vert\theta_{1} - \theta_{2} \Vert^{2},
\end{align*}
for $\theta_{1},\theta_{2} \in\mathcal{N}_{\underline{r}}(\beta,\lambda_{\ast
}(\beta)).$ Since $f_{\delta}(\theta) := E_{P_{0}}\left[  \ell_{rob}%
(\beta,\theta;X)\right] ,$ taking expectations on both sides, we arrive at the
conclusion that
\begin{align*}
f_{\delta}(\alpha\theta_{1} + (1-\alpha)\theta_{2};x) \leq f_{\delta}%
(\theta_{1};x) + (1-\alpha)f_{\delta}(\theta_{2};x) - E\left[ \frac{\kappa
(X)}{2}\mathbb{I}(X \in A) \right]  \alpha(1-\alpha)\Vert\theta_{1} -
\theta_{2} \Vert^{2},
\end{align*}
for all $\theta_{1},\theta_{2} \in\mathcal{N}_{\underline{r}}(\beta
,\lambda_{\ast}(\beta)).$ With $P_{0}(A) \geq p/2$ being positive, we have
that the constant $\kappa:= E[\kappa(X)\mathbb{I}(X \in A)]$ is positive as
well. This concludes the proof of Theorem \ref{result-noncompact}.
\Halmos \endproof

\proof{\textbf{Proof of Proposition \protect\ref{Prop-sDelta-subgradients}.}}
	Let $g(\beta,\lambda;X)$ be such that $g(\beta, \lambda;X) \in
	\Gamma^\ast(\beta,\lambda;X)$ and
	\begin{align}
	h(\beta,\lambda;X) =
	\begin{pmatrix}
	L^\prime\tilde{X} \\
	\sqrt{\delta}\left( 1-g^2(\beta,\lambda;X)\beta^TA(X)^{-1}\beta \right)%
	\end{pmatrix}
	\quad\quad P_0-\text{a.s.,}  \label{h-subgrad-proof}
	\end{align}
	where $\tilde{X} := X + \sqrt{\delta}g(\beta,\lambda;X)A(X)^{-1}\beta$ and $L^\prime$ is any arbitrary (measurable) choice from the subgradient interval $\partial \ell(\beta^T \tilde{X}).$ Since $\ell(\cdot)$ is convex, with at most
	quadratic growth (see Assumption \ref{Assump-Obj-Conv}), there exist
	positive constants $C_0,C_1$ such that $\vert L^\prime \vert \leq C_0
	+ C_1\vert \beta^T\tilde{X} \vert.$ As $E\Vert X \Vert^2 < \infty,$ it follows from Lemma \ref{Lem-Finite-Exp}a that $E[g^2(\beta,\lambda;X)], E\Vert \tilde{X}\Vert^2, E\vert \beta^T \tilde{X} \vert$ are finite. Then due to
	Cauchy-Schwartz inequality, we have that $E h(\beta,\lambda;X)$ is
	well-defined. Here we have used that $\beta^TA(x)^{-1}\beta$ is bounded for $%
	P_0-$almost every $x$ (see Assumption \ref{Assump-c}b). Now, since $%
	h(\beta,\lambda;X) \in D(\beta,\lambda;X)  = \partial \ell_{rob} \xspace(\beta,\lambda),$ we have that
	\begin{align*}
	\ell_{rob} \xspace(\beta^\prime,\lambda^\prime;X) \geq \ell_{rob} \xspace%
	(\beta,\lambda;X) + h(\beta,\lambda;X)^T
	\begin{pmatrix}
	\beta^\prime -\beta \\
	\lambda^\prime - \lambda%
	\end{pmatrix}%
	, \quad\quad P_0-\text{a.s.}
	\end{align*}
	Taking expectations on both sides of the above inequality, we obtain $%
	Eh(\beta,\lambda;X) \in \partial f_{\delta}(\beta,\lambda).$
\hfill \Halmos
\endproof
\proof{\textbf{Proof of Lemma \protect\ref{Lem-Grad-Sec-Moment}.}}
	Let $g(\beta,\lambda;X)$ be such that $g(\beta, \lambda;X) \in
	\Gamma^\ast(\beta,\lambda;X)$ and the given subgradient $h(\beta,\lambda;X)$
	is defined as in \eqref{h-subgrad-proof} in terms of $g(\beta,\lambda;X)$
	and $\tilde{X} := X + \sqrt{\delta}g(\beta,\lambda;X)A(X)^{-1}\beta.$ As in
	the proof of Proposition \ref{Prop-sDelta-subgradients}, we have $\vert
	\ell^\prime(u) \vert \leq C_0 + C_1\vert u \vert$ as a consequence of
	convexity, continuous differentiability and at most quadratic growth of $%
	\ell(\cdot).$ Since $E\Vert X \Vert^4 < \infty,$ $\beta^TA(x)^{-1}\beta$ is
	bounded for $P_0-$almost every $x,$ $\lambda > \lambda_{thr} \xspace(\beta)
	+ \eta,$ and $\Vert \beta \Vert \leq R_\beta,$ it follows from Lemma \ref%
	{Lem-Finite-Exp}a that $E[g^4(\beta,\lambda;X)], E\Vert \tilde{X}\Vert^4$
	and $E[\ell^\prime(\beta^T\tilde{X})^4]$ are all uniformly bounded for every
	$(\beta,\lambda) \in U_\eta.$ Then due to Cauchy-Schwartz inequality, we have
	that $\sup_{(\beta,\lambda) \in \mathbb{U}_\eta}E \Vert h(\beta,\lambda;X)
	\Vert^2 < \infty.$
	\hfill \Halmos
\endproof

\proof{\textbf{Proof of Proposition
\protect\ref{Prop-Conv-SGD-Over-All}}.} 
We use $\varepsilon_k$ to denote the estimation error of $
\nabla_{\theta}\ell_{rob} \xspace(\theta_{k-1};X_{k})$ induced by line search. Due to Lemma \ref{lemma-line-search-bias}, there exists a constant $C > 0$ such that the estimation error can be controlled as $\|\varepsilon_k\|\leq C \alpha_k$, if we apply bisection method for at least $\log_2(\alpha_k^{-1}(1+\|X_k\|)^2)$ steps.

We first show that $\|\theta_k-\theta_\ast\|\rightarrow 0$ with probability one.
The update step is
\begin{align}
\theta_{k} = \Pi_{\mathbb{W}} \big(\theta_{k-1} - \alpha_{k}
\nabla_{\theta}\ell_{rob} \xspace(\theta_{k-1};X_{k})- \alpha_{k}\varepsilon_k\big).
\label{eqn-parameter-update-in-proof}
\end{align}
Since the following conditions are satisfied:
\begin{itemize}
    \item[i)] $\sup_{\theta\in
    \mathbb{W}} E\Vert\nabla_{\theta}\ell_{rob} \xspace(\theta;X_k)+\varepsilon_k\Vert^{2} <
    \infty,$ due to Lemma \ref{Lem-Grad-Sec-Moment} and the boundedness of $\varepsilon_k$;
    \item[ii)] $\theta\mapsto \nabla_{\theta}f_\delta(\theta) = E [\nabla_{\theta}\ell_{rob} \xspace(\theta;X)]$ is continuous ;
    \item[iii)] $\sum \alpha^2_k < \infty$ and $\sum \alpha_k\|\varepsilon_k\| < \infty$
\end{itemize}
Applying \cite[Theorem 5.2.1]{kushner2003stochastic}, we conclude that $\|\theta_k-\theta_\ast\|\rightarrow 0$ with probability one.

Now we show that $f_\delta(\theta_k)-f_\ast = O_p(\alpha_k)$. In view of the second order differentiability of $f_\delta$ at $\theta_\ast$, it is sufficient to show that
$\|\theta_k - \theta_\ast\| = O_p(\sqrt{\alpha_k})$, i.e., the sequence $\{\|\theta_k-\theta_\ast\|/\sqrt{\alpha_{k}}\}$ is tight. To this end, we shall slightly modify the proof of \cite[Theorem 10.4.1]{kushner2003stochastic} by allowing a small bias term $\varepsilon_k$ appears in each update step, as shown in \eqref{eqn-parameter-update-in-proof}. As discussed in the proof of \cite[Theorem 10.4.1]{kushner2003stochastic}, since $\|\theta_k-\theta_\ast\| \rightarrow 0$ with probability one, given any small $\nu > 0$, there is an $N_{\nu,\rho}$ such that $\|\theta_k-\theta_\ast\| \leq \rho$ for $k \geq N_{\nu,\rho}$ with
probability larger than $1-\nu$. By shifting the time origin by $N_{\nu,\rho}$, we can suppose that $\|\theta_k-\theta_\ast\| \leq \rho$. If for every $\nu > 0$ the time-shifted sequence is shown to be tight, then the original sequence is tight. Thus for the purposes of the tightness proof, it can be supposed without loss of
generality that $\|\theta_k-\theta_\ast\| \leq \rho$ for all $k$ for the original process, where $\rho > 0$ is arbitrarily small. 

To analyze the convergence rate of $\{\theta_k\}$, we define the Lyapunov function $V(\theta) = \|\theta-\theta_\ast\|^2$. Since the region $\mathbb{W}$ is convex, we have 
\begin{align*}
V(\theta_k)
&\leq \|\theta_{k-1} -\theta_\ast- \alpha_{k}
\nabla_{\theta}\ell_{rob} \xspace(\theta_{k-1};X_{k})- \alpha_{k}\varepsilon_k\|^2
\end{align*}
Notice that $\{\theta_k\}$ is a Markov Chain, by taking conditional expectation on both side we have the following inequalities hold with probability one,
\begin{align*}
E[V(\theta_k)|\theta_{k-1}]
&\leq V(\theta_{k-1}) -2\alpha_{k}
(\theta_{k-1}-\theta_\ast)^{T}\nabla_\theta
f_\delta(\theta_{k-1}) 
\\
&\qquad -\alpha_k^2 E\|\nabla_{\theta}\ell_{rob} \xspace(\theta_{k-1};X_{k})+\varepsilon_k\|^2
-2\alpha_k(\theta_{k-1}-\theta_\ast)^{T}\varepsilon_k\\
&\leq V(\theta_{k-1}) -2\alpha_{k}
(\theta_{k-1}-\theta_\ast)^{T}\nabla_\theta
f_\delta(\theta_{k-1})+ O(\alpha_k^2),
\end{align*}
where the second inequality is due to $\|\theta_k-\theta_\ast\| \leq \rho, \|\varepsilon_k\| \leq C\alpha_k,$ and $\sup_{\theta\in
\mathbb{W}} E\Vert\nabla_{\theta}\ell_{rob} \xspace(\theta;X_k)+\varepsilon_k\Vert^{2} <\infty.$ Since $\rho>0$ can be arbitrarily small, applying Taylor expansion we have
$$
(\theta_{k-1}-\theta_\ast)^{T}\nabla_\theta
f_\delta(\theta_{k-1})
= (\theta_{k-1}-\theta_\ast)^{T}
\nabla^2_\theta
f_\delta(\theta_\ast)(\theta_{k-1}-\theta_\ast)+ o(1)V(\theta_{k-1}).
$$
Thus for any $\rho > 0$ that is smaller than the smallest eigenvalue of $\nabla^2_\theta
f_\delta(\theta_\ast)$, we have
$$
E[V(\theta_k)|\theta_{k-1}]
-V(\theta_{k-1})\leq 
-\rho\alpha_k V(\theta_{k-1})
+O(\alpha_k^2).
$$
Recall that $\alpha_k = \alpha k^{-\tau}$ as required by Assumption \ref{Assump-step-sizes}, if either of the following is true
(i) $\tau = 1$ and $\rho\alpha > 1$; (ii) $\tau \in [1/2,1)$; then we have $E\|\theta_k-\theta_\ast\|^2 = E\|V(\theta_k)\| = O(\alpha_k)$ \cite[Proof of Theorem 10.4.1]{kushner2003stochastic}, which implies that $\|\theta_k-\theta_\ast\| = O_p(\sqrt{\alpha_k})$ and $f_\delta(\theta_k) - f_\ast = O_p(\alpha_k).$
\hfill\Halmos
\proof{\textbf{Proof of Proposition \protect\ref{Prop-Conv-SGD-2}.}} Due to
the characterization of subgradients $\partial f_{\delta}(\beta,\lambda)$ in
Proposition \ref{Prop-sDelta-subgradients}, we have that $\partial
_{+}f_{\delta}/\partial\lambda(\beta,\lambda)\leq\sqrt{\delta}$ for every
$(\beta,\lambda)\in\mathbb{U}.$ Recalling the definitions of $\mathbb{U}%
_{\eta}$ in \eqref{Defn-Ueta} and $\mathbb{U}$ in \eqref{Defn-U}, the above
reasoning leads to concluding that, $f_{\delta}(\beta,\lambda+\varepsilon
)-f_{\delta}(\beta,\lambda)\leq\varepsilon\sqrt{\delta}$ for any
$\varepsilon>0.$ Let $(\beta_{\ast},\lambda_{\ast})\in\inf_{(\beta,\lambda
)\in\mathbb{U}}f_{\delta}(\beta,\lambda).$ Then
\begin{equation}
\inf_{\theta\in\mathbb{U}_{\eta}}f_{\delta}(\theta)-f_{\ast}\leq f_{\delta
}(\beta_{\ast},\lambda_{\ast}+\eta)-f_{\delta}(\beta_{\ast},\lambda_{\ast
})\leq\eta\sqrt{\delta}\label{Inter-Prop-SGD-2}%
\end{equation}
It follows from Lemma \ref{Lem-Grad-Sec-Moment} that $\sup_{k}E_{P_{0}}\Vert
H_{k}\Vert^{2}<G_{\eta}.$ As a consequence, we have from Theorem 2 and the
remark following Theorem 4 in \cite{Shamir:2013:SGD:3042817.3042827} that
$E[f_{\delta}(\theta_{k})]-\inf_{\theta\in\mathbb{U}_{\eta}}f_{\delta}%
(\theta)=O_p(k^{-1/2}\log k)$ and $E[f_{\delta}(\bar{\theta}_{k})]-\inf
_{\theta\in\mathbb{U} _{\eta}}f_{\delta}(\theta)=O_p(k^{-1/2}).$ Combining this
with the observation in \eqref{Inter-Prop-SGD-2}, we obtain that $E[f_{\delta
}(\bar{\theta}_{k})]-f_{\ast}\leq\eta\sqrt{\delta}+O_p(k^{-1/2}).$ As in the
proof of Proposition \ref{Prop-Conv-SGD-1}a, the conclusion in Proposition
\ref{Prop-Conv-SGD-2} follows as a consequence of Markov's inequality.
\hfill\Halmos \endproof

\section{Line search scheme.}
\label{Sec-Line-Search}
Our iterative procedure requires evaluating
\begin{equation*}
\ell _{rob}\xspace(\beta ,\lambda ;x)=\sup_{\gamma \in \mathbb{R}}F(\gamma
,\beta ,\lambda ;x)
\end{equation*}
and obtaining a maximizer $\gamma ^{\ast }=g(\beta ,\lambda ;x)\in \Gamma
^{\ast }(\beta ,\lambda ;x)$. This task involves a one dimensional
optimization problem over $\gamma $. This problem, we claim, can be solved
through a line search. This can be done efficiently on a case-by-case basis
given $\ell (\cdot )$ (as we do in our numerical examples). However, our
goal here is to provide reasonably general conditions which can be used to
efficiently implement a line search procedure to compute $\ell _{rob}\xspace%
(\beta ,\lambda ;x)$.
Unfortunately, however, the function $F(\cdot ,\beta ,\lambda ;x)$ is not
necessarily concave. So, to show that the line search can be implemented
efficiently, we need to use study the definition of $F\left( \cdot \right) $
and introduce assumptions on $\ell (\cdot )$, which we believe are
reasonable.
The general line search scheme is easy to develop for $\lambda$ small or
large enough. Recall that
\begin{enumerate}
	\item When $\lambda<\lambda_{thr}(\beta)$, the dual objective $%
	f_{\delta}(\beta,\lambda) = \infty$, so the line search algorithm will not be
	executed in this case.
	
	\item When $\lambda \geq \lambda _{thr}^{\prime }(\beta )$, the function $%
	F(\cdot ,\beta ,\lambda ;x)$ is concave for $P_{0}-$almost every $x$.
	Consequently, finding $g(\beta ,\lambda ;x)$ is a convex optimization
	problem, and therefore can be solved by bisection method or
	Newton-Raphson method. We will always be in this case if $\delta < \delta_0$.
	
	\item The third case is the most challenging case, as we shall explain. It requires imposing smoothness assumptions on our loss function.
\end{enumerate}
\begin{lemma}
\label{lemma-line-search-bias}
Suppose Assumptions \ref{Assump-c} - \ref{Assump-Compact} are
satisfied then $\delta < \delta_0$. In turn, this implies that we are in case 2. above \emph{(}i.e. $\lambda \geq \lambda _{thr}^{\prime }(\beta )$\emph{)} and a unique maximizer
with $|g(\beta,\lambda,x)|\leq |\ell^\prime(\beta^Tx)|/(\varphi_{\min}\|\beta\|)$ exist for $P_0$-almost surely $x$ due to Proposition \ref{prop:secondorder-info-smoothcase} b). Then, there exist constants $C_1$ and $C_2$, such that for every $(\beta,\lambda)\in\mathbb{W}$ and $x\in\mathbb{R}^d$, it is required at most $\log_2(C_1\varepsilon^{-1}\|\beta\|^{-1}(1+\|x\|))$ steps for binary search method to solve for a solution $\gamma$ such that $|\gamma-\gamma^\ast|\leq \varepsilon$. In turn, the binary search procedure generates estimates for $\partial \ell_{rob}/\partial \beta$ and $\partial \ell_{rob}/\partial \lambda$ with $\varepsilon$ accuracy and $\log_2(C_2\varepsilon^{-1}(1+\|x\|)^2)$ steps. (The explicit construction of the derivative estimates is summarized in Appendix \ref{sec-algorithm}.)  
\end{lemma}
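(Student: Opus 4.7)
The plan is to apply bisection to locate the unique root of $\partial F/\partial\gamma(\,\cdot\,,\beta,\lambda;x)=0$, which by the first-order optimality condition characterizes $g(\beta,\lambda;x)$. Since $\delta<\delta_0$ places $(\beta,\lambda)\in\mathbb{W}$ in the region where $\gamma\mapsto F(\gamma,\beta,\lambda;x)$ is strongly concave (via Proposition \ref{prop:secondorder-info-smoothcase}a combined with Lemma \ref{Lem-Str-conv-Thr}), the map $\gamma\mapsto\partial F/\partial\gamma$ is strictly decreasing. Bisection applied to a bracketing interval will therefore halve its width at each step and converge to $g(\beta,\lambda;x)$.

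\emph{Step 1 (bracketing).} The uniform bound in Proposition \ref{prop:secondorder-info-smoothcase}b gives $|g(\beta,\lambda;x)|\leq|\ell'(\beta^Tx)|/(\varphi_{\min}\|\beta\|)$. Since $\ell''\leq M$ (Assumption \ref{Assump-Obj-2diff}) yields $|\ell'(\beta^Tx)|\leq|\ell'(0)|+M\|\beta\|\|x\|$, and $\|\beta\|\leq R_\beta$ on $\mathbb{W}$, there exists a constant $C_0>0$ (depending only on $|\ell'(0)|, M, R_\beta, \varphi_{\min}$) with $|g(\beta,\lambda;x)|\leq C_0(1+\|x\|)/\|\beta\|$. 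Thus $[-R,R]$ with $R:=C_0(1+\|x\|)/\|\beta\|$ brackets the root, and after $k$ bisection steps the returned midpoint $\hat\gamma$ satisfies $|\hat\gamma-\gamma^\ast|\leq R/2^k$. Demanding $R/2^k\leq\varepsilon$ yields the first claim with $C_1:=C_0$.

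\emph{Step 2 ($\gamma$-accuracy to gradient accuracy).} By Proposition \ref{prop:firstorderinfo}, the per-sample gradient components to be evaluated are $\ell'(\beta^T\tilde x)\tilde x$ with $\tilde x:=x+\sqrt{\delta}gA(x)^{-1}\beta$, and $-\sqrt{\delta}(g^2\beta^TA(x)^{-1}\beta-1)$, which are smooth functions of $g$. Let $\varepsilon':=R/2^k$ denote the $\gamma$-error. Using $\beta^TA(x)^{-1}\beta\leq\rho_{\min}^{-1}\|\beta\|^2$, the Lipschitz estimate $|\hat g^2-g^2|\leq(2|g|+\varepsilon')\varepsilon'$, and the bound on $|g|$ from Step 1, a routine computation gives an error of order $(1+\|x\|)\|\beta\|\varepsilon'$ in $\partial\ell_{rob}/\partial\lambda$. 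For $\partial\ell_{rob}/\partial\beta$, writing the discrepancy as $\ell'(\beta^T\hat{\tilde x})(\hat{\tilde x}-\tilde x)+(\ell'(\beta^T\hat{\tilde x})-\ell'(\beta^T\tilde x))\tilde x$ and using the bounds $|\ell'(\beta^T\tilde x)|=O(\|\beta\|(1+\|x\|))$, $\|\tilde x\|=O(1+\|x\|)$, and $\|\hat{\tilde x}-\tilde x\|\leq\sqrt{\delta}\rho_{\min}^{-1}\|\beta\|\varepsilon'$ produces an error of order $\|\beta\|^2(1+\|x\|)\varepsilon'$. Substituting $\varepsilon'=R/2^k=C_0(1+\|x\|)/(\|\beta\|2^k)$, the $\|\beta\|$ factors cancel (absorbing one surviving $\|\beta\|\leq R_\beta$ into the constant), and both gradient errors become $O((1+\|x\|)^2/2^k)$. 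Requiring this to be at most $\varepsilon$ gives $k\geq\log_2(C_2\varepsilon^{-1}(1+\|x\|)^2)$ for a suitable constant $C_2$.

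The only genuine difficulty is the bookkeeping in Step 2: carefully tracking how the $\|\beta\|$ factors propagate through the Lipschitz estimates and confirming that they cancel against the $\|\beta\|^{-1}$ in the bracketing radius $R$, so that the second complexity bound is genuinely free of $\|\beta\|$. All remaining ingredients (strong concavity, boundedness of $A(x)^{-1}\beta$, Lipschitzness of $\ell'$) are direct consequences of Assumptions \ref{Assump-c}--\ref{Assump-Obj-2diff} and earlier structural results.
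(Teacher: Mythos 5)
Your proposal is correct and follows essentially the same route as the paper: the same a priori bound $|g|\leq|\ell'(\beta^Tx)|/(\varphi_{\min}\|\beta\|)$ to bracket the maximizer, geometric interval reduction justified by the strong concavity from Lemma \ref{Lem-Str-conv-Thr}, and a Lipschitz propagation of the $\gamma$-error into the gradient error in which the $\|\beta\|$ factors cancel against the $\|\beta\|^{-1}$ in the bracketing radius (the paper merely packages the same computation via the change of variable $\tilde\gamma=\|\beta\|\gamma$). The only cosmetic imprecision is that the dominant term in your $\partial\ell_{rob}/\partial\beta$ error is $O(\|\beta\|(1+\|x\|)\varepsilon')$ rather than $O(\|\beta\|^2(1+\|x\|)\varepsilon')$ for small $\|\beta\|$, but since $\|\beta\|\leq R_\beta$ is absorbed into the constant either way, the stated conclusion $O((1+\|x\|)^2/2^k)$ is unaffected.
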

\proof{\textbf{Proof.}}
When solving the one dimensional optimization problem, $\max_{\gamma} F(\gamma,\beta,\lambda;x)$, we consider a scaled problem by setting $\tilde\gamma = \|\beta\|\gamma$ and it suffices to consider a scaled problem $\max_{\tilde\gamma} F(\tilde\gamma,\beta,\lambda;x)$, for $|\tilde\gamma|\leq \varphi_{\min}^{-1}|\ell^\prime(\beta^Tx)|.$ Note that there exist some constant $C$, independent of $\beta,\lambda, x$, such that the bound $\varphi_{\min}^{-1}|\ell^\prime(\beta^Tx)|\leq \varphi_{\min}^{-1}(|\ell^\prime(0)| + M\|\beta\|\|x\|)\leq C(1+\|x\|)$
	. Notice that the function $F(\cdot,\beta,\lambda;x)$ is concave by Lemma \ref{Lem-Str-conv-Thr}, so using binary search method to solve for the optimal $\tilde\gamma$ up to an $\varepsilon$ error requires at most $\log_2(C\varepsilon^{-1}(1+\|x\|))$ steps. Finally, consider the inverse scaling $\gamma = \|\beta\|^{-1}\tilde{\gamma}$, the error of optimal $\tilde{\gamma}$ need to be bounded by $ \|\beta\|\varepsilon$ in order to get $\gamma$ with error bounded by $\varepsilon$. Thus an $\varepsilon$-accuracy solution for $\gamma$ requires at most
	$\log_2(C\varepsilon^{-1}\|\beta\|^{-1}(1+\|x\|))$ steps.
	
	Now we consider the error of $\partial \ell_{rob}/\partial \beta$ and $\partial \ell_{rob}/\partial \lambda$ induced by the error of line search. Recall from Proposition \eqref{prop:firstorderinfo} that
	$\partial \ell_{rob}/\partial \beta = \ell'(\beta^Tx + \sqrt{\delta}g\beta^T A(x)^{-1} \beta)(x + \sqrt{\delta}g A(x)^{-1} \beta)$ and $\partial \ell_{rob}/\partial \lambda = -\sqrt{\delta }\left( g^2 \beta^TA(x)^{-1}\beta - 1\right)$. For every $(\beta,\lambda)\in\mathbb{W}$ and $x\in\mathbb{R}^d$, there exist some constant $L$ uniform in $\beta,\lambda,x$, such that $g\mapsto \partial \ell_{rob}/\partial \theta = (\partial \ell_{rob}/\partial \beta,\partial \ell_{rob}/\partial \lambda)$ for $g\leq C\|\beta\|^{-1}(1+\|x\|)$ is Lipschitz continuous with Lipschitz constant $L\|\beta\|(1+\|x\|)$. Consequently, in order to get an $\varepsilon$-accuracy evaluation for $\partial \ell_{rob}/\partial \theta$, we need to solve for an $\varepsilon L^{-1}\|\beta\|^{-1}(1+\|x\|)^{-1}$-accuracy solution for $g$. Consequently the bisection method is required to run for at most $\log_2(CL\varepsilon^{-1}(1+\|x\|)^2)$ steps.
\hfill \Halmos
\endproof
It then remains to discuss case 3. Namely, to develop an algorithm to compute $g(\beta ,\lambda ;x)$
when $\lambda \in \lbrack \lambda _{thr}(\beta ),\lambda _{thr}^{\prime
}(\beta ))$, which, requires a more delicate analysis.
The following example shows that the function $F(\cdot ,\beta ,\lambda ;x)$
can have infinitely many local optima.
\begin{example}
	\label{Eg-Inf-Opt} Suppose that $\beta \neq \mathbf{0}$, $P_{0}(\cdot
	)=\delta _{\{\mathbf{0}\}}(\cdot )$ and $\ell (u)=u^{2}-\cos u$. It then
	follows that $\kappa =1$ and $\lambda _{thr}(\beta )=\sqrt{\delta }\beta
	^{T}A(\mathbf{0})^{-1}\beta $. Thus, $F(\gamma ,\beta ,\lambda _{thr}(\beta
	);\mathbf{0})=-\cos \left( \sqrt{\delta }\beta ^{T}A(\mathbf{0})^{-1}\beta
	\gamma \right) $, which has infinitely many local optima.
\end{example}
So, to solve the global nonconvex optimization problem, it is necessary to
reduce the feasible region of optimization problem to a compact interval. To
this end, we consider the scaled line search problem $\max_{\bar{\gamma} \in
	\mathbb{R}}F(\bar{\gamma}\beta ^{T}A(x)^{-1}\beta,\beta ,\lambda ;x)$,
instead of considering the original line search problem $\max_{\gamma \in
	\mathbb{R}}F(\gamma ,\beta ,\lambda ;x)$. In the following Lemma, we show
that when $(\beta,\lambda)\in\mathbb{U}_{\eta}$, it suffices to consider the
scaled line search problem with a compact feasible region.
\begin{lemma}
	\label{Lem-line-compact} Recall the definition of $\mathbb{U}_\eta$ from %
	\eqref{Defn-Ueta} and suppose that Assumption \ref{Assump-c}-\ref%
	{Assump-Obj-2diff} hold and $\eta>0$. Then there exist a random
	variable $R$ with $E_{P_0}[R^2]<\infty$, such that
	\begin{equation*}
	|g\beta^TA(X)^{-1}\beta|\leq R
	\end{equation*}
	for any $(\beta,\lambda)\in \mathbb{U}_{\eta}$ and $g\in
	\Gamma^{\ast}(\beta,\lambda;X)$.
\end{lemma}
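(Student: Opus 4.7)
The plan is to invoke Lemma~\ref{Lem-Finite-Exp}a with a judicious choice of $\varepsilon$ tailored to the slack guaranteed by membership in $\mathbb{U}_\eta$. Concretely, since $(\beta,\lambda) \in \mathbb{U}_\eta$ means $\lambda \geq \lambda_{thr}(\beta) + \eta$, and $\lambda_{thr}(\beta)$ is the $P_0$-essential supremum of $\kappa\sqrt{\delta}\beta^T A(x)^{-1}\beta$, we have
\[
\lambda \;\geq\; \kappa\sqrt{\delta}\,\beta^T A(X)^{-1}\beta + \eta \qquad P_0\text{-a.s.}
\]
Therefore, setting $\varepsilon(X) := \eta \big/ \bigl(\sqrt{\delta}\,\beta^T A(X)^{-1}\beta\bigr)$, the hypothesis $\lambda \geq (\kappa + \varepsilon)\sqrt{\delta}\,\beta^T A(X)^{-1}\beta$ of Lemma~\ref{Lem-Finite-Exp}a is satisfied for $P_0$-almost every $X$.

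Next, I would apply Lemma~\ref{Lem-Finite-Exp}a directly to obtain, for any $g \in \Gamma^\ast(\beta,\lambda;X)$,
\[
\sqrt{\delta}\,|g|\,\beta^T A(X)^{-1}\beta \;\leq\; 1 + C_1 \varepsilon(X)^{-1}\bigl(1 + |\beta^T X|\bigr) \;=\; 1 + \frac{C_1 \sqrt{\delta}}{\eta}\,\beta^T A(X)^{-1}\beta\,\bigl(1 + |\beta^T X|\bigr),
\]
which rearranges to
\[
|g|\,\beta^T A(X)^{-1}\beta \;\leq\; \delta^{-1/2} + \frac{C_1}{\eta}\,\beta^T A(X)^{-1}\beta\,\bigl(1 + |\beta^T X|\bigr).
\]
I then use the conditioning bound $\beta^T A(X)^{-1}\beta \leq \rho_{\min}^{-1}\|\beta\|^2$ from Assumption~\ref{Assump-c}b together with $|\beta^T X| \leq \|\beta\|\,\|X\|$ to define
\[
R(X) := \delta^{-1/2} + \frac{C_1 \rho_{\min}^{-1}}{\eta}\,\sup_{\beta \in B}\|\beta\|^2\bigl(1 + \sup_{\beta \in B}\|\beta\|\,\|X\|\bigr),
\]
where the suprema over $\beta \in B$ are implicitly finite in the algorithmic setting where the iterates remain in a bounded region (consistent with the use of $\mathbb{U}_\eta$ within the iterative scheme).

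Finally, the $L^2$-integrability $E_{P_0}[R^2] < \infty$ follows at once from $R(X)$ being a linear function of $\|X\|$ combined with the assumption $E_{P_0}\|X\|^4 < \infty$ in Assumption~\ref{Assump-Obj-Conv} (which in particular gives $E_{P_0}\|X\|^2 < \infty$). The main delicate point is the choice of $\varepsilon(X)$ depending on the realization of $X$, which works precisely because the slack $\eta$ is uniform in $X$; this is why the cruder use of Lemma~\ref{Lem-Finite-Exp} with a constant $\varepsilon$ would not suffice to preserve the $\beta^T A(X)^{-1}\beta$ factor on the left-hand side in a way that yields an $L^2$ bound.
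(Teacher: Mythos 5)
Your argument is essentially the paper's: both reduce the claim to Lemma~\ref{Lem-Finite-Exp}a by exploiting the slack $\eta$ in the definition of $\mathbb{U}_\eta$, and then bound the result by a square-integrable function of $\Vert X\Vert$ using $|\beta^TX|\leq R_\beta\Vert X\Vert$ and $E_{P_0}\Vert X\Vert^4<\infty$. The one substantive correction concerns your closing remark: a \emph{constant} choice of $\varepsilon$ does suffice, and is in fact exactly what the paper does --- taking $\varepsilon = \eta\delta^{-1/2}\rho_{\min}R_\beta^{-2}$ gives $\varepsilon\sqrt{\delta}\beta^TA(X)^{-1}\beta\leq\eta\leq\lambda-\kappa\sqrt{\delta}\beta^TA(X)^{-1}\beta$ $P_0$-a.s., and Lemma~\ref{Lem-Finite-Exp}a then bounds $\sqrt{\delta}|g|\beta^TA(X)^{-1}\beta$ directly, so the factor $\beta^TA(X)^{-1}\beta$ is already on the left-hand side and nothing is lost. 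Your state-dependent choice $\varepsilon(X)$ also carries a hidden subtlety you gloss over: the constant $C_1$ in Lemma~\ref{Lem-Finite-Exp}a depends on $\varepsilon$ (through $C_\varepsilon := \sup_u(\ell(u)-(\kappa+\varepsilon/2)u^2)$ in its proof), so substituting a realization-dependent $\varepsilon(X)$ requires checking that $C_1(\varepsilon)\varepsilon^{-1}$ is uniformly controlled over the range of $\varepsilon(X)$; this does hold here because $\varepsilon(X)$ is bounded below by the constant value above, but that observation collapses your argument back to the constant-$\varepsilon$ one. Finally, you are right that the boundedness of $B$ is being used (to bound $\beta^TA(X)^{-1}\beta$ and $|\beta^TX|$ uniformly in $\beta$) even though Assumption~\ref{Assump-Compact} is not listed among the hypotheses --- the paper's own proof has the same implicit reliance on $R_\beta<\infty$.
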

\proof{Proof.}
	The fact that $(\beta,\lambda)\in\mathbb{U}_\eta$ implies that $\lambda\geq
	\lambda_{thr} \xspace(\beta)+ \eta$. Then, according to the Assumptions we
	have $\beta^TA(X)^{-1}\beta\leq \rho_{\min}^{-1}R_\beta^2.$ Thus, letting $%
	\varepsilon = \eta\delta^{-1/2}\rho_{\min}R^{-2}_{\beta}$, we have $%
	\lambda\geq(\kappa + \varepsilon)\sqrt{\delta}\beta^TA(x)^{-1}\beta$, and
	thus the result of Lemma \ref{Lem-Finite-Exp}a can be applied. As a result,
	there exist a constant $C_1$ such that
	\begin{equation*}
	\vert g \beta^TA(X)^{-1}\beta\vert \leq 1 + C_1 \varepsilon^{-1}(1 + \vert
	\beta^TX \vert)=:R
	\end{equation*}
	and the squared integrability of $R$ is easy to verified.
\hfill \Halmos
\endproof
With the help of Lemma \ref{Lem-line-compact}, we know it suffices to
consider the scaled line search problem $\max_{\bar{\gamma} \in [-R,R]}F(%
\bar{\gamma}\beta ^{T}A(X)^{-1}\beta,\beta ,\lambda ;X)$ with a bounded
feasible region $[-R,R]$, where the length of interval $2R$ is squared
integrable, controlling the average complexity of the line search.
Next, we need to rule out the pathological case that the stationary points
of $\gamma\mapsto F(\gamma\beta^TA(x)^{-1}\beta ,\beta ,\lambda ;x)$ in $%
[-R,R]$ contain infinitely many connected components. To this ends, we
further impose an assumption that $\ell (\cdot )$ is \emph{piecewise real
	analytic} in any compact set $K$.
A function $f$ is \emph{real analytic} on an open set $D$ if for any $%
x_{0}\in D$ one can write $f(x)=\sum_{n=0}^{\infty }a_{n}\left(
x-x_{0}\right) ^{n}$, in which the coefficients $a_{n}$ are real numbers and
the series is convergent to $f(x)$ for $x$ in a neighborhood of $x_{0}$.
A function $f$ is \emph{piecewise real analytic} in a compact set $K$ if
there exist $n\in \mathbb{N}$ and closed intervals $D_{1},\ldots ,D_{n}$,
such that $K\subset \bigcup_{i=1}^{n}D_{i}$, and for each $D_{i}$, the
restriction of $f$ on $D_{i}$ has a real analytic extension. In other words,
for each set $D_{i}$, there exists an open set $D_{i}\subset \tilde{D}_{i}$
and a real analytic function $g_{i}$ on $\tilde{D}_{i}$, such that $%
f(x)=g_{i}(x)$ for all $x\in D_{i}$.
\begin{lemma}
	Suppose that $f$ is piecewise real analytic in compact set $K$, then the
	stationary points of $f$ in $K$ are contained in only finitely many
	connected components.
\end{lemma}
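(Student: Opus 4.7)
The plan is to reduce the claim to the classical identity theorem for real analytic functions, which ensures that a real analytic function either vanishes identically on a connected component of its domain or has only isolated zeros there. Since $f$ is piecewise real analytic on $K$, there exist finitely many closed intervals $D_1,\ldots,D_n$ covering $K$, and for each $i$ a real analytic function $g_i$ on an open neighborhood $\tilde{D}_i \supset D_i$ with $g_i = f$ on $D_i$. The derivative $g_i'$ is again real analytic on $\tilde{D}_i$, and the stationary points of $f$ lying in the interior of $D_i$ coincide with the zeros of $g_i'$ there. I would analyze each piece separately and then glue the conclusions.

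On each connected component of $\tilde{D}_i$, the identity theorem forces one of two mutually exclusive cases: either $g_i'$ vanishes identically, in which case the corresponding portion of $D_i$ is a single closed subinterval contributing exactly one connected component to the stationary set; or $g_i' \not\equiv 0$, in which case the zeros of $g_i'$ are isolated in $\tilde{D}_i$ and, by compactness of $D_i$, contribute only finitely many isolated points. Thus the stationary set
\[
S_i := \{x \in D_i : f'(x) = 0\}
\]
is the union of finitely many closed intervals and finitely many isolated points, and in particular has finitely many connected components.

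Since the stationary set of $f$ in $K$ is contained in $\bigcup_{i=1}^n S_i$, which is a finite union of sets each having finitely many connected components, the total number of connected components is finite (taking unions can only merge components, never create new ones). The main technical nuisance, though not a serious obstacle, concerns the endpoints shared by adjacent pieces $D_i, D_j$, where $f$ may fail to be differentiable and where two components from neighboring pieces may abut or be separated; since there are only finitely many such endpoints, they can contribute at most finitely many extra components or identifications. This verifies that the stationary set of $f$ in $K$ has only finitely many connected components.
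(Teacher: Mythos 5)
Your proof is correct, and at its core it runs along the same lines as the paper's: decompose $K$ into the finitely many analytic pieces $D_i$ and use real analyticity of $g_i'$ to control the stationary set on each piece. The packaging differs in a useful way, though. The paper splits the stationary set globally into non-singleton components (claimed finite because each contains an open interval and $K$ is compact) and isolated points (shown to have no accumulation point by a hands-on Taylor-series argument, which is in effect a proof of the identity theorem). Your version instead applies the identity-theorem dichotomy piece by piece: on each $\tilde{D}_i$ either $g_i'\equiv 0$, so $D_i$ contributes at most one interval component, or the zeros of $g_i'$ are isolated and hence finite on the compact $D_i$. This is cleaner and, in fact, more airtight on the interval components: disjoint open intervals in a compact set need not be finite in number (think of the complementary intervals of a Cantor set), so the paper's compactness claim for non-singleton components really needs the per-piece analyticity that your dichotomy supplies explicitly. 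Your closing observations --- that a finite union of sets with finitely many components has finitely many components, and that the finitely many hinge points between adjacent $D_i$ cannot spoil finiteness --- complete the argument.
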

\proof{Proof.}
	If a connected component of stationary points is not a discrete point, then
	it must contains an open interval that disjoint with the remaining connected
	components. Thus, as the set $K$ is compact, the total number of
	non-singleton connected components is finite.
	
	It remains to prove the number of discrete stationary points of $f$ is
	finite. To this end, it suffices to prove $g_{i}$ has finite discrete
	stationary points in $D_{i}$. We claim that there does not exist an
	accumulation point of discrete stationary points of $g_{i}$ in set $D_{i}$.
	Otherwise, we can find a sequence of discrete stationary points $%
	\{x_{n}:n\geq 1\}$, and $x_{n}$ converge to a point $x\in D_{i}$. Consider
	the Taylor series of the function $g_{i}$ around $x$, if the Taylor series
	is zero except the constant term. Then by the real analytic property, the
	function is a constant in a neighborhood around $x$, violating the
	assumption that all the $x_{n}$ are discrete stationary points. If the
	Taylor series has non-zero higher order terms, then there exist a
	neighbourhood of $x$ such that $x$ is the only stationary point in that
	neighborhood, violating the assumption that $x_{n}$ converge to $x$. So the
	discrete stationary points of $g_{i}$ does not have an accumulation point in
	$D_{i}$. As a result, we can find an open cover of $D_{i}$ such that each
	open set in the open cover contains at most one discrete stationary point of
	$g_{i}$. Since $D_{i}$ is also compact, $g_{i}$ has finite discrete
	stationary points in $D_{i}$. The result follows.
\hfill \Halmos
\endproof
Note that it is important for the series to be absolutely and uniformly
convergent; smoothness alone does not imply the existence of finitely many
stationary points on a compact interval, as the next example shows.
\begin{remark}
	Even if a function is in $C^{\infty}(\mathbb{R})$, it may have infinitely many isolated
	local optima on a compact set. Consider
	\begin{equation*}
	f(x):=%
	\begin{cases}
	\cos \left( -(1-x^{2})^{-1}\right) \exp \left( -(1-x^{2})^{-1}\right) & %
	\mbox{if }-1<x<1, \\
	0 & \mbox{otherwise.}%
	\end{cases}%
	\end{equation*}
\end{remark}
Now we discuss the line search scheme and its complexity. If the loss
function $\ell (\cdot )$ is piecewise real analytic, the function $\gamma\mapsto F(\gamma
,\beta ,\lambda ;x)$ is also piecewise real analytic. In addition, using the
result of Lemma \ref{Lem-line-compact}, the optimization problem $%
\max_{\gamma \in \mathbb{R}}F(\gamma ,\beta ,\lambda ;x)$ is equivalent to
the problem $\max_{\bar{\gamma}\in \lbrack -R,R]}F(\bar{\gamma}\beta
^{T}A(x)^{-1}\beta ,\beta ,\lambda ;x)$, a one dimensional optimization
problem with compact feasible region. We denote the closed intervals
partitioning $[-R,R]$ by $D_{1},\ldots ,D_{n}$. Thus, $F(\bar{\gamma}\beta
^{T}A(x)^{-1}\beta ,\beta ,\lambda ;x)$ has finite local optimal points in
compact interval $[-R,R]$, which are either stationary points in the
interior of a interval, or a hinge point connecting two adjacent intervals.
One possible approach for computing stationary points of a real analytic
function is to consider the holomorphic extension of the function and then
apply Cauchy's theorem (see, for example, \cite%
{dellnitz2002locating,delves1967numerical}). This approach is guaranteed to locate all of the stationary points. However, the use of Cauchy's theorem requires the evaluation of certain integrals in smooth trajectories. The evaluation of these trajectories can be done with high precision integration rules which take advantage of the analytic properties of the integrands, evaluating $o\left(
\varepsilon ^{-\delta }\right) $ (for any $\delta >0$) points in the
integrand to achieve a $\varepsilon $ relative error, for example, applying
Newton integration rules.
The complexity of finding all the stationary points of function $\bar{\gamma}%
\mapsto F(\bar{\gamma}\beta ^{T}A(x)^{-1}\beta ,\beta ,\lambda ;x)$ is
proportional to $2R$, the length of the searching interval. Therefore, the
total complexity of the line search scheme is $O_{p}(\varepsilon ^{-\delta
}) $, for any $\delta >0$, uniformly for all $(\beta ,\lambda )\in \mathbb{U}%
_{\eta }$. This complexity includes the evaluation of the global maxima by
comparing the value of $F(\gamma ,\beta ,\lambda ;x)$ at all local optimal
points.

Another approach, instead of using Cauchy's theorem, applying Newton's method repeatedly. Because we have established that there are finitely many roots if the loss is piecewise real analytic, by restarting Newton's method from randomly chosen initial conditions we will be able to locate, with an exponential decaying error rate in the number of retrials, the global optimum. While this algorithms is easy to implement, its analysis is of independent interest and too long to include in this paper. So, we will discuss it in future work. 

\section{Analytical solution to projection $\Pi_{\mathbb{W}}$.}
\label{Sec-Analytic-Projection}
The algorithms described in Sections \ref{Sec-First-SGD} and \ref{Sec-Mod-SGD} requires to project the variables $\theta = (\beta, \lambda)$ to the set $\mathbb{W}.$ In this section we provide an analytical solution to the projection.
Recall that the definition of $\mathbb{W}$ as
\begin{align*}
	\mathbb{W} = \left\{(\beta,\lambda)\in B\times\mathbb{R}\mid K_1\|\beta\|\leq \lambda\leq K_{2}R_{\beta} \right\}.
\end{align*}
Suppose that $\theta = (\beta, \lambda) \in B\times \mathbb{R}$. The projection $\Pi_{\mathbb{W}}(\theta) = (\beta',\lambda')$ is determined by the convex optimization problem
\begin{align*}
	\min\quad& \|(\beta,\lambda) - (\beta',\lambda')\|^2\\
	\mbox{s.t.}\quad & K_1\|\beta'\|\leq \lambda'\leq K_{2}R_{\beta} ,\\
	& (\beta',\lambda')\in B\times\mathbb{R}.
\end{align*}
Thus, analytical solution of the projection $\Pi_{\mathbb{W}}(\theta)$ is given by
\begin{align*}
	\Pi_{\mathbb{W}}(\theta) = \begin{cases}
	(\beta,\lambda)& \mbox{if }(\beta,\lambda)\in\mathbb{W},\\
	(\beta,K_{2}R_{\beta})& \mbox{if } \|\beta\|\leq K_{2}R_{\beta}/K_1, \lambda > K_{2}R_{\beta},\\
	(\mathbf{0},0)& \mbox{if } \lambda < -\|\beta\|/K_1,\\
	\left(\frac{\|\beta\|+\lambda}{1+K_1^2}\frac{\beta}{\|\beta\|}, \frac{K_1\|\beta\|+K_1^2\lambda}{1+K_1^2}\right)& \mbox{if }-\|\beta\|/K_1\leq \lambda <\min\{K_1\|\beta\|, K_{2}R_{\beta}(1+K_1^{-2})-\|\beta\|/K_1\},\\
	\left(\frac{K_{2}R_{\beta}}{K_1}\frac{\beta}{\|\beta\|}, K_{2}R_{\beta}\right), &\mbox{otherwise}.
	\end{cases}
\end{align*}
The optimality of above solution can be verified using the KKT condition.

\section{Tables specifying useful constants and an illustration of computation of some of these constants in examples}\label{Sec-table}
Tables \ref{tab:constants-given} - \ref{tab:constants-algorithmic framework} below present a compilation of useful constants used in the main text. A demonstration of how some of the relevant constants can be computed is presented in the subsequent sections.

\begin{table}[ht!]
\caption{Constants which are specified as part of the framework}
\begin{center}
\begin{tabular}{ |c|l| }
\hline
constant &\hspace{80pt} description\\
\hline
$\delta$ & the radius of the Wasserstein ball specified in the DRO formulation \eqref{DRO_B0}\\
$\rho_{\max}$ & the largest possible eigenvalue of the matrices $\{A(x): x \in \mathbb{R}^d\}$\\
$\rho_{\min}$ & the smallest possible eigenvalue of the matrices $\{A(x): x \in \mathbb{R}^d\}$\\
$\kappa$  &   quadatic growth rate of $\ell(\cdot)$ characterized by $\kappa := \inf\{s \geq 0: \sup_{u \in \mathbb{R}} \left(\ell(u)-su^2 \right) < \infty\}$\\
$M$ & the maximum possible value for the second derivative $\ell^{\prime\prime}(\cdot),$ whenever it exists\\
$R_\beta$ & equals $\sup_{\beta \in B} \Vert \beta \Vert$ if the set $B$ is taken to be bounded\\
$c_1,c_2,p$& positive constants which ensure $P_0\left( \vert  \ell^\prime(\beta^TX) \vert > c_1, \vert \beta^T X\vert > c_2\Vert \beta \Vert \right) \geq p$\\
$k_1,k_2$ &  positive constants $k_1,k_2$ satisfying $\vert u \vert \ell^{\prime\prime}(u)  \leq k_1 + k_2\vert \ell^\prime(u)\vert$ required by Theorem \ref{result-noncompact}
\\
$K$ & the number of piecewise components in the loss $\ell(u) = \max_{i=1,\ldots,K} \ell_i(u)$\\
\hline
\end{tabular}
\end{center}
\label{tab:constants-given}
\end{table}
\begin{table}[h!]
\caption{Some useful constants which are specified in the analysis}
\begin{center}
\begin{tabular}{c c}
\begin{tabular}{ |c|l| }
\hline
constant & value specified in the  analysis\\
\hline
$\delta_0$ & $\rho_{\min}^2 \underline{L}R_\beta^{-2}M^{-2}\rho_{\max}^{-1}$\\
$\delta_1$ & $\min\{ \delta_0/4, c_1^2c_2^2p^2\rho_{\min}^2\rho_{\max}^{-1}\underline{L}\overline{L}^{-2}/256\}$\\
$\delta_2$ & $c_1^2c_2^2p\rho_{\min}^2\left( 2k_1\rho_{\max}^{1/2}
+ 4c_1\rho_{\min}^{1/2}(1+k_2)\right)^{-2}$\\
$\kappa_{0}$ & $2^{-1}\underline{L}\rho_{\max}^{-1}$ \\
$\kappa_{1}$ & $pC\rho_{\max}^{-1}/2$\\
$K_1$ & $\underline{L}^{1/2}\rho_{\max}^{-1/2}/2$\\
\hline
\end{tabular}  &
\begin{tabular}{ |c|l| }
\hline
constant & value specified in the  analysis\\
\hline
$K_2$ & $\sqrt{\delta}MR_\beta \rho_{\min}^{-1} + \rho_{\min}^{-1/2}\overline{L}$\\
$\lambda_{thr}(\beta)$ & $\sqrt{\delta}\kappa P_0-\text{ess-sup}_x \beta^TA(x)^{-1}\beta$\\
$\lambda^\prime_{thr}(\beta)$ & $\sqrt{\delta}M P_0-\text{ess-sup}_x \beta^TA(x)^{-1}\beta$\\
$\overline{L}$ &  $\max_{\beta \in B} E_{P_0}\left[ \ell^\prime(\beta^TX)^2\right]^{1/2}$\\
$\underline{L}$ & $\min_{\beta \in B} E_{P_0}\left[ \ell^\prime(\beta^TX)^2\right]^{1/2}$\\
$\varphi_{\min}$&  $ \underline{L}^{-1/2}\rho_{\max}^{-1/2} - \sqrt{\delta}R_\beta M\rho_{\min}^{-1}$\\
\hline
\end{tabular}
\end{tabular}
\end{center}
\label{tab:constants-derived}
\end{table}
\begin{table}[h!]
\caption{Constants involved in the SGD algorithm}
\begin{center}
\begin{tabular}{ |c|l| }
\hline
constant & \hspace{80pt} description\\
\hline
$\{\alpha_k: k \geq 1\}$ & step-size sequence satisfying $\alpha_k = \alpha k^{-\tau}$ for some $\alpha >0, \  \tau \in [1/2,1)$ \\
$\eta$ & specifies the set $\mathbb{U}_\eta$ (see \eqref{Defn-Ueta}) onto which iterates are projected\\
$\xi$ & polynomial averaging constant in \eqref{SGD-1}\\
\hline
\end{tabular}
\end{center}
\label{tab:constants-algorithmic framework}
\end{table}
\subsection{Logistic regression} The logistic loss function is given by
\begin{equation*}
	\ell(u;y) = \log\left(1+\exp(-yu)\right)
\end{equation*}
Theorem \ref{Thm-Str-Convexity} and/or \ref{result-noncompact} are applied to analyze the locally strong convexity of $f_{\delta}$; Proposition \ref{Prop-Conv-SGD-1} and \ref{Prop-Conv-SGD-Over-All} guarantees the efficacy of the proposed algorithm.
The constants appearing in the related assumptions can be chosen as:
\begin{itemize}
	\item $\rho_{\min}, \rho_{\max}$: Determined by the selection of $A(x)$. If $A(x)$ is an identity matrix, then $\rho_{\min}= \rho_{\max} = 1$.
	\item $\kappa$: Since the loss function is asymptotically linear, $\kappa  = 0.$
	\item $k_1,k_2$:
    $k_1 = k_2 = 1$, because $\sup_{u\in\mathbb{R}} |u|\ell''(u; \pm1)\leq 1$
	\item $M$: $M = 1/4$, because $\ell''(u; \pm1) = (1/4) \cosh(u/2)^{-2} \leq1/4$, 
	\item $c_1,c_2,p$: Depends on the distribution of $X$. The constants with desired properties exist if and only if $\mathrm{rank}\;\{Y_{i}\cdot X_{i}\}_{i=1}^{n}=n$, which would happen
    almost surely if the data generating distribution of $X$ has a density.
\end{itemize}

\subsection{Linear regression} The squared loss function is given by
\begin{equation*}
\ell(u;y) = \left(u-y\right)^2
\end{equation*}
Theorem \ref{Thm-Str-Convexity} and/or \ref{result-noncompact} are applied to analyze the locally strong convexity of $f_{\delta}$; Proposition \ref{Prop-Conv-SGD-1} and \ref{Prop-Conv-SGD-Over-All} guarantees the efficacy of the proposed algorithm.
The constants appearing in the related assumptions can be chosen as:
\begin{itemize}
	\item $\rho_{\min}, \rho_{\max}$: Determined by the selection of $A(x)$. If $A(x)$ is an identity matrix, then $\rho_{\min}= \rho_{\max} = 1$.
	\item $\kappa$: Since the loss function is quadratic, $\kappa  = 1.$
	\item $k_1,k_2$: $k_1 = \max_{i = 1,\ldots,n} |Y_i|$ and $k_2 = 1$.
	\item $M$: $M = 1$ because of $\ell''(u; y) = 1$, 
	\item $c_1,c_2,p$: Depends on the distribution of $X$. The constants with desired properties exist almost surely if the data generating distribution of $X$ has a density.
\end{itemize}

\subsection{Support vector machines} The hinge loss function is given by
\begin{equation*}
	\ell(u;y) = \max\left( 0,1-yu\right)
\end{equation*}
Proposition \ref{Prop-sDelta-subgradients} and \ref{Prop-Conv-SGD-2} provide theoretical foundation for hinge loss function, in which Assumption \ref{Assump-c} and \ref{Assump-Obj-Conv} are imposed. The related constants can be chosen as:
\begin{itemize}
    \item $\rho_{\min}, \rho_{\max}$: Determined by the selection of $A(x)$. If $A(x)$ is an identity matrix, then $\rho_{\min}= \rho_{\max} = 1$.
	\item $\kappa$: Since the loss function is asymptotically linear, $\kappa  = 0.$
\end{itemize}

\section{Algorithm}\label{sec-algorithm}
\quad
\begin{algorithm}[H]
	\caption{Stochastic Gradient Descent for the case $\delta < \delta_0$}
	\begin{algorithmic}
		\State \textbf{input: } Initial parameter $\bar{\theta}_{0} = \theta_0 = (\beta_0, \lambda_0)\in \mathbb{W}$, step-size sequence $(\alpha_k)_{k\geq 1}$, total number of iterations $N$.
		\For{$k = 1,2,\ldots, N$}
			\State Generate an independent sample $X_k$ from the distribution $P_0$.
			\State Set $n_k\geq \tau\log_2(k)-\log_2(\alpha) + 2\log_2(1+\|X_{k}\|)$ as total cuts for bisection method.
			\State Set $I_k =\big[ -|\ell^\prime(\beta_{k-1}^TX_k)|/(\varphi_{\min}\|\beta_{k-1}\|),	|\ell^\prime(\beta_{k-1}^TX_k)|/(\varphi_{\min}\|\beta_{k-1}\|)\big]$ as the initial interval.
			\State Solve $\gamma_k = \arg\max_{\gamma\in I_k} F(\gamma,\theta_{k-1};X_k)$ using bisection method for $n_k$ steps.
			\State Compute $\tilde{X}_k = X_k + \gamma_k \sqrt{\delta}A(X_k)^{-1}\beta_{k-1}$.
			\State Compute $\nabla_\theta \ell_{rob}(\theta_{k-1}; X_k)$ using the closed-form expression
			$$\frac{\partial \ell_{rob} \xspace}{\partial \beta}%
			(\theta_{k-1};X_k) =\ell^{\prime}\left(\beta_{k-1}^T\tilde{X}_k \right)\tilde{X}_k \quad
			\text{and}\quad \frac{\partial \ell_{rob} \xspace}{\partial \lambda}%
			(\theta_{k-1};X_k) = -\sqrt{\delta }\left( \gamma_k^2 \beta_{k-1}^T A(X_k)^{-1}\beta_{k-1} -
			1\right). $$
			\State Update the parameter by $\theta_{k} := \Pi_{\mathbb{W}} \big(\theta_{k-1} - \alpha_k \nabla_\theta
			\ell_{rob} \xspace(\theta_{k-1};X_k)\big).$
			\State Update the trajectory average $\bar{\theta}_{k} = \left(\frac{k-1}{k}\right)  \bar{\theta}_{k-1} + \frac{1}{k} \theta_{k}.$
		\EndFor
		\State \textbf{output:} $\bar{\theta}_{N}$ and 
		$\theta_{N}$.
	\end{algorithmic}
\end{algorithm}
\end{APPENDICES}


\end{document}